\newtheorem{theorem}{Theorem}[section]
\newtheorem{lemma}{Lemma}[section]
\theoremstyle{definition}
\theoremstyle{remark}
\newtheorem{remark}{Remark}[section]
\numberwithin{equation}{section}
\theoremstyle{plain}
\newtheorem{proposition}{Proposition}[section]
\newtheorem{example}{Example}[section]
\begin{document}
\title[Hitting Times and Positions in Rare Events]{\textbf{Hitting Times and Positions in Rare Events}}
\author{Roland Zweim\"{u}ller}
\address{Fakult\"{a}t f\"{u}r Mathematik, Universit\"{a}t Wien, Oskar-Morgenstern-Platz
1, 1090 Vienna, Austria }
\email{roland.zweimueller@univie.ac.at}
\urladdr{http://www.mat.univie.ac.at/\symbol{126}zweimueller/ }
\keywords{invariant measure, limit distribution, rare events, Poisson process}

\begin{abstract}
We establish abstract limit theorems which provide sufficient conditions for a
sequence $(A_{l})$ of rare events in an ergodic probability preserving
dynamical system to exhibit Poisson asymptotics, and for the consecutive
positions inside the $A_{l}$ to be asymptotically iid (spatiotemporal Poisson
limits). The limit theorems only use information on what happens to $A_{l}$
before some time $\tau_{l}$ which is of order $o(1/\mu(A_{l}))$. In
particular, no assumptions on the asymptotic behavior of the system akin to
classical mixing conditions are used. We also discuss some general questions
about the asymptotic behaviour of spatial and spatiotemporal processes, and
illustrate our results in a setup of simple prototypical systems.
\newline\newline

2010 Mathematics Subject Classification: 28D05, 37A25, 37A50, 37C30, 60F05, 11K50.

\end{abstract}
\maketitle

\section{Introduction}

Consider an ergodic measure-preserving map $T$ on the probability space
$(X,\mathcal{A},\mu)$, and a sequence $(A_{l})_{l\geq1}$ of sets for which
$0<\mu(A_{l})\rightarrow0$. Let $\varphi_{A_{l}}$ denote the first hitting
time function of $A_{l}$. The asymptotic behavior of the distributions of the
rescaled hitting times $\mu(A_{l})\varphi_{A_{l}}$ as $l\rightarrow\infty$ is
a well-studied circle of questions. In many interesting situations, mixing
properties have been used to show that these \emph{hitting time distributions}
converge to an exponential law,
\begin{equation}
\mu(\mu(A_{l})\varphi_{A_{l}}\leq t)\longrightarrow1-e^{-t}\text{ \quad as
}l\rightarrow\infty\text{\quad for }t>0\text{, } \label{Eq_ExpHTS}%
\end{equation}
and so do the corresponding \emph{return distributions},
\begin{equation}
\mu_{A_{l}}(\mu(A_{l})\varphi_{A_{l}}\leq t)\longrightarrow1-e^{-t}\text{
\quad as }l\rightarrow\infty\text{\quad for }t>0\text{,} \label{Eq_ExpRTS}%
\end{equation}
where $\mu_{A_{l}}$ denotes the normalized restriction of $\mu$ to $A_{l}$. In
fact, (\ref{Eq_ExpHTS}) and (\ref{Eq_ExpRTS}) are equivalent: as a consequence
of Theorem 2.1 of \cite{HSV} one has

\begin{proposition}
[\textbf{Characterizing convergence to the exponential law}]Let $T$ be an
ergodic measure-preserving map on the probability space $(X,\mathcal{A},\mu)$,
and $(A_{l})_{l\geq1}$ a sequence in $\mathcal{A}$ such that $0<\mu
(A_{l})\rightarrow0$. Then each of (\ref{Eq_ExpHTS}) and (\ref{Eq_ExpRTS}) is
equivalent to
\begin{equation}
\mu(\mu(A_{l})\varphi_{A_{l}}\leq t)-\mu_{A_{l}}(\mu(A_{l})\varphi_{A_{l}}\leq
t)\longrightarrow0\text{ \quad as }l\rightarrow\infty\text{\quad for
}t>0\text{.} \label{Eq_Char_EXPHRTS}%
\end{equation}

\end{proposition}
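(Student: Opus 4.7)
The plan is to rely on the discrete Haydn--Lacroix--Vaienti integral identity, which, for any ergodic measure-preserving system and any $A$ with $\mu(A)>0$, reads
\[
\mu(\varphi_A>n)=\mu(A)\sum_{k=n}^{\infty}\mu_A(\varphi_A>k),\qquad n\geq 0,
\]
the case $n=0$ being Kac's lemma. Writing $\alpha_l:=\mu(A_l)$ and introducing the rescaled survival functions
\[
F_l(t):=\mu(\alpha_l\varphi_{A_l}>t),\qquad\widetilde F_l(t):=\mu_{A_l}(\alpha_l\varphi_{A_l}>t),
\]
and using that $0\leq\widetilde F_l\leq1$ is nonincreasing while $\alpha_l\to 0$, the Riemann-sum error is $O(\alpha_l)$, so the identity upgrades to the asymptotic relation
\[
F_l(t)=\int_t^{\infty}\widetilde F_l(u)\,du+o(1)\quad\text{uniformly in }t\geq 0.
\]
This relation, together with the Kac normalization $\int_0^{\infty}\widetilde F_l(u)\,du=1$, will drive everything.

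For the equivalence of (\ref{Eq_ExpHTS}) and (\ref{Eq_ExpRTS}), the direction (\ref{Eq_ExpRTS})$\Rightarrow$(\ref{Eq_ExpHTS}) is immediate: pointwise convergence $\widetilde F_l(u)\to e^{-u}$ with uniformly bounded $[0,1]$-valued decreasing integrands, plus the Kac control of tails, lets one pass to the limit in the integral to obtain $F_l(t)\to e^{-t}$. Conversely, assuming (\ref{Eq_ExpHTS}), Helly's selection theorem yields pointwise subsequential limits $\widetilde F_\infty$ of the monotone sequence $(\widetilde F_l)$; the integral identity and Fatou force $\int_t^{\infty}\widetilde F_\infty(u)\,du=e^{-t}$, which uniquely determines $\widetilde F_\infty(t)=e^{-t}$. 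Uniqueness of the subsequential limit upgrades to convergence of the full sequence, proving (\ref{Eq_ExpRTS}). The implication ``(\ref{Eq_ExpHTS}) or (\ref{Eq_ExpRTS})$\Rightarrow$(\ref{Eq_Char_EXPHRTS})'' is then trivial, since both pieces tend to $e^{-t}$.

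The remaining direction (\ref{Eq_Char_EXPHRTS})$\Rightarrow$(\ref{Eq_ExpRTS}) is the subtle one. Combining (\ref{Eq_Char_EXPHRTS}) with the integral identity gives
\[
\widetilde F_l(t)-\int_t^{\infty}\widetilde F_l(u)\,du\longrightarrow 0\quad\text{pointwise in }t>0,
\]
so any Helly subsequential limit $\widetilde F_\infty$ obeys the fixed-point equation $\widetilde F_\infty(t)=\int_t^{\infty}\widetilde F_\infty(u)\,du$. Differentiating shows $\widetilde F_\infty$ is absolutely continuous with $\widetilde F_\infty'=-\widetilde F_\infty$, hence $\widetilde F_\infty(t)=c\,e^{-t}$ for some $c\in[0,1]$. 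The main obstacle is fixing $c=1$: Helly only yields $\widetilde F_\infty(0^+)\leq 1$ a priori, and mass could in principle escape to zero. This is ruled out by comparing with $F_l$: since $F_l(0)=1$ and $F_l-\widetilde F_l\to 0$ on $(0,\infty)$, together with the integral identity which keeps $\int\widetilde F_l=1$, any Fatou-type loss $c<1$ would contradict the asymptotic behavior of $F_l$ near $0$. Thus $\widetilde F_\infty(t)=e^{-t}$ uniquely, the full sequence converges, (\ref{Eq_ExpRTS}) holds, and (\ref{Eq_ExpHTS}) follows from the first implication, closing the circle.
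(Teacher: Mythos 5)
Your strategy is essentially the one behind the paper's own treatment: the proposition is quoted from \cite{HaydnLacVai05}, whose proof rests on precisely the duality identity you start from, and the same identity reappears in the paper as the $d=0$ case of (\ref{Eq_RelationBetweenHigherDimDFs}) in Theorem \ref{T_HittingTimeVsReturnTimeProc}, combined with the fixed-point characterization of the exponential law (Proposition \ref{P_CharacterizeExpos}). Your outline is correct, but one step is glossed: along a Helly subsequence, pointwise convergence $\widetilde F_l\to\widetilde F_\infty$ does not by itself give $\int_t^\infty \widetilde F_l(u)\,du\to\int_t^\infty \widetilde F_\infty(u)\,du$, since part of the unit integral may drift to $+\infty$; what you directly obtain is $\widetilde F_\infty(t)=\beta+\int_t^\infty\widetilde F_\infty(u)\,du$ with an unknown defect $\beta\geq0$, and your subsequent pinning of the constant ("would contradict the asymptotic behavior of $F_l$ near $0$") is asserted rather than proved. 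Both points are settled at once by the exact Kac normalization you already recorded: write $\int_t^\infty\widetilde F_l(u)\,du=1-\int_0^t\widetilde F_l(u)\,du$ and apply bounded convergence on the finite interval $[0,t]$, which at continuity points yields $\widetilde F_\infty(t)=1-\int_0^t\widetilde F_\infty(u)\,du$; the right-hand side is Lipschitz with derivative $-\widetilde F_\infty$ a.e., so $\widetilde F_\infty(t)=e^{-t}$, the multiplicative constant being automatically $1$ because the right-hand side equals $1$ at $t=0$. With that substitution, no mass-escape or normalization issue arises anywhere, and the remaining parts of your argument (the easy implications, Helly, uniqueness of subsequential limits) go through as you describe.
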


This fact leads to one standard approach (of many) to proving (\ref{Eq_ExpHTS}%
) and (\ref{Eq_ExpRTS}): Checking condition (\ref{Eq_Char_EXPHRTS}) means to
show that, asymptotically, changing $\mu_{A_{l}}$ to $\mu$ does not affect the
resulting distribution. This is not trivial, because the $\mu_{A_{l}}$ become
increasingly singular, being concentrated on smaller and smaller sets.

For many classes of concrete systems, strong results on the decay of
correlations (or mixing properties) provide information on how the system
forgets the difference between two initial probabilities over time. A basic
form of this might state that $d_{\mathfrak{N}}(\nu\circ T^{-n},\mu)\leq
c_{n}$ for $\nu\in\mathfrak{N}$ and $n\geq0$, where $c_{n}\rightarrow0$, and
$\mathfrak{N}$ is a family of normalized measures, typically rather small, and
equipped with some metric $d_{\mathfrak{N}}$.

We can then hope to establish (\ref{Eq_Char_EXPHRTS}) once we check it is
possible to replace the measures $\mu_{A_{l}}$ there by push-forwards
$\mu_{A_{l}}\circ T^{-\tau_{l}}$ (with integers $\tau_{l}$) which are nicer in
that they belong to $\mathfrak{N}$, and hence allow comparison to $\mu$ via
the control of $d_{\mathfrak{N}}(\nu\circ T^{-n},\mu)$ on $\mathfrak{N}$.
Taking the push-forward means to skip the first $\tau_{l}$ time steps, and we
need those to be negligible compared to the variable $\mu(A_{l})\varphi
_{A_{l}}$ itself, meaning that $\mu(A_{l})\tau_{l}\rightarrow0$. Also, one has
to check that in skipping these steps, we do not miss (with positive
asymptotic probability) the awaited visit to $A_{l}$, which amounts to
requiring that $\mu_{A_{l}}(\varphi_{A_{l}}\leq\tau_{l})\longrightarrow0$.

Note that the existence of $\tau_{l}$ meeting the last two conditions is in
fact necessary for (\ref{Eq_ExpRTS}) (otherwise, the limit law contains an
atom at the origin), and that this is a property (\textquotedblleft no short
returns\textquotedblright) of the specific sequence $(A_{l})$ of sets, which
always needs to be checked (since every system contains sequences for which it
fails). The correlation decay, on the other hand, is a feature of the whole
system, and the two are tied together by the requirement that $\mu_{A_{l}%
}\circ T^{-\tau_{l}}\in\mathfrak{N}$ for all $l\geq1$.%

\vspace{0.3cm}%

The first purpose of the present paper is to point out that the same strategy
can be used, even for functional versions for the processes of consecutive
hitting-times, without assuming any information on the decay of correlations
(and without the system being mixing). We only ask for a sequence of (not
necessarily constant) delay times $\tau_{l}$ satisfying the necessary
conditions $\mu(A_{l})\tau_{l}\rightarrow0$ and $\mu_{A_{l}}(\varphi_{A_{l}%
}\leq\tau_{l})\longrightarrow0$, and such that the time-$\tau_{l}$-measures
$\mu_{A_{l}}\circ T^{-\tau_{l}}$ belong to some set $\mathfrak{K}$ of
probabilities which is compact in total variation norm. This latter condition
can be seen as a short-time (since $\tau_{l}=o(1/\mu(A_{l}))$) decorrelation
property of $(A_{l})$.

Second, we show that the same approach can be used to analyse distributional
limits of the sequences of consecutive positions, inside the $A_{l}$, of
orbits upon their visits to these small sets, and of joint time-position
processes. Such results on spatiotemporal Poisson limits have recently been
introduced in \cite{PS2}. We also include a general discussion of some aspects
of spatial and spatio-temporal process limits in the abstract setup, and
illustrate our results in the context of some simple prototypical systems.%

\vspace{0.3cm}%

There is a large body of literature devoted to the asymptotics of return- and
hitting-time distributions. While some sources directly relevant for the
present work are mentioned in the bibliography, we do not attempt to provide
a complete overview. A closer look at earlier articles which derive similar limit
theorems on the basis of mixing conditions or on correlation decay reveals
that, effectively, these assumptions are often only used to obtain control on
the time scale mentioned above, see for example \cite{Ab}, \cite{AS},
\cite{CC}, \cite{FFT1} \cite{HP}, \cite{HY}, \cite{PS}, \cite{PS2}.\ The
present paper makes this more explicit, and emphasizes that in the presence of
mere ergodicity a comparatively weak form of short-time control suffices.%

\vspace{0.3cm}%

\section{General setup and preparations}%

\noindent
\textbf{Hitting- and return-times. Inducing.} Throughout, $(X,\mathcal{A}%
,\mu)$ is a probability space, and $T:X\rightarrow X$ is an ergodic $\mu
$-preserving map. Also, $A$ and $A_{l}$ will always denote measurable sets of
(strictly) \emph{positive} measure. By ergodicity and the Poincar\'{e}
recurrence theorem, the measurable \emph{(first) hitting time} function of $A
$, $\varphi_{A}:X\rightarrow\overline{\mathbb{N}}:=\{1,2,\ldots,\infty\}$ with
$\varphi_{A}(x):=\inf\{n\geq1:T^{n}x\in A\}$, is finite a.e. on $X$. When
restricted to $A$ it is called the \emph{(first) return time} function of the
set. Define $T_{A}x:=T^{\varphi_{A}(x)}x$ for a.e. $x\in X$, which gives the
\emph{first entrance map} $T_{A}:X\rightarrow A$. It is a standard fact that
its restriction to $A$, the \emph{first return map} $T_{A}:A\rightarrow A$ is
an ergodic measure preserving map on the probability space $(A,\mathcal{A}\cap
A,\mu_{A})$, where $\mu_{A}(B):=\mu(A\cap B)/\mu(A)$, $B\in\mathcal{A}$. By
Kac' formula, $\int_{A}\varphi_{A}\,d\mu_{A}=1/\mu(A)$. That is, when regarded
as a random variable on $(A,\mathcal{A}\cap A,\mu_{A})$, the return time has
expectation $1/\mu(A)$, and we will often normalize these functions
accordingly, thus considering $\mu(A)\,\varphi_{A}$.

\emph{The focus of this work is on asymptotic distributions of such normalized
hitting (or return) times, and of the positions inside the target set at which
an orbit hits.} We shall study processes of consecutive hitting times and
hitting places in the limit of very small sets. Call $(A_{l})_{l\geq1}$ a
\emph{sequence of asymptotically rare events} (or an \emph{asymptotically rare
sequence}) provided that $A_{l}\in\mathcal{A}$ and $0<\mu(A_{l})\rightarrow0$.

It will be natural to view various observables defined on (parts of) $X$
through different probability measures $\nu$. In the present paper we shall
focus on the family $\mathfrak{P}:=\{\nu:$ probability measure on
$(X,\mathcal{A})$, $\nu\ll\mu\}$, equipped with the \emph{total variation
distance} $d_{\mathfrak{P}}(\nu,\nu^{\prime}):=2\sup_{A\in\mathcal{A}}\mid
\nu(A)-\nu^{\prime}(A)\mid$. This induces a topology so weak that mere
ergodicity ensures convergence of averages (via Theorem \ref{T_Yosida}), but
strong enough for the latter to have substantial consequences. (The results
below rely on compactness conditions which clearly would not be sufficient if
we used a w$^{\ast}$-topology instead.)

The push-forward of a measure $\nu$ by $T$ will be denoted $T_{\ast}\nu
:=\nu\circ T^{-1}$, and likewise for measurable maps other than $T$. Indeed,
we shall use suitable \emph{times}, that is, measurable functions
$\tau:B\rightarrow\mathbb{N}_{0}:=\{0,1,\ldots\}$ with $B\in\mathcal{A}$, to
define auxiliary \emph{induced maps} $T^{\tau}:B\rightarrow X$ via $T^{\tau
}x:=T^{\tau(x)}x$. Given $\nu\in\mathfrak{P}$, the push-forward $T_{\ast
}^{\tau}\nu:=\nu\circ(T^{\tau})^{-1}$ then is the distribution, at the
(possibly random) time $\tau$, of the process $(T^{n})_{n\geq0}$, all defined
on the probability space $(X,\mathcal{A},\nu)$.%

\vspace{0.3cm}%
%

\noindent
\textbf{Distributional convergence.} Let $(\mathfrak{E},d_{\mathfrak{E}})$ be
a compact metric space with Borel $\sigma$-algebra $\mathcal{B}_{\mathfrak{E}%
}$. As usual, a sequence $(Q_{l})_{l\geq1}$ of probability measures on
$(\mathfrak{E},\mathcal{B}_{\mathfrak{E}})$ is said to \emph{converge weakly}
to the probability measure $Q$ on $(\mathfrak{E},\mathcal{B}_{\mathfrak{E}})$,
written $Q_{l}\Longrightarrow Q$, if the integrals of all continuous real
functions $\chi$ on $\mathfrak{E}$ converge, $\int\chi\,dQ_{l}\longrightarrow
\int\chi\,dQ$ as $l\rightarrow\infty$ for $\chi\in\mathcal{C}(\mathfrak{E})$.
This is w$^{\ast}$-convergence in $\mathfrak{M}(\mathfrak{E})$, the set of
Borel probabilities on $\mathfrak{E}$, regarded as a subset of the space of
all finite signed Borel measures on $\mathfrak{E}$ (equipped with the total
variation norm), which by the Riesz representation theorem constitute the
topological dual space of $\mathcal{C}(\mathfrak{E})$.

If $R_{l}$, $l\geq1$, are measurable maps of $(X,\mathcal{A})$ into
$(\mathfrak{E},\mathcal{B}_{\mathfrak{E}})$, $\nu_{l}$ are probability
measures on $(X,\mathcal{A})$, and $R$ is another \emph{random element} of
$\mathfrak{E}$ (defined on some $(\Omega,\mathcal{F},\Pr)$), then we write
\begin{equation}
R_{l}\overset{\nu_{l}}{\Longrightarrow}R\text{ \quad as }l\rightarrow
\infty\label{Eq_NotationDistrCgeSeqMeas}%
\end{equation}
to indicate that $\mathrm{law}_{\nu_{l}}(R_{l}):=\nu_{l}\circ R_{l}%
^{-1}\Longrightarrow\mathrm{law}(R)=\Pr\circ R^{-1}$. This is
\emph{distributional convergence} to $R$ of the $R_{l}$ when the latter
functions are regarded as random variables on the probability spaces
$(X,\mathcal{A},\nu_{l})$, respectively. This includes the case of a single
measure $\nu$, where $R_{l}\overset{\nu}{\Longrightarrow}R$ means that the
distributions $\mathrm{law}_{\nu}(R_{l})=\nu\circ R_{l}^{-1}$ of the $R_{l}$
under $\nu$ converge weakly to the law of $R$.\newline

A sequence $R=(R^{(0)},R^{(1)},\ldots)$ of measurable functions $R^{(j)}%
:X\rightarrow\mathfrak{E}$ can be regarded as a single function into the
(compact) sequence space $\mathfrak{E}^{\mathbb{N}_{0}}=\{(r^{(j)})_{j\geq
0}:r^{(j)}\in\mathfrak{E}\}$, equipped with the \emph{product metric}
$d_{\mathfrak{E}^{\mathbb{N}_{0}}}(q,r):=\sum_{j\geq0}2^{-(j+1)}%
d_{\mathfrak{E}}(q^{(j)},r^{(j)})$.

Recall that weak convergence $Q_{l}\Longrightarrow Q$ in $\mathfrak{M}%
(\mathfrak{E}^{\mathbb{N}_{0}})$ of Borel probabilities on $\mathfrak{E}%
^{\mathbb{N}_{0}}$ is equvalent to convergence of all finite-dimensional
marginals, $\pi_{\ast}^{d}Q_{l}\Longrightarrow\pi_{\ast}^{d}Q$ in
$\mathfrak{M}(\mathfrak{E}^{d})$ for $d\geq1$, where $\pi^{d}:\mathfrak{E}%
^{\mathbb{N}_{0}}\rightarrow\mathfrak{E}^{d}$ denotes the canonical projection
onto the first $d$ factors.%

\vspace{0.3cm}%

\section{When do orbits hit small sets?\label{Sec_When}}%

\noindent
\textbf{Hitting-time and return-time processes.} To accommodate normalized
hitting times and their possible limits, we will use the (compact) target
space $(\mathfrak{E},d_{\mathfrak{E}})=([0,\infty],d_{[0,\infty]})$ with
$d_{[0,\infty]}(s,t):=\mid e^{-s}-e^{-t}\mid$. The sequence space
$\mathfrak{E}^{\mathbb{N}_{0}}=[0,\infty]^{\mathbb{N}_{0}}$ will be equipped
with the corresponding product metric $d_{[0,\infty]^{\mathbb{N}_{0}}}$ as above.

We first study, for sets $A$ as above, the random sequences of consecutive
return- and hitting-times, that is, we are going to consider the sequences
$\Phi_{A}:X\rightarrow\lbrack0,\infty]^{\mathbb{N}_{0}}$ of functions given
by
\begin{equation}
\Phi_{A}:=(\varphi_{A},\varphi_{A}\circ T_{A},\varphi_{A}\circ T_{A}%
^{2},\ldots)\quad\text{on }X\text{.} \label{Eq_DefHitProc}%
\end{equation}
When regarded as a random sequence defined on $(X,\mathcal{A},\nu)$, we shall
call $\Phi_{A}$ the \emph{hitting-time process} \emph{of} $A$ \emph{under}
$\nu$. If no measure is mentioned, this means that $\nu=\mu$. In case we
restrict $\Phi_{A}$ to $A$ and view it through $\mu_{A}$, we call it the
\emph{return-time process} of $A$. From the properties of $T_{A}$ on
$(A,\mathcal{A}\cap A,\mu_{A})$ it is immediate that
\begin{equation}
\text{any return-time process }\Phi_{A}\text{ is stationary and ergodic (under
}\mu_{A}\text{),} \label{Eq_bvxcnb}%
\end{equation}
and by relating return-time processes to hitting-time processes with different
initial measures, stationarity often carries over to limits of the latter. \ %

\vspace{0.3cm}%
%

\noindent
\textbf{Asymptotic hitting-time and return-time processes for rare events.}
Assume now that $(A_{l})_{l\geq1}$ is a sequence of asymptotically rare
events. It is immediate from (\ref{Eq_bvxcnb}) and Kac' formula that for any
random sequence $\widetilde{\Phi}=(\widetilde{\varphi}^{(0)},\widetilde
{\varphi}^{(1)},\ldots)$ in $[0,\infty]$,%
\begin{equation}
\text{if\quad}\mu(A_{l})\Phi_{A_{l}}\overset{\mu_{A_{l}}}{\Longrightarrow
}\widetilde{\Phi}\text{ as }l\rightarrow\infty\text{,\quad then\quad
}\widetilde{\Phi}\text{ is stationary with }\mathbb{E}[\widetilde{\varphi
}^{(0)}]\leq1\text{.} \label{Eq_BasicPropsAsyRetProc}%
\end{equation}
Beyond that, little can be said about the general \emph{asymptotic return-time
process} $\widetilde{\Phi}$. In fact, it has been shown in \cite{Z11} that
every stationary sequence $\widetilde{\Phi}$ with $\mathbb{E}[\widetilde
{\varphi}^{(0)}]\leq1$ does appear as the limit for a suitable asymptotically
rare sequence $(A_{l})$ if only $T$ acts on a nonatomic space $(X,\mathcal{A}%
,\mu)$.

Turning to \emph{asymptotic hitting-time processes} $\Phi$, that is,
distributional limits of hitting-time processes under one fixed probability
$\nu\in\mathfrak{P}$, we first recall that these do not depend on the
particular choice of $\nu$. (The following is Corollary 6 of \cite{Z7}.)

\begin{proposition}
[\textbf{Strong distributional convergence of }$\mu(A_{l})\Phi_{A_{l}}$%
]\label{P_SDCforHittiPs}Suppose that $(X,\mathcal{A},\mu,T)$ is an ergodic
probability preserving system, and $(A_{l})$ an asymptotically rare sequence
in $\mathcal{A}$. Let $\Phi$ be any random sequence in $[0,\infty]$. Then
\begin{equation}
\mu(A_{l})\Phi_{A_{l}}\overset{\nu}{\Longrightarrow}\,\Phi\text{ for
\emph{some} }\nu\in\mathfrak{P}\text{\qquad iff\qquad}\mu(A_{l})\Phi_{A_{l}%
}\overset{\nu}{\Longrightarrow}\,\Phi\text{ for \emph{all} }\nu\in
\mathfrak{P}\text{.}%
\end{equation}

\end{proposition}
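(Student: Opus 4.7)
The plan is to apply the general \emph{strong distributional convergence} principle (this is essentially the content of Corollary 6 of \cite{Z7}): if $T$ is an ergodic $\mu$-preserving map, $(\mathfrak{E}, d_{\mathfrak{E}})$ a compact metric space, and the measurable maps $R_l : X \to \mathfrak{E}$ are asymptotically $T$-invariant in the sense that $d_{\mathfrak{E}}(R_l, R_l \circ T) \to 0$ in $\mu$-probability, then $R_l \overset{\nu}{\Longrightarrow} R$ holds for some $\nu \in \mathfrak{P}$ if and only if it holds for every $\nu' \in \mathfrak{P}$. Since ``all implies some'' is trivial, everything reduces to verifying this asymptotic invariance hypothesis for the specific $R_l := \mu(A_l)\Phi_{A_l}$, valued in the compact sequence space $[0,\infty]^{\mathbb{N}_0}$.

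The verification rests on a simple relation between $\Phi_{A_l}$ and $T$. For every $x \in X \setminus A_l$ one has $\varphi_{A_l}(Tx) = \varphi_{A_l}(x) - 1$, hence $T^{\varphi_{A_l}(Tx)}(Tx) = T^{\varphi_{A_l}(x)}x$, which says $T_{A_l}(Tx) = T_{A_l}(x)$; by induction $T_{A_l}^j(Tx) = T_{A_l}^j(x)$ for every $j \geq 1$. Consequently the $j$th coordinates of $\Phi_{A_l}(Tx)$ and $\Phi_{A_l}(x)$ coincide for $j \geq 1$, while their $0$th coordinates differ by exactly $1$. With $d_{[0,\infty]}(s,t) = |e^{-s} - e^{-t}|$ one obtains, for every $x \in X \setminus A_l$,
\begin{equation*}
d_{[0,\infty]^{\mathbb{N}_0}}\!\bigl(\mu(A_l)\Phi_{A_l}(x),\;\mu(A_l)\Phi_{A_l}(Tx)\bigr) \;\leq\; \tfrac{1}{2}\bigl(1 - e^{-\mu(A_l)}\bigr),
\end{equation*}
where the bound $\mu(A_l)\varphi_{A_l}(x) \geq \mu(A_l)$ is used to absorb the exponential prefactor. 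Since this bound tends to $0$ while the exceptional set $A_l$ has $\mu$-measure $\mu(A_l) \to 0$, the asymptotic invariance $d_{[0,\infty]^{\mathbb{N}_0}}(R_l, R_l \circ T) \to 0$ in $\mu$-probability follows, and the proposition is proved modulo the general principle above.

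If one prefers a self-contained argument rather than citing \cite{Z7}, the SDC principle itself can be established via the transfer operator $\hat{T}$ of $T$: asymptotic invariance together with measure preservation yields $\int \chi(R_l)\,d\nu = \int \chi(R_l)\,(\hat{T}^k h)\,d\mu + o(1)$ for each fixed $k$, any bounded $\chi \in \mathcal{C}(\mathfrak{E})$, and $h := d\nu/d\mu$; Cesàro averaging over $k$ together with the mean ergodic theorem ($\frac{1}{N}\sum_{k=0}^{N-1} \hat{T}^k h \to 1$ in $L^1(\mu)$) then shows that convergence of $\int \chi(R_l)\,d\nu$ does not depend on $\nu \in \mathfrak{P}$. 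The only nontrivial step in the whole chain is the asymptotic invariance computation above; everything else is a standard black-box argument.
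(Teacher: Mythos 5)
Your route is essentially the paper's own: prove that $R_{l}:=\mu(A_{l})\Phi_{A_{l}}$ is asymptotically $T$-invariant in $\mu$-measure and then apply the strong distributional convergence principle for asymptotically invariant sequences (Theorem 1/Corollary 6 of \cite{Z7}; in the present paper this is exactly Proposition \ref{P_AsyInvarHitProc} combined with Theorem \ref{T_MyOldStrongDistrCgeThm}), and your optional transfer-operator/mean-ergodic sketch of that principle is the standard argument via Theorem \ref{T_Yosida}. One correction is needed in your invariance computation: the identities $\varphi_{A_{l}}\circ T=\varphi_{A_{l}}-1$ and $T_{A_{l}}\circ T=T_{A_{l}}$ hold on $\{\varphi_{A_{l}}>1\}=X\setminus T^{-1}A_{l}$, not on $X\setminus A_{l}$. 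Indeed, if $x\notin A_{l}$ but $Tx\in A_{l}$, then $\varphi_{A_{l}}(x)=1$ while $\varphi_{A_{l}}(Tx)\geq1$, and $\Phi_{A_{l}}(Tx)$ is the shifted sequence of $\Phi_{A_{l}}(x)$, so your coordinatewise comparison and the resulting bound fail on $A_{l}^{c}\cap T^{-1}A_{l}$. Since $\mu(T^{-1}A_{l})=\mu(A_{l})\rightarrow0$ by $T$-invariance of $\mu$, replacing the exceptional set $A_{l}$ by $T^{-1}A_{l}$ leaves both your estimate $\tfrac12(1-e^{-\mu(A_{l})})$ and the conclusion of asymptotic $T$-invariance intact, so after this repair the proof is correct and coincides with the paper's.
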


Despite this, even if both exist, the asymptotic hitting-time process $\Phi$
for a given sequence $(A_{l})$ need not coincide with the asymptotic
return-time process $\widetilde{\Phi}$ for that very sequence. Indeed, the
relation between the two types of limit processes will be of central
importance in what follows.%

\vspace{0.3cm}%
%

\noindent
\textbf{Relating limit processes under }$\mu_{A_{l}}$ \textbf{to limit
processes under} $\mu$\textbf{. }It is well known that for any asymptotically
rare sequence $(A_{l})$, limit laws for the normalized first return-times,
$\mu(A_{l})\varphi_{A_{l}}$ under $\mu_{A_{l}}$, are intimately related to
limit laws of the normalized first hitting-times, $\mu(A_{l})\varphi_{A_{l}}$
under $\mu$ (see \cite{HaydnLacVai05}, \cite{AS}), and that this leads to an
efficient way of proving convergence (of both) to an exponential law. In
\cite{Z11} we have extended the crucial duality to processes $\mu(A_{l}%
)\Phi_{A_{l}}$, see Section \ref{S_PfsRH} below for more details.

A key ingredient of our present approach is the following generalization of
Proposition \ref{P_SDCforHittiPs} which provides conditions under which the
processes $\mu(A_{l})\Phi_{A_{l}}$, when started with suitable measures
$\nu_{l}$, exhibit the same asymptotic distributional behaviour as hitting
time processes started with $\mu$. The assumptions on the delay times
$\tau_{l}$ below are those already mentioned in the introduction, but we now
allow non-constant $\tau_{l}$. Parallel to (\ref{Eq_NotationDistrCgeSeqMeas})
we write, for measurable functions $R_{l},R:X\rightarrow\mathfrak{E}$,
\begin{equation}
R_{l}\overset{\nu_{l}}{\longrightarrow}R\text{ \quad as }l\rightarrow
\infty\label{Eq_NotationProbaCgeSeqMeas}%
\end{equation}
provided that $\nu_{l}(d_{\mathfrak{E}}(R_{l},R)>\varepsilon)\rightarrow0$ as
$l\rightarrow\infty$ whenever $\varepsilon>0$. This includes the case of a
single measure, $\nu_{l}=\nu$, in which case $R_{l}\overset{\nu}%
{\longrightarrow}R$ is the usual convergence in measure, $\nu(d_{\mathfrak{E}%
}(R_{l},R)>\varepsilon)\rightarrow0$ for $\varepsilon>0$. In the results to
follow, compact subsets $\mathfrak{K}$ of $(\mathfrak{P},d_{\mathfrak{P}})$
play the role of families of measures which only differ from $\mu$ in a
controllable way, as one can always assume w.l.o.g. that $\mu\in\mathfrak{K}$.

\begin{theorem}
[\textbf{Asymptotic hitting-time process - }$\nu_{l}$\textbf{\ versus }$\mu$%
]\label{T_ProcessesAsymptoticToHittingTimeProc}Let $(X,\mathcal{A},\mu,T)$ be
an ergodic probability preserving system, $(A_{l})_{l\geq1}$ a sequence of
asymptotically rare events, and $(\nu_{l})$ a sequence in $\mathfrak{P}$.
Assume that there are measurable functions $\tau_{l}:X\rightarrow
\mathbb{N}_{0}$ such that
\begin{equation}
\mu(A_{l})\,\tau_{l}\overset{\nu_{l}}{\longrightarrow}0\quad\text{as
}l\rightarrow\infty, \label{Eq_kllklklklklk}%
\end{equation}
and
\begin{equation}
\nu_{l}\left(  \tau_{l}<\varphi_{A_{l}}\right)  \longrightarrow1\quad\text{as
}l\rightarrow\infty, \label{Eq_opopopopo}%
\end{equation}
while there is some compact subset $\mathfrak{K}$ of $(\mathfrak{P}%
,d_{\mathfrak{P}})$ such that
\begin{equation}
T_{\ast}^{\tau_{l}}\nu_{l}\in\mathfrak{K}\text{ \quad for }l\geq1\text{,}
\label{Eq_hbhbhbhbhbkkkk}%
\end{equation}
Then, for any random element $\Phi$ of $[0,\infty]^{\mathbb{N}_{0}}$,
\begin{equation}
\mu(A_{l})\Phi_{A_{l}}\overset{\nu_{l}}{\Longrightarrow}\,\Phi\text{ \quad iff
\quad}\mu(A_{l})\Phi_{A_{l}}\overset{\mu}{\Longrightarrow}\,\Phi\text{\qquad
as }l\rightarrow\infty\text{.} \label{Eq_gvzgvzgvzgvzgvzgv}%
\end{equation}

\end{theorem}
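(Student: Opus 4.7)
The plan is to interpolate between $\nu_{l}$ and $\mu$ via the pushed-forward measures $\nu_{l}':=T_{\ast}^{\tau_{l}}\nu_{l}$, which by (\ref{Eq_hbhbhbhbhbkkkk}) all belong to the compact set $\mathfrak{K}\subset(\mathfrak{P},d_{\mathfrak{P}})$. I split (\ref{Eq_gvzgvzgvzgvzgvzgv}) into two separate equivalences: (i) convergence under $\nu_{l}$ iff convergence under $\nu_{l}'$, handled via the negligible-delay hypotheses (\ref{Eq_kllklklklklk})--(\ref{Eq_opopopopo}); and (ii) convergence under $\nu_{l}'$ iff convergence under $\mu$, handled by a subsequence argument combining total-variation compactness of $\mathfrak{K}$ with Proposition \ref{P_SDCforHittiPs}.

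For (i) the key observation is pathwise. Whenever $\tau_{l}(x)<\varphi_{A_{l}}(x)$, the orbit through $x$ first meets $A_{l}$ at the same point as the orbit through $T^{\tau_{l}}x$, only $\tau_{l}(x)$ steps later, so $T_{A_{l}}(T^{\tau_{l}}x)=T_{A_{l}}(x)$ and hence $T_{A_{l}}^{j}\circ T^{\tau_{l}}=T_{A_{l}}^{j}$ for every $j\geq 1$. Thus $\Phi_{A_{l}}(x)$ and $\Phi_{A_{l}}(T^{\tau_{l}}x)$ agree coordinatewise from the second entry onward, while their first coordinates differ exactly by $\tau_{l}(x)$. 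Using $|e^{-s}-e^{-t}|\leq|s-t|$ for $s,t\geq 0$, this yields
\[
d_{[0,\infty]^{\mathbb{N}_{0}}}\bigl(\mu(A_{l})\Phi_{A_{l}}(x),\,\mu(A_{l})\Phi_{A_{l}}(T^{\tau_{l}}x)\bigr)\,\leq\,\tfrac{1}{2}\,\mu(A_{l})\,\tau_{l}(x)
\]
on $\{\tau_{l}<\varphi_{A_{l}}\}$. By (\ref{Eq_kllklklklklk}) the right-hand side tends to $0$ in $\nu_{l}$-probability, and by (\ref{Eq_opopopopo}) the event on which the estimate holds has $\nu_{l}$-measure tending to $1$. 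So the random elements $\mu(A_{l})\Phi_{A_{l}}$ and $\mu(A_{l})\Phi_{A_{l}}\circ T^{\tau_{l}}$ on $(X,\nu_{l})$ differ by something that tends to zero in $\nu_{l}$-probability, and the standard converging-together lemma forces them to have the same weak limits; since $\mathrm{law}_{\nu_{l}}(\mu(A_{l})\Phi_{A_{l}}\circ T^{\tau_{l}})=\mathrm{law}_{\nu_{l}'}(\mu(A_{l})\Phi_{A_{l}})$, this is precisely (i).

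For (ii) I argue by the subsequence criterion for weak convergence, which applies since the space of Borel probabilities on the compact metric space $[0,\infty]^{\mathbb{N}_{0}}$ is itself sequentially compact. Given any subsequence of the indices $l$, compactness of $\mathfrak{K}$ in $(\mathfrak{P},d_{\mathfrak{P}})$ produces a further subsequence $(l_{k})$ along which $\nu_{l_{k}}'\to\nu^{\ast}$ in total variation, with $\nu^{\ast}\in\mathfrak{K}\subset\mathfrak{P}$. For any bounded continuous $f$ on $[0,\infty]^{\mathbb{N}_{0}}$,
\[
\bigl|\textstyle\int f\circ(\mu(A_{l_{k}})\Phi_{A_{l_{k}}})\,d\nu_{l_{k}}'-\int f\circ(\mu(A_{l_{k}})\Phi_{A_{l_{k}}})\,d\nu^{\ast}\bigr|\leq\|f\|_{\infty}\,d_{\mathfrak{P}}(\nu_{l_{k}}',\nu^{\ast})\longrightarrow 0,
\]
so the law sequences $\mathrm{law}_{\nu_{l_{k}}'}(\mu(A_{l_{k}})\Phi_{A_{l_{k}}})$ and $\mathrm{law}_{\nu^{\ast}}(\mu(A_{l_{k}})\Phi_{A_{l_{k}}})$ share the same subsequential weak limits. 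Proposition \ref{P_SDCforHittiPs}, applied to the still asymptotically rare subsequence $(A_{l_{k}})$, then identifies weak convergence under the fixed $\nu^{\ast}\in\mathfrak{P}$ with weak convergence under $\mu$. Running this chain in either direction, and noting that the putative limit $\Phi$ is the same along every subsequence, the subsequence criterion delivers (ii). Combining (i) and (ii) yields (\ref{Eq_gvzgvzgvzgvzgvzgv}).

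The genuinely delicate step is the subsequence manoeuvre in (ii): one must simultaneously pass to a total-variation limit of initial measures and to a weak limit of the associated laws, and then pin down uniqueness of every subsequential weak limit as $\mathrm{law}(\Phi)$. Step (i) is essentially a pathwise identity followed by a converging-together argument, and the compactness hypothesis (\ref{Eq_hbhbhbhbhbkkkk}) is used exactly once, precisely to extract the subsequence in (ii).
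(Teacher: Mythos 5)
Your proof is correct, and while its first half coincides with the paper's own mechanism, its second half takes a genuinely different route. Step (i) is exactly the paper's admissible-delay argument: the pathwise identity $\Phi_{A_l}=\Phi_{A_l}\circ T^{\tau_l}+(\tau_l,0,0,\ldots)$ on $\{\tau_l<\varphi_{A_l}\}$ and the bound $d_{[0,\infty]}(s,s+\delta)\le\delta$ are Propositions \ref{P_AsyInvarHitProc} a) and \ref{P_CharAdmissTimeDelay} a), and your converging-together step is the paper's sufficient condition (\ref{Eq_vbvbvbvbvbvbvbvvbvbvbvbvffff}). For step (ii) the paper feeds the compactness hypothesis (\ref{Eq_hbhbhbhbhbkkkk}) into Proposition \ref{P_AsyInvarSeqNuLVsMu}, which rests on the uniform distributional convergence Theorem \ref{T_HTSforVaryingMeasures}, proved through mean ergodic theory (Theorem \ref{T_Yosida}), equicontinuity of the Ces\`aro averages of push-forwards, and uniform integrability of the compact family of densities. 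You bypass all of that: sequential compactness of $\mathfrak{K}$ in total variation produces a subsequence along which $T_{\ast}^{\tau_l}\nu_l\to\nu^{\ast}\in\mathfrak{P}$, the law comparison is handled by the elementary estimate (\ref{Eq_ererrewrerwrertrerwerq}), and the switch between the fixed measure $\nu^{\ast}$ and $\mu$ is delegated to the already-quoted Proposition \ref{P_SDCforHittiPs} (Corollary 6 of \cite{Z7}), applied to the subsequence $(A_{l_k})$ — legitimate, since a subsequence of an asymptotically rare sequence is again asymptotically rare — after which the subsequence criterion closes the loop in both directions. What each approach buys: yours is shorter and more elementary, treating the strong distributional convergence result as a black box and using compactness only to extract one convergent subsequence; the paper's route proves the stronger quantitative assertion $D_{[0,\infty]^{\mathbb{N}_0}}(\mathrm{law}_{\nu_l}(\mu(A_l)\Phi_{A_l}),\mathrm{law}_{\mu}(\mu(A_l)\Phi_{A_l}))\to 0$, uniformly over measures in a compact set, which is the reusable engine behind the later theorems (where no candidate limit $\Phi$ is given in advance and one compares laws before knowing they converge). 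Your version delivers exactly the stated equivalence (\ref{Eq_gvzgvzgvzgvzgvzgv}), which is all this theorem claims.
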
%

\vspace{0.1cm}%

\begin{remark}
For a constant sequence $(\nu_{l})=(\nu)$ we can take $\tau_{l}:=0$ for all
\thinspace$l$, and obtain Proposition \ref{P_SDCforHittiPs}. Given any
sequence $(\nu_{l})$ in $\mathfrak{P}$, the $\tau_{l}=0$ case of the theorem
shows that (\ref{Eq_gvzgvzgvzgvzgvzgv}) holds whenever all the $\nu_{l}$
belong to some compact subset $\mathfrak{K}$ of $\mathfrak{P}$.
\end{remark}

%

\vspace{0.3cm}%
%

\noindent
\textbf{Convergence to iid exponential limit processes.} In the most prominent
case the limit process is an iid sequence of normalized exponentially
distributed random variables, henceforth denoted by $\Phi_{\mathrm{Exp}}$.
This is the process of interarrival times of an elementary standard Poisson
(counting) process, and we shall say that $(A_{l})$ \emph{exhibits Poisson
asymptotics} if
\begin{equation}
\mu(A_{l})\Phi_{A_{l}}\overset{\mu}{\Longrightarrow}\,\Phi_{\mathrm{Exp}%
}\text{ \quad and \quad}\mu(A_{l})\Phi_{A_{l}}\overset{\mu_{A_{l}}%
}{\Longrightarrow}\,\Phi_{\mathrm{Exp}}\text{.}
\label{Eq_DefPoissonAsymptotics}%
\end{equation}
This terminology is further justified by

\begin{remark}
[\textbf{Convergence of associated counting processes}]\label{R_CgeCounting1}%
Any set $A\in\mathcal{A}$ of positive measure comes with an associated
\emph{normalized counting process} $\mathrm{N}_{A}:X\rightarrow\mathcal{D}%
[0,\infty)$ (the collection of cadlag paths $\mathrm{x}=(\mathrm{x}%
_{t})_{t\geq0}:[0,\infty)\rightarrow\mathbb{R}$) given by $\mathrm{N}%
_{A}=(\mathrm{N}_{A,t})_{t\geq0}$ with
\begin{equation}
\mathrm{N}_{A,t}:=%
{\textstyle\sum_{k=1}^{\left\lfloor t/\mu(A)\right\rfloor }}
1_{A}\circ T^{k}=%
{\textstyle\sum_{j\geq1}}
1_{\left[  \mu(A)\sum_{i=0}^{j-1}\varphi_{A}\circ T_{A}^{i},\infty\right)
}(t)\text{.} \label{Eq_DefNormalizedCountingProc}%
\end{equation}
Observe then that for any probability measures $\nu_{l}$ on $(X,\mathcal{A})
$,
\begin{equation}
\mu(A_{l})\Phi_{A_{l}}\overset{\nu_{l}}{\Longrightarrow}\,\Phi_{\mathrm{Exp}%
}\text{ \quad implies \quad}\mathrm{N}_{A_{l}}\overset{\nu_{l}}%
{\Longrightarrow}\mathrm{N}\text{ in }(\mathcal{D}[0,\infty),\mathcal{J}%
_{1})\text{,} \label{Eq_CgeOfCountingProcFollows}%
\end{equation}
where $\mathrm{N}=(\mathrm{N}_{t})_{t\geq0}$ denotes a standard Poisson
counting process. To see this, define $\mathcal{W}:=\{(\varphi^{(i)}%
)\in\lbrack0,\infty)^{\mathbb{N}_{0}}:\sum_{i=0}^{j-1}\varphi^{(i)}%
\nearrow\infty$ as $j\rightarrow\infty\}$, and $\Upsilon:[0,\infty
]^{\mathbb{N}_{0}}\rightarrow\mathcal{D}[0,\infty)$ by $\Upsilon
((\varphi^{(i)})_{i\geq0}):=\sum_{j\geq1}1_{\left[  \sum_{i=0}^{j-1}%
\varphi^{(i)},\infty\right)  }$ for $(\varphi^{(i)})_{i\geq0}\in\mathcal{W}$,
and $\Upsilon((\varphi^{(i)})_{i\geq0}):=0$ otherwise. An elementary argument
shows that $\Upsilon$ is continuous on $\mathcal{W}_{+}:=\mathcal{W}%
\cap(0,\infty)^{\mathbb{N}_{0}}$ for the Skorokhod $\mathcal{J}_{1}$-topology
of $\mathcal{D}[0,\infty)$ (see \cite{Bi}, \cite{Whitt}). Since $\Upsilon
(\Phi_{\mathrm{Exp}})$ has the same law as $\mathrm{N}$, and $\Phi
_{\mathrm{Exp}}\in\mathcal{W}_{+}$ almost surely,
(\ref{Eq_CgeOfCountingProcFollows}) thus follows by the \textquotedblleft
extended continuous mapping theorem\textquotedblright\ (Theorem 2.7 in
\cite{Bi}).
\end{remark}

\begin{remark}
[\textbf{Convergence of associated point processes}]\label{R_CgePoint1}%
Alternatively, consider the associated \emph{normalized point process of
visiting times}, $\mathcal{N}_{A}:X\rightarrow\mathcal{M}_{p}[0,\infty)$ (the
collection of Radon point measures $\mathrm{n}:\mathcal{B}_{[0,\infty
)}\rightarrow\{0,1,2,\ldots,\infty\}$, equipped with the topology of vague
convergence, see Chapter 3 of \cite{Resnick}) given by
\begin{equation}
\mathcal{N}_{A}:=%
{\textstyle\sum_{j\geq1}}
\delta_{\{\mu(A)\sum_{i=0}^{j-1}\varphi_{A}\circ T_{A}^{i}\}}\text{,}%
\end{equation}
where $\delta_{\{s\}}$ denotes the unit point mass at $s$. That is,
$\mathcal{N}_{A}(x)$ has distribution function $t\mapsto\mathrm{N}_{A,t}(x)$.
Then, for arbitrary probability measures $\nu_{l}$ on $(X,\mathcal{A})$,
\begin{equation}
\mu(A_{l})\Phi_{A_{l}}\overset{\nu_{l}}{\Longrightarrow}\,\Phi_{\mathrm{Exp}%
}\text{ \quad implies \quad}\mathcal{N}_{A_{l}}\overset{\nu_{l}}%
{\Longrightarrow}\mathrm{PRM}(\lambda_{\lbrack0,\infty)}^{1})\text{ in
}\mathcal{M}_{p}[0,\infty)\text{,}%
\end{equation}
with $\mathrm{PRM}(\lambda_{\lbrack0,\infty)}^{1})$ denoting the Poisson
random measure of intensity $\lambda^{1}$ on $[0,\infty)$. This, too, follows
via the \textquotedblleft extended continuous mapping
theorem\textquotedblright\ since it is easily seen that the map $\Theta
:[0,\infty]^{\mathbb{N}_{0}}\rightarrow\mathcal{M}_{p}[0,\infty)$ with
$\Theta((\varphi^{(i)})_{i\geq0}):=%
{\textstyle\sum_{j\geq1}}
\delta_{\{\sum_{i=0}^{j-1}\varphi^{(i)}\}}$ for $(\varphi^{(i)})_{i\geq0}%
\in\mathcal{W}$, and $\Theta((\varphi^{(i)})_{i\geq0}):=0$ otherwise, is
continuous on $\mathcal{W}$, while $\Theta(\Phi_{\mathrm{Exp}})$ has the same
law as $\mathrm{PRM}(\lambda_{\lbrack0,\infty)}^{1})$.
\end{remark}

We will show that this happens in the $\nu_{l}=\mu_{A_{l}}$ case of the
previous theorem:

\begin{theorem}
[\textbf{Convergence to an iid exponential sequence}]%
\label{T_MOreFlexiblePoissonLimitFromCompactness}Let $(X,\mathcal{A},\mu,T)$
be an ergodic probability preserving system, and $(A_{l})_{l\geq1}$ a sequence
of asymptotically rare events. Let $\nu_{l}:=\mu_{A_{l}}$ and assume that
there are measurable functions $\tau_{l}:X\rightarrow\mathbb{N}_{0}$ such
that
\begin{equation}
\mu(A_{l})\,\tau_{l}\overset{\nu_{l}}{\longrightarrow}0\quad\text{as
}l\rightarrow\infty, \label{Eq_DontWaitTooLong1}%
\end{equation}
and
\begin{equation}
\nu_{l}\left(  \tau_{l}<\varphi_{A_{l}}\right)  \longrightarrow1\quad\text{as
}l\rightarrow\infty, \label{Eq_HardlyAnyReturnsUpToTimeM1}%
\end{equation}
while there is some compact subset $\mathfrak{K}$ of $(\mathfrak{P}%
,d_{\mathfrak{P}})$ such that
\begin{equation}
T_{\ast}^{\tau_{l}}\nu_{l}\in\mathfrak{K}\text{ \quad for }l\geq1\text{,}
\label{Eq_TheFabulousCompactnessCondition1}%
\end{equation}
Then $(A_{l})$ exhibits Poisson asymptotics,
\begin{equation}
\mu(A_{l})\Phi_{A_{l}}\overset{\mu_{A_{l}}}{\Longrightarrow}\,\Phi
_{\mathrm{Exp}}\text{ \quad and \quad}\mu(A_{l})\Phi_{A_{l}}\overset{\mu
}{\Longrightarrow}\,\Phi_{\mathrm{Exp}}\text{ \quad as }l\rightarrow
\infty\text{.} \label{Eq_ExpoLimitInMyExpoLimitThm1}%
\end{equation}
More generally, (\ref{Eq_ExpoLimitInMyExpoLimitThm1}) follows provided that
for every $\varepsilon>0$ there are a sequence $(\nu_{l})_{l\geq1}$ in
$\mathfrak{P}$ with $d_{\mathfrak{P}}(\nu_{l},\mu_{A_{l}})<\varepsilon$ for
all $l$, a compact set $\mathfrak{K}\subseteq\mathfrak{P}$, and measurable
functions $\tau_{l}:X\rightarrow\mathbb{N}_{0}$ such that
(\ref{Eq_DontWaitTooLong1})-(\ref{Eq_TheFabulousCompactnessCondition1}) hold.
\end{theorem}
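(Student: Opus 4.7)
The plan is to combine Theorem \ref{T_ProcessesAsymptoticToHittingTimeProc} with a subsequence-compactness argument, and then to identify any subsequential limit as $\Phi_{\mathrm{Exp}}$ via the hitting-time/return-time duality developed in \cite{Z11} and recalled in Section \ref{S_PfsRH}. For the first statement I would apply Theorem \ref{T_ProcessesAsymptoticToHittingTimeProc} with $\nu_l:=\mu_{A_l}$: the assumptions (\ref{Eq_DontWaitTooLong1})--(\ref{Eq_TheFabulousCompactnessCondition1}) are precisely those of that theorem, so for any random element $\Phi$ of $[0,\infty]^{\mathbb{N}_0}$ it gives $\mu(A_l)\Phi_{A_l}\overset{\mu_{A_l}}{\Longrightarrow}\Phi$ if and only if $\mu(A_l)\Phi_{A_l}\overset{\mu}{\Longrightarrow}\Phi$. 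The problem thus reduces to showing that the common limit is $\Phi_{\mathrm{Exp}}$. Since $([0,\infty]^{\mathbb{N}_0},d_{[0,\infty]^{\mathbb{N}_0}})$ is compact and hence $\mathfrak{M}([0,\infty]^{\mathbb{N}_0})$ is $w^*$-compact, every subsequence of $(\mathrm{law}_{\mu_{A_l}}(\mu(A_l)\Phi_{A_l}))$ admits a further subsequence $(l_k)$ along which $\mu(A_{l_k})\Phi_{A_{l_k}}\overset{\mu_{A_{l_k}}}{\Longrightarrow}\widetilde{\Phi}$, and the same $\widetilde{\Phi}$ is then also the limit under $\mu$.

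For the identification of any such $\widetilde{\Phi}$, note first that the one-dimensional marginals $\mu(A_{l_k})\varphi_{A_{l_k}}$ have the same limit law (that of $\widetilde{\varphi}^{(0)}$) under $\mu$ and under $\mu_{A_{l_k}}$, so the difference of the corresponding distribution functions vanishes; the Haydn--Lacroix--Vaienti proposition from the introduction \cite{HaydnLacVai05} then pins this common marginal down to $\operatorname{Exp}(1)$. To pass from one-dimensional exponentiality to the full iid structure, I would use that $\widetilde{\Phi}$ is stationary (by (\ref{Eq_bvxcnb})) and simultaneously equals the asymptotic hitting-time process, and invoke the extended duality of \cite{Z11} to be recalled in Section \ref{S_PfsRH}: in point-process language, this Palm-versus-stationary coincidence characterizes a standard Poisson point process, forcing $\widetilde{\Phi}$ to be distributed as $\Phi_{\mathrm{Exp}}$. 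Since every subsequential limit then equals $\Phi_{\mathrm{Exp}}$, the full sequence converges.

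For the more general statement, I would again extract (by the same compactness argument) a subsequence $(l_k)$ along which $\mu(A_{l_k})\Phi_{A_{l_k}}\overset{\mu}{\Longrightarrow}\Phi_*$, and, given $\varepsilon>0$, pick the corresponding $(\nu_l^{\varepsilon})$, $(\tau_l^{\varepsilon})$, $\mathfrak{K}^{\varepsilon}$. Theorem \ref{T_ProcessesAsymptoticToHittingTimeProc} applied with $\nu_l^{\varepsilon}$ transfers $\mu$-convergence to $\nu_{l_k}^{\varepsilon}$-convergence (same limit $\Phi_*$), and the total-variation estimate $|\int f\,d\nu_{l_k}^{\varepsilon}-\int f\,d\mu_{A_{l_k}}|\le\|f\|_\infty\,\varepsilon/2$ for bounded continuous $f$ (valid since $d_{\mathfrak{P}}(\nu_{l_k}^{\varepsilon},\mu_{A_{l_k}})<\varepsilon$), combined with letting first $k\to\infty$ and then $\varepsilon\to 0$, yields $\mu(A_{l_k})\Phi_{A_{l_k}}\overset{\mu_{A_{l_k}}}{\Longrightarrow}\Phi_*$. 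With both limits now in hand, the identification of the previous paragraph forces $\Phi_*=\Phi_{\mathrm{Exp}}$; arbitrariness of the subsequence completes the argument.

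The main obstacle will be the identification of $\widetilde{\Phi}$: exponentiality of $\widetilde{\varphi}^{(0)}$ is clean from the Haydn--Lacroix--Vaienti criterion, but the iid structure does \emph{not} follow from exponential marginals plus stationarity alone. It genuinely requires the process-level Palm/stationary duality of \cite{Z11}, whose systematic recollection in Section \ref{S_PfsRH} is the real content underlying the leap from first-marginal convergence to convergence of the entire interarrival process.
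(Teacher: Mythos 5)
Your proposal is correct and follows essentially the same route as the paper: reduce to subsequential limits by compactness of $\mathfrak{M}([0,\infty]^{\mathbb{N}_{0}})$, apply Theorem \ref{T_ProcessesAsymptoticToHittingTimeProc} (together with the total-variation estimate (\ref{Eq_ererrewrerwrertrerwerq}) and the $\varepsilon\rightarrow0$ limit for the general form) to conclude that the asymptotic return-time and hitting-time processes have the same law, and then identify that common law through the process-level duality of Theorem \ref{T_HittingTimeVsReturnTimeProc}. The paper carries out your ``Palm-versus-stationary'' identification explicitly via Proposition \ref{P_CharacterizeExpos} (the fixed-point characterization of $\Phi_{\mathrm{Exp}}$), which also renders the separate Haydn--Lacroix--Vaienti argument for the first marginal unnecessary.
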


%

\vspace{0.3cm}%

We shall see that this opens up a very easy way of proving Poisson asymptotics
in several interesting situations.%

\vspace{0.3cm}%

\begin{remark}
\textbf{a)} An obvious necessary condition for the first component $\mu
(A_{l})\,\varphi_{A_{l}}$ to have an exponential limit law is the absence of a
point mass at zero in the limit. That is, for any sequence $(\tau_{l})$
satisfying condition (\ref{Eq_DontWaitTooLong1}) we need to have
(\ref{Eq_HardlyAnyReturnsUpToTimeM1}).\newline\textbf{b)} Note that condition
(\ref{Eq_TheFabulousCompactnessCondition1}) only uses information on what
happens to $A_{l}$ before the time $\tau_{l}$ which is of order $o(1/\mu
(A_{l}))$ as $l\rightarrow\infty$. We do not assume mixing (let alone any
quantitative mixing conditions), but only use what little asymptotic
information follows from ergodicity alone. \newline\textbf{c)} The assumptions
(\ref{Eq_DontWaitTooLong1})-(\ref{Eq_TheFabulousCompactnessCondition1}) make
precise the condition that the system should forget, sufficiently fast,
whether or not it started in $A_{l}$. We can regard (\ref{Eq_DontWaitTooLong1}%
)-(\ref{Eq_TheFabulousCompactnessCondition1}) as a short-time decorrelation
(or mixing) condition. \newline\textbf{d)} Of course, mixing properties of
specific systems can still be very useful for validating conditions
(\ref{Eq_DontWaitTooLong1})-(\ref{Eq_TheFabulousCompactnessCondition1}).
However, in our discusion of examples in Section \ref{Sec_GMmaps}, we make a
point of not using any asymptotic mixing properties for this purpose.
\newline\textbf{e)} Allowing measures $\nu_{l}$ more general than $\mu_{A_{l}%
}$ in the final statement of the theorem sometimes enables us to replace the
density $\mu(A_{l})^{-1}1_{A_{l}}$ of $\mu_{A_{l}}$ by an approximating
density of higher regularity for which (\ref{Eq_DontWaitTooLong1}%
)-(\ref{Eq_TheFabulousCompactnessCondition1}) are easier to verify (e.g. if
they belong to a space on which the transfer operator is well
understood).\newline\textbf{f)} Another way of using this flexibility is to
replace $\mu_{A_{l}}$ by $\mu_{A_{l}^{\prime}}$\ for nicer sets $A_{l}%
^{\prime}\in\mathcal{A}$. This works if, for every $\widetilde{\varepsilon}%
>0$, one can pick a sequence $(A_{l}^{\prime})$ such that $\mu_{A_{l}}%
(A_{l}\bigtriangleup A_{l}^{\prime})<\widetilde{\varepsilon}$ for all $l\geq1$
while the $\nu_{l}:=\mu_{A_{l}^{\prime}}$ admit $\mathfrak{K}$ and $\tau_{l}$
satisfying (\ref{Eq_DontWaitTooLong1}%
)-(\ref{Eq_TheFabulousCompactnessCondition1}).
\end{remark}

%

\vspace{0.3cm}%
%

\noindent
\textbf{Allowing immediate returns.} While for many important classes of
concrete dynamical systems one typically observes Poisson asymptotics for
natural families of rare events (cylinders or general $\varepsilon$-balls
shrinking to a typical point $x^{\ast}$), there are often distinguished
exceptional points $x^{\ast}$, like the periodic points of the system, to
which a positive proportion $1-\theta$ with $\theta\in(0,1)$ of a
neighbourhood can return after a fixed number of steps. This will result in a
point mass at zero in the limit of return time distributions. If the situation
is nice otherwise, the part which did escape in the first step may return
after a rescaled exponential time, so that $\mu(A_{l})\varphi_{A_{l}}%
\overset{\mu_{A_{l}}}{\Longrightarrow}\widetilde{\varphi}$, where the limit
variable $\widetilde{\varphi}$ is characterized by the distribution function
\begin{equation}
\widetilde{F}_{(\mathrm{Exp},\theta)}(t):=(1-\theta)+\theta(1-e^{-\theta
t})\text{, \quad}t\geq0\text{.} \label{Eq_DefOneDimDFComp}%
\end{equation}
Turning to processes, for any $\theta\in(0,1)$ we let $\widetilde{\Phi
}_{(\mathrm{Exp},\theta)}$ denote an iid sequence of random variables, each
distributed according to $\widetilde{F}_{(\mathrm{Exp},\theta)}$.

The following complement to Theorem
\ref{T_MOreFlexiblePoissonLimitFromCompactness} covers such situations. Here
and later, when given a sequence $(\nu_{l})$ of probabilities on
$(X,\mathcal{A})$ and some $B\in\mathcal{A}$, we shall simply write $\nu
_{l,B}$ for the normalized restriction given by $\nu_{l,B}(A):=\nu_{l}%
(B)^{-1}\nu_{l}(B\cap A)$, $A\in\mathcal{A}$.

\begin{theorem}
[\textbf{Convergence to an iid }$\widetilde{F}_{(\mathrm{Exp},\theta)}%
$\textbf{\ sequence}]\label{T_CPoissonViaCompactness}Let $(X,\mathcal{A}%
,\mu,T)$ be an ergodic probability preserving system, $(A_{l})_{l\geq1}$ a
sequence of asymptotically rare events. Let $\nu_{l}:=\mu_{A_{l}}$ and suppose
that $A_{l}=A_{l}^{\bullet}\cup A_{l}^{\circ}$ (disjoint) with
\begin{equation}
\nu_{l}(A_{l}^{\circ})\longrightarrow\theta\in(0,1)\text{ }\quad\text{as
}l\rightarrow\infty. \label{Eq_CompundingTheProportion}%
\end{equation}
Set $\nu_{l}^{\circ}:=\nu_{l,A_{l}^{\circ}}$ and $\nu_{l}^{\bullet}%
:=\nu_{l,A_{l}^{\bullet}}$ (the normalized restrictions of $\nu_{l}$ to
$A_{l}^{\circ}$ and $A_{l}^{\bullet}$, respectively). Assume further that
there are measurable functions $\tau_{l}:X\rightarrow\mathbb{N}_{0}$, $l\geq
1$, such that
\begin{equation}
\mu(A_{l})\,\tau_{l}\overset{\nu_{l}}{\longrightarrow}0\quad\text{as
}l\rightarrow\infty, \label{Eq_DontWaitTooLong2}%
\end{equation}
and
\begin{equation}
\nu_{l}^{\circ}\left(  \tau_{l}<\varphi_{A_{l}}\right)  \longrightarrow
1\quad\text{as }l\rightarrow\infty, \label{Eq_HardlyAnyReturnsUpToTimeM2}%
\end{equation}
while there is some compact subset $\mathfrak{K}$ of $(\mathfrak{P}%
,d_{\mathfrak{P}})$ such that
\begin{equation}
T_{\ast}^{\tau_{l}}\nu_{l}^{\circ}\in\mathfrak{K}\text{ \quad for }%
l\geq1\text{,} \label{Eq_TheFabulousCompactnessCondition2}%
\end{equation}
whereas
\begin{equation}
\nu_{l}^{\bullet}\left(  \tau_{l}\geq\varphi_{A_{l}}\right)  \longrightarrow
1\quad\text{as }l\rightarrow\infty, \label{Eq_ManyHappyReturns}%
\end{equation}
and
\begin{equation}
d_{\mathfrak{P}}((T_{A_{l}})_{\ast}\nu_{l}^{\bullet},\nu_{l})\longrightarrow
0\text{\quad\ as }l\rightarrow\infty\text{.} \label{Eq_ControlShortReturns}%
\end{equation}
Then,
\begin{equation}
\mu(A_{l})\Phi_{A_{l}}\overset{\mu_{A_{l}}}{\Longrightarrow}\,\widetilde{\Phi
}_{(\mathrm{Exp},\theta)}\text{ \quad as }l\rightarrow\infty\text{.}
\label{Eq_yxcvbnmnbvcxy}%
\end{equation}
More generally, (\ref{Eq_yxcvbnmnbvcxy}) follows provided that for every
$\varepsilon>0$ there are a sequence $(\nu_{l})_{l\geq1}$ in $\mathfrak{P}$
with $d_{\mathfrak{P}}(\nu_{l},\mu_{A_{l}})<\varepsilon$ for all $l$, a
compact set $\mathfrak{K}\subseteq\mathfrak{P}$, and measurable functions
$\tau_{l}:X\rightarrow\mathbb{N}_{0}$ such that (\ref{Eq_DontWaitTooLong2}%
)-(\ref{Eq_ControlShortReturns}) hold.
\end{theorem}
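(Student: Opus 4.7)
My approach is to reduce Theorem~\ref{T_CPoissonViaCompactness} to Theorem~\ref{T_MOreFlexiblePoissonLimitFromCompactness} applied to the ``core'' sequence $(A_l^{\circ})$ with the measures $\nu_l^{\circ}$, and then to use the quick-return assumptions (\ref{Eq_ManyHappyReturns})--(\ref{Eq_ControlShortReturns}) to relate the full process $\Phi_{A_l}$ to $\Phi_{A_l^{\circ}}$ via a vanishing cluster decoration. The final clause, allowing $(\nu_l)$ merely close to $\mu_{A_l}$ in $d_{\mathfrak{P}}$, follows from the main case by a standard total-variation approximation: a TV-bound on the initial measures transfers to the same bound on the laws of $\mu(A_l)\Phi_{A_l}$, so letting $\varepsilon\to 0$ finishes. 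I therefore focus on $\nu_l=\mu_{A_l}$.

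First I check that $(A_l^{\circ},\nu_l^{\circ},\tau_l)$ satisfies the hypotheses of Theorem~\ref{T_MOreFlexiblePoissonLimitFromCompactness}. The density bound $d\nu_l^{\circ}/d\nu_l \leq 1/\nu_l(A_l^{\circ})\to 1/\theta$ transfers (\ref{Eq_DontWaitTooLong2}) to $\nu_l^{\circ}$, giving $\mu(A_l^{\circ})\tau_l\leq\mu(A_l)\tau_l\overset{\nu_l^{\circ}}{\longrightarrow}0$; the no-short-returns condition $\nu_l^{\circ}(\tau_l<\varphi_{A_l^{\circ}})\geq\nu_l^{\circ}(\tau_l<\varphi_{A_l})\to 1$ comes directly from (\ref{Eq_HardlyAnyReturnsUpToTimeM2}); and compactness is (\ref{Eq_TheFabulousCompactnessCondition2}). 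Theorem~\ref{T_MOreFlexiblePoissonLimitFromCompactness} then yields $\mu(A_l^{\circ})\Phi_{A_l^{\circ}}\overset{\nu_l^{\circ}}{\Longrightarrow}\Phi_{\mathrm{Exp}}$, and since $\mu(A_l^{\circ})/\mu(A_l)=\nu_l(A_l^{\circ})\to\theta$, rescaling produces the iid $\mathrm{Exp}(\theta)$-limit for $\mu(A_l)\Phi_{A_l^{\circ}}$ under $\nu_l^{\circ}$.

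Next I analyze the cluster structure relating visits to $A_l$ and to $A_l^{\circ}$. Starting from any point in $A_l^{\bullet}$, (\ref{Eq_ManyHappyReturns}) forces $\varphi_{A_l}\leq\tau_l$ with probability tending to $1$, while (\ref{Eq_ControlShortReturns}) places $(T_{A_l})_{\ast}\nu_l^{\bullet}$ within $o(1)$ of $\mu_{A_l}$ in total variation. Iterating these two inputs, the successive visits in a ``satellite cluster'' launched from an $A_l^{\bullet}$-entry are asymptotically iid Bernoulli$(\theta)$ (for landing in $A_l^{\circ}$), so the cluster terminates after an asymptotically $\mathrm{Geom}(\theta)$-distributed number of $A_l^{\bullet}$-steps. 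Each satellite step consumes at most $\tau_l$ time units, hence total cluster duration is $O_{p}(\tau_l)=o_{p}(1/\mu(A_l))$ by (\ref{Eq_DontWaitTooLong2}); on the $\mu(A_l)$-scale, every satellite collapses onto its triggering $A_l^{\circ}$-visit.

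Assembling the pieces, decompose $\mu_{A_l}=\theta_l\nu_l^{\circ}+(1-\theta_l)\nu_l^{\bullet}$ with $\theta_l\to\theta$. Each coordinate $\mu(A_l)\Phi_{A_l}^{(k)}$ is asymptotically either an intra-cluster step of negligible scaled length (if the $k$-th visit lies in $A_l^{\bullet}$, with limit probability $1-\theta$) or the inter-cluster wait $\mathrm{Exp}(\theta)$ (if the $k$-th visit lies in $A_l^{\circ}$, with limit probability $\theta$), and these decisions are asymptotically independent across $k$. This gives finite-dimensional convergence to $\widetilde{\Phi}_{(\mathrm{Exp},\theta)}$ and hence (\ref{Eq_yxcvbnmnbvcxy}). \textbf{The main obstacle} is the joint (iid) structure: the reset (\ref{Eq_ControlShortReturns}) handles the post-satellite landing on $A_l$, but after an inter-cluster wait one must also show that the landing on $A_l$ is asymptotically $\mu_{A_l}$. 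This ``uniformization'' is not directly assumed; it has to be extracted from Theorem~\ref{T_MOreFlexiblePoissonLimitFromCompactness} applied to $(A_l^{\circ})$ together with Proposition~\ref{P_SDCforHittiPs}, which via the compactness (\ref{Eq_TheFabulousCompactnessCondition2}) equates the limit laws of the hitting process started from any element of $\mathfrak{K}$ with the one started from $\mu$.
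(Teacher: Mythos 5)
Your reduction to Theorem \ref{T_MOreFlexiblePoissonLimitFromCompactness} for the core sequence $(A_{l}^{\circ})$ is fine as far as it goes (the hypothesis checks you list are correct, and Poisson asymptotics for $A_{l}^{\circ}$ do follow), but the renewal/cluster argument you then build on it has a genuine gap, and it is exactly the one you flag yourself. Your scheme needs, at each visit to $A_{l}$ that follows a long excursion, that the \emph{conditional} law of the landing point (given the past, in particular given that the preceding gap was long) is asymptotically $\mu_{A_{l}}$ -- both to guarantee that a geometric number of $\tau_{l}$-short returns then follows (collapsing the cluster) and to get the claimed independence across coordinates. None of the hypotheses provides this: (\ref{Eq_ControlShortReturns}) controls only $(T_{A_{l}})_{\ast}\nu_{l}^{\bullet}$, i.e.\ entries produced by a quick return from $A_{l}^{\bullet}$, not entries after long excursions; and the fix you propose -- Theorem \ref{T_MOreFlexiblePoissonLimitFromCompactness} for $(A_{l}^{\circ})$ together with Proposition \ref{P_SDCforHittiPs} and compactness of $\mathfrak{K}$ -- only equates limit laws of hitting \emph{time} processes across starting measures; it says nothing about the spatial entry law into $A_{l}$, and the paper's example following (\ref{Eq_AWarningLocObs}) shows such spatial information genuinely does not transfer in this abstract setting. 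Note also that appealing to $(T_{A_{l}})_{\ast}\mu_{A_{l}}=\mu_{A_{l}}$ only gives the unconditional law of each entry point; using it conditionally on the past is precisely the asymptotic independence you are trying to prove, so that route is circular. (A smaller point: the final clause does not follow from the main case by TV-approximation of the initial measures, because in that clause the hypotheses (\ref{Eq_DontWaitTooLong2})--(\ref{Eq_ControlShortReturns}) are assumed for the approximating $\nu_{l}$, not for $\mu_{A_{l}}$; one must run the whole argument with $\nu_{l}$ and only compare laws at the end, along subsequences, letting $\varepsilon\rightarrow0$.)

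The paper's proof avoids the entry-law issue entirely, and this is the idea your proposal is missing. It keeps the hitting-time process $R_{l}=\mu(A_{l})\Phi_{A_{l}}$ of the \emph{full} set throughout, never forming $\Phi_{A_{l}^{\circ}}$, and splits only the initial measure: $\nu_{l}=(1-\theta_{l})\nu_{l}^{\bullet}+\theta_{l}\nu_{l}^{\circ}$. Theorem \ref{T_ProcessesAsymptoticToHittingTimeProc} applied with $(\nu_{l}^{\circ})$ gives $R_{l}\overset{\nu_{l}^{\circ}}{\Longrightarrow}\Phi$ (the limit under $\mu$), while (\ref{Eq_ManyHappyReturns}) and (\ref{Eq_ControlShortReturns}) give $R_{l}\overset{\nu_{l}^{\bullet}}{\Longrightarrow}(0,\mathbf{\sigma}\widetilde{\Phi})$; this yields the identity $\mathrm{law}(\widetilde{\Phi})=(1-\theta)\,\mathrm{law}(0,\mathbf{\sigma}\widetilde{\Phi})+\theta\,\mathrm{law}(\Phi)$. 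The iid structure is then extracted \emph{algebraically}: the return/hitting duality of Theorem \ref{T_HittingTimeVsReturnTimeProc} (valid for every asymptotically rare sequence, with no spatial input) turns this identity into the functional equation (\ref{Eq_CharHigherDimCompoundDF}), and Proposition \ref{P_HandsUp} b) shows that equation characterizes $\widetilde{\Phi}_{(\mathrm{Exp},\theta)}$. So independence comes from the duality plus a uniqueness statement for distribution functions, not from a renewal picture with uniformized entries; to complete your argument you would either have to add an assumption controlling entries into $A_{l}$ after long excursions, or switch to this duality mechanism.
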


%

\vspace{0.2cm}%

In Section \ref{Sec_GMmaps} we illustrate how this can be used very easily in
some standard situations.

\section{Where do orbits hit small sets?}%

\noindent
\textbf{Local observables and local processes.} Let $(X,\mathcal{A},\mu,T)$ be
an ergodic probability preserving system. We introduce a large class of random
processes associated to the visits of orbits to a given small set
$A\in\mathcal{A}$. The idea is to focus on what exactly happens upon each
visit, and record the position inside $A$ by means of some function $\psi_{A}$
on this set. As we are interested in small sets and, ultimately, in limits as
the size of the sets tends to zero, it is natural to consider functions
encoding the \emph{relative} position inside $A$, thus effectively rescaling
the set.

For instance, if the relevant sets $A$ are subintervals of some larger
interval $X$, the \emph{normalizing interval charts} $\psi_{A}:A\rightarrow
\lbrack0,1]$ with $\psi_{A}(x):=(x-a)/(b-a)$ for $A=[a,b]$, are a natural
choice. This is the prototypical example to keep in mind, but for the general
theory we simply allow measurable maps $\psi_{A}:A\rightarrow\mathfrak{Z}$,
not necessarily invertible, into some space $\mathfrak{Z}$ which does not
depend on $A$.

In the following we fix some compact metric space $(\mathfrak{Z}%
,d_{\mathfrak{Z}})$ with Borel $\sigma$-algebra $\mathcal{B}_{\mathfrak{Z}}$
to represent the relative positions (or some other relevant aspect) of points
inside the distinguished small sets we wish to study. Any $\mathcal{A}%
$-$\mathcal{B}_{\mathfrak{Z}}$-measurable map $\psi_{A}:A\rightarrow
\mathfrak{Z}$ will be called an ($\mathfrak{Z}$-valued) \emph{local
observable} on $A$, and we shall use the uppercase $\Psi_{A}$ to denote the
sequence of consecutive local observations of an orbit which starts anywhere
in $X$,
\begin{equation}
\Psi_{A}:X\rightarrow\mathfrak{Z}^{\mathbb{N}}\text{, \quad}\Psi_{A}%
:=(\psi_{A}\circ T_{A},\psi_{A}\circ T_{A}^{2},\ldots)\text{.}
\label{Eq_LocProc}%
\end{equation}
Equip $\mathfrak{Z}^{\mathbb{N}}$ with its compact Polish product topology,
induced by the product metric $d_{\mathfrak{Z}^{\mathbb{N}}}$, then $\Psi_{A}$
is $\mathcal{A}$-$\mathcal{B}_{\mathfrak{Z}^{\mathbb{N}}}$-measurable (and can
thus be regarded as a single $\mathfrak{Z}^{\mathbb{N}}$-valued local
observable on $A$). We can include the local observable at time zero provided
that the orbit starts in $A$. To this end, define
\begin{equation}
\widetilde{\Psi}_{A}:A\rightarrow\mathfrak{Z}^{\mathbb{N}_{0}}\text{, \quad
}\widetilde{\Psi}_{A}:=(\psi_{A},\psi_{A}\circ T_{A},\psi_{A}\circ T_{A}%
^{2},\ldots)\text{.}%
\end{equation}
Given any probability measure $\nu$ on $(X,\mathcal{A})$, we can view
$\Psi_{A}$ as a random process on the probability space $(X,\mathcal{A},\nu)$.
If $\nu$ is concentrated on $A$, the same is true for $\widetilde{\Psi}_{A}$.
We shall refer to either variant as a \emph{local process under }$\nu$. Again,
the properties of $T_{A}$ on $(A,\mathcal{A}\cap A,\mu_{A})$ entail that
\begin{equation}
\text{any local process }\widetilde{\Psi}_{A}\text{ is stationary and ergodic
under }\mu_{A}\text{,} \label{Eq_fcsad}%
\end{equation}
and we shall exploit this by linking local processes with different initial
measures to this particular version. By (\ref{Eq_fcsad}), $\mathrm{law}%
_{\mu_{A}}(\widetilde{\Psi}_{A})=\mathrm{law}_{\mu_{A}}(\Psi_{A})$, so that we
need not distinguish between the two variants as far as the laws of individual
processes under the corresponding measures $\mu_{A}$ are concerned.

\begin{remark}
[\textbf{Normalized return times and processes as local observables and
processes}]The normalized return times $\mu(A)\varphi_{A}:A\rightarrow
\lbrack0,\infty]$ studied in the previous section also define local
observables, somewhat special in that they are defined in terms of the
dynamics. Moreover, the processes $\mu(A)\Phi_{A}:A\rightarrow\lbrack
0,\infty]^{\mathbb{N}_{0}}$ are particular local processes: using the notation
introduced above, we have $\mu(A)\Phi_{A}=\widetilde{\Psi}_{A}$ for $\psi
_{A}:=\mu(A)\varphi_{A}$.
\end{remark}

\begin{remark}
[\textbf{Warning regarding normalized hitting processes}]In contrast, the
normalized hitting time process $\mu(A)\Phi_{A}:X\rightarrow\lbrack
0,\infty]^{\mathbb{N}_{0}}$ with $\Phi_{A}$ as defined in (\ref{Eq_DefHitProc}%
) is \emph{not} a local process $\Psi_{A}$ as in (\ref{Eq_LocProc}). Indeed,
$\Phi_{A}$ is not constant on $\{x,Tx,\ldots,T^{\varphi_{A}(x)-1}x\}$ for
$x\in A^{c}$. This is why we do not need an analogue of condition
(\ref{Eq_kllklklklklk}) in Theorem \ref{T_AsyIntStateProc} below.
\end{remark}

%

\vspace{0.3cm}%
%

\noindent
\textbf{Local processes for asymptotically rare events.} Assume now that
$(A_{l})_{l\geq1}$ is a sequence of asymptotically rare events, and that for
each $A_{l}$ we are given a local observable $\psi_{A_{l}}:A\rightarrow
\mathfrak{Z}$. Our goal is to provide useful conditions under which the
sequence of local processes $(\widetilde{\Psi}_{A_{l}})_{l\geq1}$ or
$(\Psi_{A_{l}})_{l\geq1}$ converges in distribution as $l\rightarrow\infty$.
Here, again, it makes sense to study these random variables either through one
fixed initial probability $\nu$ (in case of $(\Psi_{A_{l}})_{l\geq1}$), say
$\nu=\mu$, or to view them through the sequence $(\mu_{A_{l}})$ of normalized
restrictions to these sets.%

\vspace{0.3cm}%

We first look at the $\widetilde{\Psi}_{A_{l}}$ under the measures $\mu
_{A_{l}}$. Due to (\ref{Eq_fcsad}) we see that for any random sequence
$\widetilde{\Psi}$ in $\mathfrak{Z}$,
\begin{equation}
\text{if\quad}\widetilde{\Psi}_{A_{l}}\overset{\mu_{A_{l}}}{\Longrightarrow
}\widetilde{\Psi}\text{ as }l\rightarrow\infty\text{,\quad then\quad
}\widetilde{\Psi}\text{ is stationary.} \label{Eq_uoljioljo1}%
\end{equation}
Beyond that, little can be said about the general \emph{asymptotic local
process} $\widetilde{\Psi}$. In fact, we are going to show that unless the
system acts on a discrete space (and hence is essentially a cyclic
permutation), every $\mathfrak{Z}$-valued stationary sequence arises as the
limit of local processes for any given sequence $(A_{l})$ if only we use
suitable local observables $\psi_{A_{l}}$. (This is parallel to Theorem 2.1 of
\cite{Z11}.) In particular, $\widetilde{\Psi}$ need not be independent, and
doesn't even have to be ergodic.

\begin{theorem}
[\textbf{Prescribing the asymptotic internal state process}]%
\label{T_PrescribingPSI}Let $T$ be an ergodic measure preserving map on the
nonatomic probability space $(X,\mathcal{A},\mu)$, let $(A_{l})$ be an
asymptotically rare sequence in $\mathcal{A}$, and let $\widetilde{\Psi}$ be
any $\mathfrak{Z}$-valued stationary sequence. Then there is a sequence
$(\psi_{A_{l}})$ of local observables for the $A_{l}$ such that
\begin{equation}
\widetilde{\Psi}_{A_{l}}\overset{\mu_{A_{l}}}{\Longrightarrow}\widetilde{\Psi
}\text{ \quad as }l\rightarrow\infty\text{.}
\label{Eq_dhbsdhfsdhfsdhdbhhbjhjbhbhbhb}%
\end{equation}

\end{theorem}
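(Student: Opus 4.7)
The plan is to reduce weak convergence to a family of approximate finite-dimensional matching problems on the ergodic measure-preserving system $(A_l, \mathcal{A}\cap A_l, \mu_{A_l}, T_{A_l})$, and to solve each of those via a Rokhlin tower inside $A_l$. Since $\mathfrak{F}^{\mathbb{N}_0}$ carries the product topology, weak convergence of the laws of $\widetilde{\Psi}_{A_l}$ under $\mu_{A_l}$ to that of $\widetilde{\Psi}$ is equivalent to convergence of every $d$-dimensional marginal (as noted in the excerpt), and on each compact $\mathfrak{F}^d$ this is metrized by e.g.\ total-variation distance of the laws. It therefore suffices to show that for every $d\geq1$ and every $\varepsilon>0$, for each $l$ we can design a local observable $\psi_{A_l}$ so that the $d$-dim marginal of $\widetilde{\Psi}_{A_l}$ under $\mu_{A_l}$ is within $\varepsilon$ of that of $\widetilde{\Psi}$. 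A diagonal selection, applying the core construction below with parameters $(d_l,\varepsilon_l)=(l,1/l)$, then produces a single sequence $(\psi_{A_l})$ yielding (\ref{Eq_dhbsdhfsdhfsdhdbhhbjhjbhbhbhb}).

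For the core construction, fix $d$, $\varepsilon$, and $l$. The first-return system $(A_l, \mu_{A_l}, T_{A_l})$ is ergodic, and $\mu_{A_l}$ is nonatomic because $\mu$ is. Apply the Rokhlin--Kakutani lemma to $T_{A_l}$ with height $N\geq d/\varepsilon$ to obtain a measurable base $F_l\subseteq A_l$ with $F_l,T_{A_l}F_l,\dots,T_{A_l}^{N-1}F_l$ pairwise disjoint, with $\mu_{A_l}\bigl(\bigsqcup_{i=0}^{N-1}T_{A_l}^{i}F_l\bigr)>1-\varepsilon$, and such that each $T_{A_l}^{i}:F_l\to T_{A_l}^{i}F_l$ is an essential measure-preserving bijection. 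Realize the stationary sequence $(\widetilde{\Psi}_n)_{n\in\mathbb{Z}}$ on a standard probability space $(\Omega,\mathcal{F},\mathbb{P})$ via a two-sided extension, and use the isomorphism theorem for standard probability spaces to fix a measure-preserving isomorphism $\phi_l:(F_l,(\mu_{A_l})_{F_l})\to(\Omega,\mathbb{P})$. Define
\[
\psi_{A_l}(T_{A_l}^{i}x):=\widetilde{\Psi}_{i}(\phi_l(x))\qquad(x\in F_l,\ 0\leq i<N),
\]
which is a.e.\ well defined by the essential bijectivity of the level maps, and set $\psi_{A_l}$ to any fixed element of $\mathfrak{F}$ on the residual complement. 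For any $x\in F_l$ and $0\leq j<N$ one then has $\psi_{A_l}(T_{A_l}^{j}x)=\widetilde{\Psi}_{j}(\phi_l(x))$, so the vector $(\psi_{A_l}\circ T_{A_l}^{j})_{0\leq j<d}$, evaluated on a starting point in any tower level $T_{A_l}^{i}F_l$ with $0\leq i\leq N-d$, has (by the transport of measure through $\phi_l$ and by stationarity of $\widetilde{\Psi}$) distribution exactly $\mathrm{law}(\widetilde{\Psi}_0,\dots,\widetilde{\Psi}_{d-1})$. The exceptional set (top $d$ levels plus residue) carries $\mu_{A_l}$-mass at most $d/N+\varepsilon\leq 2\varepsilon$, so the $d$-dim marginal is within $4\varepsilon$ of the target in total variation, as required.

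The main obstacle is the Rokhlin step when $T$ (and hence possibly $T_{A_l}$) is not invertible: the classical Rokhlin--Kakutani lemma is formulated for aperiodic automorphisms, while $T_{A_l}$ is in general only an ergodic endomorphism. One must invoke the endomorphism version of the lemma, which in a standard Lebesgue setup still yields a tower whose level-to-level maps are essentially bijective, ensuring that the formula defining $\psi_{A_l}$ indeed gives a single-valued measurable function. Passing instead to the natural extension $(\bar A_l,\bar\mu,\bar T)$ of $(A_l,\mu_{A_l},T_{A_l})$ would trivialize the Rokhlin step but produce an observable on the extended space; the point of the endomorphism Rokhlin lemma is precisely that the construction descends to $A_l$ itself. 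Once this ingredient is in place, everything else --- the isomorphism-theoretic transport along the tower, the total-variation control of the exceptional complement, and the diagonalization in $l$ --- is routine.
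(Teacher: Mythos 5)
Your overall architecture (reduce to $d$-dimensional marginals, build a Rokhlin tower in the induced system $(A_l,\mu_{A_l},T_{A_l})$, transport the law of $\widetilde{\Psi}$ onto the tower, then diagonalize in $l$) is the same as the paper's, and in the invertible case your construction works. The genuine gap is exactly at the point you flag and then dismiss: for a non-invertible ergodic map the Rokhlin lemma for endomorphisms does \emph{not} produce a forward-image tower $F_l, T_{A_l}F_l,\ldots,T_{A_l}^{N-1}F_l$ with essentially bijective level maps; what it gives is a \emph{preimage} tower, i.e.\ disjoint sets $X_i=T_{A_l}^{-(I-i)}X_I$, and the map from one level to the next is in general many-to-one. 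Moreover, even if you could arrange $T_{A_l}^i|_{F_l}$ to be essentially injective, your marginal computation would still break: for a non-invertible measure-preserving map, the forward push $\left(T_{A_l}^i\right)_{\ast}\bigl((\mu_{A_l})_{F_l}\bigr)$ is in general \emph{not} the normalized restriction of $\mu_{A_l}$ to the level $T_{A_l}^iF_l$ (only preimages preserve measure; forward images distort it by a Jacobian), and the levels of a forward tower need not have equal mass, so neither the claim that the $d$-vector has law exactly $\mathrm{law}(\widetilde{\Psi}_0,\ldots,\widetilde{\Psi}_{d-1})$ on each level nor the bound $d/N+\varepsilon$ for the exceptional set is justified. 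Since the theorem is stated for an arbitrary ergodic measure-preserving (hence possibly non-invertible) $T$, and $T_{A_l}$ inherits this, the core step of your proof is unsupported.

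The paper's proof avoids both problems by working entirely with preimages: it takes the tower $X_i=T^{-(I-i)}X_I$, partitions the \emph{top} level $X_I$ (using nonatomicity) into sets $X_I(y_0,\ldots,y_I)$ whose $\mu_{X_I}$-measures equal the prescribed probabilities $\Pr[(\widehat{\psi}^{(0)},\ldots,\widehat{\psi}^{(I)})=(y_0,\ldots,y_I)]$ of a finite-valued discretization of $\widetilde{\Psi}$, pulls this partition \emph{down} by $X_i(y_0,\ldots,y_I):=T^{-(I-i)}X_I(y_0,\ldots,y_I)$, and defines $\psi:=y_i$ on $X_i(y_0,\ldots,y_I)$; every distributional identity then follows from $\mu\circ T^{-1}=\mu$ alone, with no injectivity and no isomorphism theorem needed (the preliminary discretization of $\widetilde{\Psi}$, with error $1/l$, replaces your transport map $\phi_l$). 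Two further, repairable, slips in your write-up: weak convergence on a compact space is not metrized by total variation (you only need the harmless direction that total-variation closeness implies closeness in a metric for weak convergence), and the ``isomorphism'' $\phi_l$ need not exist when $\mathrm{law}(\widetilde{\Psi})$ has atoms --- a measure-preserving map suffices, and its existence on a merely nonatomic (not assumed standard) base needs a short separate argument. Neither of these, however, fixes the forward-tower/measure-transport issue, which is the real obstruction.
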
%

\vspace{0.3cm}%

Observe that the distributions $\mathrm{law}_{\mu_{A_{l}}}(\psi_{A_{l}})$ of
the first components of the $\widetilde{\Psi}_{A_{l}}$ may not involve any
dynamics, but their convergence is of course necessary for convergence of the
processes as in (\ref{Eq_dhbsdhfsdhfsdhdbhhbjhjbhbhbhb}). For the abstract
theory we will therefore take the assumption
\begin{equation}
\psi_{A_{l}}\overset{\mu_{A_{l}}}{\Longrightarrow}\psi\text{ \qquad as
}l\rightarrow\infty\text{,} \label{Eq_cnbvmcbvmbmybvmyvbyb}%
\end{equation}
that they converge to the law of some particular random element $\psi$ of
$\mathfrak{Z}$, as our starting point. (Note that this is not particularly
restrictive. By compactness of $\mathfrak{M}(\mathfrak{Z})$, every sequence
contains a subsequence along which (\ref{Eq_cnbvmcbvmbmybvmyvbyb}) is
satisfied.) The question will then be under what conditions
(\ref{Eq_cnbvmcbvmbmybvmyvbyb}) entails convergence of the processes
$\Psi_{A_{l}}$ to some (or some particular) random sequence $\Psi$.

In some natural situations, condition (\ref{Eq_cnbvmcbvmbmybvmyvbyb}) relates
the local observables $\psi_{A_{l}}$ to the local regularity of $\mu$ on the
$A_{l}$:

\begin{example}
[\textbf{Normalizing interval charts}]Assume that $X$ is an interval, and
$\mu$ is absolutely continuous w.r.t. Lebesgue measure $\lambda$. Suppose that
the $A_{l}$ are subintervals which shrink to a distinguished point $x^{\ast
}\in X$ at which (a suitable version of) the density $d\mu/d\lambda$ is
continuous and strictly positive. Define $\psi_{A_{l}}:A_{l}\rightarrow
\lbrack0,1]$ to be the normalizing interval chart from the previous
subsection. Then (\ref{Eq_cnbvmcbvmbmybvmyvbyb}) holds, with the limit
variable $\psi$ uniformly distributed on $\mathfrak{Z}=[0,1]$.
\end{example}

%

\vspace{0.3cm}%

Considering a sequence of local processes under one probability measure $\nu$
which doesn't depend on $l$, we usually lose stationarity, but gain the
possibility of freely switching measures.

\begin{proposition}
[\textbf{Strong distributional convergence of }$\Psi_{A_{l}}$]%
\label{P_SDCInternal}Let $(X,\mathcal{A},\mu,T)$ be an ergodic probability
preserving system, $(A_{l})$ an asymptotically rare sequence in $\mathcal{A}$,
with $(\Psi_{A_{l}})_{l\geq1}$ a sequence of $\mathfrak{Z}$-valued local
processes for the $A_{l}$. Then%
\begin{equation}
\Psi_{A_{l}}\overset{\nu}{\Longrightarrow}\,\Psi\text{ for \emph{some} }\nu
\in\mathfrak{P}\text{\qquad iff\qquad}\Psi_{A_{l}}\overset{\nu}%
{\Longrightarrow}\,\Psi\text{ for \emph{all} }\nu\in\mathfrak{P}\text{.}%
\end{equation}

\end{proposition}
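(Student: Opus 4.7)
The plan is to adapt the strong distributional convergence argument behind Proposition~\ref{P_SDCforHittiPs} (Corollary~6 of \cite{Z7}) to the local-process setting. The key elementary observation is that each $\Psi_{A_l}$ is \emph{asymptotically $T$-invariant}, and this alone, via an ergodic averaging trick, will be enough to show that the choice of reference probability $\nu\in\mathfrak{P}$ does not affect weak limits of the laws $\mathrm{law}_\nu(\Psi_{A_l})$.

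\smallskip\noindent\emph{Step 1 (asymptotic invariance).} I first verify the structural identity
\begin{equation*}
\Psi_{A_l}\circ T^j=\Psi_{A_l}\qquad\text{on }\{\varphi_{A_l}>j\},\quad j\ge 0.
\end{equation*}
Indeed, for $x$ with $\varphi_{A_l}(x)>j$ one has $\varphi_{A_l}(T^jx)=\varphi_{A_l}(x)-j$, so $T_{A_l}(T^jx)=T^{\varphi_{A_l}(x)}x=T_{A_l}(x)$; a straightforward induction gives $T_{A_l}^k(T^jx)=T_{A_l}^k(x)$ for every $k\ge 1$. Since $\mu(\varphi_{A_l}\le j)\le j\,\mu(A_l)\to 0$ as $l\to\infty$ for each fixed $j$, this amounts to asymptotic $T^j$-invariance of $\Psi_{A_l}$ under $\mu$.

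\smallskip\noindent\emph{Step 2 (ergodic averaging).} Fix $\chi\in\mathcal{C}(\mathfrak{F}^{\mathbb{N}})$ and $\nu=f\mu\in\mathfrak{P}$. By $T$-invariance of $\mu$, each summand on the right below equals $\int\chi(\Psi_{A_l})f\,d\mu$, so for any $k\ge 1$,
\begin{equation*}
\int\chi(\Psi_{A_l})\,f\,d\mu=\frac1k\sum_{j=0}^{k-1}\int(\chi(\Psi_{A_l})\circ T^j)(f\circ T^j)\,d\mu=\int\chi(\Psi_{A_l})\,h_k\,d\mu+E_{l,k},
\end{equation*}
where $h_k:=\frac1k\sum_{j=0}^{k-1}f\circ T^j$ and, using Step~1 and $f\ge 0$,
\begin{equation*}
|E_{l,k}|\le 2\|\chi\|_\infty\int 1_{\{\varphi_{A_l}\le k\}}\,h_k\,d\mu.
\end{equation*}
The $L^1$-mean ergodic theorem gives $h_k\to 1$ in $L^1(\mu)$ as $k\to\infty$. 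Given $\varepsilon>0$, I first choose $k$ with $\|h_k-1\|_1<\varepsilon/(2\|\chi\|_\infty)$; then, for this fixed $k$, absolute continuity of the Lebesgue integral together with $\mu(\varphi_{A_l}\le k)\to 0$ yields $|E_{l,k}|<\varepsilon/2$ for all sufficiently large $l$. Combining,
\begin{equation*}
\Bigl|\int\chi(\Psi_{A_l})\,d\nu-\int\chi(\Psi_{A_l})\,d\mu\Bigr|<\varepsilon\qquad\text{for all large }l.
\end{equation*}

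\smallskip\noindent\emph{Step 3 (conclusion and the main obstacle).} As $\chi\in\mathcal{C}(\mathfrak{F}^{\mathbb{N}})$ and $\nu\in\mathfrak{P}$ were arbitrary, weak convergence of $\mathrm{law}_\nu(\Psi_{A_l})$ is equivalent to weak convergence of $\mathrm{law}_\mu(\Psi_{A_l})$, and by transitivity the claimed dichotomy follows. The delicate point lies entirely in the interlocking of the two limits in Step~2: the integer $k$ must be chosen first (depending only on $\varepsilon$ and $f$) via the mean ergodic theorem, and only afterwards can $l\to\infty$ be taken to exploit the asymptotic invariance through $\mu(\varphi_{A_l}\le k)\to 0$. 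Once the structural identity of Step~1 is in place, this diagonal bookkeeping is the only technicality.
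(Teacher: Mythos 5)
Your proof is correct. Step 1 is exactly the paper's route: the identity $\Psi_{A_l}\circ T^{j}=\Psi_{A_l}$ on $\{\varphi_{A_l}>j\}$ together with $\mu(\varphi_{A_l}\le j)\le j\,\mu(A_l)\to 0$ is the content of Proposition \ref{P_AsyInvarAdmissDelayInternal} a), b), and the paper then simply observes that Proposition \ref{P_SDCInternal} is the special case $R_l:=\Psi_{A_l}$ of Theorem \ref{T_MyOldStrongDistrCgeThm} (Theorem 1 of \cite{Z7}). Where you differ is that you do not cite that black-box principle but re-prove it for the fixed pair $(\nu,\mu)$: you average the density $f$ along the Koopman orbit, obtaining $h_k=\frac1k\sum_{j<k}f\circ T^{j}$, use the structural identity to control the error on $\{\varphi_{A_l}\le k\}$, and then let $h_k\to 1$ in $L^1(\mu)$ (Birkhoff plus Scheff\'e, or the mean ergodic theorem), with the correct order of limits ($k$ fixed first, then $l\to\infty$, exploiting absolute continuity of $\int_{\{\varphi_{A_l}\le k\}}h_k\,d\mu$). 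This is essentially an inlined, Koopman-side version of the argument behind Theorem \ref{T_MyOldStrongDistrCgeThm}, whereas the paper's Section 5 machinery averages the measures $T_\ast^{m}\nu$ (equivalently $\widehat{T}^{m}$ on densities) and invokes Yosida's characterization of ergodicity; the paper's formulation pays off later because it upgrades to uniformity over compact sets of measures (Theorem \ref{T_HTSforVaryingMeasures}), which your fixed-$\nu$ argument does not give, while your version has the merit of being completely self-contained and elementary.
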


This is an immediate consequence of \cite{Z7}, see the start of Section
\ref{Sec_PfInternalStates} for details.\ Variations on this theme will be the
key to the limit theorems below.%

\vspace{0.3cm}%
%

\noindent
\textbf{Relating limit processes under }$\mu_{A_{l}}$ \textbf{to limit
processes under} $\mu$\textbf{. }As mentioned before, the intimate relation
between return- and hitting times, that is, the relation between the laws of
$\varphi_{A}$ under $\mu_{A}$ and $\mu$ respectively, is often crucial for the
analysis of these variables. For general local observables there is no such
principle:
\begin{equation}
\text{In general, }\psi_{A_{l}}\overset{\mu_{A_{l}}}{\Longrightarrow}%
\psi\text{ does not imply convergence of }\psi_{A_{l}}\circ T_{A_{l}}\text{
under }\mu\text{.} \label{Eq_AWarningLocObs}%
\end{equation}

\begin{example}
Let $(X,\mathcal{A},\mu,T)$ be an ergodic probability preserving system,
$(A_{l}^{\prime})$ an asymptotically rare sequence in $\mathcal{A}$ such that
$A_{l}^{\prime}\cap T^{-1}A_{l}^{\prime}=\varnothing$ for $l\geq1$. Set
$A_{l}:=T^{-1}A_{l}^{\prime}\cup A_{l}^{\prime}$, and $\psi_{A_{l}}%
:A_{l}\rightarrow\{0,1\}$ with $\psi_{A_{2l-1}}:=1_{A_{l}^{\prime}}$ while
$\psi_{A_{2l}}:=1_{T^{-1}A_{l}^{\prime}}$. Then $\psi_{A_{l}}\overset
{\mu_{A_{l}}}{\Longrightarrow}\psi$, with $\psi$ denoting a fair coin, while
$\psi_{A_{2l-1}}\circ T_{A_{l}}\overset{\mu}{\Longrightarrow}0$ and
$\psi_{A_{2l}}\circ T_{A_{l}}\overset{\mu}{\Longrightarrow}1$ since $T_{A_{l}%
}=T_{T^{-1}A_{l}^{\prime}}$ outside $T^{-1}A_{l}^{\prime}$.
\end{example}

Nonetheless, we can provide a very useful condition which ensures that
possibly localized measures $\nu_{l}$ can be replaced by any fixed probablity
$\nu\in\mathfrak{P}$.

\begin{theorem}
[\textbf{Asymptotic local process - }$\nu_{l}$\textbf{\ versus }$\mu$%
]\label{T_AsyIntStateProc}Let $(X,\mathcal{A},\mu,T)$ be an ergodic
probability preserving system, $(A_{l})$ an asymptotically rare sequence in
$\mathcal{A}$, and $(\psi_{A_{l}})_{l\geq1}$ a sequence of $\mathfrak{Z}%
$-valued local observables for the $A_{l}$, with corresponding local processes
$\Psi_{A_{l}}$. Let $(\nu_{l})$ be a sequence in $\mathfrak{P}$.

Assume that there are measurable $\tau_{l}:X\rightarrow\mathbb{N}_{0}$ such
that
\begin{equation}
\nu_{l}\left(  \tau_{l}<\varphi_{A_{l}}\right)  \longrightarrow1\quad\text{as
}l\rightarrow\infty\text{,} \label{Eq_hghghghgghghghghdhsf}%
\end{equation}
while there is some compact subset $\mathfrak{K}$ of $(\mathfrak{P}%
,d_{\mathfrak{P}})$ such that
\begin{equation}
T_{\ast}^{\tau_{l}}\nu_{l}\in\mathfrak{K}\text{ \quad for }l\geq1\text{,}
\label{Eq_cxdcdsewvxvxwfrvxsf}%
\end{equation}
Then, for any random sequence $\Psi$ in $\mathfrak{Z}$,
\begin{equation}
\Psi_{A_{l}}\overset{\nu_{l}}{\Longrightarrow}\Psi\text{\quad iff\quad}%
\Psi_{A_{l}}\overset{\mu}{\Longrightarrow}\Psi\text{\qquad as }l\rightarrow
\infty\text{.}%
\end{equation}

\end{theorem}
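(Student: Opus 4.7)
The plan is to exploit condition \eqref{Eq_hghghghgghghghghdhsf} to show that, up to a total-variation error vanishing with $l$, the law of $\Psi_{A_l}$ under $\nu_l$ coincides with its law under the push-forward $\nu_l^{\ast} := T_{\ast}^{\tau_l}\nu_l$, and then to use the compactness hypothesis \eqref{Eq_cxdcdsewvxvxwfrvxsf} together with Proposition \ref{P_SDCInternal} to switch between these push-forwards and the ambient measure $\mu$.

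The first step is a pointwise identity on the event $E_l := \{\tau_l < \varphi_{A_l}\}$. There, the first hit of $A_l$ starting from $x$ satisfies $\varphi_{A_l}(x) = \tau_l(x) + \varphi_{A_l}(T^{\tau_l}x)$, hence $T_{A_l}(x) = T_{A_l}(T^{\tau_l}x)$. Iterating gives $T_{A_l}^k(x) = T_{A_l}^k(T^{\tau_l}x)$ for all $k \geq 1$, so that $\Psi_{A_l}(x) = \Psi_{A_l}(T^{\tau_l}x)$ on $E_l$. Since $\nu_l(E_l) \to 1$ by \eqref{Eq_hghghghgghghghghdhsf}, it follows that for every Borel $B \subseteq \mathfrak{F}^{\mathbb{N}}$,
\begin{equation*}
\bigl| \nu_l(\Psi_{A_l} \in B) - \nu_l^{\ast}(\Psi_{A_l} \in B) \bigr| \leq 2\,\nu_l(E_l^c) \longrightarrow 0,
\end{equation*}
uniformly in $B$. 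Hence it suffices to analyse the laws of $\Psi_{A_l}$ under the auxiliary measures $\nu_l^{\ast}$.

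The second step invokes compactness: given any subsequence of $(\nu_l^{\ast})$, condition \eqref{Eq_cxdcdsewvxvxwfrvxsf} yields a further subsequence $(\nu_{l_j}^{\ast})$ converging in $d_{\mathfrak{P}}$ to some $\nu^{\ast} \in \mathfrak{K} \subseteq \mathfrak{P}$. Since push-forward by an arbitrary measurable map is nonexpansive for total variation, the laws of $\Psi_{A_{l_j}}$ under $\nu_{l_j}^{\ast}$ and under $\nu^{\ast}$ differ by at most $d_{\mathfrak{P}}(\nu_{l_j}^{\ast}, \nu^{\ast})/2 \to 0$. Finally, Proposition \ref{P_SDCInternal} applied to the fixed $\nu^{\ast} \in \mathfrak{P}$ gives $\Psi_{A_l} \overset{\mu}{\Longrightarrow} \Psi$ if and only if $\Psi_{A_l} \overset{\nu^{\ast}}{\Longrightarrow} \Psi$.

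Chaining these three comparisons and applying the subsequence principle for weak convergence then delivers both directions of the claimed equivalence at once: every subsequence admits a further subsequence along which $\mathrm{law}_{\nu_l}(\Psi_{A_l})$ and $\mathrm{law}_{\mu}(\Psi_{A_l})$ have the same weak limit, so convergence on either side forces convergence on the other, with the same limit $\Psi$. The point requiring the most care is the first step, where the bound must be in total variation, not merely weak convergence, so that composing with the arbitrary measurable map $\Psi_{A_l}$ (about which we assume essentially nothing) is harmless; this is precisely what makes it possible to dispense with any asymptotic assumption on the dynamics past the cut-off time $\tau_l$.
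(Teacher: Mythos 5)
Your argument is correct, and it reaches the conclusion by a route that differs from the paper's in its second half. Your first step is essentially the paper's own reduction: the identity $\Psi_{A_l}=\Psi_{A_l}\circ T^{\tau_l}$ on $\{\tau_l<\varphi_{A_l}\}$ is exactly what Proposition \ref{P_AsyInvarAdmissDelayInternal} a)/c) records (you even extract a total-variation bound between $\mathrm{law}_{\nu_l}(\Psi_{A_l})$ and $\mathrm{law}_{T_\ast^{\tau_l}\nu_l}(\Psi_{A_l})$, which is stronger than the $D_{\mathfrak{E}}$-comparison needed there). Where you diverge is in handling the compactness hypothesis: the paper feeds the push-forwards into Proposition \ref{P_AsyInvarSeqNuLVsMu}, which rests on the uniform distributional convergence Theorem \ref{T_HTSforVaryingMeasures} (equicontinuity of the Ces\`aro averages of $\widehat T$ plus uniform integrability over $\mathfrak{K}$), whereas you use $\mathfrak{K}$ only through sequential compactness: extract a subsequence along which $T_\ast^{\tau_l}\nu_l\to\nu^\ast$ in total variation, use that push-forward by the measurable map $\Psi_{A_l}$ is TV-nonexpansive, and then invoke the single fixed-measure result, Proposition \ref{P_SDCInternal}, for $\nu^\ast$ versus $\mu$, finishing with the standard subsequence principle (note that \ref{P_SDCInternal} must be applied to the rare subsequence $(A_{l_j})$, which is legitimate since it is again asymptotically rare). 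What each approach buys: the paper's machinery gives the stronger, quantitative statement $D_{\mathfrak{F}^{\mathbb{N}}}(\mathrm{law}_{\nu_l}(\Psi_{A_l}),\mathrm{law}_{\mu}(\Psi_{A_l}))\to0$, uniformly over initial measures in $\mathfrak{K}$, and is reused verbatim in several other theorems of the paper; your argument is more elementary, needing only the fixed-measure strong distributional convergence plus naive compactness, but is purely subsequential and yields no uniformity or metric estimate — which, for the stated iff, is all that is required.
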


Of course, the most important case will be that of $\nu_{l}=\mu_{A_{l}}$.%

\vspace{0.3cm}%
%

\noindent
\textbf{Convergence to iid limit processes. }A variant of the above assumption
in which we now take $\nu_{l}$ to be $\mu_{A_{l}}$ conditioned on suitable
subsets of $A_{l}$ actually allows us to prove (under the necessary assumption
(\ref{Eq_cnbvmcbvmbmybvmyvbyb}) discussed above) convergence of the local
processes to an \emph{independent} stationary sequence.

\begin{theorem}
[\textbf{Convergence to an iid local process}]\label{T_AsyIntStateProcIndep}%
Let $(X,\mathcal{A},\mu,T)$ be an ergodic probability preserving system,
$(A_{l})$ an asymptotically rare sequence in $\mathcal{A}$, and $(\psi_{A_{l}%
})_{l\geq1}$ a sequence of $\mathfrak{Z}$-valued local observables for the
$A_{l}$ such that
\begin{equation}
\psi_{A_{l}}\overset{\mu_{A_{l}}}{\Longrightarrow}\psi\text{ \qquad as
}l\rightarrow\infty\text{,}%
\end{equation}
for some random element $\psi$ of $\mathfrak{Z}$. Let $\Psi_{A_{l}}$ be the
corresponding local processes. Suppose that $\mathcal{B}_{\mathfrak{Z}}^{\pi
}\subseteq\mathcal{B}_{\mathfrak{Z}}$ is a $\pi$-system generating
$\mathcal{B}_{\mathfrak{Z}}$ with $\mathfrak{Z}\in\mathcal{B}_{\mathfrak{Z}%
}^{\pi}$, while $\Pr[\psi\in\partial F]=0$ for all $F\in\mathcal{B}%
_{\mathfrak{Z}}^{\pi}$.

Set $\nu_{l}:=\mu_{A_{l}}$, and assume further that for every $F\in
\mathcal{B}_{\mathfrak{Z}}^{\pi}$ with $\Pr[\psi\in F]>0$ there are measurable
$\tau_{l,F}:X\rightarrow\mathbb{N}_{0}$, $l\geq1$, such that, letting
$\nu_{l,F}$ denote the normalized restriction $\nu_{l,\{\psi_{A_{l}}\in F\}}$
of $\nu_{l}$ to $\{\psi_{A_{l}}\in F\}$, we have
\begin{equation}
\nu_{l,F}\left(  \tau_{l}<\varphi_{A_{l}}\right)  \longrightarrow
1\quad\text{as }l\rightarrow\infty\text{,} \label{Eq_uiuiuiuiui}%
\end{equation}
while there is some compact subset $\mathfrak{K}_{F}$ of $(\mathfrak{P}%
,d_{\mathfrak{P}})$ such that
\begin{equation}
T_{\ast}^{\tau_{l,F}}(\nu_{l,F})\in\mathfrak{K}_{F}\text{ \quad for }%
l\geq1\text{,} \label{Eq_uiuiuiuiuiiiiiiiiii}%
\end{equation}
Then
\begin{equation}
\widetilde{\Psi}_{A_{l}}\overset{\mu_{A_{l}}}{\Longrightarrow}\Psi^{\ast
}\text{\quad and\quad}\Psi_{A_{l}}\overset{\mu}{\Longrightarrow}\Psi^{\ast
}\text{\qquad as }l\rightarrow\infty\text{,} \label{Eq_TheConclusion}%
\end{equation}
where $\Psi^{\ast}=(\psi^{\ast(j)})_{j\geq0}$ is an iid sequence in
$\mathfrak{Z}$ with $\mathrm{law}(\psi^{\ast(0)})=\mathrm{law}(\psi)$.

More generally, (\ref{Eq_TheConclusion}) follows if every $\varepsilon>0$
there are a sequence $(\nu_{l})_{l\geq1}$ in $\mathfrak{P}$ with
$d_{\mathfrak{P}}(\nu_{l},\mu_{A_{l}})<\varepsilon$ for all $l$, and, for
every $F\in\mathcal{B}_{\mathfrak{Z}}^{\pi}$ with $\Pr[\psi\in F]>0$, a
compact set $\mathfrak{K}_{F}\subseteq\mathfrak{P}$, measures $\nu_{l,F}%
\in\mathfrak{P}$ with $d_{\mathfrak{P}}(\nu_{l,F},\nu_{l,\{\psi_{A_{l}}\in
F\}})\rightarrow0$, and $\tau_{l,F}$ such that (\ref{Eq_uiuiuiuiui}) and
(\ref{Eq_uiuiuiuiuiiiiiiiiii}) hold.
\end{theorem}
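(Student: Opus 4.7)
The plan is to prove convergence of the finite-dimensional marginals of $\Psi_{A_l}$ under $\mu_{A_l}$ to those of the iid limit $\Psi^{\ast}$ by induction on the dimension $d$, using conditioning on $\{\psi_{A_l}\in F\}$ with $F\in\mathcal{B}_{\mathfrak{F}}^{\pi}$ to peel off one independent coordinate per step, and invoking Theorem \ref{T_AsyIntStateProc} to transfer convergence between $\mu_{A_l}$, $\mu$, and the conditional measures $\nu_{l,F}$. The statement $\widetilde{\Psi}_{A_l}\overset{\mu_{A_l}}{\Longrightarrow}\Psi^{\ast}$ then follows via stationarity (\ref{Eq_fcsad}), which identifies $\mathrm{law}_{\mu_{A_l}}(\widetilde{\Psi}_{A_l})$ with $\mathrm{law}_{\mu_{A_l}}(\Psi_{A_l})$, and $\Psi_{A_l}\overset{\mu}{\Longrightarrow}\Psi^{\ast}$ is one more application of Theorem \ref{T_AsyIntStateProc} with $\nu_l:=\mu_{A_l}=\nu_{l,\mathfrak{F}}$ and the data $\tau_{l,\mathfrak{F}},\mathfrak{K}_{\mathfrak{F}}$ supplied at $F=\mathfrak{F}$ (which is in $\mathcal{B}_{\mathfrak{F}}^{\pi}$ by hypothesis and has $\Pr[\psi\in\mathfrak{F}]=1$).

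Rectangles $F_1\times\cdots\times F_d$ with $F_j\in\mathcal{B}_{\mathfrak{F}}^{\pi}$ form a generating $\pi$-system of $\mathrm{law}(\Psi^{\ast})$-continuity sets in $\mathfrak{F}^d$, so by portmanteau it suffices to show, for every $d\geq 1$ and all such $F_j$,
\[
\mu_{A_l}\!\left[\psi_{A_l}\circ T_{A_l}\in F_1,\,\ldots,\,\psi_{A_l}\circ T_{A_l}^{d}\in F_d\right]\;\longrightarrow\;\prod_{j=1}^{d}\Pr[\psi\in F_j].
\]
The base case $d=1$ combines $\psi_{A_l}\overset{\mu_{A_l}}{\Longrightarrow}\psi$ (via portmanteau at $F_1$) with stationarity, which replaces $\psi_{A_l}\circ T_{A_l}$ by $\psi_{A_l}$. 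For the inductive step, shift indices by stationarity and condition on $\{\psi_{A_l}\in F_1\}$ to factorise
\[
\mu_{A_l}[\psi_{A_l}\in F_1,\,\ldots,\,\psi_{A_l}\circ T_{A_l}^{d}\in F_{d+1}]=\mu_{A_l}[\psi_{A_l}\in F_1]\cdot\nu_{l,F_1}[\psi_{A_l}\circ T_{A_l}\in F_2,\,\ldots,\,\psi_{A_l}\circ T_{A_l}^{d}\in F_{d+1}].
\]
The first factor tends to $\Pr[\psi\in F_1]$; if this vanishes the whole product vanishes and matches the target, and otherwise the hypotheses provide admissible $\tau_{l,F_1},\mathfrak{K}_{F_1}$. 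Theorem \ref{T_AsyIntStateProc} is then applied twice: first with $\nu_l=\mu_{A_l}$ to move the inductive conclusion for $(F_2,\ldots,F_{d+1})$ from $\mu_{A_l}$ to $\mu$, then with $\nu_l=\nu_{l,F_1}$ to move it from $\mu$ to $\nu_{l,F_1}$. This identifies the limit of the second factor as $\prod_{j=2}^{d+1}\Pr[\psi\in F_j]$ and closes the induction.

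The main technical subtlety is that Theorem \ref{T_AsyIntStateProc} is stated for the full process $\Psi_{A_l}$ in $\mathfrak{F}^{\mathbb{N}}$, while the inductive step only needs to transfer a finite-dimensional marginal. This is handled by a routine compactness argument: $\mathfrak{F}^{\mathbb{N}}$ is compact, so every sequence $(\mathrm{law}_{\nu_l}(\Psi_{A_l}))$ is relatively compact; along a weakly convergent subsequence Theorem \ref{T_AsyIntStateProc} identifies the full-process limit under the other measure, and continuity of the projection $\mathfrak{F}^{\mathbb{N}}\to\mathfrak{F}^{d}$ combined with the fact that every subsequential limit of the $d$-marginals must agree forces convergence of the full $d$-marginal sequence. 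For the ``more generally'' clause, running the same induction with the approximating $\nu_l$ and $\nu_{l,F}$ produces $\Psi_{A_l}\overset{\nu_l}{\Longrightarrow}\Psi^{\ast}$; the uniform bound $d_{\mathfrak{P}}(\nu_l,\mu_{A_l})<\varepsilon$ then propagates this to $\mu_{A_l}$ up to an $\varepsilon$-error on every bounded continuous test function depending on finitely many coordinates, and letting $\varepsilon\to 0$ completes the argument.
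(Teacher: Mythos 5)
Your strategy (a finite-dimensional induction, conditioning on $\{\psi_{A_{l}}\in F\}$ and moving between $\mu_{A_{l}}$, $\mu$ and $\nu_{l,F}$ via Theorem \ref{T_AsyIntStateProc}) is a legitimate pre-limit version of what the paper does in the limit via stationarity, Lemma \ref{L_IndependentLimitPair} and Theorem \ref{T_AsyIndepForAsyInvarSequences}. However, there is a genuine gap in your very first reduction. You assert that, since the rectangles $F_{1}\times\cdots\times F_{d}$ with $F_{j}\in\mathcal{B}_{\mathfrak{F}}^{\pi}$ form a generating $\pi$-system of $\mathrm{law}(\Psi^{\ast})$-continuity sets, convergence of their probabilities implies weak convergence of the $d$-marginals ``by portmanteau''. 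Portmanteau converts weak convergence you already have into set convergence on continuity sets of the actual limit; the converse inference is false in general: convergence on a $\pi$-system that merely generates $\mathcal{B}_{\mathfrak{F}^{d}}$ and consists of continuity sets of the \emph{intended} limit does not force weak convergence, because an a priori subsequential limit $Q$ need not satisfy $Q(\partial(F_{1}\times\cdots\times F_{d}))=0$, so you cannot evaluate $Q$ on these rectangles and invoke uniqueness (the Billingsley-type criterion needs every open set to be a countable union of sets from the class, which is not assumed here). The same circularity recurs inside the induction when you pass to the limit in the conditioned factor $\nu_{l,F_{1}}\bigl[\psi_{A_{l}}\circ T_{A_{l}}\in F_{2},\ldots\bigr]$ with only rectangle-level information. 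The repair is specific to this setting and must be made explicit: under $\mu_{A_{l}}$ the law of $\Psi_{A_{l}}$ is stationary and $\psi_{A_{l}}\overset{\mu_{A_{l}}}{\Longrightarrow}\psi$, so every subsequential limit (full process or marginal) has all one-dimensional marginals equal to $\mathrm{law}(\psi)$; since $\Pr[\psi\in\partial F]=0$ for $F\in\mathcal{B}_{\mathfrak{F}}^{\pi}$, the rectangles are continuity sets of \emph{every} subsequential limit, rectangle convergence then pins down each subsequential limit by the uniqueness theorem on the generating $\pi$-system, and compactness of $\mathfrak{M}(\mathfrak{F}^{\mathbb{N}})$ yields weak convergence. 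With the inductive hypothesis strengthened to genuine weak convergence of the $d$-marginals, your transfers via Theorem \ref{T_AsyIntStateProc} (using the $F=\mathfrak{F}$ data for $\mu_{A_{l}}$ versus $\mu$, and the $F=F_{1}$ data for $\nu_{l,F_{1}}$ versus $\mu$) do close the step.

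Your treatment of the ``more generally'' clause is also too thin. You propose to ``run the same induction with the approximating $\nu_{l}$'', but the induction leans on $T_{A_{l}}$-stationarity of $\mu_{A_{l}}$ (both for the index shift and for the subsequential-limit argument above), which is unavailable for the $\nu_{l}$; moreover the hypotheses supply delays only for $\nu_{l,F}$, which approximate $(\nu_{l})_{\{\psi_{A_{l}}\in F\}}$, whereas your factorization produces $(\mu_{A_{l}})_{\{\psi_{A_{l}}\in F\}}$. You therefore need a quantitative comparison of conditional measures, e.g. $d_{\mathfrak{P}}\bigl((\mu_{A_{l}})_{E},(\nu_{l})_{E}\bigr)\leq 2\,d_{\mathfrak{P}}(\mu_{A_{l}},\nu_{l})/\mu_{A_{l}}(E)$ with $E=\{\psi_{A_{l}}\in F_{1}\}$ and $\mu_{A_{l}}(E)\rightarrow\Pr[\psi\in F_{1}]>0$, and you must track the resulting $O(\varepsilon)$ errors through the finitely many steps of each fixed-dimension induction before letting $\varepsilon\rightarrow0$. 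The paper sidesteps this bookkeeping by arguing at the level of limit processes: it fixes a diagonal subsequence along which all laws converge, establishes independence of the first coordinate from the shifted process under each approximating family $\nu_{l}^{(k)}$ via Theorem \ref{T_AsyIndepForAsyInvarSequences}, and then lets $k\rightarrow\infty$; you should either adopt that scheme or supply the missing perturbation estimates.
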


\begin{remark}
[\textbf{Using rich generating families of conditioning events}]Note that
there are natural situations in which the condition $\Pr[\psi\in\partial F]=0$
for all $F\in\mathcal{B}_{\mathfrak{Z}}^{\pi}$ need not be checked explicitly.
For instance, if $\mathfrak{Z}=[a,b]$, then $\mathcal{G}:=\{(s,b]:s\in\lbrack
a,b)\}$ is a collection of sets with pairwise disjoint boundaries. Thus, the
requirement $\Pr[\psi\in\partial F]=0$ only rules out countably many
$F\in\mathcal{G}$, and the remaining family $\mathcal{B}_{\mathfrak{Z}}^{\pi
}:=\{F\in\mathcal{G}:\Pr[\psi\in\partial F]=0\}$ still is a generating $\pi$-system.

Therefore, if we can check the other conditions (\ref{Eq_uiuiuiuiui}) and
(\ref{Eq_uiuiuiuiuiiiiiiiiii}) for all $F\in\mathcal{G}$, then there is
automatically a suitable collection $\mathcal{B}_{\mathfrak{Z}}^{\pi}$.
\end{remark}

%

\vspace{0.3cm}%
%

\noindent
\textbf{Robustness of the asymptotic behaviour. }It will also be useful to
know that asymptotic local processes do not change if the sets $A_{l}$ are
replaced by sets $A_{l}^{\prime}$ which are asymptotically equivalent (mod
$\mu$) in that $\mu(A_{l}\bigtriangleup A_{l}^{\prime})=o(\mu(A_{l}))$, and if
the local observables $\psi_{A_{l}}$ are replaced by $\psi_{A_{l}^{\prime}%
}^{\prime}$ which are close in measure.

\begin{theorem}
[\textbf{Robustness of asymptotic local processes}]\label{Thm_RobustLocProc}%
Let $(X,\mathcal{A},\mu,T)$ be an ergodic probability preserving system,
$(A_{l})$ and $(A_{l}^{\prime})$ two asymptotically rare sequences in
$\mathcal{A}$, and $(\psi_{A_{l}})$, $(\psi_{A_{l}^{\prime}}^{\prime})$ two
sequences of $\mathfrak{Z}$-valued local observables for the sets $A_{l}$ and
$A_{l}^{\prime}$, respectively. Assume that
\begin{equation}
\mu(A_{l}\bigtriangleup A_{l}^{\prime})=o(\mu(A_{l}))\text{ \qquad as
}l\rightarrow\infty\text{,} \label{Eq_hbhbhbhbhb}%
\end{equation}
and
\begin{equation}
d_{\mathfrak{Z}}(\psi_{A_{l}},\psi_{A_{l}^{\prime}}^{\prime})\overset
{\mu_{A_{l}\cap A_{l}^{\prime}}}{\longrightarrow}0\text{ \qquad as
}l\rightarrow\infty\text{.} \label{Eq_LocalObsAsySameInMeasure}%
\end{equation}
Then, for any random sequence $\widetilde{\Psi}$ \ in $\mathfrak{Z}$,%
\begin{equation}
\widetilde{\Psi}_{A_{l}}\overset{\mu_{A_{l}}}{\Longrightarrow}\widetilde{\Psi
}\text{\qquad iff \qquad}\widetilde{\Psi}_{A_{l}^{\prime}}^{\prime}%
\overset{\mu_{A_{l}^{\prime}}}{\Longrightarrow}\widetilde{\Psi}\text{,}%
\end{equation}
where $\widetilde{\Psi}_{A_{l}}$ and $\widetilde{\Psi}_{A_{l}^{\prime}%
}^{\prime}$ are the local processes given by $\psi_{A_{l}}$ and $\psi
_{A_{l}^{\prime}}^{\prime}$, respectively.
\end{theorem}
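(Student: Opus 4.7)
The plan is to show that, under the hypotheses, the laws of $\widetilde{\Psi}_{A_{l}}$ under $\mu_{A_{l}}$ and of $\widetilde{\Psi}_{A_{l}^{\prime}}^{\prime}$ under $\mu_{A_{l}^{\prime}}$ become indistinguishable in the weak topology on $\mathfrak{M}(\mathfrak{F}^{\mathbb{N}_{0}})$. Setting $B_{l}:=A_{l}\cap A_{l}^{\prime}$, assumption \eqref{Eq_hbhbhbhbhb} immediately yields $\mu(A_{l}^{\prime})/\mu(A_{l})\to 1$ and $\mu_{A_{l}}(B_{l}),\mu_{A_{l}^{\prime}}(B_{l})\to 1$, hence $d_{\mathfrak{P}}(\mu_{A_{l}},\mu_{B_{l}})\to 0$ and $d_{\mathfrak{P}}(\mu_{A_{l}^{\prime}},\mu_{B_{l}})\to 0$. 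Since total-variation closeness of initial distributions translates to total-variation closeness of the corresponding process laws, it is enough to compare $\widetilde{\Psi}_{A_{l}}$ and $\widetilde{\Psi}_{A_{l}^{\prime}}^{\prime}$ under the single common measure $\mu_{B_{l}}$. By the product-metric characterization of weak convergence on $\mathfrak{F}^{\mathbb{N}_{0}}$, I may fix $k\geq 0$ and aim to show that the coordinatewise $d_{\mathfrak{F}}$-differences of the two truncations to $\mathfrak{F}^{k+1}$ tend to $0$ in $\mu_{B_{l}}$-probability; this is accomplished by a coupling on a good event.

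Let $C_{l}:=A_{l}\setminus A_{l}^{\prime}$, $D_{l}:=A_{l}^{\prime}\setminus A_{l}$, and set
\[
E_{l}:=\{x\in B_{l} : T_{A_{l}}^{j}x\in B_{l}\text{ and }T_{A_{l}^{\prime}}^{j}x\in B_{l}\text{ for all }0\leq j\leq k\}.
\]
Stationarity of $T_{A_{l}}$ on $(A_{l},\mu_{A_{l}})$ yields $\mu_{A_{l}}(T_{A_{l}}^{j}x\in C_{l})=\mu(C_{l})/\mu(A_{l})=o(1)$ uniformly in $j$, so a union bound combined with the estimate $\mu_{B_{l}}(\cdot)\leq\mu_{A_{l}}(\cdot)/\mu_{A_{l}}(B_{l})$ gives $\mu_{B_{l}}(\exists\, j\leq k:T_{A_{l}}^{j}x\notin B_{l})\to 0$; the symmetric argument shows $\mu_{B_{l}}(E_{l})\to 1$. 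The crucial structural observation is that on $E_{l}$ one has the pointwise identity $T_{A_{l}}^{j}x=T_{A_{l}^{\prime}}^{j}x$ for all $j\leq k$. This is established inductively: denoting by $n_{j}$ and $m_{j}$ the respective $j$-th visits to $A_{l}$ and $A_{l}^{\prime}$ of the $T$-orbit from $x\in B_{l}$, once $n_{j-1}=m_{j-1}$ has been secured, the relation $T^{n_{j}}x=T_{A_{l}}^{j}x\in B_{l}\subseteq A_{l}^{\prime}$ forces $m_{j}\leq n_{j}$, while $T^{m_{j}}x=T_{A_{l}^{\prime}}^{j}x\in B_{l}\subseteq A_{l}$ forces $n_{j}\leq m_{j}$, and hence $n_{j}=m_{j}$.

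It remains to turn \eqref{Eq_LocalObsAsySameInMeasure} into uniform coordinatewise control. For each $j\leq k$, on $E_{l}$ the $j$-th coordinate difference equals $d_{\mathfrak{F}}(\psi_{A_{l}}(y),\psi_{A_{l}^{\prime}}^{\prime}(y))$ evaluated at $y=T_{A_{l}^{\prime}}^{j}x\in B_{l}$. The distribution of $T_{A_{l}^{\prime}}^{j}x$ under $\mu_{B_{l}}$ is $d_{\mathfrak{P}}$-close to the $T_{A_{l}^{\prime}}$-invariant $\mu_{A_{l}^{\prime}}$ (push-forward preserves total variation), and further restriction to $E_{l}\subseteq\{T_{A_{l}^{\prime}}^{j}x\in B_{l}\}$ brings it $d_{\mathfrak{P}}$-close to $\mu_{B_{l}}$ itself; hence \eqref{Eq_LocalObsAsySameInMeasure} transfers directly and each coordinatewise difference tends to $0$ in $\mu_{B_{l}}$-probability. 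Summing over $j\leq k$ and combining with $\mu_{B_{l}}(E_{l}^{c})\to 0$ yields the required convergence of all finite-dimensional marginals, and hence the stated equivalence. The principal technical hurdle is the inductive identification of the two first-return maps on $E_{l}$, which really hinges on the symmetric good event: one needs \emph{both} orbits to avoid $C_{l}\cup D_{l}$ throughout the relevant window, and neither stationarity estimate alone suffices.
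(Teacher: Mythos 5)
Your proposal is correct and follows essentially the same route as the paper's own proof: both arguments rest on the facts that, by invariance of $\mu_{A_{l}}$ and $\mu_{A_{l}^{\prime}}$ under their first-return maps together with (\ref{Eq_hbhbhbhbhb}), the iterates $T_{A_{l}}^{j}$ and $T_{A_{l}^{\prime}}^{j}$ agree for all $j\leq k$ outside a set of vanishing relative measure, after which (\ref{Eq_LocalObsAsySameInMeasure}) transfers coordinatewise along the induced orbit and the total-variation closeness of $\mu_{A_{l}}$, $\mu_{A_{l}\cap A_{l}^{\prime}}$, $\mu_{A_{l}^{\prime}}$ yields closeness of the process laws in the metric $D_{\mathfrak{F}^{\mathbb{N}_{0}}}$. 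The only difference is organizational: the paper first reduces w.l.o.g.\ to $A_{l}^{\prime}\subseteq A_{l}$ and obtains the agreement of induced maps from the containment $A^{\prime}\cap\{T_{A}^{j}=T_{A^{\prime}}^{j}\}\supseteq\bigcap_{i=0}^{j}T_{A}^{-i}A^{\prime}$, whereas you keep the symmetric setting, work under the common measure $\mu_{A_{l}\cap A_{l}^{\prime}}$, and prove the same agreement on your event $E_{l}$ by induction on visit times -- a cosmetic variation of the same idea.
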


For basic situations in which this applies consider the following.

\begin{example}
[\textbf{Normalizing interval charts}]Let $X$ be an interval containing the
sets $A_{l}=[a_{l},b_{l}]$, $A_{l}^{\prime}=[a_{l}^{\prime},b_{l}^{\prime}]$,
and let $\psi_{A_{l}}$ and $\psi_{A_{l}^{\prime}}^{\prime}$ be the
corresponding normalizing interval charts. It is immediate that $\sup
_{A_{l}\cap A_{l}^{\prime}}d_{[0,1]}(\psi_{A_{l}},\psi_{A_{l}^{\prime}%
}^{\prime})\rightarrow0$ provided that $\lambda(A_{l}\bigtriangleup
A_{l}^{\prime})=o(\lambda(A_{l}))$ as $l\rightarrow\infty$. In this case,
assumptions (\ref{Eq_hbhbhbhbhb}) and (\ref{Eq_LocalObsAsySameInMeasure}) of
the theorem are satisfied if $c\lambda\leq\mu\leq c^{-1}\lambda$ for some
constant $c>0$.
\end{example}

\begin{example}
[\textbf{Normalized return times}]\label{Ex_RobustRetTimes}Let $(X,\mathcal{A}%
,\mu,T)$ be ergodic and probability preserving, $(A_{l})$, $(A_{l}^{\prime})$
two asymptotically rare sequences, and consider the $[0,\infty]$-valued local
observables $\psi_{A_{l}}:=\mu(A_{l})\varphi_{A_{l}}\mid_{A_{l}}$,
$\psi_{A_{l}^{\prime}}^{\prime}:=\mu(A_{l}^{\prime})\varphi_{A_{l}^{\prime}%
}\mid_{A_{l}^{\prime}}$. Then,
\begin{equation}
\mu(A_{l}\bigtriangleup A_{l}^{\prime})=o(\mu(A_{l}))\text{ \qquad
implies\qquad}d_{[0,\infty]}(\psi_{A_{l}},\psi_{A_{l}^{\prime}}^{\prime
})\overset{\mu_{A_{l}\cap A_{l}^{\prime}}}{\longrightarrow}0\text{.}
\label{Eq_buewujksk}%
\end{equation}
In particular, Theorem 2.2 of \cite{Z11} is a special case of Theorem
\ref{Thm_RobustLocProc} above.

(To check (\ref{Eq_buewujksk}), assume w.l.o.g. that $A_{l}^{\prime}\subseteq
A_{l}$, take $\eta>0$, and note that due to $d_{[0,\infty]}(s,t)\leq\mid
s-t\mid$ it suffices to control the measure of $\{\mid\mu(A_{l})\varphi
_{A_{l}}-\mu(A_{l}^{\prime})\varphi_{A_{l}^{\prime}}\mid\geq\eta
\}\subseteq\{\mu(A_{l}^{\prime})\mid\varphi_{A_{l}}-\varphi_{A_{l}^{\prime}%
}\mid\geq\eta/2\}\cup\{\mu(A_{l}\setminus A_{l}^{\prime})\,\varphi_{A_{l}}%
\geq\eta/2\}$ which is easy since $(A_{l}\cup A_{l}^{\prime})\cap
\{\varphi_{A_{l}}\neq\varphi_{A_{l}^{\prime}}\}=T_{A_{l}\cup A_{l}^{\prime}%
}^{-1}(A_{l}\bigtriangleup A_{l}^{\prime})$ and $\int\mu(A_{l}^{\prime
})\varphi_{A_{l}^{\prime}}d\mu_{A_{l}^{\prime}}=1$ for all $l$.)
\end{example}

%

\vspace{0.3cm}%

\section{Joint limit processes}

Again, the space for local observables will be a compact metric space
$(\mathfrak{Z},d_{\mathfrak{Z}})$. Given an asymptotically rare sequence
$(A_{l})$ for $(X,\mathcal{A},\mu,T)$ and local observables $\psi_{A_{l}}$ we
now consider the joint distribution of $\mu(A_{l})\Phi_{A_{l}}$ and
$\widetilde{\Psi}_{A_{l}}$ under $\mu_{A_{l}}$, and that of $\mu(A_{l}%
)\Phi_{A_{l}}$ and $\Psi_{A_{l}}$ under $\mu$ (or some other fixed probability
$\nu\in\mathfrak{P}$). For the second variant we find, as expected:

\begin{proposition}
[\textbf{Strong distributional convergence of }$(\mu(A_{l})\Phi_{A_{l}}%
,\Psi_{A_{l}})$]\label{P_SDCJoint}Let $(X,\mathcal{A},\mu,T)$ be an ergodic
probability preserving system, $(A_{l})$ an asymptotically rare sequence in
$\mathcal{A}$, with $(\Psi_{A_{l}})_{l\geq1}$ a sequence of $\mathfrak{Z}%
$-valued local processes for the $A_{l}$. Let $(\Phi,\Psi)$ be any random
sequence in $[0,\infty]\times\mathfrak{Z}$. Then $(\mu(A_{l})\Phi_{A_{l}}%
,\Psi_{A_{l}})\overset{\nu}{\Longrightarrow}\,(\Psi,\Phi)$ for \emph{some}
$\nu\in\mathfrak{P}$ iff $(\mu(A_{l})\Phi_{A_{l}}\Psi_{A_{l}})\overset{\nu
}{\Longrightarrow}\,(\Psi,\Phi)$ for \emph{all} $\nu\in\mathfrak{P}$.
\end{proposition}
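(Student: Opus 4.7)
The plan is to deduce this proposition from the same general principle that yields Propositions \ref{P_SDCforHittiPs} and \ref{P_SDCInternal}, namely Corollary 6 of \cite{Z7}: for measurable maps $R_l : X \to \mathfrak{E}$ into a compact metric space satisfying the asymptotic $T$-invariance property $d_\mathfrak{E}(R_l, R_l \circ T) \overset{\mu}{\longrightarrow} 0$, the assertions ``$R_l \overset{\nu}{\Longrightarrow} R$ for some $\nu \in \mathfrak{P}$'' and ``$R_l \overset{\nu}{\Longrightarrow} R$ for every $\nu \in \mathfrak{P}$'' are equivalent. To apply this here, I take $\mathfrak{E} := [0,\infty]^{\mathbb{N}_0} \times \mathfrak{F}^{\mathbb{N}}$ with its compact product metric, and $R_l := (\mu(A_l)\Phi_{A_l}, \Psi_{A_l})$.

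The only real work is then the verification of $d_\mathfrak{E}(R_l, R_l \circ T) \overset{\mu}{\longrightarrow} 0$. The key observation is that off $A_l$ one has $\varphi_{A_l} = \varphi_{A_l} \circ T + 1$, hence $T_{A_l} \circ T = T^{\varphi_{A_l} \circ T + 1} = T^{\varphi_{A_l}} = T_{A_l}$ on $A_l^c$, and by induction $T_{A_l}^i \circ T = T_{A_l}^i$ on $A_l^c$ for every $i \geq 1$. Consequently, on $A_l^c$ the process $\Psi_{A_l} \circ T$ coincides with $\Psi_{A_l}$, while $\Phi_{A_l} \circ T$ coincides with $\Phi_{A_l}$ in every coordinate of index $\geq 1$, and in the 0th coordinate $\mu(A_l)\Phi_{A_l} \circ T$ differs from $\mu(A_l)\Phi_{A_l}$ by $\mu(A_l)$, contributing at most $e^{\mu(A_l)} - 1$ under $d_{[0,\infty]}$. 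With the summable weights of the product metric this gives $d_\mathfrak{E}(R_l, R_l \circ T) \leq C(e^{\mu(A_l)} - 1)$ uniformly on $A_l^c$ for some absolute constant $C$, and since $\mu(A_l) \to 0$ the required convergence in $\mu$-measure follows.

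No serious obstacle is expected: the identification $T_{A_l} \circ T = T_{A_l}$ off $A_l$ and the elementary estimate for the shifted 0th coordinate are the only items needing care. Once these are recorded, the joint process meets the hypothesis of Corollary 6 of \cite{Z7} and the proposition follows at once. Essentially the same argument that appears (for the scalar processes) at the start of Section \ref{Sec_PfInternalStates} handles the joint process verbatim; the product structure of $\mathfrak{E}$ only intervenes through the trivial fact that asymptotic $T$-invariance in each factor implies asymptotic $T$-invariance in the product.
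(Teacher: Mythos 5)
Your proposal is correct and follows essentially the same route as the paper: establish asymptotic $T$-invariance in measure of the joint sequence $R_l=(\mu(A_l)\Phi_{A_l},\Psi_{A_l})$ and invoke the strong distributional convergence principle of \cite{Z7} (Theorem \ref{T_MyOldStrongDistrCgeThm}), the paper merely citing Propositions \ref{P_AsyInvarHitProc} b) and \ref{P_AsyInvarAdmissDelayInternal} b) instead of re-deriving the invariance. One small correction: the identities $\varphi_{A_l}=\varphi_{A_l}\circ T+1$ and $T_{A_l}\circ T=T_{A_l}$ hold on $\{\varphi_{A_l}>1\}=(T^{-1}A_l)^{c}$, not on $A_l^{c}$, but this is harmless since $\mu(T^{-1}A_l)=\mu(A_l)\rightarrow0$.
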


The main result of this section, Theorem \ref{T_SpatiotemporalIIDLimits}
below, gives sufficient conditions for convergence to an independent pair of
iid sequences. Before stating it, we record that, in certain situations, this
takes place under the measure $\mu$ iff it takes place under the measures
$\mu_{A_{l}}$. Recall (\ref{Eq_AWarningLocObs}), which shows that the latter
statement can only be correct under some extra condition. We will use the same
assumption, (\ref{Eq_mnhjnggnfjfggnjf1}) and (\ref{Eq_mnhjnggnfjfggnjf2})
below, which already appeared in Theorem \ref{T_AsyIntStateProc}.

\begin{theorem}
[\textbf{Independent joint limit processes - }$\mu_{A_{l}}$\textbf{\ versus
}$\mu$]\label{T_JointLimitProcMuVsMuA}Suppose $(X,\mathcal{A},\mu,T)$ is an
ergodic probability preserving system, $(A_{l})$ an asymptotically rare
sequence in $\mathcal{A}$, and $(\psi_{A_{l}})_{l\geq1}$ a sequence of
$\mathfrak{Z}$-valued local observables for the $A_{l}$ with corresponding
local processes $\Psi_{A_{l}}$. Assume there are measurable $\tau
_{l}:X\rightarrow\mathbb{N}_{0}$ s.t.
\begin{equation}
\mu_{A_{l}}\left(  \tau_{l}<\varphi_{A_{l}}\right)  \longrightarrow
1\quad\text{as }l\rightarrow\infty\text{,} \label{Eq_mnhjnggnfjfggnjf1}%
\end{equation}
while there is some compact subset $\mathfrak{K}$ of $(\mathfrak{P}%
,d_{\mathfrak{P}})$ such that
\begin{equation}
T_{\ast}^{\tau_{l}}\mu_{A_{l}}\in\mathfrak{K}\text{ \quad for }l\geq1\text{,}
\label{Eq_mnhjnggnfjfggnjf2}%
\end{equation}

Let $(\Phi_{\mathrm{Exp}},\Psi^{\ast})$ be an independent pair of iid
sequences. Then
\begin{equation}
(\mu(A_{l})\Phi_{A_{l}},\Psi_{A_{l}})\overset{\mu}{\Longrightarrow}%
(\,\Phi_{\mathrm{Exp}},\Psi^{\ast})\quad\text{as }l\rightarrow\infty\text{,}%
\end{equation}
iff
\begin{equation}
(\mu(A_{l})\Phi_{A_{l}},\widetilde{\Psi}_{A_{l}})\overset{\mu_{A_{l}}%
}{\Longrightarrow}\,(\Phi_{\mathrm{Exp}},\Psi^{\ast})\quad\text{as
}l\rightarrow\infty\text{.} \label{Eq_jomiasanmimradldo}%
\end{equation}

\end{theorem}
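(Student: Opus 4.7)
The plan is to insert the intermediate statement $(\mu(A_{l})\Phi_{A_{l}},\Psi_{A_{l}})\overset{\mu_{A_{l}}}{\Longrightarrow}(\Phi_{\mathrm{Exp}},\Psi^{\ast})$ between the two displayed formulas and establish each of the resulting two equivalences. The first reduction---between this intermediate statement and \eqref{Eq_jomiasanmimradldo}---is essentially combinatorial: since $T_{A_{l}}$ preserves $\mu_{A_{l}}$, the joint process $(\mu(A_{l})\Phi_{A_{l}},\widetilde{\Psi}_{A_{l}})$ is stationary, and its $d$-dimensional marginal differs from that of $(\mu(A_{l})\Phi_{A_{l}},\Psi_{A_{l}})$ only by replacing the initial observation $\psi_{A_{l}}$ by the terminal one $\psi_{A_{l}}\circ T_{A_{l}}^{d}$. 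As the limit is an independent pair of iid sequences, any two prescribed finite sub-collections of $\{E_{j}\}_{j\geq 0}$ and of $\{P_{j}\}_{j\geq 0}$ share the same joint distribution, so both marginals have the same limit law and the two convergence statements are equivalent.

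The second and main reduction---between the intermediate statement and $(\mu(A_{l})\Phi_{A_{l}},\Psi_{A_{l}})\overset{\mu}{\Longrightarrow}(\Phi_{\mathrm{Exp}},\Psi^{\ast})$---rests on the observation that the condition $\mu(A_{l})\tau_{l}\overset{\mu_{A_{l}}}{\to}0$ required in Theorem \ref{T_ProcessesAsymptoticToHittingTimeProc}, although absent from the hypothesis \eqref{Eq_mnhjnggnfjfggnjf1}--\eqref{Eq_mnhjnggnfjfggnjf2}, is a \emph{consequence} of the assumed iid $\mathrm{Exp}(1)$ limit of the first coordinate. Indeed, from $\mu(A_{l})\varphi_{A_{l}}\overset{\mu_{A_{l}}}{\Longrightarrow}\mathrm{Exp}(1)$ the HaydnLacVai Proposition stated at the beginning of the introduction together with Proposition \ref{P_SDCforHittiPs} give convergence to $\mathrm{Exp}(1)$ under every $\nu\in\mathfrak{P}$, in particular under any total-variation limit $\nu^{\ast}\in\mathfrak{K}$ of a subsequence of $T_{\ast}^{\tau_{l}}\mu_{A_{l}}$; hence $\mu(A_{l})\varphi_{A_{l}}\circ T^{\tau_{l}}\overset{\mu_{A_{l}}}{\Longrightarrow}\mathrm{Exp}(1)$ along every subsequence. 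Combining with the identity $\mu(A_{l})\varphi_{A_{l}}=\mu(A_{l})\tau_{l}+\mu(A_{l})\varphi_{A_{l}}\circ T^{\tau_{l}}$ valid on $\{\tau_{l}<\varphi_{A_{l}}\}$ (which has $\mu_{A_{l}}$-probability $\to 1$), the non-negativity of $\mu(A_{l})\tau_{l}$, and the elementary fact that if $Y^{\ast}\overset{d}{=}Y^{\ast}+X^{\ast}$ with $X^{\ast}\geq 0$ then $X^{\ast}=0$ almost surely, a subsequential joint limit argument forces $\mu(A_{l})\tau_{l}\overset{\mu_{A_{l}}}{\longrightarrow}0$.

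Equipped with this, I would view the joint $(\mu(A_{l})\Phi_{A_{l}},\Psi_{A_{l}})$ as a single measurable map into the compact Polish space $([0,\infty]\times\mathfrak{F})^{\mathbb{N}_{0}}$ and run the joint analogue of the proofs of Theorems \ref{T_ProcessesAsymptoticToHittingTimeProc} and \ref{T_AsyIntStateProc}: on $\{\tau_{l}<\varphi_{A_{l}}\}$ one has $T_{A_{l}}^{j}x=T_{A_{l}}^{j}(T^{\tau_{l}}x)$ for every $j\geq 1$, so the entire joint process except for its first time-coordinate is unchanged under the shift $x\mapsto T^{\tau_{l}}x$, while that remaining coordinate shifts by the now-negligible $\mu(A_{l})\tau_{l}$. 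The joint laws of $(\mu(A_{l})\Phi_{A_{l}},\Psi_{A_{l}})$ under $\mu_{A_{l}}$ and under $T_{\ast}^{\tau_{l}}\mu_{A_{l}}$ therefore agree in the limit; compactness of $T_{\ast}^{\tau_{l}}\mu_{A_{l}}\subseteq\mathfrak{K}$ together with Proposition \ref{P_SDCJoint} let us pass along a subsequence to a total-variation limit $\nu^{\ast}\in\mathfrak{P}$ and thereby identify the limit law under $\mu$ with the one under $\mu_{A_{l}}$. The main obstacle is the preceding paragraph's extraction of $\mu(A_{l})\tau_{l}\overset{\mu_{A_{l}}}{\to}0$ from the iid $\mathrm{Exp}(1)$ structure of the limit; once this is available, the joint transfer is a direct joint adaptation of the arguments already established in Theorems \ref{T_ProcessesAsymptoticToHittingTimeProc} and \ref{T_AsyIntStateProc}.
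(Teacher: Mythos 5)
Your proposal is correct, but it takes a genuinely different route from the paper's proof. The paper never establishes $\mu(A_{l})\,\tau_{l}\rightarrow0$: it instead transfers only the \emph{shifted} process $(\mu(A_{l})\Phi_{A_{l}}\circ T_{A_{l}},\Psi_{A_{l}})$ between $\mu$ and $\mu_{A_{l}}$ -- for this process admissibility of $(\tau_{l})$ needs only the no-short-return condition (Proposition \ref{P_CharAdmissTimeDelay} b) and Proposition \ref{P_AsyInvarAdmissDelayInternal} c)) together with (\ref{Eq_mnhjnggnfjfggnjf2}) and Proposition \ref{P_AsyInvarSeqNuLVsMu} -- and then reattaches the missing coordinate $\mu(A_{l})\varphi_{A_{l}}$ by proving its asymptotic independence from the remainder through the integral duality relations of \cite{Z11} (Lemmas 4.1 and 4.2, culminating in (\ref{Eq_KeyFromZ11Nice})); that is where the exponential structure of the limit enters the paper's argument. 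You instead show that the exponential limit \emph{forces} the stronger delay condition: on $\{\tau_{l}<\varphi_{A_{l}}\}$ one has $\varphi_{A_{l}}=\tau_{l}+\varphi_{A_{l}}\circ T^{\tau_{l}}$, the law of $\varphi_{A_{l}}\circ T^{\tau_{l}}$ under $\mu_{A_{l}}$ is the law of $\varphi_{A_{l}}$ under $T_{\ast}^{\tau_{l}}\mu_{A_{l}}\in\mathfrak{K}$, and the combination of the Haydn--Lacroix--Vaienti duality, strong distributional convergence, total-variation compactness of $\mathfrak{K}$, and the monotonicity argument on subsequential joint limits in $[0,\infty]^{2}$ indeed yields $\mu(A_{l})\tau_{l}\overset{\mu_{A_{l}}}{\longrightarrow}0$ (your "elementary fact" is valid here because the limit $\mathrm{Exp}(1)$ is a.s.\ finite). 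With that in hand, a single application of the delay/compactness transfer to the unshifted joint process, plus stationarity to restore $\psi^{(0)}$, gives the theorem. Your route avoids the joint duality lemmas of \cite{Z11} at the price of the auxiliary statement on $\tau_{l}$, and it has the pleasant by-product that, under the hypotheses of this theorem, the extra condition (\ref{Eq_kllklklklklk}) of Theorem \ref{T_ProcessesAsymptoticToHittingTimeProc} is automatic once the exponential limit holds; the paper's route keeps the first coordinate entirely inside the hitting/return duality and so never needs this fact.

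Two small points to tighten. First, in your initial reduction the direction from convergence of $(\mu(A_{l})\Phi_{A_{l}},\Psi_{A_{l}})$ under $\mu_{A_{l}}$ back to (\ref{Eq_jomiasanmimradldo}) is not a literal identification of finite-dimensional marginals at fixed $l$ (the time-indices of the two marginal vectors differ); it should be run by taking a subsequential limit of the full process, using stationarity of the pair sequence $((\varphi_{A_{l}}\circ T_{A_{l}}^{j},\psi_{A_{l}}\circ T_{A_{l}}^{j}))_{j\geq0}$ under $\mu_{A_{l}}$ and the iid/independent structure of the limit, exactly as the paper's "easy to see" step. Second, the upgrade of the one-dimensional hitting law to arbitrary fixed $\nu\in\mathfrak{P}$ in the direction starting from (\ref{Eq_jomiasanmimradldo}) is most cleanly cited as Theorem \ref{T_MyOldStrongDistrCgeThm} (or \cite{Z7}) applied to the single asymptotically invariant sequence $\mu(A_{l})\varphi_{A_{l}}$, since Proposition \ref{P_SDCforHittiPs} as stated concerns the full hitting-time process; this is a citation adjustment, not a gap.
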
%

\vspace{0.3cm}%

We can then formulate our abstract spatiotemporal Poisson limit theorem.

\begin{theorem}
[\textbf{Joint iid limit processes}]\label{T_SpatiotemporalIIDLimits}Let
$(X,\mathcal{A},\mu,T)$ be an ergodic probability preserving system, $(A_{l})$
an asymptotically rare sequence in $\mathcal{A}$, and $(\psi_{A_{l}})_{l\geq
1}$ a sequence of $\mathfrak{Z}$-valued local observables for the $A_{l}$ such
that
\begin{equation}
\psi_{A_{l}}\overset{\mu_{A_{l}}}{\Longrightarrow}\psi\text{ \qquad as
}l\rightarrow\infty\text{,}%
\end{equation}
for some random element $\psi$ of $\mathfrak{Z}$. Let $\Psi_{A_{l}}$ be the
corresponding local processes. Moreover, assume that

\textbf{(A)} for every $s\in\lbrack0\,,\infty)$ there are a compact subset
$\mathfrak{K}_{s}$ of $(\mathfrak{P},d_{\mathfrak{P}})$, measures $\nu
_{l,s}\in\mathfrak{P}$ with $d_{\mathfrak{P}}(\nu_{l,s},\mu_{A_{l}\cap
\{\mu(A_{l})\varphi_{A_{l}}>s\}})\rightarrow0$, and measurable $\tau
_{l,s}:X\rightarrow\mathbb{N}_{0}$ s.t.
\begin{equation}
\nu_{l,s}\left(  \tau_{l,s}<\varphi_{A_{l}}\right)  \longrightarrow
1\quad\text{as }l\rightarrow\infty, \label{Eq_xdrxrxxdxrxdrxd}%
\end{equation}
while $T_{\ast}^{\tau_{l,s}}\nu_{l,s}\in\mathfrak{K}_{s}$ for $l\geq1$, and

\textbf{(B)} there is some $\pi$-system $\mathcal{B}_{\mathfrak{Z}}^{\pi}$
generating $\mathcal{B}_{\mathfrak{Z}}$ with $\mathfrak{Z}\in\mathcal{B}%
_{\mathfrak{Z}}^{\pi}$, while $\Pr[\psi\in\partial F]=0$ for all
$F\in\mathcal{B}_{\mathfrak{Z}}^{\pi}$; in addition, for every $F\in
\mathcal{B}_{\mathfrak{Z}}^{\pi}$ with $\Pr[\psi\in F]>0$ there are a compact
subset $\mathfrak{K}_{F}$ of $(\mathfrak{P},d_{\mathfrak{P}})$, measures
$\nu_{l,F}\in\mathfrak{P}$ with $d_{\mathfrak{P}}(\nu_{l,F},\mu_{A_{l}%
\cap\{\psi_{A_{l}}\in F\}})\rightarrow0$, and measurable $\tau_{l,F}%
:X\rightarrow\mathbb{N}_{0}$ such that
\begin{equation}
\mu(A_{l})\,\tau_{l,F}\overset{\nu_{l,F}}{\longrightarrow}0\quad\text{as
}l\rightarrow\infty, \label{Eq_vzgvzgvzgvzgvzgv1}%
\end{equation}
and
\begin{equation}
\nu_{l,F}\left(  \tau_{l,F}<\varphi_{A_{l}}\right)  \longrightarrow
1\quad\text{as }l\rightarrow\infty\text{,} \label{Eq_owowowowowoowwowowow}%
\end{equation}
while $T_{\ast}^{\tau_{l,F}}\nu_{l,F}\in\mathfrak{K}_{F}$ \quad for $l\geq
1$.\newline\newline Then,%
\begin{equation}
(\mu(A_{l})\Phi_{A_{l}},\Psi_{A_{l}})\overset{\mu}{\Longrightarrow}%
(\,\Phi_{\mathrm{Exp}},\Psi^{\ast})\quad\text{as }l\rightarrow\infty\text{,}%
\end{equation}
and
\begin{equation}
(\mu(A_{l})\Phi_{A_{l}},\widetilde{\Psi}_{A_{l}})\overset{\mu_{A_{l}}%
}{\Longrightarrow}\,(\Phi_{\mathrm{Exp}},\Psi^{\ast})\quad\text{as
}l\rightarrow\infty\text{,} \label{Eq_qdqdqdqdqd}%
\end{equation}
where $(\,\Phi_{\mathrm{Exp}},\Psi^{\ast})$ is an independent pair of iid processes.
\end{theorem}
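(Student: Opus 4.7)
The plan is to reduce the $\mu$-claim to the return-time statement (\ref{Eq_qdqdqdqdqd}) via Theorem~\ref{T_JointLimitProcMuVsMuA}, and then to establish (\ref{Eq_qdqdqdqdqd}) by induction on the order of finite-dimensional marginals, in the spirit of Theorem~\ref{T_AsyIntStateProcIndep} but now handling joint time--position events. Throughout I would first treat the simpler case where the measures $\nu_{l,\cdot}$ from (A) and (B) equal the corresponding conditional restrictions of $\mu_{A_l}$; the general ``more generally'' form then follows by the same diagonal $\varepsilon$-approximation already used in Theorems~\ref{T_MOreFlexiblePoissonLimitFromCompactness} and~\ref{T_AsyIntStateProcIndep}. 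Since $\mathfrak{F}\in\mathcal{B}_{\mathfrak{F}}^{\pi}$, instantiating (B) at $F=\mathfrak{F}$ (where $\{\psi_{A_l}\in\mathfrak{F}\}=A_l$) yields $\tau_l$ with $\mu(A_l)\tau_l\overset{\mu_{A_l}}{\longrightarrow}0$, $\mu_{A_l}(\tau_l<\varphi_{A_l})\to 1$, and $T_{\ast}^{\tau_l}\mu_{A_l}\in\mathfrak{K}_{\mathfrak{F}}$, which fulfils the hypotheses of Theorem~\ref{T_JointLimitProcMuVsMuA} directly. Moreover, Theorem~\ref{T_MOreFlexiblePoissonLimitFromCompactness} applied to the same data gives $\mu(A_l)\Phi_{A_l}\overset{\mu_{A_l}}{\Longrightarrow}\Phi_{\mathrm{Exp}}$, and Theorem~\ref{T_AsyIntStateProcIndep}, whose assumption is precisely (B), gives $\widetilde{\Psi}_{A_l}\overset{\mu_{A_l}}{\Longrightarrow}\Psi^{\ast}$, so the two marginal limits are already in hand.

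Setting $Y_k:=(\mu(A_l)\varphi_{A_l}\circ T_{A_l}^{k},\,\psi_{A_l}\circ T_{A_l}^{k})$, the remaining task is to show
$$
\mu_{A_l}\Bigl(\bigcap_{k=0}^{d-1}\bigl\{Y_k\in(s_k,\infty]\times F_k\bigr\}\Bigr)\;\longrightarrow\;\prod_{k=0}^{d-1}e^{-s_k}\Pr[\psi\in F_k]
$$
for all $d\ge 1$, $s_k\ge 0$, and $F_k\in\mathcal{B}_{\mathfrak{F}}^{\pi}$ with $\Pr[\psi\in\partial F_k]=0$; such cylinders form a $\pi$-system determining the target law. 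I would induct on $d$. The base case $d=1$ factors as $\mu_{A_l}(\psi_{A_l}\in F_0)\cdot(\mu_{A_l})_{\{\psi_{A_l}\in F_0\}}(\mu(A_l)\varphi_{A_l}>s_0)$: the first factor tends to $\Pr[\psi\in F_0]$ by the portmanteau theorem, while the second, thanks to the data supplied by (B) at $F_0$ and Theorem~\ref{T_ProcessesAsymptoticToHittingTimeProc} applied to $\nu_{l,F_0}$, converges to the $\mu$-limit of $\mu(A_l)\varphi_{A_l}$, which is $e^{-s_0}$. For the inductive step, peel off $Y_0$ and write $p_l^{(d+1)}=\mu_{A_l}(Y_0\in(s_0,\infty]\times F_0)\cdot\bigl((\mu_{A_l})_{\{Y_0\in(s_0,\infty]\times F_0\}}\circ T_{A_l}^{-1}\bigr)\bigl(\bigcap_{k=0}^{d-1}\{Y_k\in(s_{k+1},\infty]\times F_{k+1}\}\bigr)$; the first factor converges by the base case, and the second reduces to the $\mu_{A_l}$-probability of the $d$-fold cylinder by combining the data of (A) at level $s_0$ and (B) at $F_0$ with Theorems~\ref{T_AsyIntStateProc} and~\ref{T_ProcessesAsymptoticToHittingTimeProc}, after which the inductive hypothesis closes the step.

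The main obstacle is the joint conditioning in this inductive step: the measure $(\mu_{A_l})_{\{\mu(A_l)\varphi_{A_l}>s_0\}\cap\{\psi_{A_l}\in F_0\}}$, and its push-forward by $T_{A_l}$, mixes a time restriction (for which (A) supplies a compact push-forward family) with a position restriction (for which (B) supplies one), and neither assumption alone controls the combined conditioning. The remedy is to process the two conditionings sequentially---first restrict to $\{\psi_{A_l}\in F_0\}$ using (B), then further to $\{\mu(A_l)\varphi_{A_l}>s_0\}$ using (A) inside that restricted family---exploiting that compactness in $(\mathfrak{P},d_{\mathfrak{P}})$ is preserved under $o(1)$ total-variation perturbations, and to absorb the accumulated total-variation slack by the diagonal $\varepsilon$-argument already mentioned. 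Once this combined compact control is secured, the strong distributional convergence principles of Propositions~\ref{P_SDCforHittiPs} and~\ref{P_SDCInternal} together with Theorems~\ref{T_ProcessesAsymptoticToHittingTimeProc} and~\ref{T_AsyIntStateProc} transport the remaining conditional probability back to $\mu_{A_l}$, completing the induction.
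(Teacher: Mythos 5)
Your overall scaffolding (marginal limits via Theorems \ref{T_MOreFlexiblePoissonLimitFromCompactness} and \ref{T_AsyIntStateProcIndep}, transfer between $\mu_{A_l}$ and $\mu$ via Theorem \ref{T_JointLimitProcMuVsMuA} using condition (B) at $F=\mathfrak{F}$) is sound and matches the paper, and your base case $d=1$ is correct. The genuine gap is exactly the obstacle you name and then dismiss: in your inductive step you must transport the law of the shifted joint process under the push-forward by $T_{A_l}$ of the \emph{doubly} conditioned measure $(\mu_{A_l})_{\{\mu(A_l)\varphi_{A_l}>s_0\}\cap\{\psi_{A_l}\in F_0\}}$, and every transport tool available (Theorem \ref{T_ProcessesAsymptoticToHittingTimeProc}, Theorem \ref{T_AsyIntStateProc}, Proposition \ref{P_AsyInvarSeqNuLVsMu}) requires an admissible delay whose push-forward of \emph{that} initial measure lies in a compact subset of $\mathfrak{P}$. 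Hypotheses (A) and (B) only supply such control for measures within $o(1)$ total variation of the \emph{singly} conditioned measures $\mu_{A_l\cap\{\mu(A_l)\varphi_{A_l}>s\}}$ and $\mu_{A_l\cap\{\psi_{A_l}\in F\}}$. Your "sequential conditioning'' remedy does not close this: the doubly conditioned measure is \emph{not} an $o(1)$ total-variation perturbation of either singly conditioned one (the conditioning events have limiting probabilities $e^{-s_0}<1$ and $\Pr[\psi\in F_0]<1$), and conditioning a measure from a compact family $\mathfrak{K}$ on an event of non-negligible complementary probability does not keep it in, or near, any prescribed compact family; nor does $T_{\ast}^{\tau}$ of a conditional measure relate usefully to the conditional of $T_{\ast}^{\tau}\nu$. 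So no amount of $\varepsilon$-diagonalization recovers the missing compactness, and the inductive step as formulated cannot be carried out from (A) and (B).

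The paper avoids joint conditioning altogether, which is the key idea your proposal is missing. After passing to a subsequence with $(\mu(A_l)\Phi_{A_l},\widetilde{\Psi}_{A_l})\overset{\mu_{A_l}}{\Longrightarrow}(\Phi,\widetilde{\Psi})$, it applies Theorem \ref{T_AsyIndepForAsyInvarSequences} twice, each time conditioning on only \emph{one} type of event but testing independence against the \emph{entire} remaining process: first with $R_l:=\psi_{A_l}$ and $R_l':=(\mu(A_l)\Phi_{A_l},\Psi_{A_l})$, where condition (B) (including (\ref{Eq_vzgvzgvzgvzgvzgv1}) and (\ref{Eq_owowowowowoowwowowow})) makes $\tau_{l,F}$ admissible via Propositions \ref{P_CharAdmissTimeDelay} a) and \ref{P_AsyInvarAdmissDelayInternal} c), giving independence of $\psi^{(0)}$ from $(\Phi,\mathbf{\sigma}\widetilde{\Psi})$; second with $R_l:=\mu(A_l)\varphi_{A_l}$ and $R_l':=(\mu(A_l)\Phi_{A_l}\circ T_{A_l},\Psi_{A_l})$, where the composition with $T_{A_l}$ means condition (\ref{Eq_xdrxrxxdxrxdrxd}) alone suffices for admissibility (Proposition \ref{P_CharAdmissTimeDelay} b)) — which is precisely why (A) carries no analogue of (\ref{Eq_vzgvzgvzgvzgvzgv1}) — giving independence of $\varphi^{(0)}$ from $(\mathbf{\sigma}\Phi,\mathbf{\sigma}\widetilde{\Psi})$. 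These two one-step statements make $(\psi^{(0)},\varphi^{(0)},(\mathbf{\sigma}\Phi,\mathbf{\sigma}\widetilde{\Psi}))$ an independent triple, and stationarity of $(\mu(A_l)\Phi_{A_l},\widetilde{\Psi}_{A_l})$ under $\mu_{A_l}$ lets one iterate to full independence — replacing your cylinder induction with doubly conditioned measures by an iteration that only ever conditions on a single event at a time. If you want to salvage your write-up, replace the inductive step by this pair of applications of Theorem \ref{T_AsyIndepForAsyInvarSequences} plus the stationarity iteration.
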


%

\vspace{0.3cm}%

\begin{remark}
[\textbf{Convergence of associated spatiotemporal point processes}%
]\label{R_CgeSpatoiTempPoint1}Generalizing Remark \ref{R_CgePoint1} we can
interpret the above as a result about convergence of associated spatiotemporal
point processes $\mathcal{N}_{A,\psi_{A}}:X\rightarrow\mathcal{M}%
_{p}([0,\infty)\times\mathfrak{Z})$ (the collection of Radon point measures
$\mathrm{n}:\mathcal{B}_{[0,\infty)\times\mathfrak{Z}}\rightarrow
\{0,1,2,\ldots,\infty\}$ with the topology of vague convergence,
\cite{Resnick}) defined by%
\begin{equation}
\mathcal{N}_{A,\psi_{A}}:=%
{\textstyle\sum_{j\geq1}}
\delta_{\{(\mu(A)\sum_{i=0}^{j-1}\varphi_{A}\circ T_{A}^{i},\psi_{A}\circ
T_{A}^{j})\}}\text{.}%
\end{equation}
Parallel to our earlier remarks, for $(\,\Phi_{\mathrm{Exp}},\Psi^{\ast})$ an
independent pair of iid processes and arbitrary probabilities $\nu_{l}$ on
$(X,\mathcal{A})$,
\begin{gather}
(\mu(A_{l})\Phi_{A_{l}},\Psi_{A_{l}})\overset{\nu_{l}}{\Longrightarrow}%
(\,\Phi_{\mathrm{Exp}},\Psi^{\ast})\quad\text{implies}\quad\\
\mathcal{N}_{A_{l},\psi_{A_{l}}}\overset{\nu_{l}}{\Longrightarrow}%
\mathrm{PRM}(\lambda_{\lbrack0,\infty)}^{1}\otimes\mathrm{law}(\psi))\text{ in
}\mathcal{M}_{p}([0,\infty)\times\mathfrak{Z})\text{,}\nonumber
\end{gather}
and $\Psi_{A_{l}}$ may be replaced by $\widetilde{\Psi}_{A_{l}}$ if for each
$l\geq1$, $\nu_{l}$ is supported on $A_{l}$. As before, this merely requires
an application of the \textquotedblleft extended continuous mapping
theorem\textquotedblright\ (Theorem 2.7 in \cite{Bi}), because the map
$\Theta^{\times}:[0,\infty]^{\mathbb{N}_{0}}\times\mathfrak{Z}^{\mathbb{N}%
}\rightarrow\mathcal{M}_{p}([0,\infty)\times\mathfrak{Z})$ with $\Theta
^{\times}((\varphi^{(i)})_{i\geq0},(\psi^{(i)})_{i\geq1}):=%
{\textstyle\sum_{j\geq1}}
\delta_{\{(\sum_{i=0}^{j-1}\varphi^{(i)},\psi^{(j)})\}}$ for $(\varphi
^{(i)})_{i\geq0}\in\mathcal{W}$, and $\Theta^{\times}((\varphi^{(i)})_{i\geq
0},(\psi^{(i)})_{i\geq1}):=0$ otherwise, is continuous on $\mathcal{W}%
\times\mathfrak{Z}^{\mathbb{N}}$.
\end{remark}

\begin{remark}
[\textbf{Robustness of joint limit processes}]\label{R_RobustJoints}The
conclusion in (\ref{Eq_qdqdqdqdqd}) is a statement about the $(\mu(A_{l}%
)\Phi_{A_{l}},\widetilde{\Psi}_{A_{l}})$ which can be regarded as local
processes taking values in $[0,\infty]\times\mathfrak{Z}$. Recalling Example
\ref{Ex_RobustRetTimes} we can therefore use Theorem \ref{Thm_RobustLocProc}
to see that (\ref{Eq_qdqdqdqdqd}) is equivalent to
\[
(\mu(A_{l}^{\prime})\Phi_{A_{l}^{\prime}},\widetilde{\Psi^{\prime}}%
_{A_{l}^{\prime}})\overset{\mu_{A_{l}^{\prime}}}{\Longrightarrow}%
\,(\Phi_{\mathrm{Exp}},\Psi^{\ast})\quad\text{as }l\rightarrow\infty\text{,}%
\]
whenever $\mu(A_{l}\bigtriangleup A_{l}^{\prime})=o(\mu(A_{l}))$ and
$d_{\mathfrak{Z}}(\psi_{A_{l}},\psi_{A_{l}^{\prime}}^{\prime})\overset
{\mu_{A_{l}\cap A_{l}^{\prime}}}{\longrightarrow}0$. This sometimes allows us
to replace the original sequence by one for which the conditions of the
present theorem can be verified more easily.
\end{remark}

%

\vspace{0.3cm}%

\section{Mean ergodic theory and distributions under varying measures}

The present section discusses the abstract core of our approach. (It is based
on ideas which arose in the study of probabilistic properties of infinite
measure preserving systems, see \cite{ZcompactRegeneration}, \cite{PSZ},
\cite{RZ}.) Throughout, $(\mathfrak{E},d_{\mathfrak{E}})$ is a compact metric
space with Borel $\sigma$-algebra $\mathcal{B}_{\mathfrak{E}}$. For a
Lipschitz function $\varkappa:\mathfrak{E}\rightarrow\mathbb{R}$ we let
$\mathrm{Lip}(\varkappa)$ denote its (optimal) Lipschitz constant. %

\vspace{0.3cm}%
%

\noindent
\textbf{More on distributional convergence.} Recall that weak convergence of
probabilities on $(\mathfrak{E},\mathcal{B}_{\mathfrak{E}})$ is metrisable:
There are various metrics $D_{\mathfrak{E}}$ on $\mathfrak{M}(\mathfrak{E})$
such that $D_{\mathfrak{E}}(Q_{l},Q)\rightarrow0$ iff $Q_{l}\Longrightarrow
Q$. For instance, it is well known that $\mathcal{C}(\mathfrak{E})$ is
separable, and by a standard argument for w$^{\ast}$-topologies, every dense
sequence $(\vartheta_{j})_{j\geq1}$ in $\mathcal{C}(\mathfrak{E})$ allows us
to define a suitable metric with $D_{\mathfrak{E}}\leq1$ by setting
\begin{equation}
D_{\mathfrak{E}}(Q,Q^{\prime}):=\sum_{j\geq1}2^{-(j+1)}\left\vert \int\chi
_{j}\,dQ-\int\chi_{j}\,dQ^{\prime}\right\vert \text{, \quad}Q,Q^{\prime}%
\in\mathfrak{M}(\mathfrak{E})\text{,}\label{Eq_DefMeasDist}%
\end{equation}
where $\chi_{j}:=c_{j}^{-1}\vartheta_{j}$ for constants $c_{j}\geq
\sup\left\vert \vartheta_{j}\right\vert $ (so that $\left\vert \chi
_{j}\right\vert \leq1$). By the Stone-Weierstrass theorem, Lipschitz functions
are dense in $\mathcal{C}(\mathfrak{E})$. As a consequence, we can
\emph{define} $D_{\mathfrak{E}}$ \emph{as in (\ref{Eq_DefMeasDist})}
\emph{using a particular dense sequence }$(\vartheta_{j})$\emph{, henceforth
fixed, of Lipschitz functions} $\vartheta_{j}:\mathfrak{E}\rightarrow
\mathbb{R}$, and $c_{j}:=\max(\sup\left\vert \vartheta_{j}\right\vert
,\mathrm{Lip}(\vartheta_{j}))$, which ensures that $\mathrm{Lip}(\varkappa
_{j})\leq1$ for all $j\geq1$.

Observe that for any convex combinations $Q=\theta Q_{\triangle}%
+(1-\theta)Q_{\triangledown}$ and $Q^{\prime}=\theta Q_{\triangle}^{\prime
}+(1-\theta)Q_{\triangledown}^{\prime}$ in $\mathfrak{M}(\mathfrak{E})$ with
the same $\theta\in\lbrack0,1]$, we have
\begin{align}
D_{\mathfrak{E}}(Q,Q^{\prime})  &  \leq\theta D_{\mathfrak{E}}(Q_{\triangle
},Q_{\triangle}^{\prime})+(1-\theta)D_{\mathfrak{E}}(Q_{\triangledown
},Q_{\triangledown}^{\prime})\nonumber\\
&  \leq D_{\mathfrak{E}}(Q_{\triangle},Q_{\triangle}^{\prime})+(1-\theta
)\text{.} \label{Eq_Tree2}%
\end{align}
It is straightforward that for $\nu,\widetilde{\nu}\in\mathfrak{P}$, and Borel
measurable $R:X\rightarrow\mathfrak{E}$,
\begin{equation}
D_{\mathfrak{E}}(\mathrm{law}_{\nu}(R),\mathrm{law}_{\widetilde{\nu}}(R))\leq
d_{\mathfrak{P}}(\nu,\widetilde{\nu})\text{,}
\label{Eq_ererrewrerwrertrerwerq}%
\end{equation}
so that for any sequences $(\nu_{l})$ and $(\widetilde{\nu}_{l})$ in
$\mathfrak{P}$, and Borel measurable $R_{l}:X\rightarrow\mathfrak{E}$,
\begin{equation}
d_{\mathfrak{P}}(\nu_{l},\widetilde{\nu}_{l})\rightarrow0\text{ \quad implies
\quad}D_{\mathfrak{E}}(\mathrm{law}_{\nu_{l}}(R_{l}),\mathrm{law}%
_{\widetilde{\nu}_{l}}(R_{l}))\rightarrow0\text{.}
\label{Eq_vcbycxvcygvchatecv356463456}%
\end{equation}
Also, for further measurable $R_{l}^{\prime}:X\rightarrow\mathfrak{E}$,
\begin{equation}
d_{\mathfrak{E}}(R_{l},R_{l}^{\prime})\overset{\nu_{l}}{\longrightarrow
}0\text{ \quad implies \quad}D_{\mathfrak{E}}(\mathrm{law}_{\nu_{l}}%
(R_{l}),\mathrm{law}_{\nu_{l}}(R_{l}^{\prime}))\rightarrow0\text{,}
\label{Eq_Tree1}%
\end{equation}
because $D_{\mathfrak{E}}(\mathrm{law}_{\nu_{l}}(R_{l}),\mathrm{law}_{\nu_{l}%
}(R_{l}^{\prime}))\leq%
{\textstyle\sum\nolimits_{j=1}^{J}}
\mathrm{Lip}(\chi_{j})\int d_{\mathfrak{E}}(R_{l},R_{l}^{\prime})\,d\nu
_{l}+2^{-J}$ for each $J\geq1$, and $\int d_{\mathfrak{E}}(R_{l},R_{l}%
^{\prime})\,d\nu_{l}\rightarrow0$ as $\mathfrak{E}$ is bounded. Finally, note
that
\begin{equation}
R=R^{\prime}\text{ on }A\in\mathcal{A}\text{ \quad implies \quad
}D_{\mathfrak{E}}(\mathrm{law}_{\nu}(R),\mathrm{law}_{\nu}(R^{\prime}))\leq
\nu(A^{c})\text{.}%
\end{equation}
%

\vspace{0.3cm}%
%

\noindent
\textbf{Strong distributional convergence.} If $(X,\mathcal{A},\mu,T)$ is an
ergodic probability preserving system, distributional limit theorems for
dynamically defined quantities are often stated in terms of the distinguished
measure $\nu:=\mu$. Since the latter is not the only potentially relevant
initial distribution for the process, it is both interesting and useful to
observe that in many cases such a limit theorem automatically carries over to
all probability measures $\nu$ absolutely continuous with respect to $\mu$.

For measurable maps $R_{l}$, $l\geq1$, of a probability space $(X,\mathcal{A}%
,\mu)$ into $(\mathfrak{E},\mathcal{B}_{\mathfrak{E}})$, \emph{strong
distributional convergence} to a random element $R$ of $\mathfrak{E}$,
written
\begin{equation}
R_{l}\overset{\mathcal{L}(\mu)}{\Longrightarrow}R\text{ \quad as }%
l\rightarrow\infty\text{,}%
\end{equation}
means that $R_{l}\overset{\nu}{\Longrightarrow}R$ for all probability measures
$\nu\ll\mu$. (This is equivalent to $R_{l}\overset{\mu}{\Longrightarrow}R$
(\emph{mixing}), meaning that $R_{l}\overset{\mu_{E}}{\Longrightarrow}R$ for
every fixed $E\in\mathcal{A}$ with $\mu(E)>0$. The latter concept dates back
to \cite{Renyi}, see also \cite{Ea}.)%
\newline

A property often responsible for this sort of behaviour is that the sequence
$(R_{l})$ be $T$-invariant in the long run. Let $T$ be a measure-preserving
map on the probability space $(X,\mathcal{A},\mu)$. For Borel measurable maps
$R_{l}:X\rightarrow\mathfrak{E}$, we call the sequence $(R_{l})$
\emph{asymptotically }$T$\emph{-invariant in (}$\mu$\emph{-)measure} in case
\begin{equation}
d_{\mathfrak{E}}(R_{l}\circ T,R_{l})\overset{\mu}{\longrightarrow}0\text{
\quad as }l\rightarrow\infty\label{Eq_AsyTInvar}%
\end{equation}
(with $\overset{\mu}{\longrightarrow}$ indicating convergence in measure,
recall (\ref{Eq_NotationProbaCgeSeqMeas})). Note that this is equivalent to
convergence in distribution, $d_{\mathfrak{E}}(R_{l}\circ T,R_{l})\overset
{\mu}{\Longrightarrow}0$, because the limit is constant. The important role of
this concept becomes clear through

\begin{theorem}
[\textbf{Strong distributional convergence of asymptotically invariant
sequences}]\label{T_MyOldStrongDistrCgeThm}Let $T$ be an ergodic
measure-preserving map on the probability space $(X,\mathcal{A},\mu)$. Suppose
that the sequence $(R_{l})$ of Borel measurable maps $R_{l}:X\rightarrow
\mathfrak{E}$ into the compact metric space $(\mathfrak{E},d_{\mathfrak{E}})$
is asymptotically $T$-invariant in measure. Then
\begin{equation}
D_{\mathfrak{E}}(\mathrm{law}_{\nu}(R_{l}),\mathrm{law}_{\overline{\nu}}%
(R_{l}))\longrightarrow0\quad\text{as }l\rightarrow\infty\text{\quad for }%
\nu,\overline{\nu}\in\mathfrak{P}\text{.} \label{Eq_bvjhbvjabvahkvaevfuerfkea}%
\end{equation}
Hence, for $R$ a random element of $\mathfrak{E}$, and any $\nu,\overline{\nu
}\in\mathfrak{P}$,
\begin{equation}
R_{l}\overset{\overline{\nu}}{\Longrightarrow}R\text{ \qquad implies \qquad
}R_{l}\overset{\nu}{\Longrightarrow}R\text{ }\quad\text{as }l\rightarrow
\infty\text{.} \label{Eq_cyxscvdsxfcfxdxcfsa}%
\end{equation}

\end{theorem}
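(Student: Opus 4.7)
The plan is to establish \eqref{Eq_bvjhbvjabvahkvaevfuerfkea} by using the asymptotic $T$-invariance of $(R_{l})$ to relate $\mathrm{law}_{\nu}(R_{l})$ to $\mathrm{law}_{T_{\ast}^{n}\nu}(R_{l})$ for each fixed $n$, and then to pass from $\nu$ to $\mu$ by taking a Cesàro average in $n$, invoking the $L^{1}$ mean ergodic theorem for the transfer operator $\hat{T}$ of $T$. Once \eqref{Eq_bvjhbvjabvahkvaevfuerfkea} is in hand, \eqref{Eq_cyxscvdsxfcfxdxcfsa} is immediate from the triangle inequality for $D_{\mathfrak{E}}$, since $R_{l}\overset{\overline{\nu}}{\Longrightarrow}R$ means $D_{\mathfrak{E}}(\mathrm{law}_{\overline{\nu}}(R_{l}),\mathrm{law}(R))\rightarrow0$.

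For the first ingredient, fix $n\geq0$ and observe that the hypothesis \eqref{Eq_AsyTInvar}, together with $T$-invariance of $\mu$ and the telescoping estimate $d_{\mathfrak{E}}(R_{l}\circ T^{n},R_{l})\leq\sum_{k=0}^{n-1}d_{\mathfrak{E}}(R_{l}\circ T,R_{l})\circ T^{k}$, yields $d_{\mathfrak{E}}(R_{l}\circ T^{n},R_{l})\overset{\mu}{\longrightarrow}0$ as $l\rightarrow\infty$; since $\nu\ll\mu$, the same convergence holds under $\nu$. Applying \eqref{Eq_Tree1} and the identity $\mathrm{law}_{\nu}(R_{l}\circ T^{n})=\mathrm{law}_{T_{\ast}^{n}\nu}(R_{l})$, we obtain $D_{\mathfrak{E}}(\mathrm{law}_{T_{\ast}^{n}\nu}(R_{l}),\mathrm{law}_{\nu}(R_{l}))\rightarrow0$ as $l\rightarrow\infty$ for each fixed $n$. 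Applying the convexity estimate \eqref{Eq_Tree2}, we deduce, for each $N\geq1$, with $\overline{\nu}^{(N)}:=\tfrac{1}{N}\sum_{n=0}^{N-1}T_{\ast}^{n}\nu\in\mathfrak{P}$, that
\[
D_{\mathfrak{E}}\bigl(\mathrm{law}_{\overline{\nu}^{(N)}}(R_{l}),\,\mathrm{law}_{\nu}(R_{l})\bigr)\longrightarrow0\qquad(l\rightarrow\infty,\text{ }N\text{ fixed}).
\]

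The averaged measure $\overline{\nu}^{(N)}$ has Radon-Nikodym derivative $\tfrac{1}{N}\sum_{n=0}^{N-1}\hat{T}^{n}h$ with respect to $\mu$, where $h:=d\nu/d\mu$. Since $T$ is ergodic and $\mu$-preserving, the $L^{1}$ mean ergodic theorem for the Markov operator $\hat{T}$ (whose only $L^{1}$-invariant functions are constants, by ergodicity) gives $\tfrac{1}{N}\sum_{n=0}^{N-1}\hat{T}^{n}h\rightarrow\int h\,d\mu=1$ in $L^{1}(\mu)$, i.e. $d_{\mathfrak{P}}(\overline{\nu}^{(N)},\mu)\rightarrow0$ as $N\rightarrow\infty$. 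Hence, by \eqref{Eq_ererrewrerwrertrerwerq}, $D_{\mathfrak{E}}(\mathrm{law}_{\overline{\nu}^{(N)}}(R_{l}),\mathrm{law}_{\mu}(R_{l}))\rightarrow0$ as $N\rightarrow\infty$, \emph{uniformly in} $l$. Combining the last two displays via the triangle inequality and taking first $N$, then $l$, to infinity yields $D_{\mathfrak{E}}(\mathrm{law}_{\nu}(R_{l}),\mathrm{law}_{\mu}(R_{l}))\rightarrow0$; the same argument for $\overline{\nu}$ and one more triangle inequality complete the proof of \eqref{Eq_bvjhbvjabvahkvaevfuerfkea}.

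The one delicate point is the ordering of the two limits: one must first fix $N$ large enough that $d_{\mathfrak{P}}(\overline{\nu}^{(N)},\mu)$ is below a prescribed threshold (a statement uniform in $l$), and only then let $l\rightarrow\infty$ to make the approximation of $\mathrm{law}_{\nu}(R_{l})$ by $\mathrm{law}_{\overline{\nu}^{(N)}}(R_{l})$ effective. All other tools used are either elementary properties of $D_{\mathfrak{E}}$ recorded earlier in this section, or the standard $L^{1}$ mean ergodic theorem for the transfer operator of an ergodic probability-preserving map.
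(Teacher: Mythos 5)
Your proof is correct, and it takes a genuinely different route from the one the paper records for this theorem. The paper does not prove Theorem \ref{T_MyOldStrongDistrCgeThm} directly: it cites Theorem 1 of \cite{Z7} for the implication (\ref{Eq_cyxscvdsxfcfxdxcfsa}), notes that (\ref{Eq_bvjhbvjabvahkvaevfuerfkea}) is subsumed by the later uniform Theorem \ref{T_HTSforVaryingMeasures} (whose proof is independent of the present statement), and only supplies a short Alaoglu-compactness subsequence argument showing that (\ref{Eq_cyxscvdsxfcfxdxcfsa}) yields (\ref{Eq_bvjhbvjabvahkvaevfuerfkea}). You instead give a self-contained direct proof: telescoping the asymptotic invariance to get $d_{\mathfrak{E}}(R_{l}\circ T^{n},R_{l})\overset{\nu}{\longrightarrow}0$ for each fixed $n$ (using $\nu\ll\mu$), converting this via (\ref{Eq_Tree1}) and $\mathrm{law}_{\nu}(R_{l}\circ T^{n})=\mathrm{law}_{T_{\ast}^{n}\nu}(R_{l})$ into closeness of laws, averaging over $n$ by convexity of $D_{\mathfrak{E}}$ as in (\ref{Eq_Tree2}), and sending the Ces\`{a}ro averages to $\mu$ in total variation by Theorem \ref{T_Yosida}, with the bound (\ref{Eq_ererrewrerwrertrerwerq}) uniform in $l$ so that the order of limits ($N$ first, then $l$) works exactly as you state. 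This is the same mechanism the paper deploys for the stronger Theorem \ref{T_HTSforVaryingMeasures}, but there the uniformity over a compact $\mathfrak{K}$ forces the additional equicontinuity and uniform-integrability steps, which for a single fixed $\nu\ll\mu$ are unnecessary (absolute continuity of a single $L_{1}$ density does the job, as you use implicitly when passing from $\mu$-measure to $\nu$-measure). What your route buys is a proof independent of the external reference \cite{Z7} and of the heavier uniform theorem; what the paper's route buys is brevity, since it needs the uniform result anyway.
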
%

\vspace{0.3cm}%

\begin{proof}
It is clear that (\ref{Eq_bvjhbvjabvahkvaevfuerfkea}) entails
(\ref{Eq_cyxscvdsxfcfxdxcfsa}). The implication (\ref{Eq_cyxscvdsxfcfxdxcfsa})
is the content of Theorem 1 of \cite{Z7}. A stronger version of assertion
(\ref{Eq_bvjhbvjabvahkvaevfuerfkea}) is contained in Theorem
\ref{T_HTSforVaryingMeasures} below, whose proof does not use the present theorem.

Alternatively, it is not hard to check directly that
(\ref{Eq_cyxscvdsxfcfxdxcfsa}) implies (\ref{Eq_bvjhbvjabvahkvaevfuerfkea}):
Suppose that (\ref{Eq_bvjhbvjabvahkvaevfuerfkea}) fails, meaning that there
are $\nu,\overline{\nu}\in\mathfrak{P}$ with $\delta>0$ and $l_{j}%
\nearrow\infty$ such that
\begin{equation}
D_{\mathfrak{E}}(\mathrm{law}_{\nu}(R_{l_{j}}),\mathrm{law}_{\overline{\nu}%
}(R_{l_{j}}))\geq\delta\text{ \quad for }j\geq1\text{.}
\label{Eq_uioizpouipiupztiup}%
\end{equation}
By Alaoglu's theorem, the metric space $(\mathfrak{M}(\mathfrak{E}%
),D_{\mathfrak{E}})$ is compact, which allows us to select a further
subsequence $l_{i}^{\prime}=l_{j_{i}}\nearrow\infty$ of indices such that
$\mathrm{law}_{\overline{\nu}}(R_{l_{i}^{\prime}})\Longrightarrow Q$ for some
$Q\in\mathfrak{M}(\mathfrak{E})$. But then (\ref{Eq_cyxscvdsxfcfxdxcfsa})
shows that $\mathrm{law}_{\nu}(R_{l_{i}^{\prime}})\Longrightarrow Q$ as well,
which contradicts (\ref{Eq_uioizpouipiupztiup}).
\end{proof}

\bigskip%
\vspace{0.3cm}%
%

\noindent
\textbf{The transfer operator and mean ergodic theory.} Since we shall improve
on the above result, we review the main ingredient of its proof. Recall the
\emph{transfer operator} $\widehat{T}:L_{1}(\mu)\rightarrow L_{1}(\mu)$ of $T$
on $(X,\mathcal{A},\mu)$ which describes the evolution of probability
densities under $T$. That is, if $\nu$ has density $u$ w.r.t. $\mu$,
$u=d\nu/d\mu$, then $\widehat{T}u:=d(\nu\circ T^{-1})/d\mu$. Equivalently,
$\int(g\circ T)\cdot u\,d\mu=\int g\cdot\widehat{T}u\,d\mu$ for all $u\in
L_{1}(\mu)$ and $g\in L_{\infty}(\mu)$, i.e. $\widehat{T}$ is dual to
$g\longmapsto g\circ T$. We let $\mathcal{D}(\mu)$ denote the set of
probability densities w.r.t. $\mu$.%

\vspace{0.3cm}%

The following classical companion of the mean ergodic theorem is essentially
due to Yosida \cite{Yos} (see also \cite{Kr}, Theorem 2.1.3 or \cite{Z7},
Theorem 2). Statements (\ref{Eq_T_Yosida}) and (\ref{Eq_T_Yosida2}) below are
equivalent since we can identify $(\mathfrak{P},d_{\mathfrak{P}})$ with
$(\mathcal{D}(\mu),\left\Vert \centerdot\right\Vert _{L_{1}(\mu)})$ via
$\nu\mapsto d\nu/d\mu$, where $\left\Vert d\nu/d\mu-d\overline{\nu}%
/d\mu\right\Vert _{L_{1}(\mu)}=d_{\mathfrak{P}}(\nu,\overline{\nu})$ for
$\nu,\overline{\nu}\in\mathfrak{P}$.

\begin{theorem}
[\textbf{Characterization of ergodicity}]\label{T_Yosida}\textit{Let }%
$T$\textit{\ be a measure-preserving map on a probability space }%
$(X,\mathcal{A},\mu)$\textit{. Then }$T$ \textit{is ergodic if and only if }%
\begin{equation}
\left\Vert \frac{1}{n}\sum_{k=0}^{n-1}\widehat{T}^{k}(u-\overline
{u})\right\Vert _{L_{1}(\mu)}\longrightarrow0\text{ \quad as }l\rightarrow
\infty\text{\quad\textit{for all} }u,\overline{u}\in\mathcal{D}(\mu
)\text{,}\label{Eq_T_Yosida}%
\end{equation}
which is equivalent to
\begin{equation}
d_{\mathfrak{P}}\left(  \frac{1}{n}\sum_{k=0}^{n-1}T_{\ast}^{k}\nu,\frac{1}%
{n}\sum_{k=0}^{n-1}T_{\ast}^{k}\overline{\nu}\right)  \longrightarrow0\text{
\quad as }l\rightarrow\infty\text{\quad\textit{for all} }\nu,\overline{\nu}%
\in\mathfrak{P}\text{.}\label{Eq_T_Yosida2}%
\end{equation}

\end{theorem}%

\vspace{0.2cm}%
%

\noindent
\textbf{Uniform distributional convergence.} At the heart of the present paper
is a uniform version of the principle of strong distributional convergence for
asymptotically invariant sequences of observables quoted above. We capture the
key point in the following result.

\begin{theorem}
[\textbf{Uniform distributional convergence of asymptotically invariant
sequences}]\label{T_HTSforVaryingMeasures}Let $(X,\mathcal{A},\mu,T)$ be an
ergodic probability preserving system and $(R_{l})_{l\geq1}$ a sequence of
Borel measurable maps $R_{l}:X\rightarrow\mathfrak{E}$, asymptotically
$T$-invariant in measure, into a compact metric space $(\mathfrak{E}%
,d_{\mathfrak{E}})$. Let $\mathfrak{K}$ be a compact set in $(\mathfrak{P}%
,d_{\mathfrak{P}})$. Then,
\begin{equation}
D_{\mathfrak{E}}(\mathrm{law}_{\nu}(R_{l}),\mathrm{law}_{\overline{\nu}}%
(R_{l}))\longrightarrow0\text{ }\quad%
\begin{array}
[c]{c}%
\text{as }l\rightarrow\infty\text{,}\\
\text{uniformly in }\nu,\overline{\nu}\in\mathfrak{K}\text{.}%
\end{array}
\label{Eq_UniformCgeOnCptSetNew}%
\end{equation}
Hence, for $R$ a random element of $\mathfrak{E}$, and any two sequences
$(\nu_{l})$, $(\overline{\nu}_{l})$ in $\mathfrak{K}$,
\begin{equation}
R_{l}\overset{\overline{\nu}_{l}}{\Longrightarrow}R\text{ \qquad implies
\qquad}R_{l}\overset{\nu_{l}}{\Longrightarrow}R\quad\text{as }l\rightarrow
\infty\text{.} \label{Eq_UniformCgeOnCptSetU2}%
\end{equation}

\end{theorem}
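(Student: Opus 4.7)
The plan is to control $D_{\mathfrak{E}}(\mathrm{law}_{\nu}(R_l),\mathrm{law}_{\overline{\nu}}(R_l))$ termwise via the defining series (\ref{Eq_DefMeasDist}). Passing to densities $u:=d\nu/d\mu$ and $\overline{u}:=d\overline{\nu}/d\mu$, identify $\mathfrak{K}$ with a compact subset of $\mathcal{D}(\mu)\subseteq L_1(\mu)$. For each Lipschitz $\chi_j$ from the fixed sequence defining $D_{\mathfrak{E}}$ (with $|\chi_j|\leq 1$), I will bound $\left|\int(\chi_j\circ R_l)(u-\overline{u})\,d\mu\right|$ uniformly in $u,\overline{u}\in\mathfrak{K}$ by combining a time-averaging step based on asymptotic $T$-invariance with a Cesàro-mean estimate from mean ergodicity.

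The averaging step uses the transfer-operator identity $\int(\chi_j\circ R_l\circ T^k)\,u\,d\mu=\int(\chi_j\circ R_l)\,\widehat{T}^k u\,d\mu$. Asymptotic $T$-invariance in measure combined with $T$-invariance of $\mu$ gives, by telescoping, $d_{\mathfrak{E}}(R_l,R_l\circ T^k)\overset{\mu}{\longrightarrow}0$ for each fixed $k$; together with Lipschitz continuity of $\chi_j$ and boundedness of $d_{\mathfrak{E}}$, this yields
\[
\Big|\int(\chi_j\circ R_l)\,u\,d\mu-\int(\chi_j\circ R_l)\,\widehat{T}^k u\,d\mu\Big|\;\longrightarrow\;0\qquad(l\to\infty),
\]
uniformly in $u\in\mathfrak{K}$ for each fixed $k$, where the uniformity uses uniform integrability of $\mathfrak{K}$: a compact subset of $L_1(\mu)$ is totally bounded, hence uniformly absolutely continuous via a finite $\varepsilon$-net argument. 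Averaging over $k=0,\ldots,n-1$, this gives, for every $n\geq 1$,
\[
\int(\chi_j\circ R_l)(u-\overline{u})\,d\mu=\int(\chi_j\circ R_l)\,\tfrac{1}{n}\sum_{k=0}^{n-1}\widehat{T}^k(u-\overline{u})\,d\mu+\varrho_{l,n}(u,\overline{u}),
\]
with $\varrho_{l,n}(u,\overline{u})\to 0$ as $l\to\infty$ uniformly for $u,\overline{u}\in\mathfrak{K}$.

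The main term is bounded in absolute value by $\bigl\|\tfrac{1}{n}\sum_{k=0}^{n-1}\widehat{T}^k(u-\overline{u})\bigr\|_{L_1(\mu)}$, which tends to $0$ for each fixed pair by Theorem \ref{T_Yosida}; uniformity over $\mathfrak{K}\times\mathfrak{K}$ follows from a second $\varepsilon$-net argument, using that $\widehat{T}$ is an $L_1(\mu)$-contraction, so approximating $u,\overline{u}$ within $\varepsilon$ alters the averaged $L_1$-norm by at most $2\varepsilon$, independently of $n$. This uniform mean-ergodic step is the main delicate point. Given $\varepsilon>0$, pick $n$ so that the Cesàro norm is $<\varepsilon$ throughout $\mathfrak{K}\times\mathfrak{K}$, then $l_0$ so that $|\varrho_{l,n}|<\varepsilon$ for $l\geq l_0$ and all pairs; summing over $j$ with the weights $2^{-(j+1)}$ (truncating the tail at a large $J$) yields (\ref{Eq_UniformCgeOnCptSetNew}). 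The implication (\ref{Eq_UniformCgeOnCptSetU2}) is then immediate from the triangle inequality $D_{\mathfrak{E}}(\mathrm{law}_{\nu_l}(R_l),\mathrm{law}(R))\leq D_{\mathfrak{E}}(\mathrm{law}_{\nu_l}(R_l),\mathrm{law}_{\overline{\nu}_l}(R_l))+D_{\mathfrak{E}}(\mathrm{law}_{\overline{\nu}_l}(R_l),\mathrm{law}(R))$, both summands vanishing by (\ref{Eq_UniformCgeOnCptSetNew}) and the hypothesis $R_l\overset{\overline{\nu}_l}{\Longrightarrow}R$.
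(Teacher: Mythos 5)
Your proposal is correct and takes essentially the same route as the paper's proof: you compare $\nu$ and $\overline{\nu}$ through the Ces\`{a}ro averages $\frac{1}{n}\sum_{k=0}^{n-1}\widehat{T}^{k}$ of their densities (the paper's $\gamma_{M}(\nu)=M^{-1}\sum_{m}T_{\ast}^{m}\nu$), using asymptotic $T$-invariance plus uniform integrability of the compact set $\mathfrak{K}$ to make the replacement error uniform, and Yosida's mean ergodic theorem combined with the contraction/$\varepsilon$-net (equicontinuity) argument for the uniform Ces\`{a}ro step, which is precisely Remark \ref{R_UnifCgeByEquiCty}. The concluding triangle-inequality derivation of (\ref{Eq_UniformCgeOnCptSetU2}) from (\ref{Eq_UniformCgeOnCptSetNew}) also matches the paper.
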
%

\vspace{0.3cm}%

The key to this refinement of Theorem \ref{T_MyOldStrongDistrCgeThm} is the
following basic principle.

\begin{remark}
[\textbf{Uniform convergence by equicontinuity}]\label{R_UnifCgeByEquiCty}Let
$(\mathfrak{P},d_{\mathfrak{P}})$ be any metric space and $\gamma
_{M}:\mathfrak{P}\rightarrow\mathfrak{P}$, $M\geq1$, a sequence of maps which
converges pointwise to the continuous map $\gamma:\mathfrak{P}\rightarrow
\mathfrak{P}$. If $(\gamma_{M})_{M\geq1}$ is equicontinuous, then
\[
\gamma_{M}\longrightarrow\gamma\text{ \qquad uniformly on }\mathfrak{K}\text{
\quad as }M\rightarrow\infty
\]
whenever $\mathfrak{K}$ is a compact subset of $\mathfrak{P}$. (Indeed, for
every $\varepsilon>0$ there is some $\delta>0$ such that $d_{\mathfrak{P}}%
(\nu,\widetilde{\nu})<\delta$ implies $d_{\mathfrak{P}}(\gamma_{M}(\nu
),\gamma_{M}(\widetilde{\nu}))<\varepsilon$ for all $M\geq1$. But the compact
set $\mathfrak{K}$ contains a finite $\delta$-dense subset, and on the latter
$\gamma_{M}\longrightarrow\gamma$ uniformly as $M\rightarrow\infty$.)
\end{remark}

%

\vspace{0.3cm}%

\begin{proof}
[\textbf{Proof of Theorem \ref{T_HTSforVaryingMeasures}.}]\textbf{(i)} The
second assertion, implication (\ref{Eq_UniformCgeOnCptSetU2}), is immediate
from (\ref{Eq_UniformCgeOnCptSetNew}), so we focus on proving the latter. In
steps (ii)-(iv) below we are going to show that for every $\varepsilon>0$,
there is some $l^{\prime}=l^{\prime}(\varepsilon)$ such that for any
$\chi:\mathfrak{E}\rightarrow\mathbb{R}$ with $\left\vert \chi\right\vert
\leq1$ and $\mathrm{Lip}(\varkappa)\leq1$,
\begin{equation}
\left\vert \int\chi\circ R_{l}\,d\overline{\nu}-\int\chi\circ R_{l}%
\,d\nu\right\vert <\varepsilon\text{ \quad for }l\geq l^{\prime}\text{ and
}\nu,\overline{\nu}\in\mathfrak{K}\text{.}\label{Eq_Abschaetzig}%
\end{equation}
To see that this implies (\ref{Eq_UniformCgeOnCptSetNew}), recall the
definition (\ref{Eq_DefMeasDist}) of $D_{\mathfrak{E}}$ via our specific
sequence $(\chi_{j})$, and that the $\chi_{j}$ satisfy the above assumptions
on $\chi$. Therefore (\ref{Eq_UniformCgeOnCptSetNew}) follows, since
\begin{align*}
D_{\mathfrak{E}}(\mathrm{law}_{\nu}(R_{l}),\mathrm{law}_{\overline{\nu}}%
(R_{l}))  & =\sum_{j\geq1}2^{-(j+1)}\left\vert \int\chi_{j}\circ
R_{l}\,d\overline{\nu}-\int\chi_{j}\circ R_{l}\,d\nu\right\vert \\
& \leq\sum\limits_{j\geq1}2^{-(j+1)}\varepsilon=\varepsilon\text{\quad for
}\nu,\overline{\nu}\in\mathfrak{K}\text{ \quad if }l\geq l^{\prime}\text{.}%
\end{align*}
\newline\textbf{(ii)} Consider the maps $\gamma_{M}:\mathfrak{P}%
\rightarrow\mathfrak{P}$ with $\gamma_{M}(\nu):=M^{-1}%
{\textstyle\sum_{m=0}^{M-1}}
T_{\ast}^{m}\nu$. By ergodicity and Theorem \ref{T_Yosida}, $(\gamma_{M})$
converges pointwise to the constant map $\gamma(\nu):=\mu$, as $\gamma_{M}%
(\nu)\rightarrow\mu$ for every $\nu\in\mathfrak{P}$. But $(\gamma_{M}%
)_{M\geq1}$ is equicontinuous. Indeed, due to the identification of the $\nu$
with their densities, it suffices to observe that all the operators $M^{-1}%
{\textstyle\sum\nolimits_{m=0}^{M-1}}
\widehat{T}^{m}$ have norm equal to $1$ on $L_{1}(\mu)$. Hence, compactness of
$\mathfrak{K}$ entails uniform convergence (see Remark
\ref{R_UnifCgeByEquiCty}),
\[
d_{\mathfrak{P}}\left(  \frac{1}{M}\sum_{m=0}^{M-1}T_{\ast}^{m}\overline{\nu
},\frac{1}{M}\sum_{m=0}^{M-1}T_{\ast}^{m}\nu\right)  \longrightarrow0\text{
\quad}%
\begin{array}
[c]{c}%
\text{as }M\rightarrow\infty\text{,}\\
\text{uniformly in }\nu,\overline{\nu}\in\mathfrak{K}\text{.}%
\end{array}
\]
Therefore there is some $M_{\varepsilon}\geq1$ (henceforth fixed) such that
\[
d_{\mathfrak{P}}\left(  \frac{1}{M_{\varepsilon}}\sum_{m=0}^{M_{\varepsilon
}-1}T_{\ast}^{m}\overline{\nu},\frac{1}{M_{\varepsilon}}\sum_{m=0}%
^{M_{\varepsilon}-1}T_{\ast}^{m}\nu\right)  <\frac{\varepsilon}{4}\text{ \quad
for }\nu,\overline{\nu}\in\mathfrak{K}\text{.}%
\]
Consequently (as $\left\vert \chi\right\vert \leq1$), for every $l$ and all
$\nu,\overline{\nu}\in\mathfrak{K}$,
\begin{equation}
\left\vert \int\chi\circ R_{l}\,d\left(  \frac{1}{M_{\varepsilon}}\sum
_{m=0}^{M_{\varepsilon}-1}T_{\ast}^{m}\overline{\nu}\right)  -\int\chi\circ
R_{l}\,d\left(  \frac{1}{M_{\varepsilon}}\sum_{m=0}^{M_{\varepsilon}-1}%
T_{\ast}^{m}\nu\right)  \right\vert <\frac{\varepsilon}{4}\text{.}%
\label{Eq_TheSmearedComparison}%
\end{equation}
\textbf{(iii)} As $\mathfrak{K}$ is compact in $(\mathfrak{P},d_{\mathfrak{P}%
})$, the family $\{d\nu/d\mu:\nu\in\mathfrak{K}\}$ is compact in $L_{1}(\mu)$,
and hence uniformly integrable. Thus, there is some $\delta=\delta
(\varepsilon)>0$ such that
\begin{equation}
\nu(A)=\int_{A}\frac{d\nu}{d\mu}\,d\mu<\frac{\varepsilon}{16M_{\varepsilon}%
}\text{ \quad for }\nu\in\mathfrak{K}\text{ and }A\in\mathcal{A}\text{ with
}\mu(A)<\delta\text{.}\label{Eq_HuHu}%
\end{equation}
Set $\eta:=(8M_{\varepsilon})^{-1}\varepsilon>0$ and define sequences
$(\Xi_{i,l})_{l\geq1}$ via $\Xi_{i,l}:=\chi\circ R_{l}\circ T^{i}$, $i\geq0$.
We claim that there is some $l^{\prime}=l^{\prime}(\varepsilon)$ s.t.
\begin{equation}
\mu\left(  \left\vert \Xi_{i,l}-\Xi_{i,l}\circ T\right\vert >\eta\right)
<\delta\text{ \quad for }i\geq0\text{ and }l\geq l^{\prime}\text{.}%
\label{Eq_DefiningLPrime}%
\end{equation}
Indeed, using asymptotic $T$-invariance in measure of $(R_{l})$ we see that
there is some $l^{\prime}$ such that
\[
\mu\left(  d_{\mathfrak{E}}(R_{l},R_{l}\circ T)>\eta\right)  <\delta\text{
\quad for }l\geq l^{\prime}\text{.}%
\]
Due to $T$-invariance of $\mu$ and $\mathrm{Lip}(\varkappa)\leq1$ we then find
that
\begin{align*}
\mu\left(  \left\vert \Xi_{i,l}-\Xi_{i,l}\circ T\right\vert >\eta\right)   &
=\mu\left(  \left\vert \chi\circ R_{l}-\chi\circ R_{l}\circ T^{i}\right\vert
>\eta\right)  \\
&  \leq\mu\left(  d_{\mathfrak{E}}(R_{l},R_{l}\circ T)>\eta\right)  \\
&  <\delta\text{ \quad for }i\geq0\text{ and }l\geq l^{\prime}\text{,}%
\end{align*}
as required. Using $\left\vert \Xi_{i,l}\right\vert \leq1$ and (\ref{Eq_HuHu})
we then see that
\begin{align}
\int\left\vert \Xi_{i,l}-\Xi_{i,l}\circ T\right\vert \,d\nu &  \leq\eta
+\int_{\{\left\vert \Xi_{i,l}-\Xi_{i,l}\circ T\right\vert >\eta\}}\left\vert
\Xi_{i,l}-\Xi_{i,l}\circ T\right\vert \,d\nu\nonumber\\
&  <\frac{\varepsilon}{4M_{\varepsilon}}\text{ \quad for }i\geq0\text{, }l\geq
l^{\prime}\text{ and }\nu\in\mathfrak{K}\text{.}\label{Eq_jshdvjhavfk}%
\end{align}
\textbf{(iv)} Note that by duality and (\ref{Eq_jshdvjhavfk}),
\begin{align}
&  \left\vert \int\chi\circ R_{l}\,d\nu-\int\chi\circ R_{l}\,d\left(  \frac
{1}{M_{\varepsilon}}\sum_{m=0}^{M_{\varepsilon}-1}T_{\ast}^{m}\nu\right)
\right\vert \nonumber\\
&  \leq\frac{1}{M_{\varepsilon}}\sum_{m=0}^{M_{\varepsilon}-1}\left\vert
\int(\chi\circ R_{l}-\chi\circ R_{l}\circ T^{m})\,d\nu\right\vert \nonumber\\
&  \leq\frac{1}{M_{\varepsilon}}\sum_{m=0}^{M_{\varepsilon}-1}\sum_{i=0}%
^{m-1}\int\left\vert \Xi_{i,l}-\Xi_{i,l}\circ T\right\vert \,d\nu\nonumber\\
&  <\frac{1}{M_{\varepsilon}}\sum_{m=0}^{M_{\varepsilon}-1}\sum_{i=0}%
^{m-1}\frac{\varepsilon}{4M_{\varepsilon}}\leq\frac{\varepsilon}{4}\text{
\quad for }l\geq l^{\prime}\text{ and }\nu\in\mathfrak{K}\text{.}%
\label{Eq_Smearing1}%
\end{align}
Now take any $\nu,\overline{\nu}\in\mathfrak{K}$, and combine
(\ref{Eq_TheSmearedComparison}) with an application of (\ref{Eq_Smearing1}) to
$\nu$ and another application of (\ref{Eq_Smearing1}) to $\overline{\nu}$ to
obtain (\ref{Eq_Abschaetzig}).
\end{proof}

%

\vspace{0.3cm}%
%

\noindent
\textbf{Waiting for good measure(s).} We shall say that the measurable
functions $\tau_{l}:X\rightarrow\mathbb{N}_{0}$ form an \emph{admissible delay
sequence} $(\tau_{l})_{l\geq1}$ \emph{for} $(R_{l})$ \emph{and} $(\nu_{l})$
if
\begin{equation}
D_{\mathfrak{E}}(\mathrm{law}_{\nu_{l}}(R_{l}),\mathrm{law}_{\nu_{l}}%
(R_{l}\circ T^{\tau_{l}}))\longrightarrow0\quad\text{as }l\rightarrow
\infty\text{.}%
\end{equation}
An easy sufficient condition for this is that
\begin{equation}
d_{\mathfrak{E}}(R_{l}\circ T^{\tau_{l}},R_{l})\overset{\nu_{l}}%
{\longrightarrow}0\text{ \quad as }l\rightarrow\infty\text{.}
\label{Eq_vbvbvbvbvbvbvbvvbvbvbvbvffff}%
\end{equation}
(By compactness of $\mathfrak{M}(\mathfrak{E})$ it suffices to show that for
any subsequence $l_{j}\nearrow\infty$ of indices and any random element $R$ of
$\mathfrak{E}$, $R_{l_{j}}\overset{\nu_{l_{j}}}{\Longrightarrow}R$ implies
$R_{l_{j}}\circ T^{\tau_{l}}\overset{\nu_{l_{j}}}{\Longrightarrow}R$, which
follows from (\ref{Eq_vbvbvbvbvbvbvbvvbvbvbvbvffff}) by a standard (Slutsky)
argument, see \cite{Bi}, Theorem 3.1).

Since $\mathrm{law}_{\nu_{l}}(R_{l}\circ T^{\tau_{l}})=\mathrm{law}%
_{\overline{\nu}_{l}}(R_{l})$ where $\overline{\nu}_{l}:=T_{\ast}^{\tau_{l}%
}\nu_{l}$, we can efficiently use admissible delays in situations where the
sequence $(\overline{\nu}_{l})$ of these push-forwards allows for better
control than $(\nu_{l})$. The latter phrase will mean that $\overline{\nu}%
_{l}\in\mathfrak{K}$ for all $l$, where $\mathfrak{K}\subseteq\mathfrak{P}$ is
compact, in which case we can use the following straightforward consequence of
Theorem \ref{T_HTSforVaryingMeasures}.

\begin{proposition}
[\textbf{Asymptotically invariant sequences - }$\nu_{l}$\textbf{\ versus }%
$\mu$]\label{P_AsyInvarSeqNuLVsMu}Let $(X,\mathcal{A},\mu,T)$ be an ergodic
probability preserving system, $(R_{l})_{l\geq1}$ a sequence of Borel
measurable maps $R_{l}:X\rightarrow\mathfrak{E}$, asymptotically $T$-invariant
in measure, into a compact metric space $(\mathfrak{E},d_{\mathfrak{E}})$, and
$(\nu_{l})_{l\geq1}$ a sequence in $\mathfrak{P}$. Suppose that $(\tau
_{l})_{l\geq1}$ is an admissible delay sequence for $(R_{l})$ and $(\nu_{l})$
such that there is some compact set $\mathfrak{K}$ in $(\mathfrak{P}%
,d_{\mathfrak{P}})$ for which
\begin{equation}
T_{\ast}^{\tau_{l}}\nu_{l}\in\mathfrak{K}\text{ \quad for }l\geq1\text{.}%
\end{equation}
Then
\begin{equation}
D_{\mathfrak{E}}(\mathrm{law}_{\nu_{l}}(R_{l}),\mathrm{law}_{\mu}%
(R_{l}))\longrightarrow0\quad\text{as }l\rightarrow\infty\text{.}%
\end{equation}

\end{proposition}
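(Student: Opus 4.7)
The plan is to reduce the statement to the already-established uniform distributional convergence on compact sets (Theorem \ref{T_HTSforVaryingMeasures}) by interposing the push-forward measures $\overline{\nu}_l := T_\ast^{\tau_l}\nu_l$, and then combining by the triangle inequality for $D_{\mathfrak{E}}$.

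More concretely, first I would invoke the admissibility hypothesis to get
\[
D_{\mathfrak{E}}(\mathrm{law}_{\nu_l}(R_l),\mathrm{law}_{\nu_l}(R_l\circ T^{\tau_l}))\longrightarrow 0.
\]
The elementary identity $\mathrm{law}_{\nu_l}(R_l\circ T^{\tau_l})=\mathrm{law}_{\overline{\nu}_l}(R_l)$ then lets me rewrite this as
\[
D_{\mathfrak{E}}(\mathrm{law}_{\nu_l}(R_l),\mathrm{law}_{\overline{\nu}_l}(R_l))\longrightarrow 0,
\]
which already replaces the potentially badly behaved initial measures $\nu_l$ by the tame $\overline{\nu}_l$.

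Next I would form the enlarged family $\mathfrak{K}':=\mathfrak{K}\cup\{\mu\}$, which is still compact in $(\mathfrak{P},d_{\mathfrak{P}})$, and apply Theorem \ref{T_HTSforVaryingMeasures} to $\mathfrak{K}'$. Taking the two sequences there to be $(\overline{\nu}_l)\subseteq\mathfrak{K}'$ and the constant sequence $(\mu)\subseteq\mathfrak{K}'$, the uniform conclusion (\ref{Eq_UniformCgeOnCptSetNew}) yields
\[
D_{\mathfrak{E}}(\mathrm{law}_{\overline{\nu}_l}(R_l),\mathrm{law}_{\mu}(R_l))\longrightarrow 0.
\]
A final triangle inequality for $D_{\mathfrak{E}}$ combines the two displays above into the desired conclusion.

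There is no real obstacle here: all the work has been done in Theorem \ref{T_HTSforVaryingMeasures}, and the present proposition is essentially a bookkeeping step making explicit how the compactness condition on $(T_\ast^{\tau_l}\nu_l)$ together with an admissible delay lets us transport that uniform result from $\mathfrak{K}$ back to the original $(\nu_l)$. The only very minor point to keep in mind is that one should enlarge the compact set to contain $\mu$ before applying Theorem \ref{T_HTSforVaryingMeasures}, which is harmless since adding a single point preserves compactness.
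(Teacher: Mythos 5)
Your proof is correct and follows essentially the same route as the paper: admissibility gives $D_{\mathfrak{E}}(\mathrm{law}_{\nu_l}(R_l),\mathrm{law}_{\overline{\nu}_l}(R_l))\to 0$ with $\overline{\nu}_l=T_\ast^{\tau_l}\nu_l$, and Theorem \ref{T_HTSforVaryingMeasures} applied on the compact set (enlarged by $\mu$, exactly the paper's ``w.l.o.g. $\mu\in\mathfrak{K}$'') gives $D_{\mathfrak{E}}(\mathrm{law}_{\overline{\nu}_l}(R_l),\mathrm{law}_{\mu}(R_l))\to 0$, after which the triangle inequality finishes. No gaps.
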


\begin{proof}
Letting $\overline{\nu}_{l}:=T_{\ast}^{\tau_{l}}\nu_{l}$ we have
$D_{\mathfrak{E}}(\mathrm{law}_{\nu_{l}}(R_{l}),\mathrm{law}_{\overline{\nu
}_{l}}(R_{l}))\rightarrow0$ due to admissibility of $(\tau_{l})$, but also
$D_{\mathfrak{E}}(\mathrm{law}_{\overline{\nu}_{l}}(R_{l}),\mathrm{law}_{\mu
}(R_{l}))\rightarrow0$ by Theorem \ref{T_HTSforVaryingMeasures} (We can assume
w.l.o.g. that $\mu\in\mathfrak{K}$).
\end{proof}

%

\vspace{0.3cm}%

In the specific situation of Theorems
\ref{T_MOreFlexiblePoissonLimitFromCompactness}, \ref{T_AsyIntStateProcIndep},
and \ref{T_SpatiotemporalIIDLimits} above, the $\nu_{l}=\mu_{A_{l}}$
concentrate on ever smaller sets, and hence can never stay inside a single
compact set $\mathfrak{K}$, while suitable push-forwards $\overline{\nu}_{l} $
sometimes do.%

\vspace{0.3cm}%
%

\noindent
\textbf{Independent limits for pairs of asymptotcally invariant sequences.} To
facilitate the analysis of distributional limits of processes involving
several asymptotically invariant sequences, we provide a natural method of
checking asymptotic independence. It relies on the following easy probability fact.

\begin{lemma}
[\textbf{Independence of limit variables by conditioning}]%
\label{L_IndependentLimitPair}Let $(\nu_{l})_{l\geq1}$ be probability measures
on $(X,\mathcal{A})$, and $R_{l}:X\rightarrow\mathfrak{E}$, $R_{l}^{\prime
}:X\rightarrow\mathfrak{E}^{\prime}$, $l\geq1$, Borel maps into the compact
metric spaces $\mathfrak{E}$ and $\mathfrak{E}^{\prime}$. Let $(R,R^{\prime})$
be a random element of $\mathfrak{E}\times\mathfrak{E}^{\prime}$ such that
\begin{equation}
(R_{l},R_{l}^{\prime})\overset{\nu_{l}}{\Longrightarrow}(R,R^{\prime})\text{
}\quad\text{as }l\rightarrow\infty\text{.} \label{Eq_hdgsvczwetv}%
\end{equation}
Assume that there is some $\pi$-system $\mathcal{B}_{\mathfrak{E}}^{\pi}$
generating $\mathcal{B}_{\mathfrak{E}}$ such that for all $E\in\mathcal{B}%
_{\mathfrak{E}}^{\pi}$ we have $\Pr[R\in\partial E]=0$, and, in case $\Pr[R\in
E]>0$, convergence in law holds under the $\nu_{l}$ conditioned on $\{R_{l}\in
E\}$,
\begin{equation}
R_{l}^{\prime}\overset{\nu_{l,\{R_{l}\in E\}}}{\Longrightarrow}R^{\prime
}\text{ }\quad\text{as }l\rightarrow\infty\text{.} \label{Eq_utioioio}%
\end{equation}
Then $R$ and $R^{\prime}$ are independent.
\end{lemma}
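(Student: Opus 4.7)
The plan is to show the factorization $\Pr[R\in E,\,R'\in E']=\Pr[R\in E]\,\Pr[R'\in E']$ first for $E\in\mathcal{B}_{\mathfrak{E}}^{\pi}$ and $E'$ ranging over a sufficiently rich collection of continuity sets of $R'$, and then to extend to all Borel sets by a standard Dynkin $\pi$-$\lambda$ argument applied separately in each coordinate.

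Fix $E\in\mathcal{B}_{\mathfrak{E}}^{\pi}$. If $\Pr[R\in E]=0$, the identity holds trivially since both sides vanish (the left side is dominated by $\Pr[R\in E]$ via the portmanteau theorem applied to $\overline{E}$). Suppose $\Pr[R\in E]>0$. Let $E'\subseteq\mathfrak{E}'$ be any Borel set with $\Pr[R'\in\partial E']=0$. Then $\partial(E\times E')\subseteq(\partial E\times\mathfrak{E}')\cup(\mathfrak{E}\times\partial E')$, so by the hypothesis $\Pr[R\in\partial E]=0$ and the portmanteau theorem applied to the joint law, the convergence \eqref{Eq_hdgsvczwetv} yields
\begin{equation*}
\nu_l(R_l\in E,\,R_l'\in E')\longrightarrow\Pr[R\in E,\,R'\in E']\quad\text{and}\quad \nu_l(R_l\in E)\longrightarrow\Pr[R\in E]>0.
\end{equation*}
Hence, for all large $l$, the conditional probability is well-defined and
\begin{equation*}
(\nu_l)_{\{R_l\in E\}}(R_l'\in E')=\frac{\nu_l(R_l\in E,\,R_l'\in E')}{\nu_l(R_l\in E)}\longrightarrow\frac{\Pr[R\in E,\,R'\in E']}{\Pr[R\in E]}.
\end{equation*}
On the other hand, assumption \eqref{Eq_utioioio} together with portmanteau for the continuity set $E'$ of $R'$ gives that the same conditional probability converges to $\Pr[R'\in E']$. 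Equating the two limits yields the desired factorization for $(E,E')$.

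Now extend. Since $(\mathfrak{E}',d_{\mathfrak{E}'})$ is compact metric, there is a countable base of open balls, and all but countably many radii yield balls whose boundaries are null for $\mathrm{law}(R')$; finite intersections of such balls form a $\pi$-system $\mathcal{B}_{\mathfrak{E}'}^{\pi}$ generating $\mathcal{B}_{\mathfrak{E}'}$ and consisting of continuity sets of $R'$. For each fixed $E\in\mathcal{B}_{\mathfrak{E}}^{\pi}$, the factorization above holds on $\mathcal{B}_{\mathfrak{E}'}^{\pi}$; since $\mathcal{B}_{\mathfrak{E}'}^{\pi}$ is a $\pi$-system generating $\mathcal{B}_{\mathfrak{E}'}$ and both sides are finite measures in the second variable, a standard application of Dynkin's $\pi$-$\lambda$ theorem extends it to all $E'\in\mathcal{B}_{\mathfrak{E}'}$. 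A second application, now in the first variable with the generating $\pi$-system $\mathcal{B}_{\mathfrak{E}}^{\pi}$ (which contains $\mathfrak{E}$), extends it to all $E\in\mathcal{B}_{\mathfrak{E}}$. This establishes independence of $R$ and $R'$.

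The only subtle step is ensuring that the second-coordinate test sets $E'$ can be chosen as continuity sets of $R'$ while still generating $\mathcal{B}_{\mathfrak{E}'}$, so that the portmanteau theorem applies cleanly on both sides of the equality; this is handled by the countable-base argument above. Everything else is bookkeeping with portmanteau and the $\pi$-$\lambda$ theorem.
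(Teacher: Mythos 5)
Your argument is correct and is essentially the paper's own proof: both compute the limit of $\nu_{l}(R_{l}\in E,\,R_{l}^{\prime}\in E^{\prime})$ in two ways on continuity rectangles (once via the joint convergence (\ref{Eq_hdgsvczwetv}), once via the conditional hypothesis (\ref{Eq_utioioio}) after dividing and multiplying by $\nu_{l}(R_{l}\in E)\rightarrow\Pr[R\in E]>0$) and then upgrade the factorization to all Borel sets, the paper doing the upgrade via the uniqueness theorem plus a second appeal to Theorem 2.8 of \cite{Bi}, while you construct a boundary-null generating $\pi$-system in $\mathfrak{E}^{\prime}$ and apply Dynkin's theorem twice --- an equivalent piece of bookkeeping. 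One small remark: the lemma does not assume $\mathfrak{E}\in\mathcal{B}_{\mathfrak{E}}^{\pi}$ (that hypothesis appears only in the theorem which applies the lemma), but your second Dynkin application does not actually need it, since the whole space automatically lies in the relevant $\lambda$-system.
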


\begin{proof}
By Theorem 2.8 of \cite{Bi}, our assumption (\ref{Eq_hdgsvczwetv}) is
equivalent to
\begin{gather}
\nu_{l}(R_{l}\in E,R_{l}^{\prime}\in E^{\prime})\longrightarrow\Pr[R\in
E,R^{\prime}\in E^{\prime}]\text{ \quad as }l\rightarrow\infty
\label{Eq_hjgkhjkhjkfkhjkgfkhjkgfjh}\\
\text{whenever }\Pr[R\in\partial E]=\Pr[R^{\prime}\in\partial E^{\prime
}]=0\text{. }\nonumber
\end{gather}
We show that for any such $(E,E^{\prime})$ this limit coincides with $\Pr[R\in
E]\,\Pr[R^{\prime}\in E^{\prime}]$. Applying the same theorem from \cite{Bi}
again, then proves that $(R_{l},R_{l}^{\prime})$ converges to an independent
pair with marginals $R$, $R^{\prime}$.

Take any $E\in\mathcal{B}_{\mathfrak{E}}^{\pi}$. If $\Pr[R\in E]=0$, the
assertion is trivial. Assume therefore that $\Pr[R\in E]>0$. Pick any
$E^{\prime}\in\mathcal{B}_{\mathfrak{E}^{\prime}}$ with $\Pr[R^{\prime}%
\in\partial E^{\prime}]=0$. Due to (\ref{Eq_utioioio}),
\[
\nu_{l}(R_{l}\in E,R_{l}^{\prime}\in E^{\prime})\longrightarrow\Pr[R\in
E]\,\Pr[R^{\prime}\in E^{\prime}]\quad\text{as }l\rightarrow\infty\text{,}%
\]
and therefore
\[
\Pr[R\in E,R^{\prime}\in E^{\prime}]=\Pr[R\in E]\,\Pr[R^{\prime}\in E^{\prime
}]\text{.}%
\]
Fixing such an $E^{\prime}$, the standard uniqueness theorem for measures
shows that
\[
\Pr[R\in B,R^{\prime}\in E^{\prime}]=\Pr[R\in B]\,\Pr[R^{\prime}\in E^{\prime
}]\text{ \quad for all }B\in\mathcal{B}_{\mathfrak{E}}\text{.}%
\]
In particular, this is true for all $B\in\mathcal{B}_{\mathfrak{E}}$ with
$\Pr[R\in\partial B]=0$.
\end{proof}

%

\vspace{0.3cm}%

Combining this with the uniform distributional convergence principle, and the
idea that admissible time delays may result in good measures, we obtain

\begin{theorem}
[\textbf{Asymptotic independence of two sequences}]%
\label{T_AsyIndepForAsyInvarSequences}Let $(X,\mathcal{A},\mu,T)$ be an
ergodic probability preserving system and $(R_{l})_{l\geq1}$, $(R_{l}^{\prime
})_{l\geq1}$ sequences of Borel measurable maps, $(R_{l}^{\prime})$
asymptotically $T$-invariant in measure, into compact metric spaces
$(\mathfrak{E},d_{\mathfrak{E}})$ and $(\mathfrak{E}^{\prime},d_{\mathfrak{E}%
^{\prime}})$, respectively. Suppose that $\nu_{l}$, $l\geq1$, are
probabilities on $(X,\mathcal{A})$ such that
\begin{equation}
(R_{l},R_{l}^{\prime})\overset{\nu_{l}}{\Longrightarrow}(R,R^{\prime})\text{
}\quad\text{as }l\rightarrow\infty
\end{equation}
for some random element $(R,R^{\prime})$ of $\mathfrak{E}\times\mathfrak{E}%
^{\prime}$, and let $\mathcal{B}_{\mathfrak{E}}^{\pi}$ be some $\pi$-system
generating $\mathcal{B}_{\mathfrak{E}}$ such that $\mathfrak{E}\in
\mathcal{B}_{\mathfrak{E}}^{\pi}$ while $\Pr[R\in\partial E]=0$ for all
$E\in\mathcal{B}_{\mathfrak{E}}^{\pi}$.

Assume that for every $E\in\mathcal{B}_{\mathfrak{E}}^{\pi}$ with $\Pr[R\in
E]>0$ there is some compact set $\mathfrak{K}_{E}$ in $(\mathfrak{P}%
,d_{\mathfrak{P}})$, a sequence $(\nu_{l,E})_{l\geq1}$ in $\mathfrak{P}$ with
$d_{\mathfrak{P}}(\nu_{l,E},\nu_{l,\{R_{l}\in E\}})\rightarrow0$, and a
sequence $(\tau_{l,E})_{l\geq1}$ of admissible delays for $(R_{l}^{\prime
})_{l\geq1}$ and $(\nu_{l,E})_{l\geq1}$ such that $\overline{\nu}%
_{l,E}:=T_{\ast}^{\tau_{l,E}}\nu_{l,E}\in\mathfrak{K}_{E}$ for $l\geq1$. Then
$R$ and $R^{\prime}$ are independent.
\end{theorem}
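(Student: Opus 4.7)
The plan is to reduce the theorem to an application of Lemma \ref{L_IndependentLimitPair}. Specifically, it suffices to verify that for every $E\in\mathcal{B}_{\mathfrak{E}}^{\pi}$ with $\Pr[R\in E]>0$,
\[
R_{l}^{\prime}\overset{(\nu_{l})_{\{R_{l}\in E\}}}{\Longrightarrow}R^{\prime}\quad\text{as }l\rightarrow\infty,
\]
since the joint convergence $(R_{l},R_{l}^{\prime})\overset{\nu_{l}}{\Longrightarrow}(R,R^{\prime})$ and the $\pi$-system assumption are already supplied.

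For any such $E$, the three standard moves chain together the hypotheses. First, by (\ref{Eq_vcbycxvcygvchatecv356463456}), the assumption $d_{\mathfrak{P}}(\nu_{l,E},(\nu_{l})_{\{R_{l}\in E\}})\rightarrow0$ yields $D_{\mathfrak{E}}(\mathrm{law}_{\nu_{l,E}}(R_{l}^{\prime}),\mathrm{law}_{(\nu_{l})_{\{R_{l}\in E\}}}(R_{l}^{\prime}))\rightarrow0$. Second, the admissibility of $(\tau_{l,E})$ for $(R_{l}^{\prime})$ and $(\nu_{l,E})$ gives $D_{\mathfrak{E}}(\mathrm{law}_{\nu_{l,E}}(R_{l}^{\prime}),\mathrm{law}_{\overline{\nu}_{l,E}}(R_{l}^{\prime}))\rightarrow0$ using $\mathrm{law}_{\nu_{l,E}}(R_{l}^{\prime}\circ T^{\tau_{l,E}})=\mathrm{law}_{\overline{\nu}_{l,E}}(R_{l}^{\prime})$. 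Combining these,
\[
D_{\mathfrak{E}}\!\left(\mathrm{law}_{(\nu_{l})_{\{R_{l}\in E\}}}(R_{l}^{\prime}),\mathrm{law}_{\overline{\nu}_{l,E}}(R_{l}^{\prime})\right)\longrightarrow0\quad\text{as }l\rightarrow\infty,
\]
so everything comes down to proving $R_{l}^{\prime}\overset{\overline{\nu}_{l,E}}{\Longrightarrow}R^{\prime}$, where $\overline{\nu}_{l,E}\in\mathfrak{K}_{E}$, a compact set.

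Here is the key maneuver — which I expect to be the delicate step, since the measures $\nu_{l}$ themselves need not lie in any compact family. I would bootstrap through the distinguished choice $E=\mathfrak{E}\in\mathcal{B}_{\mathfrak{E}}^{\pi}$, for which $(\nu_{l})_{\{R_{l}\in\mathfrak{E}\}}=\nu_{l}$ and $\Pr[R\in\mathfrak{E}]=1$. From $(R_{l},R_{l}^{\prime})\overset{\nu_{l}}{\Longrightarrow}(R,R^{\prime})$ we read off the marginal $R_{l}^{\prime}\overset{\nu_{l}}{\Longrightarrow}R^{\prime}$, and the same two-step reduction applied to $\mathfrak{E}$ then yields
\[
R_{l}^{\prime}\overset{\overline{\nu}_{l,\mathfrak{E}}}{\Longrightarrow}R^{\prime}\quad\text{with }\overline{\nu}_{l,\mathfrak{E}}\in\mathfrak{K}_{\mathfrak{E}}.
\]
Now form the finite (hence compact) union $\mathfrak{K}_{\mathfrak{E}}\cup\mathfrak{K}_{E}\subseteq\mathfrak{P}$. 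Because $(R_{l}^{\prime})$ is asymptotically $T$-invariant in measure, the uniform distributional convergence principle (Theorem \ref{T_HTSforVaryingMeasures}, implication (\ref{Eq_UniformCgeOnCptSetU2})) transfers the convergence from the sequence $(\overline{\nu}_{l,\mathfrak{E}})$ to the sequence $(\overline{\nu}_{l,E})$ lying in the same compact set, producing $R_{l}^{\prime}\overset{\overline{\nu}_{l,E}}{\Longrightarrow}R^{\prime}$. Chaining this with the $D_{\mathfrak{E}}$-closeness established earlier gives the required $R_{l}^{\prime}\overset{(\nu_{l})_{\{R_{l}\in E\}}}{\Longrightarrow}R^{\prime}$, and Lemma \ref{L_IndependentLimitPair} then concludes that $R$ and $R^{\prime}$ are independent.
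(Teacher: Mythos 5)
Your proposal is correct and takes essentially the same route as the paper: the reduction via Lemma \ref{L_IndependentLimitPair}, the chain combining $d_{\mathfrak{P}}(\nu_{l,E},(\nu_{l})_{\{R_{l}\in E\}})\rightarrow0$ with admissibility of the delays, the bootstrap through the case $E=\mathfrak{E}$, and the uniform convergence principle of Theorem \ref{T_HTSforVaryingMeasures}. The only cosmetic difference is that you transfer convergence directly between the two push-forward sequences inside the compact union $\mathfrak{K}_{\mathfrak{E}}\cup\mathfrak{K}_{E}$ via (\ref{Eq_UniformCgeOnCptSetU2}), whereas the paper anchors both comparisons at the fixed measure $\mu$ (implicitly adjoined to the compact sets); the two devices are interchangeable.
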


(Note that $(R_{l})$ is not required to be asymptotically $T$-invariant.)

\begin{proof}
By Lemma \ref{L_IndependentLimitPair} it suffices to show that for every
$E\in\mathcal{B}_{\mathfrak{E}}^{\pi}$ with $\Pr[R\in E]>0$,
\[
R_{l}^{\prime}\overset{\nu_{l,\{R_{l}\in E\}}}{\Longrightarrow}R^{\prime
}\text{ }\quad\text{as }l\rightarrow\infty\text{,}%
\]
which, due to $d_{\mathfrak{P}}(\nu_{l,E},\nu_{l,\{R_{l}\in E\}})\rightarrow0$
and (\ref{Eq_vcbycxvcygvchatecv356463456}), is equivalent to
\begin{equation}
R_{l}^{\prime}\overset{\nu_{l,E}}{\Longrightarrow}R^{\prime}\text{ }%
\quad\text{as }l\rightarrow\infty\text{.} \label{Eq_LocalGoal}%
\end{equation}
Take any such $E$. Since $\overline{\nu}_{l,E}\in\mathfrak{K}_{E}$ for all
$l$, an application of Theorem \ref{T_HTSforVaryingMeasures} shows that
$D_{\mathfrak{E}}(\mathrm{law}_{\mu}(R_{l}^{\prime}),\mathrm{law}%
_{\overline{\nu}_{l,E}}(R_{l}^{\prime}))\rightarrow0$, and since the
$\tau_{l,E}$ are admissible delays for $(R_{l}^{\prime})_{l\geq1}$ and
$(\nu_{l,E})_{l\geq1}$, we conclude that
\begin{equation}
D_{\mathfrak{E}}(\mathrm{law}_{\mu}(R_{l}^{\prime}),\mathrm{law}_{\nu_{l,E}%
}(R_{l}^{\prime}))\longrightarrow0\text{ }\quad\text{as }l\rightarrow
\infty\text{.} \label{Eq_aaldhgtyh}%
\end{equation}
The case $E:=\mathfrak{E}$ yields $D_{\mathfrak{E}}(\mathrm{law}_{\mu}%
(R_{l}^{\prime}),\mathrm{law}_{\nu_{l}}(R_{l}^{\prime}))\rightarrow0$.
Together with $R_{l}^{\prime}\overset{\nu_{l}}{\Longrightarrow}R^{\prime}$ and
(\ref{Eq_aaldhgtyh}) this gives (\ref{Eq_LocalGoal}).
\end{proof}

%

\vspace{0.3cm}%

\begin{remark}
[\textbf{The auxiliary measures }$\nu_{l,E}$]In the simplest cases, we can
take $\nu_{l,E}:=\nu_{l,\{R_{l}\in E\}}$. However, constructing suitable
$\tau_{l,E}$ is sometimes easier if we use a slightly different sequence of
measures, obtained as follows.

It is easily seen that if $(B_{l})$ and $(B_{l}^{\prime})$ are sequences in
$\mathcal{A}$ with $\mu(B_{l}\bigtriangleup B_{l}^{\prime})=o(\mu(B_{l}))$,
then $d_{\mathfrak{P}}(\mu_{B_{l}},\mu_{B_{l}^{\prime}})\rightarrow0$.
Therefore, if $\nu_{l}=\mu_{A_{l}}$ and the sets $B_{l,E}^{\prime}%
\in\mathcal{A}$ are such that $\mu(B_{l,E}^{\prime}\bigtriangleup(A_{l}%
\cap\{R_{l}\in E\}))=o(\mu(A_{l}\cap\{R_{l}\in E\}))$, then the measures
$\nu_{l,E}:=\mu_{B_{l,E}^{\prime}}$ satisfy $d_{\mathfrak{P}}(\nu_{l,E}%
,\nu_{l,\{R_{l}\in E\}})\rightarrow0$.
\end{remark}

%

\vspace{0.3cm}%

\section{Proofs for return- and hitting-time processes\label{S_PfsRH}}%

\noindent
\textbf{Asymptotic invariance of hitting time processes.} A sequence of
hitting time processes for rare events, that is, a sequence $(R_{l})$ of
variables $R_{l}=\mu(A_{l})\Phi_{A_{l}}$, viewed through the single measure
$\mu$, is asymptotically $T$-invariant in measure (\cite{Z7}, Corollary 6). We
provide a more precise statement in the next proposition.

\begin{proposition}
[\textbf{Asymptotic invariance in measure of hitting-time processes}%
]\label{P_AsyInvarHitProc}Let $(X,\mathcal{A},\mu,T)$ be an ergodic
probability preserving system. \newline\textbf{a)} For every set
$A\in\mathcal{A}$ and integer $m\geq0$,%
\begin{equation}
d_{[0,\infty]^{\mathbb{N}_{0}}}(\mu(A)\Phi_{A}\circ T^{m},\mu(A)\Phi_{A})\leq
m\mu(A)\text{ \quad on }\{\varphi_{A}>m\}\text{.} \label{Eq_uzuzuzuzu}%
\end{equation}
\textbf{b)} Suppose that $(A_{l})_{l\geq1}$ is a sequence of asymptotically
rare events, and set $R_{l}:=\mu(A_{l})\Phi_{A_{l}}:X\rightarrow
\lbrack0,\infty]^{\mathbb{N}_{0}}$. Then $(R_{l})$\emph{\ }is asymptotically
$T$-invariant in measure.\newline\textbf{c)} Likewise, if $R_{l}:=\mu
(A_{l})\Phi_{A_{l}}\circ T_{A_{l}}:X\rightarrow\lbrack0,\infty]^{\mathbb{N}}$,
then $(R_{l})$\emph{\ }is asymptotically $T$-invariant in measure.
\end{proposition}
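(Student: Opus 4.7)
The plan is to establish (a) by direct pointwise computation, and then to deduce (b) and (c) from (a) together with an analogous elementary identity for the first-entrance map $T_{A_l}$.

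For (a), the key step is to isolate two identities valid pointwise on $\{\varphi_A > m\}$. Since by hypothesis $T^j x \notin A$ for $1 \leq j \leq m$, the definition of $\varphi_A$ yields $\varphi_A(T^m x) = \varphi_A(x) - m$, and hence $T_A(T^m x) = T^{\varphi_A(x)-m}(T^m x) = T^{\varphi_A(x)} x = T_A(x)$. An easy induction in $j$ then gives $T_A^j \circ T^m = T_A^j$ for every $j \geq 1$ on the same set. Consequently the $j$-th coordinates of $\mu(A)\Phi_A \circ T^m$ and $\mu(A)\Phi_A$ coincide for $j \geq 1$, while the $0$-th coordinates differ by precisely $\mu(A)m$. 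Using that $d_{[0,\infty]}(s,t) = |e^{-s}-e^{-t}| \leq |s-t|$ for $s,t \geq 0$, only the $j=0$ term of the product metric contributes, giving the bound $2^{-1} m \mu(A) \leq m\mu(A)$.

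For (b), I would apply (a) with $m = 1$. Since $\varphi_{A_l} \geq 1$ everywhere, $\{\varphi_{A_l} > 1\}^c = T^{-1} A_l$, whose $\mu$-measure equals $\mu(A_l) \to 0$. Off this exceptional set the bound is $\mu(A_l) \to 0$, so $d_{[0,\infty]^{\mathbb{N}_0}}(R_l \circ T, R_l) \to 0$ in $\mu$-measure.

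For (c), I would verify the stronger pointwise identity $T_{A_l} \circ T = T_{A_l}$ on $X \setminus T^{-1} A_l$. Indeed, whenever $Tx \notin A_l$, the convention $n \geq 1$ in the definition of $\varphi_{A_l}$ (which treats points in and out of $A_l$ uniformly) forces $\varphi_{A_l}(Tx) = \varphi_{A_l}(x) - 1$, whence $T_{A_l}(Tx) = T^{\varphi_{A_l}(x)} x = T_{A_l}(x)$. Therefore $R_l \circ T = R_l$ on the complement of $T^{-1} A_l$, which again has $\mu$-measure $\mu(A_l) \to 0$. There is no real obstacle here; the one point requiring minor care is that the standard convention for $\varphi_{A_l}$ makes the shift identity $\varphi_{A_l} \circ T = \varphi_{A_l} - 1$ hold on $X \setminus T^{-1} A_l$ without a case distinction between $x \in A_l$ and $x \notin A_l$.
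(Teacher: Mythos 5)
Your proposal is correct and follows essentially the same route as the paper: the pointwise shift identity $\Phi_A\circ T^m=\Phi_A-(m,0,0,\ldots)$ on $\{\varphi_A>m\}$ combined with $d_{[0,\infty]}(s,t)\leq|s-t|$ for part a), the $m=1$ case with $\mu(T^{-1}A_l)=\mu(A_l)\to0$ for part b), and the exact identity $T_{A_l}\circ T=T_{A_l}$ off $T^{-1}A_l$ for part c). Your explicit tracking of the weight $2^{-1}$ in the product metric and of the shift relation $\varphi_{A_l}\circ T=\varphi_{A_l}-1$ outside $T^{-1}A_l$ only makes the paper's argument slightly more explicit, not different.
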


\begin{proof}
\textbf{a)} For any $A\in\mathcal{A}$ and integer $m\geq0$,
\begin{equation}
\Phi_{A}=\Phi_{A}\circ T^{m}+(m,0,0,\ldots)\text{ \quad on }\{\varphi
_{A}>m\}\text{,} \label{Eq_KeyToAsyInvarianceOfHittProc}%
\end{equation}
and hence $d_{[0,\infty]^{\mathbb{N}_{0}}}(\mu(A)\Phi_{A}\circ T^{m}%
,\mu(A)\Phi_{A})=d_{[0,\infty]}(\mu(A)(\varphi_{A}-m),\mu(A)\varphi_{A})$ on
that set. Since $d_{[0,\infty]}(s,s+\delta)=e^{-s}(1-e^{-\delta})\leq\delta$
for all $s,\delta\in\lbrack0,\infty)$, (\ref{Eq_uzuzuzuzu}) follows.\newline%
\textbf{b)} Whenever $(A_{l})_{l\geq1}$ is a sequence of asymptotically rare
events, the $R_{l}$ satisfy
\begin{equation}
d_{[0,\infty]^{\mathbb{N}_{0}}}(R_{l}\circ T,R_{l})\leq\mu(A_{l})\text{ \quad
outside }\{\varphi_{A_{l}}\leq1\}=T^{-1}A_{l}\text{.}%
\end{equation}
By assumption this upper bound for the distance tends to zero, and since
$\mu(\varphi_{A_{l}}\leq1)=\mu(A_{l})$, so does the measure of the set on
which the bound fails to apply.\newline\textbf{c)} Analogous, using that
$R_{l}\circ T=R_{l}$ outside $T^{-1}A_{l}$.
\end{proof}

%

\vspace{0.3cm}%

The simple estimate (\ref{Eq_uzuzuzuzu}) immediately leads to sufficient
conditions for time delays $\tau_{l}$ to be admissible for a given sequence
$(\nu_{l})$ of initial densities.

\begin{proposition}
[\textbf{Admissible time delays for return or hitting processes}%
]\label{P_CharAdmissTimeDelay}Let $T$ be a measure-preserving map on the
probability space $(X,\mathcal{A},\mu)$, $(A_{l})$ a sequence of
asymptotically rare events, $(\nu_{l})$ a sequence in $\mathfrak{P}$, and
$\tau_{l}:X\rightarrow\mathbb{N}_{0}$, $l\geq1$, measurable functions.
\newline\textbf{a)} $(\tau_{l})_{l\geq1}$ is an admissible delay sequence for
the variables $R_{l}:X\rightarrow\lbrack0,\infty]^{\mathbb{N}_{0}}$ given by
$R_{l}:=\mu(A_{l})\Phi_{A_{l}}$ and the measures $\nu_{l}$ provided that
\begin{equation}
\mu(A_{l})\,\tau_{l}\overset{\nu_{l}}{\longrightarrow}0\quad\text{as
}l\rightarrow\infty, \label{Eq_ijijijiuhminiuhijijiji}%
\end{equation}
and
\begin{equation}
\nu_{l}\left(  \tau_{l}<\varphi_{A_{l}}\right)  \longrightarrow1\quad\text{as
}l\rightarrow\infty\text{.} \label{Eq_qwqwqwqqwqw}%
\end{equation}
\textbf{b)} Condition (\ref{Eq_qwqwqwqqwqw}) alone is sufficient for
$(\tau_{l})_{l\geq1}$ to be an admissible delay sequence for the variables
$R_{l}:X\rightarrow\lbrack0,\infty]^{\mathbb{N}}$ given by $R_{l}:=\mu
(A_{l})\Phi_{A_{l}}\circ T_{A_{l}}$ and the $\nu_{l}$.
\end{proposition}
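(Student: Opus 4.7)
The discussion preceding the proposition shows that admissibility of $(\tau_l)$ is implied by the convergence in $\nu_l$-measure
\[
d_{[0,\infty]^{\mathbb{N}_{0}}}(R_l\circ T^{\tau_l},R_l)\overset{\nu_l}{\longrightarrow}0\quad\text{as }l\to\infty.
\]
(For part b) the space is $[0,\infty]^{\mathbb{N}}$, but the same principle applies with the analogous product metric.) My plan is therefore to verify this sufficient condition in both cases by applying Proposition~\ref{P_AsyInvarHitProc}(a) pointwise, with the random integer $m=\tau_l(x)$, on the ``good'' set $\{\tau_l<\varphi_{A_l}\}$, which by hypothesis (\ref{Eq_qwqwqwqqwqw}) has $\nu_l$-measure tending to $1$.

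\textbf{Part a).} For any $x\in\{\tau_l<\varphi_{A_l}\}$, set $m:=\tau_l(x)$; then $x\in\{\varphi_{A_l}>m\}$, so Proposition~\ref{P_AsyInvarHitProc}(a) gives the pointwise bound
\[
d_{[0,\infty]^{\mathbb{N}_{0}}}\bigl(R_l\circ T^{\tau_l}(x),R_l(x)\bigr)\le \tau_l(x)\,\mu(A_l).
\]
Combining hypothesis (\ref{Eq_ijijijiuhminiuhijijiji}) with (\ref{Eq_qwqwqwqqwqw}) forces the left-hand side to $0$ in $\nu_l$-measure, which yields admissibility.

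\textbf{Part b).} For the process $R_l=\mu(A_l)\Phi_{A_l}\circ T_{A_l}$ I will exploit the sharper identity
\[
T_{A_l}\circ T^m=T_{A_l}\quad\text{on }\{\varphi_{A_l}>m\},
\]
which holds because $\varphi_{A_l}(T^m x)=\varphi_{A_l}(x)-m$ whenever $\varphi_{A_l}(x)>m$, so $T^{\varphi_{A_l}(T^m x)}(T^m x)=T^{\varphi_{A_l}(x)}(x)=T_{A_l}(x)$. Applying this pointwise with $m=\tau_l(x)$ shows that $R_l\circ T^{\tau_l}=R_l$ \emph{exactly} on $\{\tau_l<\varphi_{A_l}\}$, so the distance vanishes there and no analogue of (\ref{Eq_ijijijiuhminiuhijijiji}) is needed; hypothesis (\ref{Eq_qwqwqwqqwqw}) alone is enough. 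No substantive obstacle arises: the entire argument is pointwise and draws only on observations already recorded in Proposition~\ref{P_AsyInvarHitProc}; the only care required is to apply their constant-$m$ statements with the random value $m=\tau_l(x)$ on the appropriate level sets $\{\tau_l=m\}\cap\{\varphi_{A_l}>m\}$.
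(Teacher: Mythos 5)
Your argument is correct and coincides with the paper's own proof: part a) applies the estimate of Proposition \ref{P_AsyInvarHitProc} a) on the level sets $\{\tau_l=m\}$ to get $d_{[0,\infty]^{\mathbb{N}_0}}(R_l\circ T^{\tau_l},R_l)\le\tau_l\,\mu(A_l)$ on $\{\varphi_{A_l}>\tau_l\}$ and then combines (\ref{Eq_ijijijiuhminiuhijijiji}) and (\ref{Eq_qwqwqwqqwqw}) with the sufficient criterion (\ref{Eq_vbvbvbvbvbvbvbvvbvbvbvbvffff}), while part b) rests on the identity $T_{A_l}\circ T^{\tau_l}=T_{A_l}$ on $\{\tau_l<\varphi_{A_l}\}$, exactly as in the paper. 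No gaps.
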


\begin{proof}
\textbf{a)} For arbitrary $A\in\mathcal{A}$ and any measurable $\tau
:X\rightarrow\mathbb{N}_{0}$, we can apply (\ref{Eq_uzuzuzuzu}) on each set
$\{\tau=m\}$, $m\geq0$, to see that
\begin{equation}
d_{[0,\infty]^{\mathbb{N}_{0}}}(\mu(A)\Phi_{A}\circ T^{\tau},\mu(A)\Phi
_{A})\leq\tau\mu(A)\text{ \quad on }\{\varphi_{A}>\tau\}\text{.}%
\end{equation}
Now take any $\varepsilon>0$. By the above we find that for every $l$,
\begin{align}
d_{[0,\infty]^{\mathbb{N}_{0}}}(R_{l}\circ T^{\tau_{l}},R_{l})  &  \leq
\tau_{l}\mu(A_{l})\text{ \quad on }\{\varphi_{A_{l}}>\tau_{l}\}\nonumber\\
&  <\varepsilon\text{ \qquad\qquad\ on }\{\varphi_{A_{l}}>\tau_{l}\}\cap
\{\mu(A_{l})\,\tau_{l}<\varepsilon\}\text{.}%
\end{align}
But (\ref{Eq_ijijijiuhminiuhijijiji}) and (\ref{Eq_qwqwqwqqwqw}) ensure that
$\nu_{l}(\{\varphi_{A_{l}}>\tau_{l}\}\cap\{\mu(A_{l})\,\tau_{l}<\varepsilon
\})\rightarrow1$ as $l\rightarrow\infty$, which proves our claim via the
sufficient condition (\ref{Eq_vbvbvbvbvbvbvbvvbvbvbvbvffff}).\newline%
\textbf{b)} Since, for any $A$ and measurable $\tau$ we have $T_{A}\circ
T^{\tau}=T_{A}$ on $\{\tau<\varphi_{A}\}$, we see that $R_{l}\circ T^{\tau
_{l}}=R_{l}$ on $\{\tau_{l}<\varphi_{A_{l}}\}$, and the result follows.
\end{proof}

%

\vspace{0.3cm}%

We can thus establish the first theorem advertised in this paper.

\begin{proof}
[\textbf{Proof of Theorem \ref{T_ProcessesAsymptoticToHittingTimeProc}}%
]Conditions (\ref{Eq_kllklklklklk}) and (\ref{Eq_opopopopo}) guarantee, via
Propositions \ref{P_AsyInvarHitProc} b) and \ref{P_CharAdmissTimeDelay} a),
that $(R_{l})$ is asymptotically $T$-invariant in measure, and that $(\tau
_{l})$ is an admissible delay sequence for $(R_{l}):=(\mu(A_{l})\Phi_{A_{l}})$
and $(\nu_{l})$. Now (\ref{Eq_hbhbhbhbhbkkkk}) allows us to apply Proposition
\ref{P_AsyInvarSeqNuLVsMu}.
\end{proof}

%

\vspace{0.3cm}%
%

\vspace{0.3cm}%
%

\noindent
\textbf{Finite-dimensional marginals and distributional convergence.} A
sequence $s\in\lbrack0,\infty]^{\mathbb{N}_{0}}$ will be called
\emph{finite-valued} in case $s\in\lbrack0,\infty)^{\mathbb{N}_{0}}$. Let
$\Phi=(\varphi^{(j)})_{j\geq0}$ be a \emph{random sequence in} $[0,\infty]$,
that is, a random element of $[0,\infty]^{\mathbb{N}_{0}}$. We let
$\Phi^{\lbrack d]}:=(\varphi^{(0)},\ldots,\varphi^{(d-1)})$ denote its initial
piece of length $d$, $d\geq1$. The (possibly degenerate) \emph{distribution
function} of the random vector $\Phi^{\lbrack d]}$ is $F^{[d]}:[0,\infty
)^{d}\rightarrow\lbrack0,1]$, $F^{[d]}(t_{0},\ldots,t_{d-1}):=\Pr
[\varphi^{(0)}\leq t_{0},\ldots,\varphi^{(d-1)}\leq t_{d-1}]$.

Assume that each $\Phi_{l}$, $l\geq1$, is a random sequence in $[0,\infty) $
with finite-dimensional distribution functions $F_{l}^{[d]}:[0,\infty
)^{d}\rightarrow\lbrack0,1]$, $d\geq1$. Abbreviating $\{F_{l}^{[d]}%
\}:=\{F_{l}^{[d]}\}_{d\geq1}$ we shall write
\begin{equation}
\{F_{l}^{[d]}\}\Longrightarrow\{F^{[d]}\}\text{\quad as }l\rightarrow
\infty\text{,}%
\end{equation}
if all $F_{l}^{[d]}$ converge weakly, as $l\rightarrow\infty$, to the
corresponding distribution functions $F^{[d]}$ of $\Phi$, that is, for every
$d\geq1$ we have $F_{l}^{[d]}(t_{0},\ldots,t_{d-1})\rightarrow F^{[d]}%
(t_{0},\ldots,t_{d-1})$ at all continuity points $(t_{0},\ldots,t_{d-1})$ of
$F^{[d]}$.

\begin{remark}
This is the mode of convergence studied in \cite{Z11}, where it was denoted by
$\Phi_{l}\Longrightarrow\Phi$. It is closely related to the present meaning of
$\Phi_{l}\Longrightarrow\Phi$ (distributional convergence of random elements
of $[0,\infty]^{\mathbb{N}_{0}}$), which clearly implies $\{F_{l}%
^{[d]}\}\Longrightarrow\{F^{[d]}\}$. In fact, the two notions coincide in case
the $\Phi_{l}$ and $\Phi$ are a.s. finite-valued, which is always the case for
return-time processes $\Phi_{A_{l}}$ viewed through $\mu_{A_{l}}$, and their
limits $\widetilde{\Phi}$ (recall (\ref{Eq_BasicPropsAsyRetProc})).
\end{remark}

%

\vspace{0.3cm}%
%

\noindent
\textbf{The general duality between return- and hitting-time processes.} It is
a basic fact that for any sequence $(A_{l})_{l\geq1}$ of asymptotically rare
events, its return-time statistics and its hitting-time statistics are
intimately related to each other, as established in \cite{HaydnLacVai05}. This
result has been extended to the level of processes in \cite{Z11} (see also
\cite{Ma}), where we proved

\begin{theorem}
[\textbf{Hitting-time process versus return-time process; }\cite{Z11}%
]\label{T_HittingTimeVsReturnTimeProc}Let $(X,\mathcal{A},\mu,T)$ be an
ergodic probability-preserving system, and $(A_{l})_{l\geq1}$ a sequence of
asymptotically rare events. Let $\{F_{l}^{[d]}\}$ and $\{\widetilde{F}%
_{l}^{[d]}\}$ be the collections of finite-dimensional distribution functions
of $\mu(A_{l})\Phi_{A_{l}}$ under $\mu$ and the $\mu_{A_{l}}$, respectively.
Then
\begin{equation}
\{F_{l}^{[d]}\}\Longrightarrow\,\{F^{[d]}\}\text{\quad for some process }%
\Phi\text{ in }[0,\infty]\text{ with d.f.s }\{F^{[d]}\}
\label{Eq_CgeOfHittingProc}%
\end{equation}
iff
\begin{equation}
\{\widetilde{F}_{l}^{[d]}\}\Longrightarrow\,\{\widetilde{F}^{[d]}\}\text{\quad
for some process }\widetilde{\Phi}\text{ in }[0,\infty]\text{ with d.f.s
}\{\widetilde{F}^{[d]}\}\text{.} \label{Eq_CgeOfRetProc}%
\end{equation}
In this case, the sub-probability distribution functions $F^{[d]}$ and
$\widetilde{F}^{[d]}$ of $\Phi^{\lbrack d]}$ and $\widetilde{\Phi}^{[d]}$
satisfy, for any $d\geq0$ (where $\widetilde{F}^{[0]}:=1$) and $t_{j}\geq0$,
\newline%
\begin{equation}%
{\textstyle\int_{0}^{t_{0}}}
\,[\widetilde{F}^{[d]}(t_{1},\ldots,t_{d})-\widetilde{F}^{[d+1]}%
(s,t_{1},\ldots,t_{d})]\,ds=F^{[d+1]}(t_{0},t_{1},\ldots,t_{d})\text{.}
\label{Eq_RelationBetweenHigherDimDFs}%
\end{equation}
Through (\ref{Eq_RelationBetweenHigherDimDFs}), the families $\{F^{[d]}\}$ and
$\{\widetilde{F}^{[d]}\}$ uniquely determine each other.
\end{theorem}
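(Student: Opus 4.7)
The plan is to derive an exact Kac-type identity expressing $F_l^{[d+1]}$ as a Riemann-sum approximation to the right-hand side of (\ref{Eq_RelationBetweenHigherDimDFs}), and then pass to the limit in both directions, exploiting the resulting functional equation to identify subsequential limits uniquely. Let $R_l := \varphi_{A_l}$ restricted to $A_l$. The tower decomposition $\int_X f\,d\mu = \mu(A_l) \int_{A_l} \sum_{j=0}^{R_l(y)-1} f(T^j y)\,d\mu_{A_l}(y)$ applies. For $y \in A_l$ and $0 \leq j < R_l(y)$, inspection shows $\varphi_{A_l}(T^j y) = R_l(y) - j$ while $T_{A_l}(T^j y) = T_{A_l}(y)$, hence $\varphi_{A_l} \circ T_{A_l}^i(T^j y) = R_l(T_{A_l}^i y)$ for $i \geq 1$. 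Plugging this into the indicator of $\{\mu(A_l) \Phi_{A_l}^{[d+1]} \leq (t_0, \ldots, t_d)\}$ and substituting $k = R_l(y) - j$ yields, with $n_l := \lfloor t_0/\mu(A_l) \rfloor$,
\[
F_l^{[d+1]}(t_0, t_1, \ldots, t_d) = \mu(A_l) \sum_{k=1}^{n_l} \mu_{A_l}\bigl(R_l \geq k,\ \mu(A_l) R_l \circ T_{A_l}^i \leq t_i,\ i=1,\ldots,d\bigr).
\]

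Using stationarity of $\widetilde{\Phi}_{A_l}$ under $\mu_{A_l}$ to reindex coordinates $1, \ldots, d$ as $0, \ldots, d-1$, the summand equals $\widetilde{F}_l^{[d]}(t_1, \ldots, t_d) - \widetilde{F}_l^{[d+1]}\bigl((k-1)\mu(A_l), t_1, \ldots, t_d\bigr)$, so the whole expression is a left Riemann sum of mesh $\mu(A_l)$ for
\[
\int_0^{t_0} \bigl[\widetilde{F}_l^{[d]}(t_1, \ldots, t_d) - \widetilde{F}_l^{[d+1]}(s, t_1, \ldots, t_d)\bigr] \, ds,
\]
with $o(1)$ discretization error as $l \to \infty$. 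Assuming (\ref{Eq_CgeOfRetProc}), bounded convergence applied to this integral yields (\ref{Eq_CgeOfHittingProc}) with $F^{[d+1]}$ defined by (\ref{Eq_RelationBetweenHigherDimDFs}). For the converse, Helly's selection theorem lets me extract from any subsequence a further subsequence along which every $\widetilde{F}_l^{[d]}$ converges to some sub-probability distribution function $\widetilde{G}^{[d]}$; passing to the limit in the Riemann-sum identity forces $\{\widetilde{G}^{[d]}\}$ to satisfy (\ref{Eq_RelationBetweenHigherDimDFs}) in place of $\{\widetilde{F}^{[d]}\}$.

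Since the right-hand side of (\ref{Eq_RelationBetweenHigherDimDFs}) is absolutely continuous in $t_0$, Lebesgue differentiation a.e.\ in $t_0$ gives
\[
\widetilde{G}^{[d+1]}(t_0, t_1, \ldots, t_d) = \widetilde{G}^{[d]}(t_1, \ldots, t_d) - \partial_{t_0} F^{[d+1]}(t_0, t_1, \ldots, t_d),
\]
so by induction on $d$ (base case $\widetilde{F}^{[0]} \equiv 1$) and right-continuity in each argument, each $\widetilde{G}^{[d]}$ is uniquely determined by $\{F^{[d]}\}$. All subsequential limits therefore coincide, giving (\ref{Eq_CgeOfRetProc}); the same differentiation argument shows that (\ref{Eq_RelationBetweenHigherDimDFs}) mutually determines $\{F^{[d]}\}$ and $\{\widetilde{F}^{[d]}\}$.

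The principal technical delicacy is accommodating mass that may escape to infinity: the limit return-time process may satisfy $\mathbb{E}[\widetilde{\varphi}^{(0)}] < 1$, so $F^{[d]}$ and $\widetilde{F}^{[d]}$ are only sub-probability distribution functions on $[0, \infty)^d$, and the passage from the Riemann sum to the integral must not accumulate hidden mass near $s = \infty$. The uniform bound $\mu(A_l) \sum_{k=1}^{\lfloor T/\mu(A_l) \rfloor} \mu_{A_l}(R_l \geq k) \leq \mu(A_l) \int R_l\, d\mu_{A_l} = 1$ from Kac's formula, combined with a truncation-in-$T$ argument, makes the passage rigorous and dispels this concern.
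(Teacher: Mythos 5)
Your proof is correct and follows essentially the same route as the source: the theorem is quoted here from \cite{Z11} rather than reproved, and your Kac-tower/Riemann-sum identity is precisely the process-level Haydn--Lacroix--Vaienti duality underlying that proof (it is, up to an error of at most $2\mu(A_l)$, the content of Lemma 4.1 of \cite{Z11}, which this paper invokes elsewhere), with the Helly selection and uniqueness-via-differentiation steps matching the original argument. The only point you gloss over is the appeal to bounded convergence: since $\widetilde{F}_l^{[d+1]}\to\widetilde{F}^{[d+1]}$ is only guaranteed at continuity points, you should first fix $(t_1,\ldots,t_d)$ avoiding the (at most countably many) atoms of the coordinate marginals, so that convergence of $\widetilde{F}_l^{[d+1]}(s,t_1,\ldots,t_d)$ holds for all but countably many $s$, and then recover all arguments by monotonicity and right-continuity --- a standard repair that does not affect the structure of your argument.
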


We will heavily rely on this duality.%

\vspace{0.3cm}%
%

\noindent
\textbf{The case of Poisson asymptotics.} Below we will primarily be
interested in the particular case where the limit process is an iid sequence
of normalized exponentially distributed random variables $\Phi_{\mathrm{Exp}}%
$. If this particular limit occurs, then it automatically occurs both for the
hitting-times and for the return times, because\ of Theorem
\ref{T_HittingTimeVsReturnTimeProc} and

\begin{proposition}
[\textbf{Characterizing }$\Phi_{\mathrm{Exp}}$; \cite{Z11}]%
\label{P_CharacterizeExpos}Let $\Phi$\ be some stationary random sequence in
$[0,\infty)$. Then $\Phi=\Phi_{\mathrm{Exp}}$ iff the finite-dimensional
marginals have distribution functions $F^{[d]}$ satisfying
\begin{equation}
F^{[d+1]}(t_{0},t_{1},\ldots,t_{d})=%
{\textstyle\int_{0}^{t_{0}}}
\,[F^{[d]}(t_{1},\ldots,t_{d})-F^{[d+1]}(s,t_{1},\ldots,t_{d})]\,ds\text{ }
\label{Eq_asdfg}%
\end{equation}
whenever $d\geq0$ and $t_{j}\geq0$.
\end{proposition}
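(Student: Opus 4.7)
The plan is to handle each direction of the ``iff'' separately, with the sufficiency direction as the main content. The necessity direction is a direct computation; the sufficiency direction proceeds by induction on $d$, at each step reducing the recursion (\ref{Eq_asdfg}) to a first-order linear ODE in the variable $t_0$ with zero initial condition.

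For necessity, assume $\Phi = \Phi_{\mathrm{Exp}}$, so $F^{[d]}(t_0,\ldots,t_{d-1}) = \prod_{j=0}^{d-1}(1-e^{-t_j})$. Then the integrand on the right-hand side of (\ref{Eq_asdfg}) factors as $\prod_{j=1}^{d}(1-e^{-t_j})\cdot e^{-s}$, and integrating over $s\in[0,t_0]$ produces $\prod_{j=1}^{d}(1-e^{-t_j})\cdot(1-e^{-t_0}) = F^{[d+1]}(t_0,\ldots,t_d)$, as required.

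For sufficiency, I would argue by induction on $d\geq 0$ that $F^{[d]}(t_0,\ldots,t_{d-1}) = \prod_{j=0}^{d-1}(1-e^{-t_j})$. In the base case $d=0$, the recursion reads $F^{[1]}(t_0) = \int_0^{t_0}(1 - F^{[1]}(s))\,ds$, which makes $F^{[1]}$ absolutely continuous with $(F^{[1]})'(t_0) = 1 - F^{[1]}(t_0)$ almost everywhere and $F^{[1]}(0) = 0$; the unique solution is $F^{[1]}(t_0) = 1 - e^{-t_0}$. For the inductive step, fix $t_1,\ldots,t_d\geq 0$, set $c := \prod_{j=1}^{d}(1-e^{-t_j})$ (which by the induction hypothesis equals $F^{[d]}(t_1,\ldots,t_d)$), and let $G(t_0) := F^{[d+1]}(t_0,t_1,\ldots,t_d)$. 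The recursion becomes $G(t_0) = \int_0^{t_0}(c - G(s))\,ds$, whence $G$ is absolutely continuous, $G'(t_0) = c - G(t_0)$ almost everywhere, and $G(0) = 0$. The unique solution is $G(t_0) = c\,(1-e^{-t_0}) = \prod_{j=0}^{d}(1-e^{-t_j})$, which closes the induction.

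Having matched all finite-dimensional marginals of $\Phi$ with those of $\Phi_{\mathrm{Exp}}$, Kolmogorov's consistency theorem yields $\mathrm{law}(\Phi) = \mathrm{law}(\Phi_{\mathrm{Exp}})$. I do not expect a serious obstacle: the only delicate point is the regularity needed to differentiate the integral equation, and this is automatic because each distribution function $s\mapsto F^{[d+1]}(s,t_1,\ldots,t_d)$ is monotone in $s$ and bounded by $1$, so the integrand in (\ref{Eq_asdfg}) is Lebesgue-integrable and the left-hand side is absolutely continuous in $t_0$. The stationarity hypothesis plays no active role in the argument itself; it is stated because $\Phi_{\mathrm{Exp}}$ is stationary and the characterization is naturally applied to stationary candidate limits $\widetilde{\Phi}$ arising in Theorem~\ref{T_HittingTimeVsReturnTimeProc}.
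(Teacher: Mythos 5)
Your proof is correct. Note that the paper itself does not prove this proposition directly (it is quoted from \cite{Z11}); what it does prove is the $\theta$-generalization, Proposition \ref{P_HandsUp}, whose $\theta=1$ case is exactly this statement, and the route taken there differs from yours. After solving the one-dimensional fixed-point equation (part a)), the paper's part b) uses stationarity to give every coordinate the same marginal and then shows that the conditional distribution function $s\mapsto\widetilde{F}^{[d+1]}(s,t_{1},\ldots,t_{d})/\widetilde{F}^{[d]}(t_{1},\ldots,t_{d})$ satisfies the same one-dimensional equation, so that $\widetilde{\varphi}^{(0)}$ is independent of the next $d$ coordinates; stationarity is then invoked again to upgrade this to full independence. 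You instead run an induction on $d$, freezing $(t_{1},\ldots,t_{d})$ and solving the linear integral equation in $t_{0}$, which yields all finite-dimensional distribution functions as products $\prod_{j}(1-e^{-t_{j}})$ outright. The ODE core is identical, but your organization avoids conditioning (no positive-probability caveats) and, as you observe, never uses stationarity; indeed, letting $t_{0}\rightarrow\infty$ in your identity $F^{[d+1]}(t_{0},t_{1},\ldots,t_{d})=F^{[d]}(t_{1},\ldots,t_{d})(1-e^{-t_{0}})$ shows the recursion forces stationarity by itself. The conditioning formulation in the paper has the advantage of carrying over verbatim to the compound case $\theta<1$, where the marginal has an atom at the origin (though a direct induction would also work there). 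Two cosmetic remarks: identifying $\mathrm{law}(\Phi)$ from its finite-dimensional distribution functions is a uniqueness ($\pi$-system) statement rather than an application of Kolmogorov's extension theorem, and your a.e.-derivative step is sound because $G$ is absolutely continuous, e.g. $t\mapsto e^{t}\bigl(G(t)-c(1-e^{-t})\bigr)$ has vanishing derivative a.e. and is therefore constant.
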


%

\vspace{0.3cm}%

Combining the above with Theorem \ref{T_ProcessesAsymptoticToHittingTimeProc}
we can now prove the abstract temporal Poisson limit theorem.%

\vspace{0.2cm}%

\begin{proof}
[\textbf{Proof of Theorem \ref{T_MOreFlexiblePoissonLimitFromCompactness}}%
]\textbf{(i)} Let $R_{l}:=\mu(A_{l})\Phi_{A_{l}}$, $l\geq1$, which gives an
asymptotically invariant sequence of Borel measurable maps $R_{l}%
:X\rightarrow\lbrack0,\infty]^{\mathbb{N}_{0}}$. Due to compactness of
$\mathfrak{M}([0,\infty]^{\mathbb{N}_{0}})$, it suffices to show that for
every subsequence of indices $l_{j}\nearrow\infty$ along which $R_{l_{j}%
}\overset{\mu_{A_{l_{j}}}}{\Longrightarrow}\widetilde{\Phi}$ and $R_{l_{j}%
}\overset{\mu}{\Longrightarrow}\Phi$ for some random elements $\widetilde
{\Phi},\Phi$ of $[0,\infty]^{\mathbb{N}_{0}}$, both limits are iid with
marginal $\widetilde{F}_{\mathrm{Exp}}$. Focusing on such a subsequence, we
thus assume that
\begin{equation}
R_{l}\overset{\mu_{A_{l}}}{\Longrightarrow}\widetilde{\Phi}\text{ \quad and
\quad}R_{l}\overset{\mu}{\Longrightarrow}\Phi\text{ \quad as }l\rightarrow
\infty\text{.} \label{Eq_zdnczntzhzzz}%
\end{equation}
We need to show that
\begin{equation}
\mathrm{law}(\widetilde{\Phi})=\mathrm{law}(\Phi)=\mathrm{law}(\widetilde
{\Phi}_{\mathrm{Exp}}). \label{Eq_TheBigPrize2}%
\end{equation}
\textbf{(ii)} Take any $\varepsilon>0$ and choose $(\nu_{l})_{l\geq1}$ in
$\mathfrak{P}$ with $d_{\mathfrak{P}}(\nu_{l},\mu_{A_{l}})<\varepsilon$ for
all $l$, a compact set $\mathfrak{K}\subseteq\mathfrak{P}$, and measurable
functions $\tau_{l}:X\rightarrow\mathbb{N}_{0}$ such that
(\ref{Eq_DontWaitTooLong1})-(\ref{Eq_TheFabulousCompactnessCondition1}) hold.
By Theorem \ref{T_ProcessesAsymptoticToHittingTimeProc} we have $D_{[0,\infty
]^{\mathbb{N}_{0}}}(\mathrm{law}_{\nu_{l}}(R_{l}),\mathrm{law}_{\mu}%
(R_{l}))\rightarrow0$, so that
\begin{equation}
R_{l}\overset{\nu_{l}}{\Longrightarrow}\Phi\text{ \quad as }l\rightarrow
\infty\text{.} \label{Eq_ookmokmokmokm}%
\end{equation}
\textbf{(iii)} In view of (\ref{Eq_ererrewrerwrertrerwerq}), however,
$D_{[0,\infty]^{\mathbb{N}_{0}}}(\mathrm{law}_{\nu_{l}}(R_{l}),\mathrm{law}%
_{\mu_{A_{l}}}(R_{l}))<\varepsilon$ for all $l$, and letting $l\rightarrow
\infty$, (\ref{Eq_zdnczntzhzzz}) and (\ref{Eq_ookmokmokmokm}) allow us to
conclude that $D_{[0,\infty]^{\mathbb{N}_{0}}}(\mathrm{law}(\Phi
),\mathrm{law}(\widetilde{\Phi}))\leq\varepsilon$. But $\varepsilon>0$ was
arbitrary, and therefore%
\begin{equation}
\mathrm{law}(\Phi)=\mathrm{law}(\widetilde{\Phi})\text{.}%
\end{equation}
For the finite dimensional distribution functions $\{F^{[d]}\}$ and
$\{\widetilde{F}^{[d]}\}$ of $\Phi$ and $\widetilde{\Phi}$ this means that
$\{F^{[d]}\}=\{\widetilde{F}^{[d]}\}$. Together with the fact that
$\{F^{[d]}\}$ and $\{\widetilde{F}^{[d]}\}$ are related to each other as in
(\ref{T_HittingTimeVsReturnTimeProc}) of Theorem
\ref{T_HittingTimeVsReturnTimeProc}, this shows that $\{F^{[d]}\}$ satisfies
condition (\ref{Eq_asdfg}), and (\ref{Eq_TheBigPrize2}) follows by Proposition
\ref{P_CharacterizeExpos}.
\end{proof}

%

\vspace{0.3cm}%
%

\noindent
\textbf{Including a point mass at zero.} The argument for convergence to
$\widetilde{\Phi}_{(\mathrm{Exp},\theta)}$ is similar to that for
$\widetilde{\Phi}_{\mathrm{Exp}}$. As a warm-up we characterize the
one-dimensional distribution function $\widetilde{F}_{(\mathrm{Exp},\theta)} $
through a generalization of the fixed point equation $\widetilde{F}(t)=%
{\textstyle\int_{0}^{t}}
[1-\widetilde{F}(s)]\,ds$ distinguishing the exponential distribution function
$F=\widetilde{F}_{(\mathrm{Exp},1)}$. In a second step, we provide a
characterization of $\widetilde{\Phi}_{(\mathrm{Exp},\theta)}$ similar to
Proposition \ref{P_CharacterizeExpos}.

\begin{proposition}
[\textbf{Characterizing }$\widetilde{F}_{(\mathrm{Exp},\theta)}$ \textbf{and}
$\widetilde{\Phi}_{(\mathrm{Exp},\theta)}$]\label{P_HandsUp}Take any
$\theta\in(0,1]$. \newline\textbf{a)} If $\widetilde{F}$ is a probability
distribution function on $[0,\infty)$, then $\widetilde{F}=\widetilde
{F}_{(\mathrm{Exp},\theta)}$ iff
\begin{equation}
\widetilde{F}(t)=(1-\theta)+\theta%
{\textstyle\int_{0}^{t}}
[1-\widetilde{F}(s)]\,ds\text{ \quad for }t\geq0\text{.}
\label{Eq_CharDelayedRetDF}%
\end{equation}
\textbf{b)} If $\widetilde{\Phi}$ is a stationary sequence of random variables
in $[0,\infty)$ with finite-dimensional distribution functions $\widetilde
{F}^{[d]}$ (where $\widetilde{F}^{[0]}:=1$), then $\mathrm{law}(\widetilde
{\Phi})=\mathrm{law}(\widetilde{\Phi}_{(\mathrm{Exp},\theta)})$ iff
\begin{align}
\widetilde{F}^{[d+1]}(t_{0},t_{1},\ldots,t_{d})  &  =(1-\theta)\,\widetilde
{F}^{[d]}(t_{1},\ldots,t_{d})\label{Eq_CharHigherDimCompoundDF}\\
&  +\theta%
{\textstyle\int_{0}^{t_{0}}}
\,[\widetilde{F}^{[d]}(t_{1},\ldots,t_{d})-\widetilde{F}^{[d+1]}%
(s,t_{1},\ldots,t_{d})]\,ds\nonumber
\end{align}
whenever $d\geq0$ and $t_{j}\geq0$.
\end{proposition}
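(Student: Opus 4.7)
The plan is to reduce everything to a simple first-order linear ODE.

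\textbf{Part (a).} The ``only if'' direction is a direct check: with $1-\widetilde{F}_{(\mathrm{Exp},\theta)}(s)=\theta e^{-\theta s}$, the right-hand side of (\ref{Eq_CharDelayedRetDF}) integrates to $(1-\theta)+\theta(1-e^{-\theta t})=\widetilde{F}_{(\mathrm{Exp},\theta)}(t)$. For ``if'', observe that (\ref{Eq_CharDelayedRetDF}) forces $\widetilde{F}(0)=1-\theta$ and makes $\widetilde{F}$ absolutely continuous with $\widetilde{F}^{\prime}(t)=\theta(1-\widetilde{F}(t))$ a.e.\ Uniqueness of solutions to this linear ODE with initial value $1-\theta$ then gives $\widetilde{F}(t)=1-\theta e^{-\theta t}$.

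\textbf{Part (b), ``only if''.} If $\widetilde{\Phi}=\widetilde{\Phi}_{(\mathrm{Exp},\theta)}$ is iid with marginal $\widetilde{F}_{(\mathrm{Exp},\theta)}$, then $\widetilde{F}^{[d+1]}(t_{0},\ldots,t_{d})=\widetilde{F}_{(\mathrm{Exp},\theta)}(t_{0})\,\widetilde{F}^{[d]}(t_{1},\ldots,t_{d})$, and multiplying (\ref{Eq_CharDelayedRetDF}) through by $\widetilde{F}^{[d]}(t_{1},\ldots,t_{d})$ yields (\ref{Eq_CharHigherDimCompoundDF}).

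\textbf{Part (b), ``if''.} I proceed by induction on $d$. The case $d=0$ is part (a). Assume the marginals agree with those of $\widetilde{\Phi}_{(\mathrm{Exp},\theta)}$ up to level $d$. Fix $t_{1},\ldots,t_{d}\geq0$ and set $H:=\widetilde{F}^{[d]}(t_{1},\ldots,t_{d})$ and $G(t):=\widetilde{F}^{[d+1]}(t,t_{1},\ldots,t_{d})$. Then (\ref{Eq_CharHigherDimCompoundDF}) becomes
\[
G(t)=(1-\theta)H+\theta\int_{0}^{t}[H-G(s)]\,ds,
\]
so $G$ is absolutely continuous with $G(0)=(1-\theta)H$ and $G^{\prime}(t)=\theta(H-G(t))$ a.e. The unique solution is $G(t)=H\,(1-\theta e^{-\theta t})=\widetilde{F}_{(\mathrm{Exp},\theta)}(t)\,H$, hence
\[
\widetilde{F}^{[d+1]}(t_{0},t_{1},\ldots,t_{d})=\widetilde{F}_{(\mathrm{Exp},\theta)}(t_{0})\,\widetilde{F}^{[d]}(t_{1},\ldots,t_{d}),
\]
and the induction hypothesis upgrades the right-hand side to the full $(d+1)$-fold product of $\widetilde{F}_{(\mathrm{Exp},\theta)}$, completing the step.

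The main subtlety, and the only place stationarity plays a role, is in the last identity: without stationarity the symbol $\widetilde{F}^{[d]}(t_{1},\ldots,t_{d})$ appearing on the right-hand side of (\ref{Eq_CharHigherDimCompoundDF}) would refer to the marginal on coordinates $0,\ldots,d-1$, whereas I need to interpret it as the joint law of $(\widetilde{\varphi}^{(1)},\ldots,\widetilde{\varphi}^{(d)})$ in order to conclude independence of $\widetilde{\varphi}^{(0)}$ from the rest. Stationarity identifies these two laws and makes the inductive product structure go through. Apart from that, everything is a one-line ODE argument at each level.
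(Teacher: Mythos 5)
Your proof is correct and follows essentially the same route as the paper: both directions reduce to the one-dimensional fixed-point equation of part (a), solved as a linear ODE, applied to the section of $\widetilde{F}^{[d+1]}$ in its first variable with the remaining arguments frozen. The only difference is cosmetic: the paper divides by $\widetilde{F}^{[d]}(t_{1},\ldots,t_{d})$ and phrases the step as identifying the conditional distribution function of $\widetilde{\varphi}^{(0)}$ given $\{\widetilde{\varphi}^{(1)}\leq t_{1},\ldots,\widetilde{\varphi}^{(d)}\leq t_{d}\}$ (using stationarity there), whereas you keep the factor $H$ and obtain the product form of the finite-dimensional distribution functions directly by induction.
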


\begin{proof}
\textbf{a)} It is immediate that $\widetilde{F}_{(\mathrm{Exp},\theta)}$ from
(\ref{Eq_DefOneDimDFComp}) satisfies (\ref{Eq_CharDelayedRetDF}). For the
converse, assume (\ref{Eq_CharDelayedRetDF}) and let $F(t):=%
{\textstyle\int_{0}^{t}}
[1-\widetilde{F}(s)]\,ds$ for $t\geq0$. Due to (\ref{Eq_CharDelayedRetDF}) we
have $1-\widetilde{F}(s)=\theta\lbrack1-F(s)]$ and hence
\[
F(t)=\theta%
{\textstyle\int_{0}^{t}}
[1-F(s)]\,ds\text{ \quad for }t\geq0\text{.}%
\]
Therefore $F$ is $\mathcal{C}^{\infty}$ on $(0,\infty)$, and $\overline
{F}(t):=1-F(t)$ satisfies $\overline{F}^{\prime}=-\theta\overline{F}$ there.
Consequently, $\overline{F}(t)=ce^{-\theta t}$, and since $F(0^{+})=0$ we have
$c=1$.\newline\newline\textbf{b)} Using (\ref{Eq_CharDelayedRetDF}) it is
straightforward that the marginals of the iid sequence $\widetilde{\Phi
}_{(\mathrm{Exp},\theta)}$ satisfy (\ref{Eq_CharHigherDimCompoundDF}). For the
converse, assume that $\widetilde{\Phi}$ satisfies
(\ref{Eq_CharHigherDimCompoundDF}).

The $d=0$ case covered by part a) shows that $\widetilde{F}^{[1]}%
=\widetilde{F}_{(\mathrm{Exp},\theta)}$. Write $\widetilde{\Phi}%
=(\widetilde{\varphi}^{(j)})_{j\geq0}$, then by stationarity, each
$\widetilde{\varphi}^{(j)}$ has distribution $\widetilde{F}_{(\mathrm{Exp}%
,\theta)}$. We need to prove that the $\widetilde{\varphi}^{(j)}$ are
independent. Using stationarity again, we see that it suffices to check that%
\begin{equation}
\text{for }d\geq1\text{, the variable }\widetilde{\varphi}^{(0)}\text{ is
independent of }\{\widetilde{\varphi}^{(1)},\ldots,\widetilde{\varphi}%
^{(d)}\}\text{.} \label{Eq_bxcvyxvnyvnv1}%
\end{equation}
Fix any $d\geq1$, and take $(t_{1},\ldots,t_{d})$ such that $\widetilde
{F}^{[d]}(t_{1},\ldots,t_{d})>0$. Define
\begin{align*}
\widetilde{G}(s)  &  :=\Pr[\widetilde{\varphi}^{(0)}\leq s\mid\widetilde
{\varphi}^{(1)}\leq t_{1},\ldots,\widetilde{\varphi}^{(d)}\leq t_{d}]\\
&  =\widetilde{F}^{[d+1]}(s,t_{1},\ldots,t_{d})/\widetilde{F}^{[d]}%
(t_{1},\ldots,t_{d})
\end{align*}
for $s\geq0$. Then (\ref{Eq_CharHigherDimCompoundDF}) becomes
\[
\widetilde{G}(t)=(1-\theta)+\theta%
{\textstyle\int_{0}^{t}}
\,[1-\widetilde{G}(s)]\,ds\text{ \quad for }s\geq0\text{.}%
\]
But since $\widetilde{\varphi}^{(0)}$ takes values in $[0,\infty)$, part a)
ensures that $\widetilde{G}=\widetilde{F}_{(\mathrm{Exp},\theta)}$, meaning
that $\Pr[\widetilde{\varphi}^{(0)}\leq s\mid\widetilde{\varphi}^{(1)}\leq
t_{1},\ldots,\widetilde{\varphi}^{(d)}\leq t_{d}]=\Pr[\widetilde{\varphi
}^{(0)}\leq s]$ whenever the conditioning event has positive probability. This
establishes (\ref{Eq_bxcvyxvnyvnv1}).
\end{proof}

%

\vspace{0.3cm}%

We are now ready for the proof of Theorem \ref{T_CPoissonViaCompactness}. The
strategy is the same as in the case of Theorem
\ref{T_MOreFlexiblePoissonLimitFromCompactness}, but we now split off the
contribution of points which return within time $\tau_{l}$.

\begin{proof}
[Proof of Theorem \ref{T_CPoissonViaCompactness}]\textbf{(i)} Let $R_{l}%
:=\mu(A_{l})\Phi_{A_{l}}$, $l\geq1$. As in the proof of Theorem
\ref{T_MOreFlexiblePoissonLimitFromCompactness} we can assume w.l.o.g. that
\begin{equation}
R_{l}\overset{\mu_{A_{l}}}{\Longrightarrow}\widetilde{\Phi}\text{ \quad and
\quad}R_{l}\overset{\mu}{\Longrightarrow}\Phi\text{ \quad as }l\rightarrow
\infty\text{.} \label{Eq_AGoodSubSequenceHere}%
\end{equation}
We need to show that
\begin{equation}
\mathrm{law}(\widetilde{\Phi})=\mathrm{law}(\widetilde{\Phi}_{(\mathrm{Exp}%
,\theta)}). \label{Eq_TheBigPrize}%
\end{equation}
\textbf{(ii)} Take any $\varepsilon>0$ and choose $(\nu_{l})_{l\geq1}$ in
$\mathfrak{P}$ with $d_{\mathfrak{P}}(\nu_{l},\mu_{A_{l}})<\varepsilon$ for
all $l$, a compact set $\mathfrak{K}\subseteq\mathfrak{P}$, and measurable
functions $\tau_{l}:X\rightarrow\mathbb{N}_{0}$ such that
(\ref{Eq_DontWaitTooLong2})-(\ref{Eq_ControlShortReturns}) hold. Abbreviate
$\theta_{l}:=\nu_{l}(A_{l}^{\circ})$, so that
\begin{equation}
\nu_{l}=(1-\theta_{l})\nu_{l}^{\bullet}+\theta_{l}\nu_{l}^{\circ}\text{ \quad
for }l\geq1\text{.} \label{Eq_hdhjkahfsakdfSJHFFDH}%
\end{equation}
Assumptions (\ref{Eq_DontWaitTooLong2}) to
(\ref{Eq_TheFabulousCompactnessCondition2}) allow us to apply Theorem
\ref{T_ProcessesAsymptoticToHittingTimeProc} using the sequence $(\nu
_{l}^{\circ})$ of measures, to see that $D_{[0,\infty]^{\mathbb{N}_{0}}%
}(\mathrm{law}_{\nu_{l}^{\circ}}(R_{l}),\mathrm{law}_{\mu}(R_{l}%
))\rightarrow0$. Consequently,
\begin{equation}
R_{l}\overset{\nu_{l}^{\circ}}{\Longrightarrow}\Phi\text{ \quad as
}l\rightarrow\infty\text{.} \label{Eq_handiballl}%
\end{equation}
To analyse the asymptotic distribution of $R_{l}$ under $\nu_{l}^{\bullet}$ we
observe first that due to (\ref{Eq_ManyHappyReturns}) the assumption
(\ref{Eq_DontWaitTooLong2}) implies that also
\begin{equation}
\mu(A_{l})\,\varphi_{A_{l}}\overset{\nu_{l}^{\bullet}}{\longrightarrow}%
0\quad\text{as }l\rightarrow\infty. \label{Eq_hjvchgagggggg}%
\end{equation}
On the other hand, $\mathrm{law}_{\nu_{l}^{\bullet}}(R_{l}\circ T_{A_{l}%
})=\mathrm{law}_{\overline{\nu}_{l}^{\bullet}}(R_{l})$, where $\overline{\nu
}_{l}^{\bullet}:=(T_{A_{l}})_{\ast}\nu_{l}^{\bullet}$. In view of
(\ref{Eq_ControlShortReturns}) and (\ref{Eq_ererrewrerwrertrerwerq}), we thus
have $D_{[0,\infty]^{\mathbb{N}_{0}}}(\mathrm{law}_{\nu_{l}^{\bullet}}%
(R_{l}\circ T_{A_{l}}),\mathrm{law}_{\mu_{A_{l}}}(R_{l}))=D_{[0,\infty
]^{\mathbb{N}_{0}}}(\mathrm{law}_{\overline{\nu}_{l}^{\bullet}}(R_{l}%
),\mathrm{law}_{\mu_{A_{l}}}(R_{l}))\rightarrow0$. Recalling
(\ref{Eq_AGoodSubSequenceHere}) and the fact that $\widetilde{\Phi}$ is
stationary (see (\ref{Eq_BasicPropsAsyRetProc})), this shows that
\begin{equation}
R_{l}\circ T_{A_{l}}\overset{\nu_{l}^{\bullet}}{\Longrightarrow}%
\mathbf{\sigma}\widetilde{\Phi}\text{ \quad as }l\rightarrow\infty\text{,}
\label{Eq_jnjnjnjjjjjjjj}%
\end{equation}
where $\mathbf{\sigma}\widetilde{\Phi}:=(\widetilde{\varphi}^{(1)}%
,\widetilde{\varphi}^{(2)},\ldots)$ is the shifted version of $\widetilde
{\Phi}=(\widetilde{\varphi}^{(0)},\widetilde{\varphi}^{(1)},\ldots)$. Since
the limit in (\ref{Eq_hjvchgagggggg}) is constant, and hence independent of
all random variables, we can combine (\ref{Eq_hjvchgagggggg}) and
(\ref{Eq_jnjnjnjjjjjjjj}) to obtain
\begin{equation}
R_{l}=(\mu(A_{l})\,\varphi_{A_{l}},R_{l}\circ T_{A_{l}})\overset{\nu
_{l}^{\bullet}}{\Longrightarrow}(0,\mathbf{\sigma}\widetilde{\Phi
})=(0,\widetilde{\varphi}^{(1)},\widetilde{\varphi}^{(2)},\ldots)\text{ \quad
as }l\rightarrow\infty\text{.} \label{Eqwassssssaballl}%
\end{equation}
Going back to (\ref{Eq_hdhjkahfsakdfSJHFFDH}) we can employ
(\ref{Eq_handiballl}) and (\ref{Eqwassssssaballl}) to see that in
$\mathfrak{M}([0,\infty]^{\mathbb{N}_{0}})$,
\begin{align}
\mathrm{law}_{\nu_{l}}(R_{l})  &  =(1-\theta_{l})\,\mathrm{law}_{\nu
_{l}^{\bullet}}(R_{l})+\theta_{l}\,\mathrm{law}_{\nu_{l}^{\circ}}%
(R_{l})\nonumber\\
&  \rightarrow(1-\theta)\,\mathrm{law}(0,\mathbf{\sigma}\widetilde{\Phi
})+\theta\,\mathrm{law}(\Phi)\text{ \quad as }l\rightarrow\infty\text{.}
\label{Eq_okokokokokghgiuuuu2}%
\end{align}
\textbf{(iii)} On the other hand, (\ref{Eq_ererrewrerwrertrerwerq}) guarantees
that $D_{[0,\infty]^{\mathbb{N}_{0}}}(\mathrm{law}_{\nu_{l}}(R_{l}%
),\mathrm{law}_{\mu_{A_{l}}}(R_{l}))<\varepsilon$ for all $l$, and hence by
(\ref{Eq_AGoodSubSequenceHere}) that $D_{[0,\infty]^{\mathbb{N}_{0}}%
}(\mathrm{law}_{\nu_{l}}(R_{l}),\mathrm{law}(\widetilde{\Phi}))\leq
\varepsilon$. Together with (\ref{Eq_okokokokokghgiuuuu2}) this proves that
\begin{equation}
D_{[0,\infty]^{\mathbb{N}_{0}}}((1-\theta)\,\mathrm{law}(0,\mathbf{\sigma
}\widetilde{\Phi})+\theta\,\mathrm{law}(\Phi),\mathrm{law}(\widetilde{\Phi
}))\leq\varepsilon.
\end{equation}
But $\varepsilon>0$ was arbitrary, and therefore
\begin{equation}
\mathrm{law}(\widetilde{\Phi})=(1-\theta)\,\mathrm{law}(0,\mathbf{\sigma
}\widetilde{\Phi})+\theta\,\mathrm{law}(\Phi)\text{.}%
\end{equation}
For the finite dimensional distribution functions $\{F^{[d]}\}$ and
$\{\widetilde{F}^{[d]}\}$ of $\Phi$ and $\widetilde{\Phi}$ this means that for
all $d\geq0$ and $t_{0},t_{1},\ldots,t_{d}\geq0$,
\begin{equation}
\widetilde{F}^{[d+1]}(t_{0},t_{1},\ldots,t_{d})=(1-\theta)\,\widetilde
{F}^{[d]}(t_{1},\ldots,t_{d})+\theta F^{[d+1]}(t_{0},t_{1},\ldots
,t_{d})\text{.} \label{Eq_bxvcbvvvvvvvvvvaqaqaq}%
\end{equation}
However, because of (\ref{Eq_AGoodSubSequenceHere}), $\{F^{[d]}\}$ and
$\{\widetilde{F}^{[d]}\}$ are related to each other as in
(\ref{T_HittingTimeVsReturnTimeProc}) of Theorem
\ref{T_HittingTimeVsReturnTimeProc}. Together with
(\ref{Eq_bxvcbvvvvvvvvvvaqaqaq}) the latter shows that $\{\widetilde{F}%
^{[d]}\}$ satisfies condition (\ref{Eq_CharHigherDimCompoundDF}), and
(\ref{Eq_TheBigPrize}) follows by Proposition \ref{P_HandsUp} b).
\end{proof}

%

\vspace{0.3cm}%

\section{Proofs for local processes \label{Sec_PfInternalStates}}

Throughout this section, the fixed compact metric space $(\mathfrak{Z}%
,d_{\mathfrak{Z}})$ is the state space for local observables.\newline\newline%

\noindent
\textbf{The general asymptotic local process.} To prepare the proof of Theorem
\ref{T_PrescribingPSI}, we first establish an approximation result.

\begin{proposition}
[\textbf{Approximating }$d$\textbf{-dimensional marginals of a stationary
sequence}]\label{P_ApproxMarginals}Let $T$ be an ergodic measure preserving
map on the nonatomic probability space $(X,\mathcal{A},\mu)$, and let
$\widehat{\Psi}$ be an $\mathfrak{Z}$-valued stationary sequence which only
assumes finitely many different values. Then, for any $d\geq1$ and
$\varepsilon>0$, there is some measurable $\psi:X\rightarrow\mathfrak{Z}$ such
that the sequence $\Psi:=(\psi,\psi\circ T,\ldots)$ satisfies
\begin{equation}
D_{\mathfrak{Z}^{d}}(\pi_{\ast}^{d}(\mathrm{law}_{\mu}(\Psi)),\pi_{\ast}%
^{d}(\mathrm{law}(\widehat{\Psi})))<\varepsilon\text{.}%
\end{equation}

\end{proposition}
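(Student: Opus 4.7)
The plan is to transplant the $N$-dimensional marginal of $\widehat{\Psi}$ into $(X,\mathcal{A},\mu,T)$ via a Rokhlin tower. Let $\mathsf{F}=\{f_1,\ldots,f_K\}\subseteq\mathfrak{F}$ be the (finite) set of values of $\widehat{\Psi}$, and write $p_N(\mathbf{w}):=\Pr[\widehat{\Psi}^{[N]}=\mathbf{w}]$ for $\mathbf{w}\in\mathsf{F}^N$. Since $\mathsf{F}^d$ is a finite discrete subset of $\mathfrak{F}^d$, any two probabilities $Q,Q'$ on $\mathfrak{F}^d$ supported on $\mathsf{F}^d$ satisfy
$$D_{\mathfrak{F}^d}(Q,Q')\leq K^d\max_{\mathbf{v}\in\mathsf{F}^d}|Q(\mathbf{v})-Q'(\mathbf{v})|,$$
using $|\chi_j|\leq 1$ and $\sum_{j\geq 1}2^{-(j+1)}\leq 1$ in the definition of $D_{\mathfrak{F}^d}$. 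Thus it suffices to construct a measurable $\psi:X\to\mathsf{F}$ such that $|\mu(\Psi^{[d]}=\mathbf{v})-\Pr[\widehat{\Psi}^{[d]}=\mathbf{v}]|<\varepsilon K^{-d}$ for every $\mathbf{v}\in\mathsf{F}^d$.

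Fix $N\geq d$ and $\delta>0$, to be chosen later. Ergodicity of $T$ on the nonatomic space $(X,\mathcal{A},\mu)$ allows an application of Rokhlin's lemma, yielding a measurable $F\subseteq X$ with $F,TF,\ldots,T^{N-1}F$ pairwise disjoint and $\mu(X\setminus\bigcup_{k=0}^{N-1}T^k F)<\delta$; in particular $\mu(F)\leq 1/N$ and $N\mu(F)\geq 1-\delta$. Since $\mu$ is nonatomic on $F$, we may partition $F=\bigsqcup_{\mathbf{w}\in\mathsf{F}^N}F_{\mathbf{w}}$ with $\mu(F_{\mathbf{w}})=p_N(\mathbf{w})\,\mu(F)$. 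Define $\psi:X\to\mathsf{F}$ by $\psi(x):=w_k$ for $x\in T^k F_{\mathbf{w}}$ with $0\leq k<N$, and $\psi\equiv f_1$ off the tower.

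For the verification, fix $\mathbf{v}\in\mathsf{F}^d$. If $0\leq k\leq N-d$ and $x\in T^k F_{\mathbf{w}}$, then $\Psi^{[d]}(x)=(w_k,\ldots,w_{k+d-1})$; stationarity of $\widehat{\Psi}$ therefore yields
$$\mu\bigl(T^k F\cap\{\Psi^{[d]}=\mathbf{v}\}\bigr)=\mu(F)\cdot\Pr\bigl[(\widehat{\Psi}^{(k)},\ldots,\widehat{\Psi}^{(k+d-1)})=\mathbf{v}\bigr]=\mu(F)\cdot\Pr[\widehat{\Psi}^{[d]}=\mathbf{v}].$$
Summing over $k=0,\ldots,N-d$ gives $(N-d+1)\mu(F)\cdot\Pr[\widehat{\Psi}^{[d]}=\mathbf{v}]$, which differs from $\Pr[\widehat{\Psi}^{[d]}=\mathbf{v}]$ by at most $\delta+(d-1)/N$, since $(N-d+1)\mu(F)\in[1-\delta-(d-1)/N,1]$. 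The remaining contribution (top $d-1$ tower levels and the scrap region off the tower) has total $\mu$-measure at most $(d-1)/N+\delta$, hence contributes in absolute value at most this much. Combining, $|\mu(\Psi^{[d]}=\mathbf{v})-\Pr[\widehat{\Psi}^{[d]}=\mathbf{v}]|\leq 2\delta+2(d-1)/N$ uniformly in $\mathbf{v}$, and taking first $N$ large then $\delta$ small completes the proof. The main (rather mild) obstacle is the bookkeeping of these boundary contributions from the top of the tower, where $\Psi^{[d]}$ reads across into the scrap region; nonatomicity of $\mu|_F$ is used precisely to realize the prescribed weights $p_N(\mathbf{w})$ on $F$.
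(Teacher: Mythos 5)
Your overall strategy is the paper's: build a Rokhlin tower, use nonatomicity to cut a level into pieces whose weights realize the law of a finite word of $\widehat{\Psi}$, copy that word along the tower, and absorb the top levels and the scrap set into an error of size $O(\delta+d/N)$. The reduction of $D_{\mathfrak{F}^d}$ to a maximum over the finite alphabet and the final bookkeeping are fine. However, there is a genuine gap in the implementation: you take the tower in the form $F,TF,\ldots,T^{N-1}F$, partition the \emph{base} $F$, and push it forward by $T^k$. This implicitly assumes $T$ is invertible, whereas the proposition only assumes an ergodic measure preserving \emph{map}; indeed in the paper it is applied to induced first-return maps $T_{A_l}$ of noninvertible systems (Gibbs--Markov maps, the Gauss map). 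For a noninvertible $T$, Rokhlin's lemma yields a tower of \emph{preimages} $F,T^{-1}F,\ldots,T^{-(N-1)}F$; forward images $T^kF_{\mathbf{w}}$ need not be measurable, need not be pairwise disjoint (so your $\psi$ is not well defined on them), and in general $\mu(T^kA)\neq\mu(A)$, so the key identity $\mu\bigl(T^kF\cap\{\Psi^{[d]}=\mathbf{v}\}\bigr)=\mu(F)\,\Pr[\widehat{\Psi}^{[d]}=\mathbf{v}]$ fails.

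The repair is exactly the paper's device: label the tower so that $X_I$ is the top and $X_i=T^{-(I-i)}X_I$, partition the top level $X_I$ (nonatomic) into sets $X_I(y_0,\ldots,y_I)$ with $\mu_{X_I}$-weights $\Pr[(\widehat{\psi}^{(0)},\ldots,\widehat{\psi}^{(I)})=(y_0,\ldots,y_I)]$, pull this partition back to the lower levels by $T^{-(I-i)}$, and set $\psi:=y_i$ on the level-$i$ piece. Since preimages preserve both disjointness and measure, your computation then goes through verbatim on every level $X_i$ with $i\leq I-d+1$, and (as in the paper) one even gets exact equality of the conditional $d$-dimensional marginals after conditioning on the union of these levels, with the top $d-1$ levels plus the scrap contributing less than $\delta$. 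With this reorientation of the tower your proof coincides with the paper's; without it, the argument is only valid for invertible $T$.
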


\begin{proof}
\textbf{(i)} Write $\widehat{\Psi}=(\widehat{\psi}^{(0)},\widehat{\psi}%
^{(1)},\ldots)$ and let $F\subseteq\mathfrak{Z}$ be a finite set such that
$\widehat{\psi}^{(0)}\in F$ a.s. Fix $d$ and $\varepsilon$, and pick one
particular element $y_{\ast}\in F$. It suffices to show that we can construct
a local observable $\psi$ and an arbitrarily large subset $Y$ of $X$ such that
the $d$-dimensional marginal of $\Psi$, when conditioned on $Y$, coincides
with the marginal of $\widehat{\Psi}$. That is, we prove that for every
$\delta>0$ there is some $Y\in\mathcal{A}$ with $\mu(Y^{c})<\delta$, and a
measurable $\psi:X\rightarrow\mathfrak{Z}$ such that
\begin{equation}
\pi_{\ast}^{d}(\mathrm{law}_{\mu_{Y}}(\Psi))=\pi_{\ast}^{d}(\mathrm{law}%
(\widehat{\Psi}))\text{.} \label{Eq_awsawasa}%
\end{equation}
\textbf{(ii)} Apply the classical Rokhlin Lemma (as in Lemma 7.4 of
\cite{Z11}) to obtain a Rokhlin tower $(X_{i})_{i=0}^{I}$ of height
$I>2d/\delta$ and with $\mu(X\setminus%
{\textstyle\bigcup\nolimits_{i=0}^{I}}
X_{i})<\delta/2$. This means that the $X_{i}$ are pairwise disjoint and
$X_{i}=T^{-(I-i)}X_{I}$ for $i\in\{0,\ldots,I\}$. Conditioning on the top
level $X_{I}$ of the tower we obtain the probability space $(X_{I},X_{I}%
\cap\mathcal{A},\mu_{X_{I}})$. Being nonatomic, it admits a partition into
measurable sets,
\[
X_{I}=%
{\textstyle\bigcup\nolimits_{(y_{0},\ldots,y_{I})\in F^{I+1}}}
X_{I}(y_{0},\ldots,y_{I})\text{ \quad(disjoint),}%
\]
with $\mu_{X_{I}}(X_{I}(y_{0},\ldots,y_{I}))=\Pr[(\widehat{\psi}^{(0)}%
,\ldots,\widehat{\psi}^{(I)})=(y_{0},\ldots,y_{I})]$. We define partitions of
the other levels $X_{i}$, $i\in\{0,\ldots,I-1\}$,
\[
X_{i}=%
{\textstyle\bigcup\nolimits_{(y_{0},\ldots,y_{I})\in F^{I+1}}}
X_{i}(y_{0},\ldots,y_{I})\text{ \quad(disjoint),}%
\]
by setting $X_{i}(y_{0},\ldots,y_{I}):=T^{-1}X_{i+1}(y_{0},\ldots
,y_{I})=\ldots=T^{-(I-i)}X_{I}(y_{0},\ldots,y_{I})$. Finally, define a
measurable function $\psi:X\rightarrow\mathfrak{Z}$ through
\begin{equation}
\psi:=\left\{
\begin{array}
[c]{cc}%
y_{i} & \text{on }X_{i}(y_{0},\ldots,y_{I})\text{, }0\leq i\leq I\text{,}\\
y_{\ast} & \text{otherwise.}%
\end{array}
\right.
\end{equation}
Then, for any $(y_{0},\ldots,y_{I})\in F^{I+1}$ and $j\in\{0,1,\ldots I-i\}$,
\begin{equation}
\psi\circ T^{j}=y_{i+j}\text{ \quad on }X_{i}(y_{0},\ldots,y_{I})\text{.}
\label{Eq_ajajaajkkssweq}%
\end{equation}
\textbf{(iii)} As a consequence of (\ref{Eq_ajajaajkkssweq}) we get, for
$i\in\{0,\ldots,I-d+1\}$, $(z_{0},\ldots,z_{d-1})\in F^{d}$, a decomposition
\begin{multline*}
X_{i}\cap\{(\psi,\ldots,\psi\circ T^{d-1})=(z_{0},\ldots,z_{d-1})\}\\
=%
{\displaystyle\bigcup\nolimits_{\substack{(y_{0},\ldots,y_{I})\in
F^{I+1}:\\(y_{i},\ldots,y_{i+d-1})=(z_{0},\ldots,z_{d-1})}}}
X_{i}(y_{0},\ldots,y_{I})\\
=%
{\displaystyle\bigcup\nolimits_{\substack{(y_{0},\ldots,y_{I})\in
F^{I+1}:\\(y_{i},\ldots,y_{i+d-1})=(z_{0},\ldots,z_{d-1})}}}
T^{-(I-i)}X_{I}(y_{0},\ldots,y_{I})\text{.}%
\end{multline*}
Therefore, as $T$ preserves $\mu$ and $X_{i}=T^{-(I-i)}X_{I}$, we se that
\begin{align*}
&  \mu_{X_{i}}((\psi,\ldots,\psi\circ T^{d-1})=(z_{0},\ldots,z_{d-1}))=\\
&  =%
{\displaystyle\sum\nolimits_{\substack{(y_{0},\ldots,y_{I})\in F^{I+1}%
:\\(y_{i},\ldots,y_{i+d-1})=(z_{0},\ldots,z_{d-1})}}}
\mu_{X_{i}}(T^{-(I-i)}X_{I}(y_{0},\ldots,y_{I}))\\
&  =%
{\displaystyle\sum\nolimits_{\substack{(y_{0},\ldots,y_{I})\in F^{I+1}%
:\\(y_{i},\ldots,y_{i+d-1})=(z_{0},\ldots,z_{d-1})}}}
\mu_{X_{I}}(X_{I}(y_{0},\ldots,y_{I}))\\
&  =%
{\displaystyle\sum\nolimits_{\substack{(y_{0},\ldots,y_{I})\in F^{I+1}%
:\\(y_{i},\ldots,y_{i+d-1})=(z_{0},\ldots,z_{d-1})}}}
\Pr[(\widehat{\psi}^{(0)},\ldots,\widehat{\psi}^{(I)})=(y_{0},\ldots,y_{I})]\\
&  =\Pr[(\widehat{\psi}^{(i)},\ldots,\widehat{\psi}^{(i+d-1)})=(z_{0}%
,\ldots,z_{d-1})]\\
&  =\Pr[(\widehat{\psi}^{(0)},\ldots,\widehat{\psi}^{(d-1)})=(z_{0}%
,\ldots,z_{d-1})]\text{,}%
\end{align*}
meaning that
\begin{equation}
\mathrm{law}_{\mu_{X_{i}}}((\psi,\ldots,\psi\circ T^{d-1}))=\mathrm{law}%
((\widehat{\psi}^{(0)},\ldots,\widehat{\psi}^{(d-1)}))\text{ \quad for }0\leq
i\leq I-d+1\text{.}%
\end{equation}
Hence, taking $Y:=%
{\textstyle\bigcup\nolimits_{i=0}^{I-d+1}}
X_{i}$ we have $\mu(Y^{c})=\mu(%
{\textstyle\bigcup\nolimits_{i=I-d+2}^{I}}
X_{i})+\mu(X\setminus%
{\textstyle\bigcup\nolimits_{i=0}^{I}}
X_{i})<d/I+\delta/2<\delta$ and $\mathrm{law}_{\mu_{Y}}((\psi,\ldots,\psi\circ
T^{d-1}))=\mathrm{law}((\widehat{\psi}^{(0)},\ldots,\widehat{\psi}^{(d-1)}))$,
as required in (\ref{Eq_awsawasa}) above.
\end{proof}

%

\vspace{0.3cm}%

We can now turn to the%

\vspace{0.3cm}%

\begin{proof}
[\textbf{Proof of Theorem \ref{T_PrescribingPSI}.}]Let $(A_{l})$ and
$\widetilde{\Psi}=(\widetilde{\psi}^{(0)},\widetilde{\psi}^{(1)},\ldots)$ be
given. For every $l\geq1$ there is some finite set $F_{l}\subseteq
\mathfrak{Z}$ and a Borel measurable map $\theta_{l}:\mathfrak{Z}\rightarrow
F_{l}$ such that $d_{\mathfrak{Z}}(\mathrm{Id}_{\mathfrak{Z}},\theta_{l})<1/l$
on $\mathfrak{Z}$. Setting $\widehat{\Psi}_{l}:=(\theta_{l}\circ
\widetilde{\psi}^{(0)},\theta_{l}\circ\widetilde{\psi}^{(1)},\ldots)$ we
obtain a stationary sequence in $\mathfrak{Z}$ which only assumes finitely
many values and satisfies $d_{\mathfrak{Z}}(\widehat{\Psi}_{l},\widetilde
{\Psi})<1/l$ on the underlying probability space. Therefore,
\begin{equation}
D_{\mathfrak{Z}^{\mathbb{N}_{0}}}(\mathrm{law}(\widehat{\Psi}_{l}%
),\mathrm{law}(\widetilde{\Psi}))\longrightarrow0\text{ \quad as }%
l\rightarrow\infty\text{.} \label{Eq_kmkhnmhknjjjjjjjj}%
\end{equation}
Due to (\ref{Eq_kmkhnmhknjjjjjjjj}) it suffices to construct $\psi_{A_{l}}$,
$l\geq1$, such that the corresponding local processes $\widetilde{\Psi}%
_{A_{l}}$ approximate the $\widehat{\Psi}_{l}$ and satisfy
\begin{equation}
D_{\mathfrak{Z}^{\mathbb{N}_{0}}}(\mathrm{law}_{\mu_{A_{l}}}(\widetilde{\Psi
}_{A_{l}}),\mathrm{law}(\widehat{\Psi}_{l}))\longrightarrow0\text{ \quad as
}l\rightarrow\infty\text{,}%
\end{equation}
or, equivalently, that for every $d\geq1$,%
\begin{equation}
D_{\mathfrak{Z}^{d}}(\pi_{\ast}^{d}(\mathrm{law}_{\mu_{A_{l}}}(\widetilde
{\Psi}_{A_{l}})),\pi_{\ast}^{d}(\mathrm{law}(\widehat{\Psi}_{l}%
)))\longrightarrow0\text{ \quad as }l\rightarrow\infty\text{.}
\label{Eq_vmvmvmvm}%
\end{equation}
For each $l\geq1$ apply Proposition \ref{P_ApproxMarginals} to $(A_{l}%
,A_{l}\cap\mathcal{A},\mu_{A_{l}},T_{A_{l}})$, $d:=l$, $\varepsilon:=1/l$, and
$\widehat{\Psi}:=\widehat{\Psi}_{l}$ to obtain an local observable
$\psi_{A_{l}}:A_{l}\rightarrow\mathfrak{Z}$ for $A_{l}$ for which
\begin{equation}
D_{\mathfrak{Z}^{d}}(\pi_{\ast}^{l}(\mathrm{law}_{\mu_{A_{l}}}(\widetilde
{\Psi}_{A_{l}})),\pi_{\ast}^{l}(\mathrm{law}(\widehat{\Psi}_{l})))<1/l\text{.}%
\end{equation}
Then (\ref{Eq_vmvmvmvm}) follows since $D_{\mathfrak{Z}^{d}}(\pi_{\ast}%
^{d}(\mathrm{law}(\Psi^{\prime\prime})),\pi_{\ast}^{d}(\mathrm{law}%
(\Psi^{\prime})))$ is non-decreasing in $d$ for all random sequences
$\Psi^{\prime},\Psi^{\prime\prime}$ in $\mathfrak{Z}$.
\end{proof}

%

\vspace{0.3cm}%
%

\noindent
\textbf{Towards specific limit processes.} To get started, we record some
basic properties of local processes.

\begin{proposition}
[\textbf{Asymptotic invariance and admissible delays for }$(\Psi_{A_{l}})$%
]\label{P_AsyInvarAdmissDelayInternal}Let $T$ be a measure-preserving map on
the probability space $(X,\mathcal{A},\mu)$. \newline\textbf{a)} For every set
$A\in\mathcal{A}$, any local process $\Psi_{A}$ on $A$, and any $m\geq0$,
\begin{equation}
\Psi_{A}\circ T^{m}=\Psi_{A}\text{ \quad on }\{\varphi_{A}>m\}\text{. }
\label{Eq_bxvcyvvcyvjs}%
\end{equation}
\textbf{b)} Suppose that $(A_{l})_{l\geq1}$ is a sequence of asymptotically
rare events, and $(\psi_{A_{l}})_{l\geq1}$ a sequence of local observables for
the $A_{l}$, with corresponding local processes $\Psi_{A_{l}}$. Set
$R_{l}:=\Psi_{A_{l}}:X\rightarrow\mathfrak{Z}^{\mathbb{N}}$. Then $(R_{l}%
)$\emph{\ }is asymptotically $T$-invariant in measure.\newline\textbf{c)} Let
$(\nu_{l})$ be a sequence in $\mathfrak{P}$, and let the measurable maps
$\tau_{l}:X\rightarrow\mathbb{N}_{0}$ satisfy
\begin{equation}
\nu_{l}\left(  \tau_{l}<\varphi_{A_{l}}\right)  \longrightarrow1\text{ \quad
as }l\rightarrow\infty\text{.}%
\end{equation}
Then $(\tau_{l})$ is an admissible delay sequence for $(R_{l})$ and $(\nu
_{l})$.
\end{proposition}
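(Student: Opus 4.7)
The plan is to derive everything from the single observation in part (a), with parts (b) and (c) following by short measure-theoretic arguments.

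\textbf{Part (a).} The key step is to show that $T_A \circ T^m = T_A$ on $\{\varphi_A > m\}$. For a.e.\ $x$ in that set, none of $Tx, T^2 x, \ldots, T^m x$ lies in $A$, so $\varphi_A(T^m x) = \varphi_A(x) - m$ and hence $T_A(T^m x) = T^{\varphi_A(x)-m}(T^m x) = T^{\varphi_A(x)} x = T_A(x)$. Since $T_A x \in A$, a straightforward induction gives $T_A^{k} \circ T^m = T_A^{k}$ on $\{\varphi_A > m\}$ for every $k \geq 1$: once we are back inside $A$, the subsequent action of $T_A$ is the same regardless of whether we came via $T^m$ or directly. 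Applying $\psi_A$ componentwise yields (\ref{Eq_bxvcyvvcyvjs}).

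\textbf{Part (b).} Apply (a) with $m = 1$: the exceptional set is $\{\varphi_{A_l} \leq 1\} = T^{-1} A_l$, which has $\mu$-measure $\mu(A_l) \to 0$ by $T$-invariance. Outside this set $R_l \circ T = R_l$, so $d_{\mathfrak{F}^{\mathbb{N}}}(R_l \circ T, R_l) \to 0$ in $\mu$-measure.

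\textbf{Part (c).} Decompose $X = \bigcup_{m\geq 0} \{\tau_l = m\}$ and apply (a) on each level set to get $R_l \circ T^{\tau_l} = R_l$ on $\{\tau_l < \varphi_{A_l}\}$. By hypothesis $\nu_l(\tau_l < \varphi_{A_l}) \to 1$, so $d_{\mathfrak{F}^{\mathbb{N}}}(R_l \circ T^{\tau_l}, R_l) \overset{\nu_l}{\longrightarrow} 0$. This is exactly the sufficient condition (\ref{Eq_vbvbvbvbvbvbvbvvbvbvbvbvffff}) for admissibility, hence $(\tau_l)$ is an admissible delay sequence for $(R_l)$ and $(\nu_l)$.

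There is no real obstacle here; the only subtlety is keeping track of the inductive step in (a), where one must observe that after the first application of $T_A$ both $T^m x$ and $x$ have been brought to the same point in $A$, so all further iterates of $T_A$ coincide automatically. Everything else is bookkeeping on $T$-invariance of $\mu$ and the definition of admissible delay.
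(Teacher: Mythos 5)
Your argument is correct and follows essentially the same route as the paper: part (a) rests on the identity $T_A\circ T^m=T_A$ on $\{\varphi_A>m\}$ (which the paper states as immediate and you justify via $\varphi_A(T^mx)=\varphi_A(x)-m$ plus the induction on $T_A^k$), and parts (b) and (c) are the same observations that the exceptional sets $T^{-1}A_l$ resp. $\{\varphi_{A_l}\le\tau_l\}$ have vanishing measure, feeding into the sufficient condition (\ref{Eq_vbvbvbvbvbvbvbvvbvbvbvbvffff}) for admissibility. No gaps.
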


\begin{proof}
Statement a) is immediate from the fact that $T_{A}\circ T^{m}=T_{A}$ on
$\{\varphi_{A}>m\}$\ for every $m\geq0$. Next, $(A_{l})$ being asymptotically
rare means that $\mu(\varphi_{A_{l}}>m)\rightarrow1$ as $l\rightarrow\infty$.
In particular, $\mu(d_{\mathfrak{Z}^{\mathbb{N}}}(R_{l}\circ T,R_{l}%
)>0)\leq\mu(\Psi_{A_{l}}\circ T\neq\Psi_{A_{l}})\leq\mu(\varphi_{A_{l}%
}=1)\rightarrow0$, proving b). Turning to c) we note that
(\ref{Eq_bxvcyvvcyvjs}) entails
\[
\Psi_{A_{l}}\circ T^{\tau_{l}}=\Psi_{A_{l}}\text{ \quad on }\{\varphi_{A_{l}%
}>\tau_{l}\}\text{,}%
\]
whence $\nu_{l}(d_{\mathfrak{Z}^{\mathbb{N}}}(R_{l}\circ T^{\tau_{l}}%
,R_{l})>0)\leq\nu_{l}(\varphi_{A_{l}}\leq\tau_{l})\rightarrow0$, validating
the sufficient condition (\ref{Eq_vbvbvbvbvbvbvbvvbvbvbvbvffff}).
\end{proof}

%

\vspace{0.3cm}%

Statement b) shows that Proposition \ref{P_SDCInternal} is a special case of
Theorem \ref{T_MyOldStrongDistrCgeThm} with $R_{l}:=\Psi_{A_{l}}$. We can now
supply the easy

\begin{proof}
[\textbf{Proof of Theorem \ref{T_AsyIntStateProc}}]Set $R_{l}:=\Psi_{A_{l}}$,
$l\geq1$. By Proposition \ref{P_AsyInvarAdmissDelayInternal} b) and c) and
condition (\ref{Eq_hghghghgghghghghdhsf}), $(R_{l})$ is asymptotically
$T$-invariant in measure, and $(\tau_{l})$ is an admissible delay sequence for
$(R_{l})$ and $(\nu_{l})$. Now (\ref{Eq_cxdcdsewvxvxwfrvxsf}) allows us to
apply Proposition \ref{P_AsyInvarSeqNuLVsMu}.
\end{proof}

%

\vspace{0.3cm}%

Next we turn to the%

\vspace{0.3cm}%

\begin{proof}
[\textbf{Proof of Theorem \ref{T_AsyIntStateProcIndep}}]\textbf{(i)} For every
$k\geq1$ choose a sequence $(\nu_{l}^{(k)})_{l\geq1}$ in $\mathfrak{P}$ which
satisfies the assumptions on $(\nu_{l})_{l\geq1}$ in the final paragraph of
the theorem with $d_{\mathfrak{P}}(\nu_{l}^{(k)},\mu_{A_{l}})<1/k$ for all
$l$. By compactness of $(\mathfrak{M}(\mathfrak{Z}^{\mathbb{N}_{0}%
}),D_{\mathfrak{Z}^{\mathbb{N}_{0}}})$ and a diagonalization argument we may
assume w.l.o.g. that we work with a subsequence along which we have
distributional convergence for all measures involved. Specifically, assume
that there are random sequences $\widetilde{\Psi}=(\psi^{(0)},\psi
^{(1)},\ldots)$ and $\widetilde{\Psi}^{(k)}=(\psi^{(0,k)},\psi^{(1,k)}%
,\ldots)$, $k\geq1$, in $\mathfrak{Z}$ such that
\begin{equation}
\widetilde{\Psi}_{A_{l}}\overset{\mu_{A_{l}}}{\Longrightarrow}\widetilde{\Psi
}\text{ \quad and \quad}\widetilde{\Psi}_{A_{l}}\overset{\nu_{l}^{(k)}%
}{\Longrightarrow}\widetilde{\Psi}^{(k)}\text{ for all }k\geq1\quad\text{as
}l\rightarrow\infty\text{.} \label{Eq_wawawawawa}%
\end{equation}
Due to (\ref{Eq_uoljioljo1}) we know that $\widetilde{\Psi}$ is stationary,
obviously with $\mathrm{law}(\psi^{(0)})=\mathrm{law}(\psi)$. The main point
is to show that $\widetilde{\Psi}$ is in fact iid.\newline

To this end, write $\mathbf{\sigma}\widetilde{\Psi}:=(\psi^{(1)},\psi
^{(2)},\ldots)$ for the shifted version of $\widetilde{\Psi}$, and regard
$\widetilde{\Psi}$ as the random element $(\psi^{(0)},\mathbf{\sigma
}\widetilde{\Psi})$ of $\mathfrak{Z}\times\mathfrak{Z}^{\mathbb{N}}$. Due to
(\ref{Eq_wawawawawa}) we have
\begin{equation}
(\psi_{A_{l}},\Psi_{A_{l}})\overset{\mu_{A_{l}}}{\Longrightarrow}(\psi
^{(0)},\mathbf{\sigma}\widetilde{\Psi})\text{ \quad}\quad\text{as
}l\rightarrow\infty\text{.}%
\end{equation}
Since $\widetilde{\Psi}$ is stationary, we know it is in fact iid as soon as
\begin{equation}
(\psi^{(0)},\mathbf{\sigma}\widetilde{\Psi})\text{ is an independent pair.}
\label{Eq_FirstEntraIndependent}%
\end{equation}
\newline\textbf{(ii)} For every $k\geq1$ we can employ Theorem
\ref{T_AsyIndepForAsyInvarSequences} with $\mathfrak{E}:=\mathfrak{Z}$,
$\mathfrak{E}^{\prime}:=\mathfrak{Z}^{\mathbb{N}}$, $\nu_{l}:=\nu_{l}^{(k)}$,
$R_{l}:=\psi_{A_{l}}$, $R_{l}^{\prime}:=\Psi_{A_{l}}$, and $\mathcal{B}%
_{\mathfrak{E}}^{\pi}:=\mathcal{B}_{\mathfrak{Z}}^{\pi}$. Indeed, by
Proposition \ref{P_AsyInvarAdmissDelayInternal} b), the sequence
$(R_{l}^{\prime})$ is asymptotically $T$-invariant in measure.

Fix $k$, take any $F\in\mathcal{B}_{\mathfrak{Z}}^{\pi}$ with $\Pr[\psi\in
F]>0$, and pick $(\nu_{l,F})$, $(\tau_{l,F})$ and $\mathfrak{K}_{F}$ for
$(\nu_{l}^{(k)})$ as in the assumption of Theorem \ref{T_AsyIntStateProcIndep}%
. Observe that via Proposition \ref{P_AsyInvarAdmissDelayInternal} c) our
assumption (\ref{Eq_uiuiuiuiui}) ensures that $(\tau_{l,F})$ is always an
admissible delay sequence for $(R_{l}^{\prime})$ and the $\nu_{l,F}$. Thus,
Theorem \ref{T_AsyIndepForAsyInvarSequences} shows that for every $k\geq1$,%
\begin{equation}
(\psi^{(0,k)},\mathbf{\sigma}\widetilde{\Psi}^{(k)})\text{ is an independent
pair.} \label{Eq_IndepForK}%
\end{equation}
But since $d_{\mathfrak{P}}(\nu_{l}^{(k)},\mu_{A_{l}})<1/k$ for all $l$, it is
clear that
\begin{equation}
\widetilde{\Psi}^{(k)}=(\psi^{(0,k)},\mathbf{\sigma}\widetilde{\Psi}%
^{(k)})\Longrightarrow(\psi^{(0)},\mathbf{\sigma}\widetilde{\Psi})\text{
\quad}\quad\text{as }k\rightarrow\infty\text{.}%
\end{equation}
Together with (\ref{Eq_IndepForK}) this proves (\ref{Eq_FirstEntraIndependent}%
).\newline\newline\textbf{(iii)} The above shows that $\widetilde{\Psi}%
_{A_{l}}\overset{\mu_{A_{l}}}{\Longrightarrow}\Psi^{\ast}$, and hence also
$\Psi_{A_{l}}=\mathbf{\sigma}\widetilde{\Psi}_{A_{l}}\overset{\mu_{A_{l}}%
}{\Longrightarrow}\mathbf{\sigma}\Psi^{\ast}\overset{d}{=}\Psi^{\ast}$. From
(\ref{Eq_wawawawawa}) we see that $\Psi_{A_{l}}\overset{\nu_{l}^{(k)}%
}{\Longrightarrow}\mathbf{\sigma}\widetilde{\Psi}^{(k)}$ for each $k$, and
applying Theorem \ref{T_AsyIntStateProc} with $\nu_{l}:=\nu_{l}^{(k)}=(\nu
_{l}^{(k)})_{\{\psi_{A_{l}}\in\mathfrak{Z}\}}$ and $\tau_{l}:=\tau
_{l,\mathfrak{Z}}$, we conclude that $\Psi_{A_{l}}\overset{\mu}%
{\Longrightarrow}\mathbf{\sigma}\widetilde{\Psi}^{(k)}$ for every $k$. But
then all these limit pocesses have the same law, and hence also the same law
as their distributional limit $\mathbf{\sigma}\widetilde{\Psi}\overset{d}%
{=}\Psi^{\ast}$. This proves $\Psi_{A_{l}}\overset{\mu}{\Longrightarrow}%
\Psi^{\ast}$.
\end{proof}

%

\vspace{0.3cm}%
%

\noindent
\textbf{Robustness.} To conclude this section, we provide a

\begin{proof}
[\textbf{Proof of Theorem \ref{Thm_RobustLocProc}.}]We asume w.l.o.g. that
$A_{l}^{\prime}\subseteq A_{l}$ for all $l$. (Otherwise we can apply this
partial result to compare either of $(\psi_{A_{l}})$ and $(\psi_{A_{l}%
^{\prime}}^{\prime})$ to $(\psi_{A_{l}}\mid_{A_{l}\cap A_{l}^{\prime}})$.) Our
goal is to prove that
\begin{equation}
D_{\mathfrak{Z}^{\mathbb{N}_{0}}}(\mathrm{law}_{\mu_{A_{l}}}(\widetilde{\Psi
}_{A_{l}}),\mathrm{law}_{\mu_{A_{l}^{\prime}}}(\widetilde{\Psi}_{A_{l}%
^{\prime}}^{\prime}))\longrightarrow0\text{ \qquad as }l\rightarrow
\infty\text{.} \label{Eq_RobustLocProcMetric}%
\end{equation}
\newline\textbf{(i)} We first note that our asumptions guarantee
\begin{equation}
D_{\mathfrak{Z}}(\mathrm{law}_{\mu_{A_{l}}}(\psi_{A_{l}}),\mathrm{law}%
_{\mu_{A_{l}^{\prime}}}(\psi_{A_{l}^{\prime}}^{\prime}))\longrightarrow0\text{
\qquad as }l\rightarrow\infty\text{.} \label{Eq_idimRobust}%
\end{equation}
For this, decompose $\mathrm{law}_{\mu_{A_{l}}}(\psi_{A_{l}})=\mu_{A_{l}%
}(A_{l}^{\prime})\,\mathrm{law}_{\mu_{A_{l}^{\prime}}}(\psi_{A_{l}}%
)+\mu_{A_{l}}(A_{l}\setminus A_{l}^{\prime})\,\mathrm{law}_{\mu_{A_{l}%
\setminus A_{l}^{\prime}}}(\psi_{A_{l}})$ and $\mathrm{law}_{\mu
_{A_{l}^{\prime}}}(\psi_{A_{l}^{\prime}}^{\prime})=\mu_{A_{l}}(A_{l}^{\prime
})\,\mathrm{law}_{\mu_{A_{l}^{\prime}}}(\psi_{A_{l}^{\prime}}^{\prime}%
)+\mu_{A_{l}}(A_{l}\setminus A_{l}^{\prime})\,\mathrm{law}_{\mu_{A_{l}%
^{\prime}}}(\psi_{A_{l}^{\prime}}^{\prime})$. By (\ref{Eq_Tree2}),
\[
D_{\mathfrak{Z}}(\mathrm{law}_{\mu_{A_{l}}}(\psi_{A_{l}}),\mathrm{law}%
_{\mu_{A_{l}^{\prime}}}(\psi_{A_{l}^{\prime}}^{\prime}))\leq D_{\mathfrak{Z}%
}(\mathrm{law}_{\mu_{A_{l}^{\prime}}}(\psi_{A_{l}}),\mathrm{law}_{\mu
_{A_{l}^{\prime}}}(\psi_{A_{l}^{\prime}}^{\prime}))+\mu_{A_{l}}(A_{l}\setminus
A_{l}^{\prime})\text{,}%
\]
and this bound tends to zero as $l\rightarrow\infty$, recall (\ref{Eq_Tree1}%
).\newline\newline\textbf{(ii)} To prepare for the process version
(\ref{Eq_RobustLocProcMetric}) of (\ref{Eq_idimRobust}), we show, for every
$j\geq0$,
\begin{equation}
\mu_{A_{l}^{\prime}}(T_{A_{l}}^{j}\neq T_{A_{l}^{\prime}}^{j})\longrightarrow
0\text{\qquad as }l\rightarrow\infty\text{.} \label{Eq_SameIteratesJSteps}%
\end{equation}
Indeed, whenever $A^{\prime}\subseteq A$, then $A^{\prime}\cap\{T_{A}%
^{j}=T_{A^{\prime}}^{j}\}\supseteq%
{\textstyle\bigcap\nolimits_{i=0}^{j}}
T_{A}^{-i}A^{\prime}$. Therefore,
\begin{align*}
\mu_{A}(A^{\prime}\cap\{T_{A}^{j}\neq T_{A^{\prime}}^{j}\})  &  \leq\mu
_{A}\left(
{\textstyle\bigcup\nolimits_{i=0}^{j}}
T_{A}^{-i}(A\setminus A^{\prime})\right) \\
&  \leq%
{\textstyle\sum\limits_{i=0}^{j}}
\mu_{A}\left(  T_{A}^{-i}(A\setminus A^{\prime})\right)  =(j+1)\mu_{A}\left(
A\setminus A^{\prime}\right)  \text{,}%
\end{align*}
since $T_{A}$ preserves $\mu_{A}$. Applying this to $A_{l}^{\prime}\subseteq
A_{l}$ yields (\ref{Eq_SameIteratesJSteps}), as $\mu_{A_{l}}\left(
A_{l}\setminus A_{l}^{\prime}\right)  \rightarrow0$.

We use this to check that for every $j\geq0$,%
\begin{equation}
d_{\mathfrak{Z}}(\psi_{A_{l}}\circ T_{A_{l}}^{j},\psi_{A_{l}^{\prime}}%
^{\prime}\circ T_{A^{\prime}}^{j})\overset{\mu_{A_{l}^{\prime}}}%
{\longrightarrow}0\text{\qquad as }l\rightarrow\infty\text{.}
\label{Eq_RobyTobyMetricInMeasure}%
\end{equation}
Take any $\eta>0$, then $A_{l}^{\prime}\cap\{d_{\mathfrak{Z}}(\psi_{A_{l}%
}\circ T_{A_{l}}^{j},\psi_{A_{l}^{\prime}}^{\prime}\circ T_{A^{\prime}}%
^{j})\geq\eta\}\subseteq(A_{l}^{\prime}\cap\{T_{A_{l}}^{j}\neq T_{A_{l}%
^{\prime}}^{j}\})\cup(A_{l}^{\prime}\cap T_{A^{\prime}}^{-j}\{d_{\mathfrak{Z}%
}(\psi_{A_{l}},\psi_{A_{l}^{\prime}}^{\prime})\geq\eta\})$. As a consequence
of this and the fact that $T_{A_{l}^{\prime}}$ preserves $\mu_{A_{l}^{\prime}%
}$ we find that
\[
\mu_{A_{l}^{\prime}}(d_{\mathfrak{Z}}(\psi_{A_{l}}\circ T_{A_{l}}^{j}%
,\psi_{A_{l}^{\prime}}^{\prime}\circ T_{A^{\prime}}^{j})\geq\eta)\leq
\mu_{A_{l}^{\prime}}(T_{A_{l}}^{j}\neq T_{A_{l}^{\prime}}^{j})+\mu
_{A_{l}^{\prime}}(d_{\mathfrak{Z}}(\psi_{A_{l}},\psi_{A_{l}^{\prime}}^{\prime
})\geq\eta)\text{,}%
\]
which tends to zero due to (\ref{Eq_SameIteratesJSteps}) and assumption
(\ref{Eq_LocalObsAsySameInMeasure}). This proves
(\ref{Eq_RobyTobyMetricInMeasure}).\newline\newline\textbf{(iii)} Now recall
that we can regard the local processes $\widetilde{\Psi}_{A_{l}}$ and
$\widetilde{\Psi}_{A_{l}^{\prime}}^{\prime}$ as local observables taking
values in $\mathfrak{Z}^{\mathbb{N}_{0}}$. Therefore our assertion
(\ref{Eq_RobustLocProcMetric}) will follow from the weaker version
(\ref{Eq_idimRobust}) of the present theorem which has already been
established in step (i), as soon as we validate
\begin{equation}
d_{\mathfrak{Z}^{\mathbb{N}_{0}}}(\widetilde{\Psi}_{A_{l}},\widetilde{\Psi
}_{A_{l}^{\prime}}^{\prime})\overset{\mu_{A_{l}^{\prime}}}{\longrightarrow
}0\text{ \qquad as }l\rightarrow\infty\text{.} \label{Eq_Checky}%
\end{equation}
Given $\varepsilon>0$ choose $J\geq1$ so large that $2^{-J}\mathrm{diam}%
(\mathfrak{Z})<\varepsilon/2$. Then, for all $l\geq1$,
\begin{align*}
d_{\mathfrak{Z}^{\mathbb{N}_{0}}}(\widetilde{\Psi}_{A_{l}},\widetilde{\Psi
}_{A_{l}^{\prime}}^{\prime})  &  =%
{\textstyle\sum_{j\geq0}}
2^{-(j+1)}d_{\mathfrak{Z}}(\psi_{A_{l}}\circ T_{A_{l}}^{j},\psi_{A_{l}%
^{\prime}}^{\prime}\circ T_{A^{\prime}}^{j})\\
&  \leq%
{\textstyle\sum_{j=0}^{J}}
d_{\mathfrak{Z}}(\psi_{A_{l}}\circ T_{A_{l}}^{j},\psi_{A_{l}^{\prime}}%
^{\prime}\circ T_{A^{\prime}}^{j})+\varepsilon/2\text{,}%
\end{align*}
which in view of (\ref{Eq_RobyTobyMetricInMeasure}) leads to (\ref{Eq_Checky})
and thus gives (\ref{Eq_RobustLocProcMetric}).
\end{proof}

%

\vspace{0.3cm}%

\section{Proofs for joint processes}

We start with the easy

\begin{proof}
[\textbf{Proof of Proposition \ref{P_SDCJoint}}]Since both $(\mu(A_{l}%
)\Phi_{A_{l}})_{l\geq1}$ and $(\Psi_{A_{l}})_{l\geq1}$ are asymptotically
$T$-invariant in measure as sequences in $[0,\infty]^{\mathbb{N}}$ and
$\mathfrak{Z}^{\mathbb{N}}$ respectively (Proposition \ref{P_AsyInvarHitProc}
b) and Proposition \ref{P_AsyInvarAdmissDelayInternal} b)), we immediately see
that $(\mu(A_{l})\Phi_{A_{l}},\Psi_{A_{l}})_{l\geq1}$ is asymptotically
$T$-invariant in measure as a sequence in $[0,\infty]^{\mathbb{N}}%
\times\mathfrak{Z}^{\mathbb{N}}$. Now use Theorem
\ref{T_MyOldStrongDistrCgeThm}.
\end{proof}

%

\vspace{0.3cm}%

Now compare the laws of joint processes under $\mu$ and under the $\mu_{A_{l}%
}$.

\begin{proof}
[\textbf{Proof of Theorem \ref{T_JointLimitProcMuVsMuA}}]\textbf{(i)} Due to
stationarity of $(\mu(A_{l})\Phi_{A_{l}},\widetilde{\Psi}_{A_{l}})$ under
$\mu_{A_{l}}$, it is easy to see that (\ref{Eq_jomiasanmimradldo}) is actually
equivalent to the formally weaker statement (obtained by forgetting about the
first entry of $\widetilde{\Psi}_{A_{l}}$)
\begin{equation}
R_{l}\overset{\mu_{A_{l}}}{\Longrightarrow}\,(\Phi_{\mathrm{Exp}},\Psi^{\ast
})\quad\text{as }l\rightarrow\infty\text{.}%
\end{equation}
where $R_{l}:=(\mu(A_{l})\Phi_{A_{l}},\Psi_{A_{l}}):X\rightarrow
\lbrack0,\infty]^{\mathbb{N}_{0}}\times\mathfrak{Z}^{\mathbb{N}}%
=:\mathfrak{E}$ (equipped with $d_{\mathfrak{E}}:=d_{[0,\infty]^{\mathbb{N}%
_{0}}}+d_{\mathfrak{Z}}$). The standard subsequence argument based on
compactness of $\mathfrak{E}$ shows that we can assume w.l.o.g. that there are
random elements $(\Phi,\Psi)$ and $(\overline{\Phi},\overline{\Psi})$ of
$\mathfrak{E}$, with $\Phi=(\varphi^{(i)})_{i\geq0}$, $\overline{\Phi
}=(\overline{\varphi}^{(i)})_{i\geq0}$, $\Psi=(\psi^{(i)})_{i\geq0}$, and
$\overline{\Psi}=(\overline{\psi}^{(i)})_{i\geq0}$, such that
\begin{equation}
R_{l}\overset{\mu}{\Longrightarrow}\,(\Phi,\Psi)\quad\text{and\quad}%
R_{l}\overset{\mu_{A_{l}}}{\Longrightarrow}\,(\overline{\Phi},\overline{\Psi
})\quad\text{as }l\rightarrow\infty\text{.} \label{Eq_cwwwasqs}%
\end{equation}
To prove the theorem, we now assume that
\begin{equation}
\text{one of }(\Phi,\Psi)\ \text{and }(\overline{\Phi},\overline{\Psi})\text{
has the law of }(\Phi_{\mathrm{Exp}},\Psi^{\ast})\text{.}%
\end{equation}
In view of (\ref{Eq_cwwwasqs}) and (\ref{Eq_mnhjnggnfjfggnjf1}) \&
(\ref{Eq_mnhjnggnfjfggnjf2}), Theorem \ref{T_AsyIntStateProc} then shows that
$\Psi\overset{d}{=}\overline{\Psi}\overset{d}{=}\Psi^{\ast}$. Similarly,
Theorem \ref{T_HittingTimeVsReturnTimeProc} and Proposition
\ref{P_CharacterizeExpos} together show that if one of $\Phi\ $and
$\overline{\Phi}$ has the distribution of $\Phi_{\mathrm{Exp}}$, then so does
the other. Hence, $\Phi\overset{d}{=}\overline{\Phi}\overset{d}{=}\Phi^{\ast}$
as well, but it is not immediate that both $(\Phi,\Psi)\ $and $(\overline
{\Phi},\overline{\Psi})$ are independent pairs.\newline\newline

To prove that in fact $(\Phi,\Psi)\overset{d}{=}(\overline{\Phi}%
,\overline{\Psi})\overset{d}{=}(\Phi_{\mathrm{Exp}},\Psi^{\ast})$, we will
first check that
\begin{gather}
(\mu(A_{l})\Phi_{A_{l}}\circ T_{A_{l}},\Psi_{A_{l}})\overset{\nu_{l}%
}{\Longrightarrow}\,(\Phi_{\mathrm{Exp}},\Psi^{\ast})\text{ \quad as
}l\rightarrow\infty\label{Eq_nhnhnhnhnhnnhnhhhhh}\\
\text{holds for }(\nu_{l})=(\mu)\text{ iff it holds for }(\nu_{l})=(\mu
_{A_{l}})\text{.}\nonumber
\end{gather}
Once this is established, we show that the one component $\mu(A_{l}%
)\varphi_{A_{l}}$ of $R_{l}$ missing in (\ref{Eq_nhnhnhnhnhnnhnhhhhh}) is
asymptotically independent of the rest: Writing $\mathbf{\sigma}\Phi
:=(\varphi^{(i+1)})_{i\geq0}$ and $\mathbf{\sigma}\overline{\Phi}%
:=(\overline{\varphi}^{(i+1)})_{i\geq0}$ for the shifted versions of $\Phi$
and $\overline{\Phi}$ respectively, we claim that
\begin{equation}
\text{if }(\overline{\Phi},\overline{\Psi})\overset{d}{=}(\Phi_{\mathrm{Exp}%
},\Psi^{\ast})\text{, then }\varphi^{(0)}\text{ is independent of
}(\mathbf{\sigma}\Phi,\Psi)\text{,} \label{Eq_gehtschonu1}%
\end{equation}
whereas
\begin{equation}
\text{if }(\Phi,\Psi)\overset{d}{=}(\Phi_{\mathrm{Exp}},\Psi^{\ast})\text{,
then }\overline{\varphi}^{(0)}\text{ is independent of }(\mathbf{\sigma
}\overline{\Phi},\overline{\Psi})\text{,} \label{Eq_gehtschonu2}%
\end{equation}
Together, assertions (\ref{Eq_nhnhnhnhnhnnhnhhhhh}) - (\ref{Eq_gehtschonu2})
prove our theorem.\newline\newline\textbf{(ii)} Validating
(\ref{Eq_nhnhnhnhnhnnhnhhhhh}) is straightforward: Set $R_{l}^{\prime}%
:=(\mu(A_{l})\Phi_{A_{l}}\circ T_{A_{l}},\Psi_{A_{l}}) $, $l\geq1$. According
to Proposition \ref{P_AsyInvarHitProc} c) and Proposition
\ref{P_AsyInvarAdmissDelayInternal} b), the sequence $(R_{l}^{\prime})$ is
asymptotically $T$-invariant in measure. Recalling Proposition
\ref{P_CharAdmissTimeDelay} b) and \ref{P_AsyInvarAdmissDelayInternal} c), we
see that $(\tau_{l})$ is an admissible delay sequence for $(R_{l}^{\prime})$.
Now (\ref{Eq_mnhjnggnfjfggnjf2}) allows us to appeal to Proposition
\ref{P_AsyInvarSeqNuLVsMu} to complete the proof of
(\ref{Eq_nhnhnhnhnhnnhnhhhhh}).\newline\newline\textbf{(iii)} Preparing for
the proof of (\ref{Eq_gehtschonu1}) and (\ref{Eq_gehtschonu2}) we set, for
$M\in\mathcal{B}_{\mathfrak{E}}$, $B_{l}(M):=\{(\mu(A_{l})\Phi_{A_{l}%
},\widetilde{\Psi}_{A_{l}})\in M\}\in\mathcal{A}$. Observe then that due to
$(\mu(A)\Phi_{A}\circ T_{A},\Psi_{A})=(\mu(A)\Phi_{A},\widetilde{\Psi}%
_{A})\circ T_{A}$, independence of $\varphi^{(0)}$ and $(\mathbf{\sigma}%
\Phi,\Psi)$ follows if we show that
\begin{gather}
\mu\left(  \{\mu(A_{l})\varphi_{A_{l}}\leq t\}\cap T_{A_{l}}^{-1}%
B_{l}(M)\right)  \longrightarrow(1-e^{-t})\Pr[(\Phi_{\mathrm{Exp}},\Psi^{\ast
})\in M]\nonumber\\
\text{for }t>0\text{ and }M\in\mathcal{B}_{\mathfrak{E}}\text{ with }\Pr
[(\Phi_{\mathrm{Exp}},\Psi^{\ast})\in\partial M]=0\text{.}
\label{Eq_bhbhggggg1}%
\end{gather}
(Use a variant of Theorem 2.3 of \cite{Bi} to argue as in the proof of Theorem
2.8 of \cite{Bi}.) Analogously, independence of $\overline{\varphi}^{(0)}$ and
$(\mathbf{\sigma}\overline{\Phi},\overline{\Psi})$ is immediate if
\begin{gather}
\mu_{A}\left(  \{\mu(A_{l})\varphi_{A_{l}}>s\}\cap T_{A_{l}}^{-1}%
B_{l}(M)\right)  \longrightarrow e^{-s}\Pr[(\Phi_{\mathrm{Exp}},\Psi^{\ast
})\in M]\nonumber\\
\text{for }s>0\text{ and }M\in\mathcal{B}_{\mathfrak{E}}\text{ with }\Pr
[(\Phi_{\mathrm{Exp}},\Psi^{\ast})\in\partial M]=0\text{.}
\label{Eq_bhbhggggg}%
\end{gather}
\newline

Now Lemma 4.1 of \cite{Z11} shows that for any $A,B\in\mathcal{A}$ and
$t\geq0$,
\begin{equation}
\left\vert \int_{0}^{t}\mu_{A}\left(  \{\mu(A)\varphi_{A}>s\}\cap T_{A}%
^{-1}B\right)  \,ds-\mu\left(  \{\mu(A)\varphi_{A}\leq t\}\cap T_{A}%
^{-1}B\right)  \right\vert \leq\mu(A)\text{.}%
\end{equation}
Hence, taking $A:=A_{l}$ and $B:=B_{l}(M)$ for an arbitrary $(\Phi
_{\mathrm{Exp}},\Psi^{\ast})$-continuity set $M\in\mathcal{B}_{\mathfrak{E}}$,
we see that for every $t\geq0$,
\begin{align}
\int_{0}^{t}\mu_{A_{l}}  &  \left(  \{\mu(A_{l})\varphi_{A_{l}}>s\}\cap
T_{A_{l}}^{-1}B_{l}(M)\right)  \,ds\nonumber\\
&  -\mu\left(  \{\mu(A_{l})\varphi_{A_{l}}\leq t\}\cap T_{A_{l}}^{-1}%
B_{l}(M)\right)  \longrightarrow0\quad\text{as }l\rightarrow\infty\text{.}
\label{Eq_KeyFromZ11Nice}%
\end{align}
\newline\textbf{(iv)} To validate (\ref{Eq_gehtschonu1}), suppose that
$(\overline{\Phi},\overline{\Psi})\overset{d}{=}(\Phi_{\mathrm{Exp}}%
,\Psi^{\ast})$. This entails
\[
\mu_{A_{l}}\left(  \{\mu(A_{l})\varphi_{A_{l}}>s\}\cap T_{A_{l}}^{-1}%
B_{l}(M)\right)  \longrightarrow e^{-s}\Pr[(\Phi_{\mathrm{Exp}},\Psi^{\ast
})\in M]
\]
whenever $s>0$ and $M$ is a $(\Phi_{\mathrm{Exp}},\Psi^{\ast})$-continuity
set. But then (\ref{Eq_bhbhggggg1}) follows via the crucial relation
(\ref{Eq_KeyFromZ11Nice}) by dominated convergence, ensuring independence of
$\varphi^{(0)}$ and $(\mathbf{\sigma}\Phi,\Psi)$ as required.\newline

Similarly, to prove (\ref{Eq_gehtschonu2}), suppose that $(\Phi,\Psi
)\overset{d}{=}(\Phi_{\mathrm{Exp}},\Psi^{\ast})$. Then,
\[
\mu\left(  \{\mu(A_{l})\varphi_{A_{l}}\leq t\}\cap T_{A_{l}}^{-1}%
B_{l}(M)\right)  \longrightarrow(1-e^{-t})\Pr[(\Phi_{\mathrm{Exp}},\Psi^{\ast
})\in M]
\]
for $t>0$ and any $(\Phi_{\mathrm{Exp}},\Psi^{\ast})$-continuity set $M$.
Fixing $M$ and varying $t$, we can use (\ref{Eq_KeyFromZ11Nice}) once again
and apply Lemma 4.2 of \cite{Z11} to obtain (\ref{Eq_bhbhggggg}), and hence
the desired independence of $\overline{\varphi}^{(0)}$ and $(\mathbf{\sigma
}\overline{\Phi},\overline{\Psi})$.
\end{proof}

%

\vspace{0.3cm}%

The argument for the joint limit theorem elaborates on a principle used before.

\begin{proof}
[\textbf{Proof of Theorem \ref{T_SpatiotemporalIIDLimits}}]\textbf{(i)} By
compactness of $(\mathfrak{M}([0,\infty]^{\mathbb{N}_{0}}\times\mathfrak{Z}%
^{\mathbb{N}_{0}}),D_{[0,\infty]^{\mathbb{N}_{0}}\times\mathfrak{Z}%
^{\mathbb{N}_{0}}})$ we may assume w.l.o.g. that
\begin{equation}
(\mu(A_{l})\Phi_{A_{l}},\widetilde{\Psi}_{A_{l}})\overset{\mu_{A_{l}}%
}{\Longrightarrow}\,(\Phi,\widetilde{\Psi})\quad\text{as }l\rightarrow
\infty\text{,} \label{Eq_BasicAssmVZDGVDZEV}%
\end{equation}
with $\Phi=(\varphi^{(0)},\varphi^{(1)},\ldots)$ and $\widetilde{\Psi}%
=(\psi^{(0)},\psi^{(1)},\ldots)$ random sequences in $[0,\infty]$ and
$\mathfrak{Z}$, respectively. Theorem \ref{T_AsyIntStateProcIndep} shows that
$\widetilde{\Psi}$ is iid with $\mathrm{law}(\psi^{(0)})=\mathrm{law}(\psi)$
so that $\mathrm{law}(\widetilde{\Psi})=\mathrm{law}(\Psi^{\ast})$. Next,
taking $s=0$ and observing that $A_{l}\cap\{\mu(A_{l})\varphi_{A_{l}%
}>0\}=A_{l}$, we see that Theorem
\ref{T_MOreFlexiblePoissonLimitFromCompactness} guarantees $\mathrm{law}%
(\Phi)=\mathrm{law}(\Phi_{\mathrm{Exp}})$.\newline

The main point is to show that $\Phi$ and $\widetilde{\Psi}$ are independent.
Let $\mathbf{\sigma}\Phi:=(\varphi^{(1)},\varphi^{(2)},\ldots)$ and
$\mathbf{\sigma}\widetilde{\Psi}:=(\psi^{(1)},\psi^{(2)},\ldots)$ denote the
shifted versions of the individual limit processes. We will first show that
\begin{equation}
\psi^{(0)}\text{ \quad is independent of \quad}(\Phi,\mathbf{\sigma}%
\widetilde{\Psi})\text{,} \label{Eq_TwistIndependence1}%
\end{equation}
and then check that
\begin{equation}
\varphi^{(0)}\text{ \quad is independent of \quad}(\mathbf{\sigma}%
\Phi,\mathbf{\sigma}\widetilde{\Psi})\text{.} \label{Eq_TwistIndependence2}%
\end{equation}
Together these imply that $(\psi^{(0)},\varphi^{(0)},(\mathbf{\sigma}%
\Phi,\mathbf{\sigma}\widetilde{\Psi}))$ is an independent triple. But since
convergence in (\ref{Eq_BasicAssmVZDGVDZEV}) uses the $T_{A_{l}}$-invariant
measures $\mu_{A_{l}}$ under which, for each $l\geq1$, $(\mu(A_{l})\Phi
_{A_{l}},\widetilde{\Psi}_{A_{l}})$ is a stationary sequence in $[0,\infty
]\times\mathfrak{Z}$, we see that so is $(\Phi,\widetilde{\Psi})$, meaning
that $\mathrm{law}(\Phi,\widetilde{\Psi})=\mathrm{law}(\mathbf{\sigma}%
\Phi,\mathbf{\sigma}\widetilde{\Psi})$. Therefore the above can be iterated to
show that for any $m\geq1$, $(\psi^{(0)},\varphi^{(0)},\ldots,\psi
^{(m-1)},\varphi^{(m-1)},(\mathbf{\sigma}^{m}\Phi,\mathbf{\sigma}%
^{m}\widetilde{\Psi}))$ is an independent tuple. Hence $\{\psi^{(j)}%
,\varphi^{(k)}\}_{j,k\geq0}$ is indeed an independent family. \newline%
\newline\textbf{(ii)} To prove (\ref{Eq_TwistIndependence1}) we are going to
apply Theorem \ref{T_AsyIndepForAsyInvarSequences} with $\mathfrak{E}%
:=\mathfrak{Z}$, $\mathfrak{E}^{\prime}:=[0,\infty]^{\mathbb{N}_{0}}%
\times\mathfrak{Z}^{\mathbb{N}}$, $R_{l}:=\psi_{A_{l}}$, $R_{l}^{\prime}%
:=(\mu(A_{l})\Phi_{A_{l}},\Psi_{A_{l}})$, representing $(\mu(A_{l})\Phi
_{A_{l}},\widetilde{\Psi}_{A_{l}})$ as the map $(R_{l},R_{l}^{\prime
}):X\rightarrow\mathfrak{E}\times\mathfrak{E}^{\prime}$. The sequence
$(R_{l}^{\prime})$ is asymptotically $T$-invariant in measure since both
$(\mu(A_{l})\Phi_{A_{l}})$ and $(\Psi_{A_{l}})$ are (due to Propositions
\ref{P_AsyInvarHitProc} b) and \ref{P_AsyInvarAdmissDelayInternal} b)).

For any $F\in\mathcal{B}_{\mathfrak{Z}}^{\pi}$ with $\Pr[\psi\in F]>0$ pick
$(\nu_{l,F})$, $(\tau_{l,F})$ and $\mathfrak{K}_{F}$ as in the statement of
Theorem \ref{T_SpatiotemporalIIDLimits}. By assumption, $\overline{\nu}%
_{l,F}:=T_{\ast}^{\tau_{l,F}}\nu_{l,F}\in\mathfrak{K}_{F}$ for $l\geq1$, so
that (\ref{Eq_TwistIndependence1}) follows via Theorem
\ref{T_AsyIndepForAsyInvarSequences} once we check that
\begin{equation}
(\tau_{l,F})_{l\geq1}\text{ is an admissible delay for }(\mu(A_{l})\Phi
_{A_{l}},\Psi_{A_{l}})_{l\geq1}\text{ and }(\nu_{l,F})_{l\geq1}\text{.}
\label{Eq_naendlichpasstszamm}%
\end{equation}
Here it is enough to treat $(\mu(A_{l})\Phi_{A_{l}})$ and $(\Psi_{A_{l}})$
separately. But in the first case (\ref{Eq_vzgvzgvzgvzgvzgv1}) and
(\ref{Eq_owowowowowoowwowowow}) allow us to appeal to Proposition
\ref{P_CharAdmissTimeDelay} a), while (\ref{Eq_owowowowowoowwowowow}) alone
takes care of the second case via Proposition
\ref{P_AsyInvarAdmissDelayInternal} c). \newline\newline\textbf{(iii)} We
validate (\ref{Eq_TwistIndependence2}) analogously, this time applying Theorem
\ref{T_AsyIndepForAsyInvarSequences} with $\mathfrak{E}:=[0,\infty]$,
$\mathfrak{E}^{\prime}:=[0,\infty]^{\mathbb{N}}\times\mathfrak{Z}%
^{\mathbb{N}_{0}}$, $R_{l}:=\mu(A_{l})\varphi_{A_{l}}$, $R_{l}^{\prime}%
:=(\mu(A_{l})\Phi_{A_{l}}\circ T_{A_{l}},\Psi_{A_{l}})$, representing
$(\mu(A_{l})\Phi_{A_{l}},\Psi_{A_{l}})$ as the map $(R_{l},R_{l}^{\prime
}):X\rightarrow\mathfrak{E}\times\mathfrak{E}^{\prime}$. Again, $(R_{l}%
^{\prime})$ is asymptotically $T$-invariant in measure. We use $\mathcal{B}%
_{\mathfrak{E}}^{\pi}:=\{(s,\infty):s\geq0\}$. For any $s\geq0$ we have
$\Pr[\varphi^{(0)}>s]>0$ since we already know that $\varphi^{(0)}$ has an
exponential distribution. Take $(\nu_{l,s})$, $(\tau_{l,s})$ and
$\mathfrak{K}_{s}$ as in the statement of Theorem
\ref{T_SpatiotemporalIIDLimits}. By assumption, $\overline{\nu}_{l,s}%
:=T_{\ast}^{\tau_{l,s}}\nu_{l,s}\in\mathfrak{K}_{s}$ for $l\geq1$, and
(\ref{Eq_TwistIndependence2}) follows via Theorem
\ref{T_AsyIndepForAsyInvarSequences} if we check that
\begin{equation}
(\tau_{l,s})_{l\geq1}\text{ is an admissible delay for }(\mu(A_{l})\Phi
_{A_{l}}\circ T_{A_{l}},\Psi_{A_{l}})_{l\geq1}\text{ and }(\nu_{l,s})_{l\geq
1}\text{.}%
\end{equation}
Condition (\ref{Eq_xdrxrxxdxrxdrxd}) ensures that $(\tau_{l,s})_{l\geq1}$ is
admissible for $(\mu(A_{l})\Phi_{A_{l}}\circ T_{A_{l}})$ via Proposition
\ref{P_CharAdmissTimeDelay} b), and also for $(\Psi_{A_{l}})$ by virtue of
Proposition \ref{P_AsyInvarAdmissDelayInternal} c).\newline\newline%
\textbf{(iv)} The above shows that $(\mu(A_{l})\Phi_{A_{l}},\widetilde{\Psi
}_{A_{l}})\overset{\mu_{A_{l}}}{\Longrightarrow}\,(\Phi_{\mathrm{Exp}}%
,\Psi^{\ast})$, hence $(\mu(A_{l})\Phi_{A_{l}},\Psi_{A_{l}})=(\mu(A_{l}%
)\Phi_{A_{l}},\mathbf{\sigma}\widetilde{\Psi}_{A_{l}})\overset{\mu_{A_{l}}%
}{\Longrightarrow}(\Phi_{\mathrm{Exp}},\mathbf{\sigma}\Psi^{\ast})\overset
{d}{=}(\Phi_{\mathrm{Exp}},\Psi^{\ast})$. To prove that we also have
$(\mu(A_{l})\Phi_{A_{l}},\Psi_{A_{l}})\overset{\mu}{\Longrightarrow}%
(\,\Phi_{\mathrm{Exp}},\Psi^{\ast})$ we appeal to Proposition
\ref{P_AsyInvarSeqNuLVsMu}:

Set $\mathfrak{E}:=[0,\infty]^{\mathbb{N}_{0}}\times\mathfrak{Z}^{\mathbb{N}}%
$, $R_{l}:=(\mu(A_{l})\Phi_{A_{l}},\Psi_{A_{l}})$ and $\nu_{l}:=\mu_{A_{l}}$.
As before, $(R_{l})$ is is asymptotically $T$-invariant in measure. Take
$\tau_{l}:=\tau_{l,\mathfrak{Z}}$ and $\mathfrak{K}:=\mathfrak{K}%
_{\mathfrak{Z}}$ as in assumption b) of Theorem
\ref{T_SpatiotemporalIIDLimits}, and note that $\nu_{l}=\nu_{l,\mathfrak{Z}%
}=\mu_{A_{l}\cap\{\psi_{A_{l}}\in\mathfrak{Z}\}}$. Finally, recall that we
have already shown admissibility of this sequence $(\tau_{l})$ for the present
$(R_{l})$ and $(\nu_{l})$ in step (ii), see (\ref{Eq_naendlichpasstszamm}).
\end{proof}

%

\vspace{0.3cm}%

\section{Illustration in some easy standard situations\label{Sec_GMmaps}}

We illustrate the ease with which the above results can sometimes be applied
by studying some basic piecewise invertible dynamical systems.%

\vspace{0.3cm}%
%

\noindent
\textbf{Piecewise invertible systems.} We consider situations in which
$(X,\mathrm{d}_{X})$ is a metric space with Borel $\sigma$-field
$\mathcal{A}=\mathcal{B}_{X}$, and where $X$ comes with a partition $\xi_{0}$
(mod $\lambda$) into open \emph{components} (e.g. $X\ $may be a union of
disjoint open intervals in $\mathbb{R}$). Let $\lambda$ be a $\sigma$-finite
reference measure on $\mathcal{A}$. A \emph{piecewise invertible system} on
$X$ is a quintuple $(X,\mathcal{A},\lambda,T,\xi)$, where $\xi=\xi_{1}$ is a
(finite or) countable partition mod $\lambda$ of $X$ into open sets, refining
$\xi_{0}$, such that each \emph{branch} of $T$, i.e. its restriction to any of
its \emph{cylinders} $Z\in\xi$ is a homeomorphism onto $TZ$,
\emph{null-preserving} with respect to $\lambda$, that is, $\lambda\mid
_{Z}\circ T^{-1}\ll\lambda$. If the measure is $T$-invariant, we denote it by
$\mu$ and call $(X,\mathcal{A},\mu,T,\xi)$ a \emph{measure preserving} system.
The system is called \emph{uniformly expanding} if there is some $\rho
\in(0,1)$ such that $d_{X}(x,y)\leq\rho\cdot d_{X}(Tx,Ty)$ whenever $x,y\in
Z\in\xi$.

We let $\xi_{n}$ denote the family of \emph{cylinders of rank }$n$, that is,
the sets of the form $Z=[Z_{0},\ldots,Z_{n-1}]:=\bigcap_{i=0}^{n-1}T^{-i}%
Z_{i}$ with $Z_{i}\in\xi$. Write $\xi_{n}(x)$ for the element of $\xi_{n}$
containing $x$ (which is defined a.e.). Each \emph{iterate} $(X,\mathcal{A}%
,\mu,T^{n},\xi_{n})$, $n\geq1$, of the system is again piecewise invertible.
The \emph{inverse branches} will be denoted $v_{Z}:=(T^{n}\mid_{Z})^{-1}%
:T^{n}Z\rightarrow Z$, $Z\in\xi_{n}$. All $v_{Z}$ have Radon-Nikodym
derivatives $v_{Z}^{\prime}:=d(\lambda\circ v_{Z})/d\lambda$.

The system is \emph{Markov} if $TZ\cap Z^{\prime}\neq\varnothing$ for
$Z,Z^{\prime}\in\xi$ implies $Z^{\prime}\subseteq TZ$.%

\vspace{0.3cm}%
%

\noindent
\textbf{Gibbs-Markov maps.} One important basic class of piecewise invertible
systems $(X,\mathcal{A},\mu,T,\xi)$ is that of probability preserving
\emph{Gibbs-Markov maps} (\emph{GM maps}). This means that $\mathrm{diam}%
(X)<\infty$, and $\mu$ is an invariant probability, that the system has a
uniformly expanding iterate $T^{N}$, and satisfies the \emph{big image
property}, so that $\flat:=\inf_{Z\in\xi}\mu(TZ)>0$. Moreover, the
$v_{Z}^{\prime}$, $Z\in\xi$, have well behaved versions in that there exists
some $r>0$ such that $\left\vert v_{Z}^{\prime}(x)/v_{Z}^{\prime
}(y)-1\right\vert \leq r\,d_{X}(x,y)$ whenever $x,y\in TZ$, $Z\in\xi$ (see
\cite{AD}). In this case $r$ can be chosen in such a way that in fact
\begin{equation}
\left\vert \frac{v_{Z}^{\prime}(x)}{v_{Z}^{\prime}(y)}-1\right\vert \leq
r\,d_{X}(x,y)\text{ \quad whenever }n\geq1\text{ and }x,y\in T^{n}Z,Z\in
\xi_{n}\text{.} \label{Eq_RegularityForGMSys}%
\end{equation}
In this context, $v_{Z}^{\prime}$ will always denote such versions of the a.e.
defined Radon-Nikodym derivatives $d(\mu\circ v_{Z})/d\mu$.\newline

We recall a few well known basic properties of such systems, all of which are
obtained by elementary routine arguments. Let $\beta$ be the partition
generated by $T\xi$. By (\ref{Eq_RegularityForGMSys}) the normalized image
measures $T_{\ast}^{n}\mu_{Z}$ with $n\geq1$ and $Z\in\xi_{n}$ have densities
belonging to $\mathcal{U}:=\{u\in\mathcal{D}(\mu):\left\vert
u(x)/u(y)-1\right\vert \leq r\,d_{X}(x,y)$ whenever $x,y\in B\in\beta\}$, that
is,%
\begin{equation}
T_{\ast}^{n}\mu_{Z}\in\mathfrak{K}\text{ \qquad for all }n\geq1\text{ and
}Z\in\xi_{n}\text{,} \label{Eq_GMgoodK}%
\end{equation}
where $\mathfrak{K}:=\{\nu\in\mathfrak{P}:d\nu/d\mu\in\mathcal{U}\}$. But
$\mathcal{U}$ is compact in $L_{1}(\mu)$ (Arzela-Ascoli, as in \S 4.7 of
\cite{A0}) and convex, so that $\mathfrak{K}$ is a compact convex set in
$\mathfrak{P}$.

Property (\ref{Eq_RegularityForGMSys}) also implies \emph{bounded distortion}
in that
\begin{equation}
\mu_{Z}(Z\cap T^{-n}A)=e^{\pm r}\mu_{T^{n}Z}(A)\text{ \quad for all }%
n\geq1\text{, }Z\in\xi_{n}\text{, and }A\in\mathcal{A}\text{.}
\label{Eq_AbstractAdlerInfty}%
\end{equation}
In particular,
\begin{equation}
\mu(Z\cap T^{-n}A)\leq\flat^{-1}e^{r}\mu(Z)\mu(A)\text{ \quad for all }%
n\geq1\text{, }Z\in\xi_{n}\text{, and }A\in\mathcal{A}\text{.}
\label{Eq_UpperDistortBound}%
\end{equation}
Also, an easy argument provides constants $\kappa\geq1$ and $q\in(0,1)$ such
that%
\begin{equation}
\mu(Z)\leq\kappa q^{n}\text{ \qquad for all }n\geq1\text{ and }Z\in\xi
_{n}\text{.} \label{Eq_GMMapsContractionOfCylMeasure}%
\end{equation}
\newline%

\noindent
\textbf{Poisson asymptotics for (unions of) cylinders of GM-maps.} To
demonstrate how convenient the assumptions of our limit theorems are, we first
illustrate their use in the setup of cylinders of GM-maps, re-proving the well-known

\begin{theorem}
[\textbf{Poisson asymptotics for shrinking cylinders of GM maps}%
]\label{T_PoissonAsyGMCyls}Let $(X,\mathcal{A},\mu,T,\xi)$ be an ergodic
probability preserving Gibbs-Markov system. Let $x^{\ast}\in X$ be a point
such that the cylinder $A_{l}:=\xi_{l}(x^{\ast})$ is defined for all $l\geq
1$.\newline\textbf{a)} Assume that $x^{\ast}\in X$ is not periodic, then
$(A_{l})$ exhibits Poisson asymptotics,
\begin{equation}
\mu(A_{l})\Phi_{A_{l}}\overset{\mu}{\Longrightarrow}\,\Phi_{\mathrm{Exp}%
}\text{ \quad and \quad}\mu(A_{l})\Phi_{A_{l}}\overset{\mu_{A_{l}}%
}{\Longrightarrow}\,\Phi_{\mathrm{Exp}}\text{ \quad as }l\rightarrow
\infty\text{.}%
\end{equation}
\textbf{b)} On the other hand, if $x^{\ast}\in X$ is periodic, $x^{\ast}%
=T^{p}x^{\ast}$ with $p\geq1$ minimal, then
\begin{equation}
\mu(A_{l})\Phi_{A_{l}}\overset{\mu_{A_{l}}}{\Longrightarrow}\,\Phi
_{(\mathrm{Exp},\theta)}\text{ \quad as }l\rightarrow\infty\text{,}%
\end{equation}
where $\theta:=1-v_{A_{p}}^{\prime}(x^{\ast})\in(0,1)$.
\end{theorem}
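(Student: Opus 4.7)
The approach is to apply Theorem \ref{T_MOreFlexiblePoissonLimitFromCompactness} for (a) and Theorem \ref{T_CPoissonViaCompactness} for (b), both with $\nu_{l}:=\mu_{A_{l}}$ and delay $\tau_{l}\equiv l$ (with an adjustment $\tau_{l}\equiv p$ on the ``short-return'' piece in (b)). Three standard GM facts will be used throughout: (i) $T_{\ast}^{n}\mu_{Z}\in\mathfrak{K}$ for any $Z\in\xi_{n}$, where $\mathfrak{K}$ is the compact family of Lipschitz-on-$\beta$ densities from (\ref{Eq_GMgoodK}); (ii) the distortion bound $\mu(Z\cap T^{-n}A)\leq\flat^{-1}e^{r}\mu(Z)\mu(A)$ for $Z\in\xi_{n}$ from (\ref{Eq_UpperDistortBound}); and (iii) the exponential cylinder decay $\mu(\xi_{n}(x^{\ast}))\leq\kappa q^{n}$ from (\ref{Eq_GMMapsContractionOfCylMeasure}).

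For (a), the conditions $\mu(A_{l})\tau_{l}\to 0$ and $T_{\ast}^{l}\mu_{A_{l}}\in\mathfrak{K}$ are immediate from (iii) and (i). To verify (\ref{Eq_HardlyAnyReturnsUpToTimeM1}), bound $\mu_{A_{l}}(\varphi_{A_{l}}\leq l)\leq\sum_{j=1}^{l}\mu_{A_{l}}(T^{-j}A_{l})$ and split at a fixed cutoff $K$. For $K<j\leq l$ we have $A_{l}\subseteq\xi_{j}(x^{\ast})$, so (ii) with $Z:=\xi_{j}(x^{\ast})$ and (iii) give $\mu_{A_{l}}(T^{-j}A_{l})\leq\flat^{-1}e^{r}\kappa q^{j}$, whose tail from $K+1$ is arbitrarily small. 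For $j\leq K$, non-periodicity of $x^{\ast}$ together with uniform expansion (which makes $\xi$ generating and thus forces the symbolic orbit of $x^{\ast}$ to be aperiodic) rules out the overlap compatibility $Z_{i+j}=Z_{i}$ for $i=0,\ldots,l-j-1$ required for $A_{l}\cap T^{-j}A_{l}\neq\varnothing$; hence these intersections vanish once $l$ is large enough. Theorem \ref{T_MOreFlexiblePoissonLimitFromCompactness} now yields Poisson asymptotics.

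For (b), set $A_{l}^{\bullet}:=A_{l+p}$ and $A_{l}^{\circ}:=A_{l}\setminus A_{l+p}$, and let $\tau_{l}\equiv p$ on $A_{l}^{\bullet}$, $\tau_{l}\equiv l$ on $A_{l}^{\circ}$. Bounded distortion (\ref{Eq_RegularityForGMSys}) gives $\mu(A_{l+p})/\mu(A_{l})\to v_{A_{p}}^{\prime}(x^{\ast})=1-\theta$, verifying (\ref{Eq_CompundingTheProportion}); (\ref{Eq_DontWaitTooLong2}) is trivial. On $A_{l}^{\bullet}$ the first return to $A_{l}$ is deterministically at time $p$, so (\ref{Eq_ManyHappyReturns}) holds, while $(T_{A_{l}})_{\ast}\mu_{A_{l}^{\bullet}}=T_{\ast}^{p}\mu_{A_{l+p}}$ has density on $A_{l}=T^{p}A_{l+p}$ proportional to the derivative of the inverse branch of $T^{p}|_{A_{l+p}}$, which (\ref{Eq_RegularityForGMSys}) forces to differ from a constant by a factor uniformly close to one as $\mathrm{diam}(A_{l})\to 0$; hence this density converges to $\mu(A_{l})^{-1}1_{A_{l}}$ in $L^{1}$, giving (\ref{Eq_ControlShortReturns}). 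For (\ref{Eq_HardlyAnyReturnsUpToTimeM2}), a symbolic computation using minimality of $p$ shows $A_{l}\cap T^{-j}A_{l}=\varnothing$ when $p\nmid j$ and $l$ is large, while for $j=kp\leq l$ one verifies $A_{l}\cap T^{-kp}A_{l}=A_{l+kp}\subseteq A_{l+p}$; combined with the distortion tail for the narrow window $(l-p,l]$ this delivers $\mu_{A_{l}^{\circ}}(\varphi_{A_{l}}\leq l)\to 0$.

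The main obstacle is (\ref{Eq_TheFabulousCompactnessCondition2}): $T_{\ast}^{l}\mu_{A_{l}^{\circ}}$ has a density of the form $v_{A_{l}}^{\prime}\,1_{B_{l}\setminus T^{l}A_{l+p}}/\mu(A_{l}^{\circ})$ with $B_{l}:=T^{l}A_{l}$, and the cutoff by the subcylinder $T^{l}A_{l+p}\subsetneq B_{l}$ generally pushes this density out of the original $\mathfrak{K}$. To produce a suitable compact set, split the indices $l$ into the $p$ residue classes modulo $p$: along each class both $B_{l}$ and $T^{l}A_{l+p}$ are fixed (they depend only on $l\bmod p$ via the location of $T^{l}x^{\ast}$ on the finite orbit of $x^{\ast}$), and bounded distortion keeps the densities uniformly $L^{\infty}$-bounded and uniformly Lipschitz on the fixed annulus $B_{l}\setminus T^{l}A_{l+p}$. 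Arzela--Ascoli along each of the $p$ classes gives relative $L^{1}$-compactness of the whole family, whose closure serves as a compact $\mathfrak{K}'\subset(\mathfrak{P},d_{\mathfrak{P}})$ satisfying (\ref{Eq_TheFabulousCompactnessCondition2}). Theorem \ref{T_CPoissonViaCompactness} then yields (b).
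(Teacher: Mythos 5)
Your argument is correct. Part a) is essentially the paper's own proof: the same data $\nu_{l}=\mu_{A_{l}}$, $\tau_{l}=l$, the compact set from (\ref{Eq_GMgoodK}), and the same splitting of return times at a fixed cutoff $K$, with the range $K<j\leq l$ controlled through $A_{l}\subseteq A_{j}$, (\ref{Eq_UpperDistortBound}) and (\ref{Eq_GMMapsContractionOfCylMeasure}); the only cosmetic difference is that the paper excludes the finitely many times $j\leq K$ metrically (continuity on cylinders plus $\mathrm{diam}(A_{l})\rightarrow0$), where you argue symbolically via the generating property. In part b) you use the same decomposition $A_{l}^{\bullet}=A_{l+p}$, $A_{l}^{\circ}=A_{l}\setminus A_{l+p}$, the same $\tau_{l}=p$ on $A_{l}^{\bullet}$, and the same verifications of (\ref{Eq_CompundingTheProportion}), (\ref{Eq_ManyHappyReturns}) and (\ref{Eq_ControlShortReturns}), but you diverge on the escaping piece: the paper sets $\tau_{l}:=l+p$ there, so that $A_{l}^{\circ}$ is (mod $\mu$) a union of rank-$(l+p)$ cylinders and (\ref{Eq_TheFabulousCompactnessCondition2}) follows at once from (\ref{Eq_GMgoodK}) and convexity of $\mathfrak{K}$, with possible returns in $(l,l+p]$ killed by distortion; your choice $\tau_{l}:=l$ forces you to manufacture a new compact set for the densities $\mu(A_{l}^{\circ})^{-1}v_{A_{l}}^{\prime}1_{B_{l}\setminus T^{l}A_{l+p}}$, which you do correctly by noting that $B_{l}=T^{l}A_{l}$ and $T^{l}A_{l+p}=\xi_{p}(T^{l}x^{\ast})$ depend only on $l\bmod p$ and applying uniform bounds plus an Arzel\`a--Ascoli argument of the same kind that gives compactness of $\mathfrak{K}$ itself. (A shortcut available in your setting: $T_{\ast}^{l}\mu_{A_{l}^{\circ}}$ is the conditional measure of $T_{\ast}^{l}\mu_{A_{l}}\in\mathfrak{K}$ on the fixed set $X\setminus\xi_{p}(T^{l}x^{\ast})$, whose mass stays bounded below, and conditioning on a fixed set is total-variation continuous, so compactness is inherited from $\mathfrak{K}$ without redoing Arzel\`a--Ascoli.) Thus the paper's delay $l+p$ buys simplicity, while your route costs an extra compactness construction. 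A genuine merit of your version: your treatment of (\ref{Eq_HardlyAnyReturnsUpToTimeM2}) --- emptiness of $A_{l}\cap T^{-j}A_{l}$ for $p\nmid j$ and $j\leq l-p$, absorption into $A_{l}^{\bullet}$ for $p\mid j$, and a separate distortion bound for the window $(l-p,l]$ --- is more careful than the paper's assertion that $\varphi_{A_{l}}>l$ on $A_{l}^{\circ}$, which can fail for return times in that window when $p\nmid l$ (full-branch maps admit points with itinerary $(Z_{0}\cdots Z_{l-1})(Z_{0}\cdots Z_{l-1})\cdots$ lying in $A_{l}^{\circ}$); the exponentially small window estimate you include is exactly what is needed there.
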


\vspace{0.3cm}%

Our proof via Theorems \ref{T_MOreFlexiblePoissonLimitFromCompactness} and
\ref{T_CPoissonViaCompactness} will only employ the basic elementary facts
about Gibbs-Markov maps mentioned before. The well-known strong mixing
properties (valid in aperiodic situations) which follow (by more sophisticated
arguments) from related observations are not used.%

\vspace{0.3cm}%

\begin{proof}
\textbf{a)} We are going to apply Theorem
\ref{T_MOreFlexiblePoissonLimitFromCompactness}, using $\nu_{l}:=\mu_{A_{l}}$,
$\mathfrak{K}$ as in (\ref{Eq_GMgoodK}), and the obvious delay times $\tau
_{l}:=l$. Set $\widetilde{\kappa}:=\flat^{-1}e^{r}\kappa$.

Condition (\ref{Eq_DontWaitTooLong1}) is trivially satisfied since $\mu
(A_{l})$ is exponentially small, while condition
(\ref{Eq_TheFabulousCompactnessCondition1}) is taken care of by
(\ref{Eq_GMgoodK}).

To validate (\ref{Eq_HardlyAnyReturnsUpToTimeM1}), take any $\varepsilon>0$.
Choose $K\geq1$ so large that $\widetilde{\kappa}q^{K}/(1-q)<\varepsilon$.
Since $x^{\ast}$ is not periodic, and $T$ is continuous on cylinders, there is
some $l^{\prime}$ such that $\varphi_{A_{l}}>K$ on $A_{l}$ whenever $l\geq
l^{\prime}$ (recall that $\mathrm{diam}(A_{l})\rightarrow0$). For every
$k\in\{1,\ldots,l\}$ we have $A_{l}\subseteq A_{k}=\xi_{k}(x^{\ast})\in\xi
_{k}$, and therefore
\begin{equation}
\mu_{A_{l}}(T^{-k}A_{l})\leq\mu(A_{l})^{-1}\mu(A_{k}\cap T^{-k}A_{l})\leq
\flat^{-1}e^{r}\mu(A_{k})\leq\widetilde{\kappa}q^{k}%
\end{equation}
due to (\ref{Eq_UpperDistortBound}) and
(\ref{Eq_GMMapsContractionOfCylMeasure}). For $l\geq l^{\prime}$ we then find
that
\begin{align}
\mu_{A_{l}}\left(  \varphi_{A_{l}}\leq l\right)   &  =\mu_{A_{l}}\left(
K\leq\varphi_{A_{l}}\leq l\right)  =\mu_{A_{l}}(%
{\textstyle\bigcup\nolimits_{k=K}^{l}}
T^{-k}A_{l})\nonumber\\
&  \leq%
{\textstyle\sum\nolimits_{k=K}^{l}}
\mu_{A_{l}}\left(  T^{-k}A_{l}\right)  \leq\widetilde{\kappa}%
{\textstyle\sum\nolimits_{k=K}^{l}}
q^{k}<\varepsilon\text{,}%
\end{align}
and (\ref{Eq_HardlyAnyReturnsUpToTimeM1}) follows because $\varepsilon>0$ was arbitrary.%

\vspace{0.3cm}%
%

\noindent
\textbf{b)} We employ Theorem \ref{T_CPoissonViaCompactness}, using $\nu
_{l}:=\mu_{A_{l}}$ and $\mathfrak{K}$ as in (\ref{Eq_GMgoodK}). Define
$A_{l}^{\bullet}:=A_{l}\cap T^{-p}A_{l}=A_{l+p}\in\xi_{l+p}$ and $A_{l}%
^{\circ}:=A_{l}\setminus A_{l}^{\bullet}$, so that
\[
\mu(A_{l}^{\bullet})=\mu(A_{p}\cap T^{-p}A_{l})=T_{\ast}^{p}(\mu\mid_{A_{p}%
})(A_{l})=\mu(A_{l})\cdot%
{\textstyle\int\nolimits_{A_{l}}}
v_{A_{p}}^{\prime}\,d\mu_{A_{l}}\text{.}%
\]
Now $x^{\ast}\in A_{l}\subseteq T^{p}A_{p}$ for $l>p$, and $\mathrm{diam}%
(A_{l})\searrow0$. As $v_{A_{p}}^{\prime}$ is continuous on $T^{p}A_{p}$ with
$v_{A_{p}}^{\prime}(x^{\ast})=1-\theta$, we get $\mu(A_{l}^{\bullet}%
)\sim(1-\theta)\mu(A_{l})$ as $l\rightarrow\infty$, proving
(\ref{Eq_CompundingTheProportion}).

Observe that $\varphi_{A_{l}}=p$ on $A_{l}^{\bullet}$, and accordingly define
$\tau_{l}:=p$ on $A_{l}^{\bullet}$. Then (\ref{Eq_ManyHappyReturns}) is clear.
Moreover, $(T_{A_{l}})_{\ast}\mu_{A_{l}^{\bullet}}=\mu(A_{l}^{\bullet}%
)^{-1}\left(  T_{\ast}^{p}(\mu\mid_{A_{l}})\right)  \mid_{A_{l}}$, and this
measure is given by the probability density $\mu(A_{l}^{\bullet})^{-1}%
1_{A_{l}}v_{A_{l}}^{\prime}=:h_{l}^{\bullet}$. Comparing these to the
densities $\mu(A_{l})^{-1}1_{A_{l}}=:h_{l}$ of the $\mu_{A_{l}}$ we obtain
(\ref{Eq_ControlShortReturns}), because of $\mathrm{diam}(A_{l})\searrow0$ and
(\ref{Eq_RegularityForGMSys}).

Turning to the escaping part $A_{l}^{\circ}$, note that it is $\xi_{l+p}%
$-measurable (mod $\mu$). Define $\tau_{l}:=l+p$ on $A_{l}^{\circ}$, then
(\ref{Eq_TheFabulousCompactnessCondition2}) is immediate from
(\ref{Eq_GMgoodK}) and convexity of $\mathfrak{K}$. We finally check
(\ref{Eq_HardlyAnyReturnsUpToTimeM2}). Up to a set of measure zero,
$A_{l}^{\circ}=%
{\textstyle\bigcup\nolimits_{W\in\xi_{p}\setminus\{A_{p}\}}}
A_{l}\cap T^{-l}W$, so that $\varphi_{A_{l}}>l$ on $A_{l}^{\circ}$.
Therefore,
\begin{align*}
\mu_{A_{l}^{\circ}}\left(  \varphi_{A_{l}}\leq\tau_{l}\right)   &  =\mu
_{A_{l}^{\circ}}\left(  l<\varphi_{A_{l}}\leq\tau_{l}\right)  \leq%
{\textstyle\sum\nolimits_{k=1}^{p}}
\mu_{A_{l}^{\circ}}(T^{-(l+k)}A_{l})\\
&  \leq\mu(A_{l}^{\circ})^{-1}%
{\textstyle\sum\nolimits_{k=1}^{p}}
\mu(A_{l}\cap T^{-(l+k)}A_{l})\\
&  \leq\mu(A_{l}^{\circ})^{-1}%
{\textstyle\sum\nolimits_{k=1}^{p}}
{\textstyle\sum\nolimits_{V\in\xi_{k}}}
\mu((A_{l}\cap T^{-l}V)\cap T^{-(l+k)}A_{l})\\
&  \leq\flat^{-1}e^{r}\mu_{A_{l}}(A_{l}^{\circ})^{-1}%
{\textstyle\sum\nolimits_{k=1}^{p}}
{\textstyle\sum\nolimits_{V\in\xi_{k}}}
\mu\left(  A_{l}\cap T^{-l}V\right) \\
&  =p\,\flat^{-1}e^{r}\mu_{A_{l}}(A_{l}^{\circ})^{-1}\,\mu(A_{l}%
)\longrightarrow0\text{ \quad as }l\rightarrow\infty\text{,}%
\end{align*}
where we have used $A_{l}=%
{\textstyle\bigcup\nolimits_{V\in\xi_{k}}}
A_{l}\cap T^{-l}V$ (disjoint), (\ref{Eq_UpperDistortBound}),
(\ref{Eq_GMMapsContractionOfCylMeasure}), and the fact that $\mu_{A_{l}}%
(A_{l}^{\circ})\rightarrow\theta\in(0,1)$.
\end{proof}

Another simple situation is that of small sets $A_{l}$ which consist of (fewer
and fewer) rank-one cylinders. Partitioning the $A_{l}$ into subsets of the
same type, and recording which of those subsets an orbit hits, leads us to the
study of basic discrete\ local processes.

\begin{theorem}
[\textbf{Poisson asymptotics and local processes for unions of cylinders of GM
maps}]\label{T_PoissonAsyGMUnionCyls}Let $(X,\mathcal{A},\mu,T,\xi)$ be an
ergodic probability preserving Gibbs-Markov system. Let $(A_{l})$ be a
sequence of asymptotically rare events such that each $A_{l}$ is $\xi
$-measurable. \newline\textbf{a)} Then $(A_{l})$ exhibits Poisson
asymptotics,
\begin{equation}
\mu(A_{l})\Phi_{A_{l}}\overset{\mu}{\Longrightarrow}\,\Phi_{\mathrm{Exp}%
}\text{ \quad and \quad}\mu(A_{l})\Phi_{A_{l}}\overset{\mu_{A_{l}}%
}{\Longrightarrow}\,\Phi_{\mathrm{Exp}}\text{ \quad as }l\rightarrow
\infty\text{.} \label{Eq_tptptptptptpt}%
\end{equation}
\textbf{b)} Assume further that, for some integer $m\geq2$, each $A_{l}$ is
partitioned into $\xi$-measurable subsets $A_{l}^{(1)},\ldots A_{l}^{(m)}$
which satisfy $\mu_{A_{l}}(A_{l}^{(j)})\rightarrow\vartheta_{j}\in(0,1)$ as
$l\rightarrow\infty$. Define local observables by letting $\psi_{A_{l}}(x):=j$
if $x\in A_{l}^{(j)}$. Then,%
\begin{equation}
\left.
\begin{array}
[c]{c}%
(\mu(A_{l})\Phi_{A_{l}},\widetilde{\Psi}_{A_{l}})\overset{\mu_{A_{l}}%
}{\Longrightarrow}\\
(\mu(A_{l})\Phi_{A_{l}},\Psi_{A_{l}})\overset{\mu}{\Longrightarrow}%
\end{array}
\right\}  (\,\Phi_{\mathrm{Exp}},\Psi^{\ast})\quad\text{as }l\rightarrow
\infty\text{,} \label{Eq_GMJointLimits}%
\end{equation}
where $(\,\Phi_{\mathrm{Exp}},\Psi^{\ast})$ is an independent pair with
$\Psi^{\ast}$ a $(\vartheta_{1},\ldots,\vartheta_{m})$-Bernoulli sequence.
\end{theorem}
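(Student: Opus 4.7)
The plan is to apply Theorem~\ref{T_MOreFlexiblePoissonLimitFromCompactness} for (a) and Theorem~\ref{T_SpatiotemporalIIDLimits} for (b). In both cases the key ingredients are the compact class $\mathfrak{K}$ from (\ref{Eq_GMgoodK}) and the distortion estimate (\ref{Eq_UpperDistortBound}), and in nearly every instance one can use the simplest possible delay, the constant $1$; only one place (condition \textbf{(A)} for $s>0$) forces a cylinder-matched delay.

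For part (a), set $\nu_l:=\mu_{A_l}$ and $\tau_l:=1$. Condition (\ref{Eq_DontWaitTooLong1}) is immediate. Since $A_l$ is $\xi$-measurable, decomposing $A_l$ into its $\xi$-cylinders and applying (\ref{Eq_UpperDistortBound}) with $n=1$ yields $\mu(A_l\cap T^{-1}A_l)\le\flat^{-1}e^r\mu(A_l)^2$, hence $\mu_{A_l}(\varphi_{A_l}=1)\to 0$, which is (\ref{Eq_HardlyAnyReturnsUpToTimeM1}). For (\ref{Eq_TheFabulousCompactnessCondition1}), $\mu_{A_l}$ is a convex combination of the $\mu_Z$, $Z\in\xi$, $Z\subseteq A_l$, and (\ref{Eq_GMgoodK}) with $n=1$ together with convexity of $\mathfrak{K}$ places $T_*\nu_l$ in $\mathfrak{K}$.

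For part (b), apply Theorem~\ref{T_SpatiotemporalIIDLimits} with discrete $\mathfrak{F}:=\{1,\dots,m\}$ and $\mathcal{B}_{\mathfrak{F}}^{\pi}:=2^{\mathfrak{F}}$, for which $\partial F=\varnothing$ is automatic. The marginal convergence $\psi_{A_l}\Longrightarrow\psi$ with $\Pr[\psi=j]=\vartheta_j$ is exactly the hypothesis. Condition \textbf{(B)} is handled as in part~(a): for $F\subseteq\{1,\dots,m\}$ with $\sum_{j\in F}\vartheta_j>0$ the set $A_l\cap\{\psi_{A_l}\in F\}=\bigcup_{j\in F}A_l^{(j)}$ is $\xi$-measurable, and $\nu_{l,F}:=\mu_{A_l\cap\{\psi_{A_l}\in F\}}$, $\tau_{l,F}:=1$, $\mathfrak{K}_F:=\mathfrak{K}$ work verbatim. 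Condition \textbf{(A)} with $s=0$ reduces to part~(a) since $A_l\cap\{\mu(A_l)\varphi_{A_l}>0\}=A_l$. For $s>0$, let $M_l:=\lfloor s/\mu(A_l)\rfloor$ and $B_l:=A_l\cap\{\varphi_{A_l}>M_l\}=A_l\cap\{\mu(A_l)\varphi_{A_l}>s\}$, and take $\nu_{l,s}:=\mu_{B_l}$ (zero total variation distance) and $\tau_{l,s}:=M_l+1$. The no-short-return condition follows from
\[
\nu_{l,s}(\tau_{l,s}<\varphi_{A_l})=\frac{\mu_{A_l}(\varphi_{A_l}>M_l+1)}{\mu_{A_l}(\varphi_{A_l}>M_l)}\longrightarrow 1,
\]
since both numerator and denominator tend to $e^{-s}$ by the Poisson asymptotics of part~(a).

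The one mildly delicate point is the compactness condition for $\nu_{l,s}$ when $s>0$, because $B_l$ is typically not $\xi$-measurable. The key observation is that $A_l\in\sigma(\xi)\subseteq\xi_{M_l+1}$ and each $T^{-k}A_l^c\in\sigma(T^{-k}\xi)\subseteq\xi_{M_l+1}$ for $1\le k\le M_l$, so $B_l$ is a union of cylinders $Z\in\xi_{M_l+1}$. Consequently $\nu_{l,s}$ is a convex combination of measures $\mu_Z$ with $Z\in\xi_{M_l+1}$, and (\ref{Eq_GMgoodK}) applied with $n=M_l+1$, together with convexity of $\mathfrak{K}$, gives $T_*^{M_l+1}\nu_{l,s}\in\mathfrak{K}$. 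With \textbf{(A)} and \textbf{(B)} verified, Theorem~\ref{T_SpatiotemporalIIDLimits} yields both assertions in (\ref{Eq_GMJointLimits}).
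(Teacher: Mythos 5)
Your proposal is correct and follows essentially the same route as the paper: part a) via Theorem \ref{T_MOreFlexiblePoissonLimitFromCompactness} with $\nu_{l}=\mu_{A_{l}}$, $\tau_{l}=1$, the compact class $\mathfrak{K}$ from (\ref{Eq_GMgoodK}) and the distortion bound (\ref{Eq_UpperDistortBound}), and part b) via Theorem \ref{T_SpatiotemporalIIDLimits} with the same $\mathfrak{K}$, $\tau_{l,F}=1$ for the spatial conditioning sets, and a cylinder-adapted delay for condition (A). The one place where you deviate is condition (A) for $s>0$: the paper takes $\tau_{l,s}=\lfloor s/\mu(A_{l})\rfloor$, so that $\nu_{l,s}(\tau_{l,s}<\varphi_{A_{l}})=1$ trivially, and asserts that $B_{l}=A_{l}\cap\{\mu(A_{l})\varphi_{A_{l}}>s\}$ is $\xi_{\tau_{l,s}}$-measurable; you instead take $\tau_{l,s}=M_{l}+1$, observe (correctly) that $B_{l}$ depends on the coordinates $0,\ldots,M_{l}$ and is therefore a union of cylinders of rank $M_{l}+1$, so that (\ref{Eq_GMgoodK}) with $n=M_{l}+1$ and convexity give $T_{\ast}^{M_{l}+1}\mu_{B_{l}}\in\mathfrak{K}$, and you then recover the no-short-return condition from the ratio $\mu_{A_{l}}(\varphi_{A_{l}}>M_{l}+1)/\mu_{A_{l}}(\varphi_{A_{l}}>M_{l})\rightarrow1$, which is legitimate because the one-dimensional exponential return law is already available from part a) and the limit distribution function is continuous. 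This variant is slightly more careful than the paper's: as written, $B_{l}$ is only $\xi_{M_{l}+1}$-measurable (not $\xi_{M_{l}}$-measurable), and pushing forward a rank-$(M_{l}+1)$ cylinder by only $M_{l}$ steps does not obviously land in $\mathfrak{K}$, so your extra unit of delay together with the ratio argument is exactly the right repair; the price is the (harmless) reliance on part a) in verifying (A). Your choice $\mathcal{B}_{\mathfrak{F}}^{\pi}=2^{\mathfrak{F}}$ rather than the singletons also has the small advantage of literally containing $\mathfrak{F}$, as the hypothesis of Theorem \ref{T_SpatiotemporalIIDLimits} requires.
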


This, too, is an easy consequence of the results above.

\begin{proof}
\textbf{a)} We check that Theorem
\ref{T_MOreFlexiblePoissonLimitFromCompactness} applies with $\nu_{l}%
:=\mu_{A_{l}}$, $\tau_{l}:=1$ and $\mathfrak{K}$ as in (\ref{Eq_GMgoodK}).
Since $(\tau_{l})$ is uniformly bounded, condition (\ref{Eq_DontWaitTooLong1})
is trivial. In view of (\ref{Eq_GMgoodK}) and $\xi$-measurability of the
$A_{l}$, (\ref{Eq_TheFabulousCompactnessCondition1}) is also satisfied. To
validate (\ref{Eq_HardlyAnyReturnsUpToTimeM1}), recall
(\ref{Eq_UpperDistortBound}) to see that indeed%
\begin{align*}
\mu_{A_{l}}\left(  \varphi_{A_{l}}=1\right)   &  =\mu_{A_{l}}\left(
T^{-1}A_{l}\right)  =\mu(A_{l})^{-1}%
{\textstyle\sum\nolimits_{Z\in\xi\cap A_{l}}}
\mu(Z\cap T^{-1}A_{l})\\
&  \leq\mu(A_{l})^{-1}%
{\textstyle\sum\nolimits_{Z\in\xi\cap A_{l}}}
\flat e^{r}\mu(Z)\mu(A_{l})\\
&  \leq\flat^{-1}e^{r}\mu(A_{l})\longrightarrow0\quad\text{as }l\rightarrow
\infty\text{.}%
\end{align*}
\textbf{b)} To establish the joint convergence asserted in
(\ref{Eq_GMJointLimits}), we will appeal to Theorem
\ref{T_SpatiotemporalIIDLimits}. The local observables $\psi_{A_{l}}$ take
their values in the compact discrete space $\mathfrak{Z}:=\{1,\ldots,m\}$, and
we are assuming that $\psi_{A_{l}}\overset{\mu_{A_{l}}}{\Longrightarrow}\psi$
with $\Pr[\psi=j]=\vartheta_{j}$ for all $j\in\mathfrak{Z}$. We use the same
$\mathfrak{K}$ as above.

To check condition (A) of Theorem \ref{T_SpatiotemporalIIDLimits}, take any
$s\in\lbrack0,\infty)$ and define $\tau_{l,s}:=\left\lfloor s/\mu
(A_{l})\right\rfloor $. Then $A_{l}\cap\{\varphi_{A_{l}}>s/\mu(A_{l}%
)\}=A_{l}\cap\{\varphi_{A_{l}}>\tau_{l,s}\}$, so that
(\ref{Eq_xdrxrxxdxrxdrxd}) is trivially fulfilled. On the other hand, this set
is $\xi_{\tau_{l,s}}$-measurable because $A_{l}$ is $\xi$-measurable.
Therefore, (\ref{Eq_GMgoodK}) and convexity of $\mathfrak{K}$ ensure that
$T_{\ast}^{\tau_{l,s}}\mu_{A_{l}\cap\{\mu(A_{l})\varphi_{A_{l}}>s\}}%
\in\mathfrak{K}$ for $l\geq1$.

In order to validate condition (B) of Theorem \ref{T_SpatiotemporalIIDLimits},
we use $\mathcal{B}_{\mathfrak{Z}}^{\pi}:=\{\{j\}:j\in\mathfrak{Z}\}$ and, for
arbitrary $F=\{j\}$, take $\tau_{l,F}:=1$, so that (\ref{Eq_vzgvzgvzgvzgvzgv1}%
) is automatically satisfied. As $A_{l}\cap\{\psi_{A_{l}}\in F\}=A_{l}^{(j)}$
is $\xi$-measurable, (\ref{Eq_GMgoodK}) and convexity of $\mathfrak{K}$
immediately show that $T_{\ast}^{\tau_{l,F}}\mu_{A_{l}\cap\{\psi_{A_{l}}\in
F\}}\in\mathfrak{K}$ for $l\geq1$. Finally, (\ref{Eq_owowowowowoowwowowow})
follows since
\begin{align*}
\mu_{A_{l}\cap\{\psi_{A_{l}}\in F\}}\left(  \varphi_{A_{l}}\leq\tau
_{l,F}\right)   &  =\mu_{A_{l}}(A_{l}^{(j)})^{-1}\mu(A_{l}^{(j)}\cap
T^{-1}A_{l})\\
&  =\mu_{A_{l}}(A_{l}^{(j)})^{-1}%
{\textstyle\sum\nolimits_{Z\in\xi\cap A_{l}^{(j)}}}
\mu\left(  Z\cap T^{-1}A_{l}\right) \\
&  \leq\flat^{-1}e^{r}\,\mu_{A_{l}}(A_{l}^{(j)})^{-1}%
{\textstyle\sum\nolimits_{Z\in\xi\cap A_{l}^{(j)}}}
\mu\left(  Z\right)  \mu\left(  A_{l}\right) \\
&  =\flat^{-1}e^{r}\,\mu\left(  A_{l}\right)  \longrightarrow0\text{ \quad as
}l\rightarrow\infty\text{,}%
\end{align*}
where we used (\ref{Eq_UpperDistortBound}) again.
\end{proof}

%

\vspace{0.3cm}%

We next provide some specific applications of this theorem in the context of
continued fraction expansions. But sequences $(A_{l})$ as in our theorem do
appear naturally in a variety of other situations. We mention one particular instance:

\begin{remark}
Under the assumptions of Theorem \ref{T_PoissonAsyGMUnionCyls} a), it is
immediate from Theorem \ref{T_HTSforVaryingMeasures} and Arzela-Ascoli that
(\ref{Eq_tptptptptptpt}) implies
\begin{equation}
\mu(A_{l})\Phi_{A_{l}}\overset{\nu_{l}}{\Longrightarrow}\,\Phi_{\mathrm{Exp}%
}\text{\quad as }l\rightarrow\infty\text{,} \label{Eq_bhbshbhds}%
\end{equation}
whenever $(\nu_{l})$ is a sequence of probabilities with $\sup_{l\geq
1}\mathrm{Lip}_{\xi}(d\nu_{l}/d\mu)<\infty$, where $\mathrm{Lip}_{\xi
}(w):=\sup_{Z\in\xi}\mathrm{Lip}_{Z}(w)$ with $\mathrm{Lip}_{Z}(w):=\sup
_{x,y\in Z,x\neq y}\mid w(x)-w(y)\mid/d(x,y)$. This is a stronger (functional)
version of Proposition 3.13 in \cite{PT}.
\end{remark}

%

\vspace{0.3cm}%
%

\noindent
\textbf{The continued fraction map. Variations on a theme of Doeblin.} We will
illustrate the use of Theorem \ref{T_PoissonAsyGMUnionCyls} in the setup of a
particularly prominent system. Set $X:=[0,1]$, $\mathcal{A}:=\mathcal{B}_{X}$,
and let $T:X\rightarrow X$ be the \emph{Gauss map} with $T0:=0$ and
\begin{equation}
Tx:=\dfrac{1}{x}-\left\lfloor \dfrac{1}{x}\right\rfloor =\dfrac{1}{x}-k\text{
for }x\in\left(  \dfrac{1}{k+1},\dfrac{1}{k}\right]  =:I_{k}\text{, }%
k\geq1\text{,}%
\end{equation}
which, since Gauss \cite{G}, is known to preserve the probability density%
\begin{equation}
h(x):=\frac{1}{\log2}\frac{1}{1+x}\text{, \quad}x\in X\text{.}%
\end{equation}
The invariant \emph{Gauss measure} $\mu$ on $\mathcal{A}$ defined by the
latter, $\mu(A):=\int_{A}h(x)\,dx$, is exact (and hence ergodic). Iteration of
$T$ reveals the \emph{continued fraction (CF) digits} of any $x\in X$, in
that
\begin{equation}
x=\frac{1}{\mathsf{a}_{1}(x)+\dfrac{1}{\mathsf{a}_{2}(x)+\cdots}}\text{ \quad
with \quad}\mathsf{a}_{n}(x)=\mathsf{a}\circ T^{n-1}(x)\text{, }n\geq1\text{,}%
\end{equation}
where $\mathsf{a}:X\rightarrow\mathbb{N}$ is the \emph{digit function}
corresponding to $\xi:=\{I_{k}:k\geq1\}$, i.e. $\mathsf{a}(x):=\left\lfloor
1/x\right\rfloor =k$ for $x\in I_{k}$. It is a standard fact that the ergodic
measure preserving piecewise invertible \emph{CF-system} $(X,\mathcal{A}%
,\mu,T,\xi)$ is Gibbs-Markov.

We shall focus on the simple sequence of $\xi$-measurable asymptotically rare
events given by $A_{l}:=\{\mathsf{a}\geq l\}=%
{\textstyle\bigcup\nolimits_{k\geq l}}
I_{k}$, which satisfy $\mu(A_{l})\sim1/(l\log2)$ as $l\rightarrow\infty$, and
allow to immediately apply Theorem \ref{T_PoissonAsyGMUnionCyls} a) to obtain
Poisson asymptotics,
\begin{equation}
\mu(A_{l})\Phi_{A_{l}}\overset{\mu}{\Longrightarrow}\,\Phi_{\mathrm{Exp}%
}\text{ \quad and \quad}\mu(A_{l})\Phi_{A_{l}}\overset{\mu_{A_{l}}%
}{\Longrightarrow}\,\Phi_{\mathrm{Exp}}\text{ \quad as }l\rightarrow
\infty\text{.}%
\end{equation}
This is a well-known classical fact with non-trivial history (\cite{Doeblin},
\cite{I}) and various extensions, see e.g. \cite{IK}. In the following we
refine this statement, using two different sequences of local observables in
order to obtain extra information on the distributions of the particular
digits observed when the orbit hits $A_{l}$. In either case, Theorem
\ref{T_PoissonAsyGMUnionCyls} b) applies without difficulties.%

\vspace{0.3cm}%

To get a better understanding of the actual size of those digits which happen
to exceed some large $l$, we show that, asymptotically, whether these large
digits are even of order $l/\vartheta$ (for some $\vartheta\in(0,1)$) is
determined by an independent sequence of $(1-\vartheta,\vartheta)$-coin flips.

\begin{proposition}
[\textbf{Just how large are large CF-digits?}]\label{P_CF1}Let $(X,\mathcal{A}%
,\mu,T,\xi)$ be the CF-system, and take any $\vartheta\in(0,1)$. Set
$\psi_{A_{l}}:=1_{\{\mathsf{a}\geq l/\vartheta\}}$ on $A_{l}$, which
identifies those digits $\geq l$ which are in fact $\geq l/\vartheta$. Then
\begin{equation}
(\mu(A_{l})\Phi_{A_{l}},\Psi_{A_{l}})\overset{\mu}{\Longrightarrow}%
(\,\Phi_{\mathrm{Exp}},\Psi^{\ast})\quad\text{as }l\rightarrow\infty\text{,}%
\end{equation}
where $(\,\Phi_{\mathrm{Exp}},\Psi^{\ast})$ is an independent pair with
$\Psi^{\ast}$ a $(1-\vartheta,\vartheta)$-Bernoulli sequence.
\end{proposition}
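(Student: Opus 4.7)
The plan is to realize this as a direct application of Theorem \ref{T_PoissonAsyGMUnionCyls} b) to the continued-fraction Gibbs-Markov system, with the two-element partition of $A_l$ that underlies the binary observable $\psi_{A_l}$. Specifically, I would set $m:=2$ and define
\[
A_l^{(1)}:=\{l\leq\mathsf{a}<l/\vartheta\}=\bigcup_{l\leq k<l/\vartheta} I_k,\qquad A_l^{(2)}:=\{\mathsf{a}\geq l/\vartheta\}=\bigcup_{k\geq\lceil l/\vartheta\rceil} I_k,
\]
so that $A_l=A_l^{(1)}\cup A_l^{(2)}$ is a disjoint union of $\xi$-measurable sets, and the prescription $\psi_{A_l}(x):=j$ for $x\in A_l^{(j)}$ agrees with $1_{\{\mathsf{a}\geq l/\vartheta\}}$ up to the trivial relabeling $1\leftrightarrow 0$, $2\leftrightarrow 1$.

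Next, I would verify the mass proportions required by Theorem \ref{T_PoissonAsyGMUnionCyls} b). Using $\mathsf{a}(x)\geq k\Leftrightarrow x\leq 1/k$ and the explicit Gauss density, one has
\[
\mu(A_l)=\tfrac{1}{\log 2}\log(1+1/l)\sim\tfrac{1}{l\log 2},\qquad \mu(A_l^{(2)})=\tfrac{1}{\log 2}\log\bigl(1+1/\lceil l/\vartheta\rceil\bigr)\sim\tfrac{\vartheta}{l\log 2}
\]
as $l\to\infty$, so that $\mu_{A_l}(A_l^{(2)})\to\vartheta$ and $\mu_{A_l}(A_l^{(1)})\to 1-\vartheta$. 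Both limits lie in $(0,1)$ since $\vartheta\in(0,1)$, as required.

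With these inputs, Theorem \ref{T_PoissonAsyGMUnionCyls} b) applied to the Gauss CF-system $(X,\mathcal{A},\mu,T,\xi)$ and this choice of $(A_l)$, $(A_l^{(1)},A_l^{(2)})$ and $(\psi_{A_l})$ immediately yields
\[
(\mu(A_l)\Phi_{A_l},\Psi_{A_l})\overset{\mu}{\Longrightarrow}(\Phi_{\mathrm{Exp}},\Psi^{\ast})\qquad\text{as }l\to\infty,
\]
where $\Psi^\ast$ is an iid sequence taking the values $1,2$ (i.e. $0,1$ after relabeling) with probabilities $1-\vartheta$ and $\vartheta$, and is independent of $\Phi_{\mathrm{Exp}}$; this is exactly the claimed $(1-\vartheta,\vartheta)$-Bernoulli structure. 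The only step involving any actual work is the elementary asymptotic computation of $\mu(A_l^{(2)})/\mu(A_l)\to\vartheta$, which I regard as the single non-trivial (but entirely routine) ingredient; everything else is bookkeeping to match the hypotheses of the theorem.
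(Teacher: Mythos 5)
Your argument is exactly the paper's proof: the paper also deduces the proposition from Theorem \ref{T_PoissonAsyGMUnionCyls} b) by splitting $A_{l}$ into the $\xi$-measurable pieces $\{l\leq\mathsf{a}<l/\vartheta\}$ and $\{\mathsf{a}\geq l/\vartheta\}$ and noting $\mu(\{\mathsf{a}\geq l/\vartheta\})\sim\vartheta\,\mu(A_{l})$, a computation it labels trivial and which you correctly carry out via the Gauss density. The relabeling of $\{1,2\}$ to $\{0,1\}$ is harmless, so the proposal is correct and matches the paper's route.
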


\begin{proof}
Let $A_{l}^{\prime}:=\{\mathsf{a}\geq l/\vartheta\}=%
{\textstyle\bigcup\nolimits_{k\geq l/\vartheta}}
I_{k}\subseteq A_{l}$, which is again $\xi$-measurable. Trivially, $\mu
(A_{l}^{\prime})\sim\vartheta\mu(A_{l}^{\prime})$ as $l\rightarrow\infty$, and
the assertion follows via Theorem \ref{T_PoissonAsyGMUnionCyls} b).
\end{proof}

%

\vspace{0.3cm}%

\begin{remark}
This result is closely related to the fact that ergodic sums of the digit
function satisfy a functional stable limit theorem,
\begin{equation}
\left(  \frac{\log2}{n}\sum_{k=0}^{\left\lfloor nt\right\rfloor -1}%
\mathsf{a}\circ T^{k}-t(\log n-\gamma)\right)  _{t\geq0}\overset{\mu
}{\Longrightarrow}\mathrm{G}\text{ \quad as }n\rightarrow\infty\text{,}
\label{Eq_FSLTforCF}%
\end{equation}
where $\gamma$ is Euler's constant, $\mathrm{G}=\left(  \mathrm{G}_{t}\right)
_{t\geq0}$ is an $\alpha$-stable motion with $\alpha=1$ and skewness $\beta
=1$, and convergence takes place on the Skorohod space $\mathbb{D}[0,\infty)$
equipped with the $J_{1}$-topology. Proposition \ref{P_CF1} can also be
derived using information on the convergence (\ref{Eq_FSLTforCF}), see the
approach to that limit theorem developed in \cite{T}.
\end{remark}

%

\vspace{0.3cm}%

Turning to a different feature of individual CF-digits which cannot be
extracted from (\ref{Eq_FSLTforCF}), we now observe that the residue classes
mod $m$ of large CF-digits are asymptotically equidistributed and independent
of each other (and of the waiting times).

\begin{proposition}
[\textbf{Residue classes of large CF-digits}]\label{P_CF2}Let $(X,\mathcal{A}%
,\mu,T,\xi)$ be the CF-system, and take an integer $m\geq2$. Define
$\psi:X\rightarrow\{0,\ldots,m-1\}$ by $\psi(x):=j$ if $\mathsf{a}(x)\equiv j$
$(\operatorname{mod}m)$, so that $\psi\circ T^{n-1}$ identifies the residue
class mod $m$ of the digit $\mathsf{a}_{n}$, and set $\psi_{A_{l}}:=\psi
\mid_{A_{l}}$. Then%
\begin{equation}
(\mu(A_{l})\Phi_{A_{l}},\Psi_{A_{l}})\overset{\mu}{\Longrightarrow}%
(\,\Phi_{\mathrm{Exp}},\Psi^{\ast})\quad\text{as }l\rightarrow\infty\text{,}%
\end{equation}
where $(\,\Phi_{\mathrm{Exp}},\Psi^{\ast})$ is an independent pair with
$\Psi^{\ast}$ a $(\frac{1}{m},\ldots,\frac{1}{m})$-Bernoulli sequence.
\end{proposition}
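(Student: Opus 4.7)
The plan is to verify the conditions of Theorem \ref{T_PoissonAsyGMUnionCyls} b) for the CF-system $(X,\mathcal{A},\mu,T,\xi)$ with the sequence $(A_l) = (\{\mathsf{a} \geq l\})$ and the residue-class local observables $\psi_{A_l}$. Since the CF-system is Gibbs-Markov and each $A_l = \bigcup_{k \geq l} I_k$ is $\xi$-measurable, part a) of that theorem already supplies the Poisson asymptotics. For part b) I set, for $j \in \{0, \ldots, m-1\}$,
\[
A_l^{(j)} := A_l \cap \{\psi = j\} = \bigcup_{k \geq l,\ k \equiv j \,(\operatorname{mod} m)} I_k,
\]
which is again $\xi$-measurable, and I only need to check that $\mu_{A_l}(A_l^{(j)}) \to 1/m$ as $l \to \infty$.

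First I would recall the explicit form of the Gauss measure on the basic cylinders:
\[
\mu(I_k) = \frac{1}{\log 2}\log\!\left(\frac{(k+1)^2}{k(k+2)}\right),
\]
which gives $\mu(I_k) = \frac{1}{\log 2}\cdot \frac{1}{k^2} + O(1/k^3)$ as $k \to \infty$, and in particular $\mu(A_l) = \sum_{k \geq l}\mu(I_k) \sim \frac{1}{l\log 2}$. Next I would show, by a standard Riemann-sum comparison with the decreasing function $x \mapsto x^{-2}$, that for each fixed $j \in \{0, \ldots, m-1\}$,
\[
\sum_{k \geq l,\ k \equiv j\, (\operatorname{mod} m)} \frac{1}{k^2} \sim \frac{1}{m l} \qquad \text{as } l \to \infty,
\]
which implies $\mu(A_l^{(j)}) \sim \frac{1}{m l \log 2}$ and hence $\mu_{A_l}(A_l^{(j)}) \to 1/m$. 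This is the only quantitative input needed; no mixing or correlation estimate is required because Theorem \ref{T_PoissonAsyGMUnionCyls} b) already absorbs the Gibbs-Markov information.

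Having established these asymptotic proportions, Theorem \ref{T_PoissonAsyGMUnionCyls} b) applied with $\vartheta_j := 1/m$ immediately yields
\[
(\mu(A_l)\Phi_{A_l}, \Psi_{A_l}) \overset{\mu}{\Longrightarrow} (\Phi_{\mathrm{Exp}}, \Psi^\ast) \quad \text{as } l \to \infty,
\]
with $\Psi^\ast$ an iid sequence uniformly distributed on $\{0,\ldots,m-1\}$ and independent of $\Phi_{\mathrm{Exp}}$. The main obstacle is simply the elementary asymptotic for the tail sum restricted to an arithmetic progression; there is no genuine dynamical difficulty, since all the hard work (compactness of $\mathfrak{K}$, bounded distortion, the absence of asymptotic mixing hypotheses) has already been packaged into Theorem \ref{T_PoissonAsyGMUnionCyls}.
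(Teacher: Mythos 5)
Your proposal is correct and follows essentially the same route as the paper: the paper's proof also defines $A_l^{(j)}:=A_l\cap\{\psi=j\}=\bigcup_{i\ge0}(A_l\cap I_{im+j})$, notes these are $\xi$-measurable, checks $\mu_{A_l}(A_l^{(j)})\to 1/m$ from the explicit form of the Gauss measure on the $I_k$, and then invokes Theorem \ref{T_PoissonAsyGMUnionCyls} b). Your Riemann-sum asymptotics for $\mu(I_k)\sim\frac{1}{k^2\log 2}$ and the tail sum over the arithmetic progression merely spell out what the paper leaves as ``easily seen.''
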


\begin{proof}
Setting $I_{0}:=\varnothing$ we have $\{\psi=j\}=%
{\textstyle\bigcup\nolimits_{i\geq0}}
I_{im+j}$ for all $j$. From our explicit knowledge of $\mu$ and the $I_{k} $
it is easily seen that, for each $j\in\{0,\ldots,m-1\}$, the $\xi$-measurable
sets $A_{l}^{(j)}:=A_{l}\cap\{\psi=j\}$ satisfy $\mu_{A_{l}}(A_{l}%
^{(j)})\rightarrow1/m$ as $l\rightarrow\infty$. Now apply Theorem
\ref{T_PoissonAsyGMUnionCyls} b).
\end{proof}

%

\vspace{0.3cm}%
%

\vspace{0.3cm}%
%

\noindent
\textbf{Local processes of interval maps.} We now turn to a basic situation in
which the geometry of the underlying space suggests a natural way of
describing the relative position inside small sets by specific local
observables. Call $(X,\mathcal{A},\mu,T,\xi)$ a (probability-preserving)
\emph{Gibbs-Markov interval map} provided that it is a GM-system as in the
previous section, where $X$ and each $Z\in\xi$ is an open interval, and the
invariant probability $\mu$ is absolutely continuous w.r.t. one-dimensional
Lebesgue measure $\lambda$. In this setup, we shall study asymptotically rare
sequences $(A_{l})$ of subintervals, and take the normalizing interval charts
$\psi_{A_{l}}:A_{l}\rightarrow\lbrack0,1]$ as our local observables.

Only using the elementary properties employed in the previous section, we are
going to prove

\begin{theorem}
[\textbf{Small intervals in GM interval maps}]%
\label{T_WhenWhereGMIntervalMaps}Let $(X,\mathcal{A},\mu,T,\xi)$ be a
probability-preserving ergodic Gibbs-Markov interval map, $(A_{l})$ an
asymptotically rare sequence of subintervals, and $\psi_{A_{l}}:A_{l}%
\rightarrow\lbrack0,1]$ the corresponding normalizing interval charts, giving
local processes $\Psi_{A_{l}}$. Assume that $x^{\ast}\in X$ is not periodic
and such that each $\xi_{l}(x^{\ast})$ is well defined, and the $A_{l}$ are
contained in neighbourhoods $I_{l}$ of $x^{\ast}$ with $\mathrm{diam}%
(I_{l})\rightarrow0$. Then,%
\begin{equation}
(\mu(A_{l})\Phi_{A_{l}},\widetilde{\Psi}_{A_{l}})\overset{\mu_{A_{l}}%
}{\Longrightarrow}(\,\Phi_{\mathrm{Exp}},\Psi^{\ast})\quad\text{as
}l\rightarrow\infty\text{,}%
\end{equation}
and
\begin{equation}
(\mu(A_{l})\Phi_{A_{l}},\Psi_{A_{l}})\overset{\mu}{\Longrightarrow}%
(\,\Phi_{\mathrm{Exp}},\Psi^{\ast})\quad\text{as }l\rightarrow\infty\text{,}%
\end{equation}
where $(\,\Phi_{\mathrm{Exp}},\Psi^{\ast})$ is an independent pair with
$\Psi^{\ast}$ an iid sequence of uniformly distributed elements of $[0,1]$.
\end{theorem}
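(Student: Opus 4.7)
The plan is to apply Theorem~\ref{T_SpatiotemporalIIDLimits} to a cylinder approximation of $(A_l,\psi_{A_l})$. Pick $k_l:=\lceil 2\log(1/\mu(A_l))/\log(1/q)\rceil$, so $k_l\to\infty$, $\kappa q^{k_l}=o(\mu(A_l))$, and $k_l=o(1/\mu(A_l))$. Define $A_l^*:=\bigcup\{Z\in\xi_{k_l}:Z\subseteq A_l\}$; being a union of consecutive rank-$k_l$ cylinders inside the interval $A_l$, $A_l^*=[a_l^*,b_l^*]$ is itself an interval with $\mu(A_l\triangle A_l^*)\leq 2\kappa q^{k_l}=o(\mu(A_l))$, and its normalizing chart $\psi_{A_l^*}^*$ satisfies $\sup_{A_l^*}|\psi_{A_l}-\psi_{A_l^*}^*|\to 0$ (the sup is bounded by $\lambda(A_l\setminus A_l^*)/\lambda(A_l)=O(q^{k_l}/\mu(A_l))$). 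By Remark~\ref{R_RobustJoints}, it suffices to prove the conclusion for $(A_l^*,\psi_{A_l^*}^*)$, which has the decisive advantage of being $\xi_{k_l}$-measurable, so that $T_{*}^{k_l}\mu_{A_l^*}\in\mathfrak{K}$ by (\ref{Eq_GMgoodK}) and convexity.

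The key input is the no-short-returns estimate $\mu_{A_l^*}(\varphi_{A_l^*}\leq k_l)\to 0$, which I derive by bounding $\sum_{j=1}^{k_l}\mu_{A_l}(T^{-j}A_l)$ (the bound transfers to $A_l^*$ since $A_l^*\subseteq A_l$ and $\mu(A_l)/\mu(A_l^*)\to 1$) via a three-range split at a fixed $K$ and at $j^*(l):=\max\{j:A_l\subseteq\xi_j(x^*)\}$ (which diverges as $\mathrm{diam}(I_l)\to 0$). For $j\leq K$, the summand vanishes for $l$ large by non-periodicity of $x^*$ and continuity of $T^j$ there. For $K<j\leq j^*(l)$, the nesting $A_l\subseteq\xi_j(x^*)$ together with bounded distortion~(\ref{Eq_UpperDistortBound}) yields $\mu_{A_l}(T^{-j}A_l)\leq\flat^{-1}e^r\kappa q^j$, summing to $\leq\flat^{-1}e^r\kappa q^{K+1}/(1-q)$, arbitrarily small. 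For $j^*(l)<j\leq k_l$, the $\xi_j$-hull of the interval $A_l$ has measure at most $\mu(A_l)+2\kappa q^j$ (only two boundary cells protrude), so bounded distortion gives $\mu_{A_l}(T^{-j}A_l)\leq\flat^{-1}e^r(\mu(A_l)+2\kappa q^j)$, with total contribution $\leq\flat^{-1}e^r(k_l\mu(A_l)+2\kappa q^{j^*(l)+1}/(1-q))\to 0$. Applied to $(A_l^*)$, Theorem~\ref{T_MOreFlexiblePoissonLimitFromCompactness} (using this estimate, $k_l\mu(A_l^*)\to 0$, and the compactness just noted) now yields the preliminary Poisson asymptotics $\mu(A_l^*)\Phi_{A_l^*}\overset{\mu_{A_l^*}}{\Longrightarrow}\Phi_{\mathrm{Exp}}$.

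The remaining hypotheses of Theorem~\ref{T_SpatiotemporalIIDLimits} for $(A_l^*,\psi_{A_l^*}^*)$ are then routine. The one-dimensional convergence $\psi_{A_l^*}^*\overset{\mu_{A_l^*}}{\Longrightarrow}U[0,1]$ is a direct computation from continuity of the invariant density $h=d\mu/d\lambda$ at $x^*$ (a standard GM fact, as $h$ inherits $\mathrm{Lip}_\beta$-regularity from the transfer operator and $x^*$ is interior to every relevant cell) together with $h(x^*)>0$. For condition~(B), take the $\pi$-system $\{(c,1]:c\in[0,1)\}$ (the boundary condition is automatic since the limit $\psi$ is uniform), $\nu_{l,F}:=\mu_{B_l^{(c)}}$ with $B_l^{(c)}$ the inner $\xi_{k_l}$-cylinder approximation of $A_l^*\cap\{\psi_{A_l^*}^*>c\}$, and $\tau_{l,F}:=k_l$; total-variation closeness, $T_{*}^{k_l}\nu_{l,F}\in\mathfrak{K}$, $\mu(A_l^*)k_l\to 0$, and the no-short-returns condition all follow exactly as before. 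For condition~(A), take $\nu_{l,s}:=\mu_{A_l^*\cap\{\mu(A_l^*)\varphi_{A_l^*}>s\}}$ and $\tau_{l,s}:=k_l+\lceil s/\mu(A_l^*)\rceil$; the support is $\xi_{\tau_{l,s}}$-measurable (since $A_l^*$ is $\xi_{k_l}$-measurable and $\{\varphi_{A_l^*}>N\}$ is $\xi_{N+k_l}$-measurable), giving $T_{*}^{\tau_{l,s}}\nu_{l,s}\in\mathfrak{K}$, while $\nu_{l,s}(\tau_{l,s}<\varphi_{A_l^*})\to 1$ reduces, via the preliminary Poisson asymptotics, to $\mu_{A_l^*}(N_l(s)<\varphi_{A_l^*}\leq N_l(s)+k_l)/\mu_{A_l^*}(\varphi_{A_l^*}>N_l(s))\to 0$, which holds because $k_l\mu(A_l^*)\to 0$.

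The principal obstacle is the middle range $j^*(l)<j\leq k_l$ of the no-short-returns sum, where $A_l$ no longer fits inside any single $\xi_j$-cell: it is handled by the sharper hull estimate $\mu(\mathrm{hull}_j(A_l))\leq\mu(A_l)+2\kappa q^j$ available for intervals, together with the sublinear growth $k_l=O(\log(1/\mu(A_l)))=o(1/\mu(A_l))$ that keeps the $k_l\mu(A_l)$-term negligible.
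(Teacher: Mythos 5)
Your proposal is correct and follows essentially the same route as the paper's own proof: approximate $A_l$ (and the conditioned sub-intervals) by unions of cylinders of rank about $2\log(1/\mu(A_l))/\log(1/q)$, invoke Remark~\ref{R_RobustJoints}, get compactness from (\ref{Eq_GMgoodK}) plus convexity of $\mathfrak{K}$, control short returns by non-periodicity of $x^{\ast}$ together with the two-protruding-cells hull estimate and bounded distortion, and feed the resulting Poisson asymptotics into condition (A) of Theorem~\ref{T_SpatiotemporalIIDLimits}. The only deviations are cosmetic: your middle range $K<j\leq j^{\ast}(l)$ is superfluous (the hull bound $\mu(A_l)+2\kappa q^{j}$ already works for all $j$, which is how the paper argues), your condition-(A) choice (exact conditional measure with delay $k_l+\lceil s/\mu(A_l^{\ast})\rceil$) replaces the paper's shifted-threshold approximation with delay $s/\mu(A_l)$ but is equivalent, and your $\pi$-system $\{(c,1]\}$ should formally also include $\mathfrak{F}=[0,1]$ as the theorem requires, which your Poisson-step verification already covers.
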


To establish this result we can essentially argue as in the preceding section,
once we replace the intervals $A_{l}$ by more convenient sets $A_{l}^{\prime}$
which are unions of cylinders of rank $\jmath(A_{l})$, where
\begin{equation}
\jmath(A):=\frac{-2\log(\mu(A))}{-\log q}\text{, \quad}A\in\mathcal{A}\text{
with }\mu(A)>0\text{,} \label{Eq_DefRankFunctionJ}%
\end{equation}
where $q\in(0,1)$ is as in (\ref{Eq_GMMapsContractionOfCylMeasure}). Note that%
\begin{equation}
\mu(A_{l})\jmath(A_{l})\longrightarrow0\text{ \quad as }l\rightarrow
\infty\text{,} \label{Eq_mkhmnzknmznmznmz}%
\end{equation}
whenever $(A_{l})$ is an asymptotically rare sequence.

\begin{lemma}
[\textbf{Approximating intervals by cylinders}]\label{L_ApproxByCyls}Let
$(X,\mathcal{A},\mu,T,\xi)$ be a probability-preserving ergodic Gibbs-Markov
interval map and $(A_{l})$ an asymptotically rare sequence of subintervals.
Define
\begin{equation}
A_{l}^{\prime}:=%
{\textstyle\bigcup\nolimits_{W\in\xi_{\jmath(A_{l})}:W\subseteq A_{l}}}
W\text{, \quad}l\geq1\text{,}%
\end{equation}
then the $\xi_{\jmath(A_{l})}$-measurable sets $A_{l}^{\prime}\subseteq A_{l}$
satisfy $\mu(A_{l}\bigtriangleup A_{l}^{\prime})=o(\mu(A_{l}))$ as
$l\rightarrow\infty$.
\end{lemma}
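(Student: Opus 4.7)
The plan is to reduce the symmetric difference to a boundary term and control it with the exponential decay of cylinder measures. First I observe that since $A_l^\prime \subseteq A_l$ by construction, we have $A_l \bigtriangleup A_l^\prime = A_l \setminus A_l^\prime$. A point $x \in A_l \setminus A_l^\prime$ lies in some cylinder $W \in \xi_{\jmath(A_l)}$ with $W \cap A_l \neq \varnothing$ but $W \not\subseteq A_l$. Thus $W$ straddles the boundary of $A_l$, so $W$ contains an endpoint of the interval $A_l$ in its closure.

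The key geometric point is that every $W \in \xi_n$ is itself an interval. This follows by induction on $n$: for $n=1$ the cylinders of $\xi$ are the open components of $X$, hence intervals; for the inductive step, if $Z \in \xi_n$ with $Z \subseteq Z_0 \in \xi$, and $T\mid_{Z_0}$ is a homeomorphism of one interval onto another (and therefore monotone), then $Z_0 \cap T^{-1}(Z^\prime)$ is an interval for each interval $Z^\prime$, yielding by iteration that cylinders of rank $n+1$ are intersections of $Z_0$ with preimages of cylinders of rank $n$ (which are intervals by the inductive hypothesis). Since $A_l$ is an interval, at most two cylinders of $\xi_{\jmath(A_l)}$ meet $A_l$ without being contained in it: one at each endpoint.

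Now I would apply the cylinder measure bound (\ref{Eq_GMMapsContractionOfCylMeasure}), which gives $\mu(W) \leq \kappa q^{\jmath(A_l)}$ for any such $W$. A direct computation from the definition (\ref{Eq_DefRankFunctionJ}) yields
\begin{equation*}
q^{\jmath(A_l)} = \exp\bigl(\jmath(A_l) \log q\bigr) = \exp\bigl(2 \log \mu(A_l)\bigr) = \mu(A_l)^2
\end{equation*}
(with a harmless constant factor if we interpret $\jmath(A_l)$ via its integer part). Combining these observations,
\begin{equation*}
\mu(A_l \bigtriangleup A_l^\prime) = \mu(A_l \setminus A_l^\prime) \leq 2 \kappa q^{\jmath(A_l)} \leq 2\kappa\, \mu(A_l)^2 = o(\mu(A_l)),
\end{equation*}
as required, since $\mu(A_l) \to 0$.

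There is no serious obstacle; the only step requiring care is verifying that cylinders of a Gibbs-Markov \emph{interval} map are themselves intervals, which is a straightforward consequence of piecewise monotonicity, and ensuring the integer-versus-real distinction in the definition of $\jmath$ contributes only a bounded constant to the estimate.
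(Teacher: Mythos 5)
Your proof is correct and follows essentially the same route as the paper, whose entire argument is the one-line observation that $A_{l}\setminus A_{l}^{\prime}$ consists of at most two subintervals, each contained in a single rank-$\jmath(A_{l})$ cylinder and hence of measure at most $\kappa q^{\jmath(A_{l})}$, which is $o(\mu(A_{l}))$ by the definition of $\jmath$. You merely spell out the details the paper leaves implicit (cylinders of an interval map are intervals, the computation $q^{\jmath(A_{l})}=\mu(A_{l})^{2}$, and the integer-part adjustment), all of which are fine.
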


\begin{proof}
If $A$ is an interval, $\jmath\geq1$, and $A^{\prime}:=%
{\textstyle\bigcup\nolimits_{W\in\xi_{\jmath}:W\subseteq A_{l}}}
W$, then $A_{l}\setminus A_{l}^{\prime}$ consists of at most two subintervals
of measures not exceeding $\kappa q^{\jmath}$ (recall
(\ref{Eq_GMMapsContractionOfCylMeasure})).
\end{proof}

We are now ready for the

\begin{proof}
[\textbf{Proof of Theorem \ref{T_WhenWhereGMIntervalMaps}.}]\textbf{(i)} In
view of the Lemma and Remark \ref{R_RobustJoints}, we can assume w.l.o.g. that
(up to a set of measure zero) each $A_{l}$ is $\xi_{\jmath(A_{l})}%
$-measurable. (Note that $\jmath(A_{l}^{\prime})\geq\jmath(A_{l})$.)

We prove our result by a direct application of Theorem
\ref{T_SpatiotemporalIIDLimits}. Note first that
\begin{equation}
\psi_{A_{l}}\overset{\mu_{A_{l}}}{\Longrightarrow}\psi\text{ \qquad as
}l\rightarrow\infty\text{,}%
\end{equation}
where $\psi$ is uniformly distributed in $\mathfrak{Z}:=[0,1]$ (see the
discussion following (\ref{Eq_cnbvmcbvmbmybvmyvbyb})).\newline\newline%
\textbf{(ii)} We will validate condition (B) of Theorem
\ref{T_SpatiotemporalIIDLimits} for every $F\in\mathcal{B}_{\mathfrak{Z}}%
^{\pi}:=\{[a,b]:0\leq a\leq b\leq1\}$ of positive measure. In the particular
case of $F=\mathfrak{Z}$ this verifies the assumptions of Theorem
\ref{T_MOreFlexiblePoissonLimitFromCompactness}, and hence implies Poisson
asymptotics,
\begin{equation}
\mu(A_{l})\Phi_{A_{l}}\overset{\mu_{A_{l}}}{\Longrightarrow}\,\Phi
_{\mathrm{Exp}}\text{ \quad and \quad}\mu(A_{l})\Phi_{A_{l}}\overset{\mu
}{\Longrightarrow}\,\Phi_{\mathrm{Exp}}\text{ \quad as }l\rightarrow
\infty\text{.} \label{Eq_PoiAsy}%
\end{equation}
Throughout, we use $\mathfrak{K}$ as in (\ref{Eq_GMgoodK}). Now fix any
$F\in\mathcal{B}_{\mathfrak{Z}}^{\pi}$ with $\Pr[\psi\in F]=\lambda(F)>0$.

The sets $B_{l,F}:=A_{l}\cap\{\psi_{A_{l}}\in F\}$ are intervals. Define
$B_{l,F}^{\prime}:=%
{\textstyle\bigcup\nolimits_{V\in\xi_{\jmath(B_{l,F})}:V\subseteq A_{l}}}
V$, $l\geq1$, which satisfy $\mu(B_{l,F}\bigtriangleup B_{l,F}^{\prime}%
)=o(\mu(B_{l,F}))$ as $l\rightarrow\infty$ (Lemma \ref{L_ApproxByCyls}).
Setting $\nu_{l,F}:=\mu_{B_{l,F}^{\prime}}$ we therefore have $d_{\mathfrak{P}%
}(\nu_{l,F},\mu_{A_{l}\cap\{\psi_{A_{l}}\in F\}})\rightarrow0$.

Next, $\lambda(B_{l,F})\sim\lambda(F)\lambda(A_{l})$ as $l\rightarrow\infty$
by definition of $B_{l,F}$. Since the invariant density $d\mu/d\lambda=:h$ of
$T$ is continuous at $x^{\ast}$ ($x^{\ast}$ being in the interior of
$\xi(x^{\ast})$) with $h(x^{\ast})>0$, we see that also $\mu(B_{l,F}%
)\sim\lambda(F)\mu(A_{l})$.

Let\ $\tau_{l,F}:=\jmath(B_{l,F})$, $l\geq1$, then it is immediate from
(\ref{Eq_mkhmnzknmznmznmz}) that $\mu(A_{l})\tau_{l,F}\sim\lambda(F)^{-1}%
\mu(B_{l,F})\jmath(B_{l,F})\rightarrow0$, and hence
(\ref{Eq_vzgvzgvzgvzgvzgv1}). Also, since $B_{l,F}^{\prime}$ is $\xi
_{\jmath(B_{l,F})}$-measurable, it is clear from (\ref{Eq_GMgoodK}) and
convexity of $\mathfrak{K}$ that $T_{\ast}^{\tau_{l,F}}\nu_{l,F}%
\in\mathfrak{K}$ for $l\geq1$. It only remains to check
(\ref{Eq_owowowowowoowwowowow}) or, equivalently, that
\begin{equation}
\nu_{l,F}\left(  \varphi_{A_{l}}\leq\tau_{l,F}\right)  \longrightarrow
0\quad\text{as }l\rightarrow\infty\text{.} \label{Eq_Wuhuhu}%
\end{equation}

This is done by an argument slightly extending that of Theorem
\ref{T_PoissonAsyGMCyls}. Take any $\varepsilon>0$. Choose $K\geq1$ so large
that $\widetilde{\kappa}q^{K}/(1-q)<\varepsilon/2$ with $\widetilde{\kappa
}:=2\lambda(F)^{-1}\flat^{-1}e^{r}\kappa$. For every $k\in\{1,\ldots,l\} $ we
have, using (\ref{Eq_UpperDistortBound}) and
(\ref{Eq_GMMapsContractionOfCylMeasure}),
\begin{align}
\mu_{B_{l,F}^{\prime}}(T^{-k}A_{l})  &  \leq\mu(B_{l,F}^{\prime})^{-1}%
{\textstyle\sum\nolimits_{Z\in\xi_{k}:Z\cap A_{l}\neq\varnothing}}
\mu(Z\cap T^{-k}A_{l})\nonumber\\
&  \leq\flat^{-1}e^{r}\mu(B_{l,F}^{\prime})^{-1}\mu(A_{l})%
{\textstyle\sum\nolimits_{Z\in\xi_{k}:Z\cap A_{l}\neq\varnothing}}
\mu(Z)\nonumber\\
&  \leq\flat^{-1}e^{r}\mu(B_{l,F}^{\prime})^{-1}\mu(A_{l})(\mu(A_{l})+2\kappa
q^{k})\nonumber\\
&  \leq\widetilde{\kappa}(\mu(A_{l})+q^{k})\text{ \quad for }l\geq l^{\prime
}\text{,}%
\end{align}
where we note that at most two of the $Z\in\xi_{k}$ which intersect the
interval $A_{l}$ are not covered by $A_{l}$. Since $x^{\ast}$ is not periodic,
and a continuity point of each $T^{n}$, there is some $l^{\prime\prime}$ such
that $\varphi_{A_{l}}>K$ on $A_{l}$ whenever $l\geq l^{\prime\prime}$. As seen
before, we also have $\mu(A_{l})\tau_{l,F}<\varepsilon/(2\widetilde{\kappa})$
whenever $l\geq l^{\prime\prime\prime}$. We thus find that
\begin{align}
\nu_{l,F}\left(  \varphi_{A_{l}}\leq\tau_{l,F}\right)   &  =\mu_{B_{l,F}%
^{\prime}}\left(  K\leq\varphi_{A_{l}}\leq\tau_{l,F}\right)  =\mu
_{B_{l,F}^{\prime}}(%
{\textstyle\bigcup\nolimits_{k=K}^{\tau_{l,F}}}
T^{-k}A_{l})\nonumber\\
&  \leq%
{\textstyle\sum\nolimits_{k=K}^{\tau_{l,F}}}
\mu_{B_{l,F}^{\prime}}\left(  T^{-k}A_{l}\right)  \leq\widetilde{\kappa
}\left(  \mu(A_{l})\tau_{l,F}+%
{\textstyle\sum\nolimits_{k=K}^{\tau_{l,F}}}
q^{k}\right) \nonumber\\
&  <\varepsilon\text{ \quad for }l\geq l^{\prime}\vee l^{\prime\prime}\vee
l^{\prime\prime\prime}\text{,}%
\end{align}
and (\ref{Eq_Wuhuhu}) follows as $\varepsilon>0$ was arbitrary. Condition (B)
of Theorem \ref{T_SpatiotemporalIIDLimits} is fulfilled.\newline%
\newline\textbf{(iii)} Turning to condition (A) of Theorem
\ref{T_SpatiotemporalIIDLimits}, fix any $s\in\lbrack0,\infty)$. For
$\theta>0$ consider the sets $C_{l}(\theta):=A_{l}\cap\{\varphi_{A_{l}}%
>\theta\} $, $l\geq1$. Since the $A_{l}$ are $\xi_{\jmath(A_{l})}$-measurable,
each $C_{l}(\theta)$ is $\xi_{\jmath(A_{l})+\theta}$-measurable.

We approximate the $B_{l}:=A_{l}\cap\{\mu(A_{l})\varphi_{A_{l}}>s\}=C_{l}%
(\theta_{l})$ with $\theta_{l}:=s/\mu(A_{l})$ by the sets $B_{l}^{\prime
}:=C_{l}(\theta_{l}^{\prime})$ with $\theta_{l}^{\prime}:=\theta_{l}%
-\jmath(A_{l})$. It is clear from the definition of $\jmath(A)$ that
$\theta_{l}^{\prime}\sim\theta_{l}$ as $l\rightarrow\infty$. In view of step
(ii) above, we can already use (\ref{Eq_PoiAsy}). The latter shows that
$\mu(B_{l}\bigtriangleup B_{l}^{\prime})=o(\mu(B_{l}))$, and hence
$d_{\mathfrak{P}}(\mu_{B_{l}^{\prime}},\mu_{B_{l}})\rightarrow0$.

But for the $B_{l}^{\prime}$ condition (A) is very easy if we take $\tau
_{l,s}:=\theta_{l}$. Indeed, by (\ref{Eq_PoiAsy}), $\mu_{B_{l}^{\prime}%
}(\varphi_{A_{l}}\leq\tau_{l,s})=\mu_{B_{l}^{\prime}}(s-\mu(A_{l})\jmath
(A_{l})<\mu(A_{l})\varphi_{A_{l}}\leq s)\rightarrow0$ as $l\rightarrow\infty$.
On the other hand, $T_{\ast}^{\tau_{l,s}}\mu_{B_{l}^{\prime}}\in\mathfrak{K}$
for $l\geq1$, because each $B_{l}^{\prime}$ is $\theta_{l}$-measurable.
\end{proof}

%

\vspace{0.3cm}%

\section{Inducing and further examples}%

\noindent
\textbf{Induced versions of the processes.} When studying specific systems,
one often tries to find some good reference set $Y\in\mathcal{A}$ such that
the first-return map $T_{Y}:Y\rightarrow Y$ is more convenient than $T$. In
this case, it often pays to prove a relevant property first for $T_{Y}$, and
to transfer it back to $T$ afterwards.

In the following, we let $\varphi_{A}^{Y}:Y\rightarrow\overline{\mathbb{N}} $
denote the hitting time of $A\in\mathcal{A}\cap Y$ under the first-return map
$T_{Y}$, that is,
\begin{equation}
\varphi_{A}^{Y}(x):=\inf\{j\geq1:T_{Y}^{j}x\in A\}\text{,\quad}x\in Y\text{,}%
\end{equation}
and write $\Phi_{A}^{Y}:=(\varphi_{A}^{Y},\varphi_{A}^{Y}\circ T_{A}%
,\varphi_{A}^{Y}\circ T_{A}^{2},\ldots)\ $on $Y$ for the hitting-time process
of $A$ under $T_{Y}$. Since $\mu_{Y}$ is the natural invariant probability
measure for $T_{Y}$, the canonical normalization for $\varphi_{A}^{Y}$ and
$\Phi_{A}^{Y}$ is $\mu_{Y}(A)$.

Given an $\mathfrak{Z}$-valued local observable on $A$ with corresponding
local process $\Psi_{A}=(\psi_{A}\circ T_{A},\psi_{A}\circ T_{A}^{2},\ldots)$,
we can also consider the corresponding object for the first-return map,
$\Psi_{A}^{Y}=(\psi_{A}\circ(T_{Y})_{A},\psi_{A}\circ(T_{Y})_{A}^{2},\ldots)$
on $Y$. But since the first-return maps on $A$ respectively induced by $T$ and
$T_{Y}$ coincide, $T_{A}=(T_{Y})_{A}$, we have $\Psi_{A}^{Y}=\Psi_{A}\mid_{Y}$
and there is no need for this extra notation.%

\vspace{0.3cm}%
%

\noindent
\textbf{Relating original and induced processes.} Inducing was first used to
deal with limit laws for normalized return- or hitting times $\mu
(A)\varphi_{A}$ in \cite{BruSauTrouVai}. A more general abstract form of their
result was given in \cite{HWZ}, and \cite{Zexceptional} contains an even more
flexible version. The theorem below confirms that the same strategy can also
be employed when dealing with joint processes $(\mu(A)\Phi_{A},\Psi_{A})$ for
small sets. The argument closely follows that of \cite{HWZ}, and its process
variant from \cite{FFTV}, but compares the two hitting-time processes in
probability rather than just in distribution, thus keeping track of their
relation to the second process $\Psi_{A}$.

\begin{theorem}
[\textbf{Joint limit processes under }$\mu$\textbf{\ via inducing}%
]\label{T_InduceSpatioTempMu}Let $(X,\mathcal{A},\mu,T)$ be an ergodic
probability preserving system, $Y\in\mathcal{A}$, $(A_{l})$ an asymptotically
rare sequence in $\mathcal{A}\cap Y$, and $(\psi_{A_{l}})_{l\geq1}$ a sequence
of $\mathfrak{Z}$-valued local observables for the $A_{l}$ with corresponding
local processes $\Psi_{A_{l}}$.

Assume that $(\Phi,\Psi)$ is a random element of $[0,\infty)^{\mathbb{N}%
}\times\mathfrak{Z}^{\mathbb{N}}$. Then, as $l\rightarrow\infty$,
\begin{equation}
(\mu(A_{l})\Phi_{A_{l}},\Psi_{A_{l}})\overset{\mu}{\Longrightarrow}%
(\,\Phi,\Psi)\quad\text{iff\quad}(\mu_{Y}(A_{l})\Phi_{A_{l}}^{Y},\Psi_{A_{l}%
})\overset{\mu_{Y}}{\Longrightarrow}(\,\Phi,\Psi)\text{.}
\label{Eq_cvcvcvcvcvbbbbbbbb}%
\end{equation}

\end{theorem}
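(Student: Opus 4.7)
The plan is to reduce both sides of (\ref{Eq_cvcvcvcvcvbbbbbbbb}) to convergence under the common measure $\mu_Y$, and then bridge them by an explicit pathwise comparison. Since $\mu_Y \in \mathfrak{P}$, Proposition \ref{P_SDCJoint} reduces the $\mu$-side to the corresponding statement under $\mu_Y$. Moreover, the $\Psi_{A_l}$-coordinate is literally the same process on $Y$ whether we use $T$ or $T_Y$, because $(T_Y)_{A_l}=T_{A_l}$ on $A_l\subseteq Y$. The task thus reduces to
\begin{equation*}
(\mu(A_l)\Phi_{A_l},\Psi_{A_l})\overset{\mu_Y}{\Longrightarrow}(\Phi,\Psi)\;\Longleftrightarrow\;(\mu_Y(A_l)\Phi_{A_l}^Y,\Psi_{A_l})\overset{\mu_Y}{\Longrightarrow}(\Phi,\Psi),
\end{equation*}
which by a Slutsky-type argument in the vein of (\ref{Eq_Tree1}) follows from the metric-level comparison
\begin{equation*}
d_{[0,\infty]^{\mathbb{N}_0}}(\mu(A_l)\Phi_{A_l},\mu_Y(A_l)\Phi_{A_l}^Y)\overset{\mu_Y}{\longrightarrow}0.
\end{equation*}

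Because $d_{[0,\infty]^{\mathbb{N}_0}}$ assigns geometric weights $2^{-(j+1)}$ to coordinate distances themselves bounded by $1$, a standard truncation reduces the display above to coordinatewise convergence: for each fixed $k\geq 0$,
\begin{equation*}
d_{[0,\infty]}\bigl(\mu(A_l)\,\varphi_{A_l}\!\circ T_{A_l}^k,\;\mu_Y(A_l)\,\varphi_{A_l}^Y\!\circ T_{A_l}^k\bigr)\overset{\mu_Y}{\longrightarrow}0.
\end{equation*}
The pathwise identity underlying the comparison is
\begin{equation*}
\varphi_{A_l}\!\circ T_{A_l}^k=S_{N_l}^Y\varphi_Y\!\circ T_{A_l}^k,\qquad N_l:=\varphi_{A_l}^Y\!\circ T_{A_l}^k,
\end{equation*}
where $S_n^Yf:=\sum_{j=0}^{n-1}f\circ T_Y^j$: the time to hit $A_l$ under $T$ is the sum of $\varphi_Y$ along the $N_l$ successive $T_Y$-returns to $Y$. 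Birkhoff's ergodic theorem for $T_Y$ on $(Y,\mu_Y)$ together with Kac's formula $\int_Y\varphi_Y\,d\mu_Y=1/\mu(Y)$ yields $n^{-1}S_n^Y\varphi_Y\to 1/\mu(Y)$ $\mu_Y$-a.e.

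Multiplying the identity by $\mu(A_l)$ gives
\begin{equation*}
\mu(A_l)\,\varphi_{A_l}\!\circ T_{A_l}^k=\mu_Y(A_l)\,N_l\cdot\frac{\mu(Y)\,S_{N_l}^Y\varphi_Y\!\circ T_{A_l}^k}{N_l}.
\end{equation*}
On the long-loop regime $\{N_l>M\}$, Birkhoff's theorem makes the bracketed ratio close to $1$ in $\mu_Y$-probability, so the two compared quantities are multiplicatively $(1+o(1))$-close; this gives vanishing $d_{[0,\infty]}$-distance whether the common order of magnitude stays bounded or escapes to $\infty$ (recall that $d_{[0,\infty]}(s,t)=|e^{-s}-e^{-t}|$ is small both when $|s-t|$ is small with $s,t$ bounded and when $s,t\to\infty$). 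On the complementary short-loop regime $\{N_l\le M\}$, both quantities are near $0$: $\mu_Y(A_l)N_l\le M\mu_Y(A_l)\to 0$, while $\mu(A_l)\,\varphi_{A_l}\!\circ T_{A_l}^k \le \mu(A_l)\,S_M^Y\varphi_Y\!\circ T_{A_l}^k\to 0$ in $\mu_Y$-probability since $\mu(A_l)\to 0$ and $S_M^Y\varphi_Y$ is $\mu_Y$-a.e.\ finite.

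The main obstacle is that no \emph{no short returns} hypothesis is available in the present theorem, so the random stopping time $N_l$ need not tend to $\infty$ uniformly (or even in $\mu_Y$-probability). The two regimes therefore must be handled simultaneously: the short side uses only $L^1$-integrability of $\varphi_Y$ under $\mu_Y$ together with $\mu(A_l)\to 0$, whereas the long side combines Birkhoff's theorem with Egorov (providing uniform control of $S_n^Y\varphi_Y/n$ off a set of arbitrarily small $\mu_Y$-measure) and a diagonal choice $M=M(l)\to\infty$ slowly enough. Once both regimes are dominated with $\mu_Y$-probability tending to $1$, the coordinatewise comparison is established, yielding the metric-level statement and thus the theorem.
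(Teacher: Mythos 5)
Your opening reduction (replace $\mu$ by $\mu_{Y}$ via Proposition \ref{P_SDCJoint}, note $\Psi_{A_{l}}$ is unaffected since $(T_{Y})_{A_{l}}=T_{A_{l}}$, then compare the two time processes pathwise in $\mu_{Y}$-probability using $\varphi_{A_{l}}=\sum_{j=0}^{\varphi_{A_{l}}^{Y}-1}\varphi_{Y}\circ T_{Y}^{j}$ and Birkhoff/Kac) matches the paper's strategy, but the step where you pass to \emph{coordinatewise} comparison is where the argument breaks. For the $k$-th coordinate with $k\geq1$, the Birkhoff sum you need to control is $S_{N_{l}}^{Y}\varphi_{Y}$ evaluated at the shifted point $T_{A_{l}}^{k}x\in A_{l}$, not at $x$. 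Under $\mu_{Y}$, the law of $T_{A_{l}}^{k}$ concentrates on $A_{l}$, a set of vanishing measure, so neither Egorov (a uniform-convergence set of measure $1-\epsilon$ may entirely miss $A_{l}$) nor the a.e.\ finiteness of $S_{M}^{Y}\varphi_{Y}$ in your short-loop regime gives any control: nothing in the hypotheses prevents $A_{l}$ from sitting inside the exceptional set, or inside $\{S_{M}^{Y}\varphi_{Y}>1/\mu(A_{l})^{2}\}$ when $\varphi_{Y}$ is unbounded. This is exactly the kind of localization problem that the compactness/short-return hypotheses handle elsewhere in the paper, and none of them are available here, so the claimed convergences $d_{[0,\infty]}\bigl(\mu(A_{l})\varphi_{A_{l}}\circ T_{A_{l}}^{k},\mu_{Y}(A_{l})\varphi_{A_{l}}^{Y}\circ T_{A_{l}}^{k}\bigr)\overset{\mu_{Y}}{\longrightarrow}0$ are not established by your argument (for $k=0$ it is fine; that is the classical one-dimensional inducing argument).

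The missing idea is to compare \emph{partial sums} rather than increments: since the limit $\Phi$ is assumed to take values in $[0,\infty)^{\mathbb{N}}$, convergence of the increment vectors $(\Phi_{A_{l}}^{[d]},\Psi_{A_{l}}^{[d]})$ is equivalent (continuous mapping) to convergence of the cumulative vectors $(\Phi_{A_{l}}^{\Sigma[d]},\Psi_{A_{l}}^{[d]})$, and the cumulative hitting time $\varphi_{A_{l}}+\cdots+\varphi_{A_{l}}\circ T_{A_{l}}^{i-1}$ is a Birkhoff sum of $\varphi_{Y}$ along the $T_{Y}$-orbit of the \emph{original} point $x$, with at least $\varphi_{A_{l}}^{Y}(x)$ terms. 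One then only needs the two sets $E_{M}:=\{\sum_{j=0}^{m-1}\varphi_{Y}\circ T_{Y}^{j}=e^{\pm\delta}\mu(Y)^{-1}m\text{ for }m\geq M\}$ (small complement by Birkhoff and Kac) and $F_{l}:=\{\varphi_{A_{l}}^{Y}\geq M\}$ (whose complement has $\mu_{Y}$-measure at most $M\mu_{Y}(A_{l})\rightarrow0$, with no short-returns assumption needed), and on $E_{M}\cap F_{l}$ the two cumulative vectors agree up to a factor $e^{\pm\delta}$, giving the in-probability comparison in $d_{[0,\infty]^{d}}$. Your diagonal choice $M=M(l)$ does not repair the coordinatewise route, because the obstruction is the location of the evaluation point, not the size of $M$.
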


\begin{proof}
\textbf{(i)} According to Proposition \ref{P_SDCJoint}, we can replace $\mu$
by $\mu_{Y}$ in the first convergence statement of
(\ref{Eq_cvcvcvcvcvbbbbbbbb}). Therefore it suffices to show that for every
$d\geq1$,
\[
(\mu(A_{l})\Phi_{A_{l}}^{[d]},\Psi_{A_{l}}^{[d]})\overset{\mu_{Y}%
}{\Longrightarrow}(\,\Phi^{\lbrack d]},\Psi^{\lbrack d]})\quad\text{iff\quad
}(\mu_{Y}(A_{l})\Phi_{A_{l}}^{Y,[d]},\Psi_{A_{l}}^{[d]})\overset{\mu_{Y}%
}{\Longrightarrow}(\,\Phi^{\lbrack d]},\Psi^{\lbrack d]})\text{.}%
\]
Since $\Phi^{\lbrack d]}=(\varphi^{(0)},\ldots,\varphi^{(d-1)})$ is
finite-valued by assumption, we do not lose information if instead we work
with $\Phi^{\Sigma\lbrack d]}:=(\varphi^{(0)},\varphi^{(0)}+\varphi
^{(1)},\ldots,\varphi^{(0)}+\cdots+\varphi^{(d-1)})$. Define $\Phi_{A_{l}%
}^{\Sigma\lbrack d]}$ and $\Phi_{A_{l}}^{Y,\Sigma\lbrack d]}$ analogously as
vectors of partial sums of $\Phi_{A_{l}}^{[d]}$ and $\Phi_{A_{l}}^{Y,[d]}$,
respectively. Then,
\[
(\mu(A_{l})\Phi_{A_{l}}^{[d]},\Psi_{A_{l}}^{[d]})\overset{\mu_{Y}%
}{\Longrightarrow}(\,\Phi^{\lbrack d]},\Psi^{\lbrack d]})\quad\text{iff\quad
}(\mu(A_{l})\Phi_{A_{l}}^{\Sigma\lbrack d]},\Psi_{A_{l}}^{[d]})\overset
{\mu_{Y}}{\Longrightarrow}(\,\Phi^{\Sigma\lbrack d]},\Psi^{\lbrack
d]})\text{,}%
\]
while
\[
(\mu_{Y}(A_{l})\Phi_{A_{l}}^{Y,[d]},\Psi_{A_{l}}^{[d]})\overset{\mu_{Y}%
}{\Longrightarrow}(\,\Phi^{\lbrack d]},\Psi^{\lbrack d]})\quad\text{iff\quad
}(\mu_{Y}(A_{l})\Phi_{A_{l}}^{Y,\Sigma\lbrack d]},\Psi_{A_{l}}^{[d]}%
)\overset{\mu_{Y}}{\Longrightarrow}(\,\Phi^{\Sigma\lbrack d]},\Psi^{\lbrack
d]})\text{.}%
\]
Therefore our assertion (\ref{Eq_cvcvcvcvcvbbbbbbbb}) follows once we check
that for every $d\geq1$,
\[
(\mu(A_{l})\Phi_{A_{l}}^{\Sigma\lbrack d]},\Psi_{A_{l}}^{[d]})\overset{\mu
_{Y}}{\Longrightarrow}(\,\Phi^{\lbrack d]},\Psi^{\lbrack d]})\ \text{iff }%
(\mu_{Y}(A_{l})\Phi_{A_{l}}^{Y,\Sigma\lbrack d]},\Psi_{A_{l}}^{[d]}%
)\overset{\mu_{Y}}{\Longrightarrow}(\,\Phi^{\lbrack d]},\Psi^{\lbrack
d]})\text{.}%
\]
The latter is immediate if we check that for every $d\geq1$,%
\begin{equation}
d_{[0,\infty]^{d}}\left(  \mu(A_{l})\Phi_{A_{l}}^{\Sigma\lbrack d]},\mu
_{Y}(A_{l})\Phi_{A_{l}}^{Y,\Sigma\lbrack d]}\right)  \overset{\mu_{Y}%
}{\longrightarrow}0\text{ \quad as }l\rightarrow\infty\text{,}
\label{Eq_hshshashhshdshhadh}%
\end{equation}
\newline because this convergence in probability trivially carries over to the
joint processes,
\[
d_{[0,\infty]^{d}\times\mathfrak{Z}^{d}}\left(  (\mu(A_{l})\Phi_{A_{l}%
}^{\Sigma\lbrack d]},\Psi_{A_{l}}^{[d]}),(\mu_{Y}(A_{l})\Phi_{A_{l}}%
^{Y,\Sigma\lbrack d]},\Psi_{A_{l}}^{[d]})\right)  \overset{\mu_{Y}%
}{\longrightarrow}0\text{ \quad as }l\rightarrow\infty\text{.}%
\]
\textbf{(ii)} To validate (\ref{Eq_hshshashhshdshhadh}), we take some $d\geq1$
and any $\varepsilon>0$. Note that $\varkappa(\delta):=\sup_{t\geq
0}d_{[0,\infty]}(t,e^{\delta}t)\rightarrow0$ as $\delta\searrow0$. Now fix
some $\delta>0$ so small that $d\varkappa(\delta)<\varepsilon$. By the Ergodic
theorem and Kac' formula, we have
\begin{equation}
m^{-1}%
{\textstyle\sum\nolimits_{j=0}^{m-1}}
\varphi_{Y}\circ T_{Y}^{j}\longrightarrow\mu(Y)^{-1}\text{\quad a.e. on
}Y\text{,}%
\end{equation}
so that the increasing sequence $(E_{M})_{M\geq1}$ of sets given by
\[
E_{M}:=\{%
{\textstyle\sum\nolimits_{j=0}^{m-1}}
\varphi_{Y}\circ T_{Y}^{j}=e^{\pm\delta}\mu(Y)^{-1}m\text{ for }m\geq
M\}\in\mathcal{A}\cap Y
\]
satisfies $\mu_{Y}(E_{M}^{c})\rightarrow0$ as $M\rightarrow\infty$. Now fix
some $M$ such that $\mu_{Y}(E_{M}^{c})<\varepsilon/2$. Next, let
$F_{l}:=\{\varphi_{A_{l}}^{Y}\geq M\}\in\mathcal{A}\cap Y$, $l\geq1$. Then,
$F_{l}=Y\cap%
{\textstyle\bigcap\nolimits_{j=1}^{M-1}}
T_{Y}^{-j}A_{l}^{c}$, and hence $\mu_{Y}(F_{l}^{c})\leq\sum_{j=1}^{M-1}\mu
_{Y}(T_{Y}^{-j}A_{l})\leq M\,\mu_{Y}(A_{l})\rightarrow0$ as $l\rightarrow
\infty$. Pick $L\geq1$ so large that $\mu_{Y}(F_{l}^{c})<\varepsilon/2$ for
$l\geq L$.\newline\newline\textbf{(iii)} Observe now that for $A\in
\mathcal{A}\cap Y$ we have $\varphi_{A}=\sum_{j=0}^{\varphi_{A}^{Y}-1}%
\varphi_{Y}\circ T_{Y}^{j}\,\ $on $Y$, and, more generally, for any $i\geq1$,
\begin{equation}
\varphi_{A}+\cdots+\varphi_{A}\circ T_{A}^{i-1}=\sum_{j=0}^{\varphi_{A}%
^{Y}+\cdots+\varphi_{A}^{Y}\circ T_{A}^{i-1}-1}\varphi_{Y}\circ T_{Y}%
^{j}\text{ \quad on }Y\text{.}%
\end{equation}
By definition of $E_{M}$ and $F_{l}$ we have, for every $i\in\{1,\ldots,d\}$,%
\[
\varphi_{A_{l}}+\cdots+\varphi_{A_{l}}\circ T_{A_{l}}^{i-1}=e^{\pm\delta}%
\mu(Y)^{-1}\left(  \varphi_{A_{l}}^{Y}+\cdots+\varphi_{A_{l}}^{Y}\circ
T_{A_{l}}^{i-1}\right)  \text{ on }E_{M}\cap F_{l}\text{.}%
\]
Note that the left-hand expression is the $i$th component of $\Phi_{A_{l}%
}^{\Sigma\lbrack d]}$, while the sum on the right-hand side is the $i$th
component of $\Phi_{A_{l}}^{Y,\Sigma\lbrack d]}$. Due to our choice of
$\delta$ we thus find that
\[
d_{[0,\infty]^{d}}\left(  \mu(A_{l})\Phi_{A_{l}}^{\Sigma\lbrack d]},\mu
_{Y}(A_{l})\Phi_{A_{l}}^{Y,\Sigma\lbrack d]}\right)  \leq d\varkappa
(\delta)<\varepsilon\text{ on }E_{M}\cap F_{l}\text{.}%
\]
But since $\mu_{Y}((E_{M}\cap F_{l})^{c})<\varepsilon$ for $l\geq L$, this
proves (\ref{Eq_hshshashhshdshhadh}).
\end{proof}

%

\vspace{0.3cm}%

We also provide an inducing principle for limits under the measures
$\mu_{A_{l}}$. Recall from (\ref{Eq_AWarningLocObs}) that transferring
information about the asymptotics of spatiotemporal processes under one of
$(\mu)_{l\geq1}$ and $(\mu_{A_{l}})_{l\geq1}$ to the other requires extra
information. Therefore this principle is less general than Theorem
\ref{T_InduceSpatioTempMu}, and we content ourselves with the case relevant
for typical applications.

\begin{proposition}
[\textbf{Joint limit processes under }$\mu_{A_{l}}$\textbf{\ via inducing}%
]\label{P_InduceSpatioTempFromAl}Let $(X,\mathcal{A},\mu,T)$ be an ergodic
probability preserving system, $Y\in\mathcal{A}$, $(A_{l})$ an asymptotically
rare sequence in $\mathcal{A}\cap Y$, and $(\psi_{A_{l}})_{l\geq1}$ a sequence
of $\mathfrak{Z}$-valued local observables for the $A_{l}$ with corresponding
local processes $\Psi_{A_{l}}$. Assume there are measurable $\tau_{l}%
^{Y}:A_{l}\rightarrow\mathbb{N}_{0}$ and a compact set $\mathfrak{K}%
\subseteq\mathfrak{P}$ such that
\begin{equation}
\mu_{A_{l}}\left(  \tau_{l}^{Y}<\varphi_{A_{l}}^{Y}\right)  \longrightarrow
1\text{ as }l\rightarrow\infty\text{ \quad while\quad}(T_{Y})_{\ast}^{\tau
_{l}^{Y}}\mu_{A_{l}}\in\mathfrak{K}\text{ for }l\geq1\text{,}
\label{Eq_xcxcxcxc}%
\end{equation}
and that
\begin{equation}
(\mu_{Y}(A_{l})\Phi_{A_{l}}^{Y},\widetilde{\Psi}_{A_{l}})\overset{\mu_{A_{l}}%
}{\Longrightarrow}(\Phi_{\mathrm{Exp}},\Psi^{\ast})\quad\text{as }%
l\rightarrow\infty\text{,} \label{Eq_xcxcxcxcx}%
\end{equation}
with $(\Phi_{\mathrm{Exp}},\Psi^{\ast})$ an independent pair of iid sequences.
Then
\begin{equation}
(\mu(A_{l})\Phi_{A_{l}},\widetilde{\Psi}_{A_{l}})\overset{\mu_{A_{l}}%
}{\Longrightarrow}(\Phi_{\mathrm{Exp}},\Psi^{\ast})\quad\text{as }%
l\rightarrow\infty\text{.} \label{Eq_khkhkhkkhhkhkhk}%
\end{equation}

\end{proposition}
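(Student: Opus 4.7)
The plan is to chain together three results from earlier in the paper: Theorem~\ref{T_JointLimitProcMuVsMuA} applied to the \emph{induced} system, to transfer~(\ref{Eq_xcxcxcxcx}) from $\mu_{A_l}$ to $\mu_Y$; then Theorem~\ref{T_InduceSpatioTempMu}, to transfer from $\mu_Y$ to $\mu$; and finally Theorem~\ref{T_JointLimitProcMuVsMuA} once more, this time for the original system, to transfer from $\mu$ back to $\mu_{A_l}$. All three results are stated for joint spatiotemporal processes, so they splice together naturally.

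For the first link, regard $(Y,\mathcal{A}\cap Y,\mu_Y,T_Y)$ as an ergodic probability preserving system in its own right. The sequence $(A_l)$ is asymptotically rare for $T_Y$ (since $\mu_Y(A_l)=\mu(A_l)/\mu(Y)\to 0$), the induced local processes coincide with the original ones ($\Psi_{A_l}^Y=\Psi_{A_l}$, $\widetilde{\Psi}_{A_l}^Y=\widetilde{\Psi}_{A_l}$), and hypothesis~(\ref{Eq_xcxcxcxc}) supplies delays $\tau_l^Y$ together with a compact subset of the relevant probability space over $(Y,\mu_Y)$: namely, the image of $\mathfrak{K}\cap\{\nu:\nu(Y)=1\}$ under the natural embedding, which is isometric in total variation and merely rescales $\mu$-densities by the constant $\mu(Y)$. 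This verifies the hypotheses of Theorem~\ref{T_JointLimitProcMuVsMuA} for $T_Y$, whose $\mu_{A_l}$-side is exactly~(\ref{Eq_xcxcxcxcx}), so its $\mu_Y$-side yields
\[
(\mu_Y(A_l)\Phi_{A_l}^Y,\Psi_{A_l})\xrightarrow{\mu_Y}(\Phi_{\mathrm{Exp}},\Psi^*).
\]
Because the limit is a.s.\ finite-valued, Theorem~\ref{T_InduceSpatioTempMu} applies directly and converts this into $(\mu(A_l)\Phi_{A_l},\Psi_{A_l})\xrightarrow{\mu}(\Phi_{\mathrm{Exp}},\Psi^*)$.

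For the last link, define $\tau_l:A_l\to\mathbb{N}_0$ by $\tau_l:=\sum_{i=0}^{\tau_l^Y-1}\varphi_Y\circ T_Y^i$, i.e.\ the number of $T$-iterates needed to realize $\tau_l^Y$ iterates of $T_Y$ starting from a point of $A_l\subseteq Y$. Then $T^{\tau_l}=T_Y^{\tau_l^Y}$ pointwise on $A_l$, so $T_{\ast}^{\tau_l}\mu_{A_l}=(T_Y)_{\ast}^{\tau_l^Y}\mu_{A_l}\in\mathfrak{K}$, verifying~(\ref{Eq_mnhjnggnfjfggnjf2}); and since $\varphi_{A_l}=\sum_{i=0}^{\varphi_{A_l}^Y-1}\varphi_Y\circ T_Y^i$ on $A_l$, the inclusion $\{\tau_l^Y<\varphi_{A_l}^Y\}\subseteq\{\tau_l<\varphi_{A_l}\}$ shows that~(\ref{Eq_mnhjnggnfjfggnjf1}) inherits from~(\ref{Eq_xcxcxcxc}). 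Theorem~\ref{T_JointLimitProcMuVsMuA} applied to $(X,\mathcal{A},\mu,T)$ with these $\tau_l$ then converts the $\mu$-convergence just obtained into the desired~(\ref{Eq_khkhkhkkhhkhkhk}). The only genuinely delicate point---really a bookkeeping matter---is the verification that the compact set $\mathfrak{K}$ supplied by~(\ref{Eq_xcxcxcxc}) can simultaneously play the role of a compact subset of $(\mathfrak{P}(Y,\mu_Y),d_{\mathfrak{P}_Y})$ in the first link and of $(\mathfrak{P},d_{\mathfrak{P}})$ in the last. This works because the relevant pushforwards $(T_Y)_{\ast}^{\tau_l^Y}\mu_{A_l}$ are automatically supported on $Y$, and the total variation topology is insensitive to the resulting embedding.
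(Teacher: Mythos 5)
Your proposal is correct and follows essentially the same route as the paper's own proof: Theorem~\ref{T_JointLimitProcMuVsMuA} applied to the induced map $T_Y$, then Theorem~\ref{T_InduceSpatioTempMu}, then Theorem~\ref{T_JointLimitProcMuVsMuA} again for $T$ with $\tau_l:=\sum_{j=0}^{\tau_l^Y-1}\varphi_Y\circ T_Y^j$. Your explicit remark on identifying $\mathfrak{K}$ with a compact subset of the probabilities over $(Y,\mu_Y)$ is a point the paper leaves implicit, and your justification of it is sound.
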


\begin{proof}
Applying Theorem \ref{T_JointLimitProcMuVsMuA} to the first-return map $T_{Y}
$ we see that (\ref{Eq_xcxcxcxc}) and (\ref{Eq_xcxcxcxcx}) together imply
$(\mu_{Y}(A_{l})\Phi_{A_{l}}^{Y},\Psi_{A_{l}})\overset{\mu_{Y}}%
{\Longrightarrow}(\Phi_{\mathrm{Exp}},\Psi^{\ast})$. In view of Thorem
\ref{T_InduceSpatioTempMu} this entails
\begin{equation}
(\mu(A_{l})\Phi_{A_{l}},\Psi_{A_{l}})\overset{\mu}{\Longrightarrow}%
(\Phi_{\mathrm{Exp}},\Psi^{\ast})\quad\text{as }l\rightarrow\infty\text{.}
\label{Eq_hghghghghghghg}%
\end{equation}
Now define $\tau_{l}:=\sum_{j=0}^{\tau_{l}^{Y}-1}\varphi_{Y}\circ T_{Y}^{j}$
for $l\geq1$. This ensures that $T^{\tau_{l}}=(T_{Y})^{\tau_{l}^{Y}}$ on
$A_{l}$ and that $A_{l}\cap\{\tau_{l}<\varphi_{A_{l}}\}=A_{l}\cap\{\tau
_{l}^{Y}<\varphi_{A_{l}}^{Y}\}$. Having thus found measurable $\tau_{l}%
:A_{l}\rightarrow\mathbb{N}_{0}$ satisfying
\[
\mu_{A_{l}}\left(  \tau_{l}<\varphi_{A_{l}}\right)  \longrightarrow1\text{ as
}l\rightarrow\infty\text{ \quad while\quad}T^{\tau_{l}}\mu_{A_{l}}%
\in\mathfrak{K}\text{ for }l\geq1\text{,}%
\]
we can apply Theorem \ref{T_JointLimitProcMuVsMuA} again, this time to $T$, to
derive (\ref{Eq_khkhkhkkhhkhkhk}) from (\ref{Eq_hghghghghghghg}).
\end{proof}

%

\vspace{0.3cm}%
%

\noindent
\textbf{Application: Spatiotemporal Poisson limits for systems inducing GM
maps.} Using the results of this section, we can easily provide further
examples of systems exhibiting spatiotemporal Poisson limits, including some
with arbitrarily slow mixing rates. We first record

\begin{theorem}
[\textbf{Small intervals via induced GM maps}]%
\label{T_SpatioTempIntervalMapsInducing}Let $(X,\mathcal{A},\mu,T)$ be a
probability-preserving system, and $Y\in\mathcal{A}$ a set with $\mu(Y)>0$ on
which it induces a Gibbs-Markov interval map $(Y,\mathcal{A}\cap Y,\mu
_{Y},T_{Y},\xi_{Y})$. Let $(A_{l})$ an asymptotically rare sequence of
subintervals of $Y$, and $\psi_{A_{l}}:A_{l}\rightarrow\lbrack0,1]$ the
corresponding normalizing interval charts, giving local processes $\Psi
_{A_{l}}$. Assume that $x^{\ast}\in Y$ is not periodic and such that each
$\xi_{Y,l}(x^{\ast})$ is well defined, and the $A_{l}$ are contained in
neighbourhoods $I_{l}$ of $x^{\ast}$ with $\mathrm{diam}(I_{l})\rightarrow0 $.
Then,%
\begin{equation}
(\mu(A_{l})\Phi_{A_{l}},\widetilde{\Psi}_{A_{l}})\overset{\mu_{A_{l}}%
}{\Longrightarrow}(\,\Phi_{\mathrm{Exp}},\Psi^{\ast})\quad\text{as
}l\rightarrow\infty\text{,} \label{Eq_slslslslsslsl}%
\end{equation}
and
\begin{equation}
(\mu(A_{l})\Phi_{A_{l}},\Psi_{A_{l}})\overset{\mu}{\Longrightarrow}%
(\,\Phi_{\mathrm{Exp}},\Psi^{\ast})\quad\text{as }l\rightarrow\infty\text{,}
\label{Eq_jhbvfjhfedbvhdfabvjhadvahg}%
\end{equation}
where $(\,\Phi_{\mathrm{Exp}},\Psi^{\ast})$ is an independent pair with
$\Psi^{\ast}$ an iid sequence of uniformly distributed elements of $[0,1]$.
\end{theorem}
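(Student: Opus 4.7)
The plan is to reduce the claim to the induced Gibbs--Markov interval map $T_{Y}$ on $Y$, invoke Theorem \ref{T_WhenWhereGMIntervalMaps} there, and lift the two conclusions back to $T$ via Theorem \ref{T_InduceSpatioTempMu} and Proposition \ref{P_InduceSpatioTempFromAl}. First I would apply Theorem \ref{T_WhenWhereGMIntervalMaps} to $(Y,\mathcal{A}\cap Y,\mu_{Y},T_{Y},\xi_{Y})$ with the subinterval sequence $(A_{l})$ and normalizing charts $\psi_{A_{l}}$; the hypotheses on the non-periodic $x^{\ast}$, the cylinders $\xi_{Y,l}(x^{\ast})$ and the shrinking neighbourhoods $I_{l}$ are exactly what is needed. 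This yields both
\[
(\mu_{Y}(A_{l})\Phi_{A_{l}}^{Y},\widetilde{\Psi}_{A_{l}})\overset{\mu_{A_{l}}}{\Longrightarrow}(\Phi_{\mathrm{Exp}},\Psi^{\ast})\quad\text{and}\quad(\mu_{Y}(A_{l})\Phi_{A_{l}}^{Y},\Psi_{A_{l}})\overset{\mu_{Y}}{\Longrightarrow}(\Phi_{\mathrm{Exp}},\Psi^{\ast}).
\]

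The second of these converts to (\ref{Eq_jhbvfjhfedbvhdfabvjhadvahg}) by a direct appeal to Theorem \ref{T_InduceSpatioTempMu}. For (\ref{Eq_slslslslsslsl}) the natural tool is Proposition \ref{P_InduceSpatioTempFromAl}: its hypothesis (\ref{Eq_xcxcxcxcx}) is the first convergence above, so only condition (\ref{Eq_xcxcxcxc}) remains, namely to produce delays $\tau_{l}^{Y}:A_{l}\rightarrow\mathbb{N}_{0}$ and a compact $\mathfrak{K}\subseteq\mathfrak{P}$ with $\mu_{A_{l}}(\tau_{l}^{Y}<\varphi_{A_{l}}^{Y})\rightarrow 1$ and $(T_{Y})_{\ast}^{\tau_{l}^{Y}}\mu_{A_{l}}\in\mathfrak{K}$. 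The main technical obstacle is that each $A_{l}$ is an interval, not a $\xi_{Y}$-cylinder, so $\mu_{A_{l}}$ has irregular density with respect to the induced Gibbs--Markov structure and does not directly enter the standard compact family from (\ref{Eq_GMgoodK}) for $T_{Y}$.

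To circumvent this I invoke the robustness principle (Remark \ref{R_RobustJoints}) and replace $(A_{l},\psi_{A_{l}})$ by the approximating pair $(A_{l}^{\prime},\psi_{A_{l}}|_{A_{l}^{\prime}})$, where $A_{l}^{\prime}:=\bigcup\{W\in\xi_{Y,\jmath_{Y}(A_{l})}:W\subseteq A_{l}\}$, with $\jmath_{Y}$ denoting the rank function (\ref{Eq_DefRankFunctionJ}) for the induced Gibbs--Markov system. Lemma \ref{L_ApproxByCyls} applied to $T_{Y}$ yields $\mu(A_{l}\triangle A_{l}^{\prime})=o(\mu(A_{l}))$, while $d_{[0,1]}(\psi_{A_{l}},\psi_{A_{l}}|_{A_{l}^{\prime}})=0$ on $A_{l}\cap A_{l}^{\prime}=A_{l}^{\prime}$ is trivial; thus Remark \ref{R_RobustJoints}, applied once to $T_{Y}$ and once to $T$, lets me prove everything for $(A_{l}^{\prime})$ and then transfer back to $(A_{l})$. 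For the $\xi_{Y,\jmath_{Y}(A_{l})}$-measurable sets $A_{l}^{\prime}$ the choice $\tau_{l}^{Y}:=\jmath_{Y}(A_{l})$ makes $(T_{Y})_{\ast}^{\tau_{l}^{Y}}\mu_{A_{l}^{\prime}}$ a convex combination of measures with Gibbs--Markov-regular densities, hence a member of the compact family (\ref{Eq_GMgoodK}) for $T_{Y}$. The no-short-returns bound $\mu_{A_{l}^{\prime}}(\varphi_{A_{l}^{\prime}}^{Y}\leq\jmath_{Y}(A_{l}))\rightarrow 0$ is then obtained by a routine calculation paralleling step (ii) of the proof of Theorem \ref{T_WhenWhereGMIntervalMaps}, combining non-periodicity of $x^{\ast}$ with the distortion bound (\ref{Eq_UpperDistortBound}), the cylinder contraction (\ref{Eq_GMMapsContractionOfCylMeasure}), and the fact that $\mu(A_{l})\jmath_{Y}(A_{l})\rightarrow 0$. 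Proposition \ref{P_InduceSpatioTempFromAl} then delivers (\ref{Eq_slslslslsslsl}) for $(A_{l}^{\prime})$, and Remark \ref{R_RobustJoints} carries it back to $(A_{l})$.
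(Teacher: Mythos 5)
Your proposal is correct and follows essentially the same route as the paper: apply Theorem \ref{T_WhenWhereGMIntervalMaps} to the induced GM interval map, lift the $\mu_{Y}$-statement via Theorem \ref{T_InduceSpatioTempMu}, and obtain the $\mu_{A_{l}}$-statement through Proposition \ref{P_InduceSpatioTempFromAl} after replacing $(A_{l})$ by the cylinder approximations $A_{l}^{\prime}$ of Lemma \ref{L_ApproxByCyls} using Remark \ref{R_RobustJoints}, with $\tau_{l}^{Y}=\jmath(A_{l})$ and the compact family (\ref{Eq_GMgoodK}). The paper simply notes that the required delay and compactness conditions for $(A_{l}^{\prime})$ were already checked in the proof of Theorem \ref{T_WhenWhereGMIntervalMaps}, exactly the computation you sketch.
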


\begin{proof}
It is immediate from our assumptions that Theorem
\ref{T_WhenWhereGMIntervalMaps} applies to the induced system, thus proving
\begin{equation}
(\mu_{Y}(A_{l})\Phi_{A_{l}}^{Y},\Psi_{A_{l}})\overset{\mu_{Y}}{\Longrightarrow
}(\,\Phi_{\mathrm{Exp}},\Psi^{\ast})\quad\text{as }l\rightarrow\infty\text{,}%
\end{equation}
and%
\begin{equation}
(\mu_{Y}(A_{l})\Phi_{A_{l}}^{Y},\widetilde{\Psi}_{A_{l}})\overset{\mu_{A_{l}}%
}{\Longrightarrow}(\Phi_{\mathrm{Exp}},\Psi^{\ast})\quad\text{as }%
l\rightarrow\infty\text{.} \label{Eq_ubububugugugbug}%
\end{equation}
The first of these immediately implies (\ref{Eq_jhbvfjhfedbvhdfabvjhadvahg})
via Theorem \ref{T_InduceSpatioTempMu}. We then check that
(\ref{Eq_slslslslsslsl}) can be derived from (\ref{Eq_ubububugugugbug}) using
Proposition \ref{P_InduceSpatioTempFromAl}. In view of Remark
\ref{R_RobustJoints} and Lemma \ref{L_ApproxByCyls} we can replace $(A_{l})$
by $(A_{l}^{\prime})$ in both (\ref{Eq_slslslslsslsl}) and
(\ref{Eq_ubububugugugbug}), where $A_{l}^{\prime}:=%
{\textstyle\bigcup\nolimits_{W\in\xi_{Y,\jmath(A_{l})}:W\subseteq A_{l}}}
W$, $l\geq1$, (note the use of the partition $\xi_{Y}$ of the induced system)
with $\jmath(A_{l})$ as in (\ref{Eq_DefRankFunctionJ}). To apply Proposition
\ref{P_InduceSpatioTempFromAl}, recall that condition (\ref{Eq_xcxcxcxc}) for
$(A_{l}^{\prime})$ and the induced Gibbs-Markov interval map $T_{Y}$ has
already been validated in the proof of Theorem \ref{T_WhenWhereGMIntervalMaps}.
\end{proof}

%

\vspace{0.3cm}%

We illustrate the use of this theorem by applying it to prototypical
nonuniformly expanding interval maps with indifferent fixed points.
(Everything said below generalized in a trivial way to Markovian interval maps
with several indifferent fixed points satisfying the obvious analogous
analytical conditions.)

\begin{example}
[\textbf{Probability preserving intermittent interval maps}]Let $(X,T,\xi)$ be
piecewise increasing with $X=[0,1]$ and $\xi=\{(0,c),(c,1)\}$, mapping each
$Z\in\xi$ onto $(0,1)$. Assume that $T\mid_{(c,1)}$ admits a uniformly
expanding $\mathcal{C}^{2}$ extension to $[c,1]$, while $T\mid_{(0,c)}$
extends to a $\mathcal{C}^{2}$ map on $(0,c]$ and is expanding except for an
indifferent fixed point at $x^{\ast}=0$: for every $\varepsilon>0$ there is
some $\rho(\varepsilon)>1$ such that $T^{\prime}\geq\rho(\varepsilon)$ on
$[\varepsilon,c]$, while $T0=0$ and $\lim\nolimits_{x\searrow0}T^{\prime}x=1$
with $T^{\prime}$ increasing on some $(0,\delta)$. Suppose also that
\begin{gather}
\text{there is a continuous decreasing function }g\text{ on }(0,c]\text{
with}\nonumber\\
\int_{0}^{c}g(x)\,dx<\infty\text{ \quad and \quad}\left\vert T^{\prime\prime
}\right\vert \leq g\text{ on }(0,c]\text{.} \label{Eq_MaxMax}%
\end{gather}
Let $c=:c_{0}>c_{1}>c_{2}>\ldots>0$ be the points with $Tc_{m}=c_{m-1}$ for
$m\geq1$. By an analytic argument which goes back to \cite{T1}, see for
example \S 3 of \cite{Z3} or \S 4 of \cite{T3}, assumption (\ref{Eq_MaxMax})
ensures that the induced system on any $Y:=Y^{(m)}:=(c_{m},1)$ is
Gibbs-Markov, and that $T$ posesses a unique absolutely continuous (w.r.t.
Lebesgue measure $\lambda$) invariant probability measure $\mu$ with density
$h$ strictly positive and continuous on $(0,1]$. This family of maps contains
systems with very slow decay of correlations, see for example \cite{Holland}.

Now take any $x^{\ast}\in X$ which is not periodic and such that each $\xi
_{j}(x^{\ast})$ is well defined, let $(A_{l})_{l\geq1}$ be a sequence of
non-degenerate intervals contained in neighbourhoods $I_{l}$ of $x^{\ast}$
with $\mathrm{diam}(I_{l})\rightarrow0$, and let $\psi_{A_{l}}:A_{l}%
\rightarrow\lbrack0,1]$ denote the corresponding normalizing interval charts,
giving local processes $\Psi_{A_{l}}$. Then Theorem
\ref{T_SpatioTempIntervalMapsInducing} applies (with $Y=Y^{(m)}$ for $m$ so
large that $x^{\ast}\in Y$), so that (\ref{Eq_slslslslsslsl}) and
(\ref{Eq_jhbvfjhfedbvhdfabvjhadvahg}) hold in the present situation. (We only
need to observe that all cylinders $\xi_{Y,l}(x^{\ast})$ of the induced system
around this particular point are well defined since the cylinders of the
original system are.)
\end{example}

%

\vspace{0.3cm}%

\section{Relation to the tail-$\sigma$-algebra}

The abstract limit theorems of the present paper merely require the
probability preserving system $(X,\mathcal{A},\mu,T)$ to be ergodic. On top of
this we only impose conditions on the specific asymptotically rare sequence
$(A_{l})$ under consideration. These assumptions do not imply that the system
is mixing, as is clear from the basic GM-map examples of Section
\ref{Sec_GMmaps}, which can be taken to have a periodic structure. However, it
is well-known that this is the only way in which an ergodic probability
preserving GM-map can fail to be mixing, since it always has a discrete
tail-$\sigma$-algebra.

We conclude by showing that if an ergodic probability preserving map $T$
admits a (one-sided) generating partition (mod $\mu$) such that the cylinders
$\xi_{l}(x)$ around a.e. point $x\in X$ (the element of $\xi_{l}%
:=\bigvee_{j=0}^{l-1}T^{-j}\xi$ containing $x$) satisfy a condition similar to
that used above, with constant delay times and a common compact set of image
measures, then it is exact up to a cyclic permutation.

\begin{proposition}
[\textbf{Abundance of good cylinders implies discrete tail}]Let
$(X,\mathcal{A},\mu,T)$ be an ergodic probability preserving system. Assume
that there is a compact subset $\mathfrak{K}$ of $(\mathfrak{P}%
,d_{\mathfrak{P}})$ and a countable partition $\xi$ of $X$ (mod $\mu$), with
$\mathcal{A}=\sigma(\xi_{n}:n\geq1)$ (mod $\mu$) and the following property:
For a.e. $x\in X$ the sequence $(\xi_{l}(x))_{l\geq1}$ is well defined with
$0<\mu(\xi_{l}(x))\searrow0$, and it admits a sequence of constants
$\tau_{x,l}\in\mathbb{N}_{0}$, such that
\begin{equation}
T_{\ast}^{\tau_{x,l}}\mu_{\xi_{l}(x)}\in\mathfrak{K}\text{ \quad for }%
l\geq1\text{.} \label{Eq_jshdcsjadhvbh}%
\end{equation}
Then there are $p\in\mathbb{N}$ and $X_{1},\ldots,X_{p}\in\mathcal{A}$ such
that $T^{-1}X_{i+1}=X_{i}$. The tail-$\sigma$-algebra has the form
$\bigcap_{n\geq0}T^{-n}\mathcal{A}=\sigma(X_{1},\ldots,X_{p})$ (mod $\mu$),
and $T^{p}\mid_{X_{i}}$ is exact.
\end{proposition}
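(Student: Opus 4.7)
The plan is to show that the tail $\sigma$-algebra $\mathcal{T}:=\bigcap_{n\geq 0}T^{-n}\mathcal{A}$ has only finitely many atoms, each of $\mu$-measure bounded below, and then to read off the cyclic structure from how $T$ permutes them. Two ingredients will drive the argument: first, compactness of $\mathfrak{K}$ in $(L_1(\mu),\Vert\cdot\Vert_1)$ yields uniform integrability of $\{d\nu/d\mu:\nu\in\mathfrak{K}\}$, so that for every $\varepsilon>0$ there is some $\delta=\delta(\varepsilon)>0$ with $\sup_{\nu\in\mathfrak{K}}\nu(A)<\varepsilon$ whenever $\mu(A)<\delta$; second, since $\xi$ is a generator, the filtration $(\sigma(\xi_l))_{l\geq 1}$ increases to $\mathcal{A}$, so L\'evy's martingale theorem gives
$$\mu_{\xi_l(x)}(B)=\mathbb{E}_\mu[1_B\mid\sigma(\xi_l)](x)\longrightarrow 1_B(x)\quad\mu\text{-a.e.\ as }l\to\infty$$
for every $B\in\mathcal{A}$.

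The core step is the following dichotomy. Given $B\in\mathcal{T}$ and $n\geq 0$, I write $B=T^{-n}B_n$ with $B_n\in\mathcal{A}$ and $\mu(B_n)=\mu(B)$. For a.e.\ $x$ and each $l$,
\[
\mu_{\xi_l(x)}(B)=\mu_{\xi_l(x)}\bigl(T^{-\tau_{x,l}}B_{\tau_{x,l}}\bigr)=\bigl(T_\ast^{\tau_{x,l}}\mu_{\xi_l(x)}\bigr)(B_{\tau_{x,l}}),
\]
and the probability on the right lies in $\mathfrak{K}$ by (\ref{Eq_jshdcsjadhvbh}). If $\mu(B)<\delta(\varepsilon)$, uniform integrability gives $\mu_{\xi_l(x)}(B)<\varepsilon$ for every $l$, and letting $l\to\infty$ yields $1_B\leq\varepsilon$ a.e. Choosing $\varepsilon<1$ forces $\mu(B)=0$. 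Hence every tail set has $\mu$-measure either $0$ or at least $\delta_0:=\delta(1/2)$, so $\mathcal{T}$ has only finitely many atoms $Y_1,\ldots,Y_p$ (each of measure $\geq\delta_0$).

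Next I extract the cyclic structure. Since $T^{-1}\mathcal{T}\subseteq\mathcal{T}$, each $T^{-1}Y_j$ is a disjoint union of atoms, so there is a well-defined map $\pi:\{1,\ldots,p\}\to\{1,\ldots,p\}$ with $Y_i\subseteq T^{-1}Y_{\pi(i)}$ and $T^{-1}Y_j=\bigsqcup_{i\in\pi^{-1}(j)}Y_i$. Measure-preservation gives $\mu(Y_j)=\sum_{i\in\pi^{-1}(j)}\mu(Y_i)>0$, so $\pi^{-1}(j)\neq\varnothing$ for every $j$; hence $\pi$ is surjective, and thus a bijection on the finite set. A $T$-invariant set is a union $\bigcup_{i\in I}Y_i$ with $\pi^{-1}(I)=I$, so ergodicity of $T$ forces $\pi$ to have no nontrivial invariant subsets, meaning $\pi$ is a single $p$-cycle. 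After relabeling one obtains $X_1,\ldots,X_p$ with $T^{-1}X_{i+1}=X_i$ and $\mu(X_i)=1/p$. For the final assertion, let $B$ lie in the tail of the induced system $(X_i,\mathcal{A}\cap X_i,\mu_{X_i},T^p|_{X_i})$; writing $B=(T^p)^{-n}B_n'$ with $B_n'\in\mathcal{A}\cap X_i$ and, for any $m\leq np$, $B=T^{-m}(T^{-(np-m)}B_n')$ shows $B\in T^{-m}\mathcal{A}$ for every $m$, hence $B\in\mathcal{T}$. Since $B\subseteq X_i$, this forces $B\in\{\varnothing,X_i\}$, proving exactness of $T^p|_{X_i}$.

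The principal obstacle is the first step: converting the abstract compactness of $\mathfrak{K}$ into a uniform positive lower bound on positive-measure tail sets through the interplay of uniform integrability with martingale convergence along the generator $\xi$. Once finiteness of $\mathcal{T}$ is in hand, the remaining cyclic-permutation analysis is purely combinatorial, with $T_\ast\mu=\mu$ delivering surjectivity of $\pi$ and ergodicity delivering the single-cycle structure.
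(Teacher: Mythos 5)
Your argument is correct, and it reaches the paper's key intermediate statement (the uniform lower bound (\ref{Eq_ConstEtaForTail}): every positive-measure tail set has measure bounded below by a constant) by a somewhat different and slightly more direct route. The paper fixes a positive-measure tail set $A$, conditions on $\xi_l(x)\cap A$ for a density point $x\in A$, shows via the decomposition $v_l=u_l+\tfrac{\mu(\xi_l(x)\setminus A)}{\mu(\xi_l(x)\cap A)}(u_l-w_l)$ that the density of $T_\ast^{\tau_l}\mu_{\xi_l(x)\cap A}$ is $L_1$-close to the compact family $\mathfrak{U}$, and combines a non-concentration estimate ($\mu(u>2\eta)>2\eta$ for $u\in\mathfrak{U}$) with the support inclusion $\{v_l>0\}\subseteq A_{\tau_l}$ to get $\mu(A)=\mu(A_{\tau_l})>\eta$. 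You instead apply the hypothesis (\ref{Eq_jshdcsjadhvbh}) to the measure that actually lies in $\mathfrak{K}$, namely $T_\ast^{\tau_{x,l}}\mu_{\xi_l(x)}$, evaluate it on the small set $B_{\tau_{x,l}}$ (using that the delays are constants, so $\mu_{\xi_l(x)}(B)=\bigl(T_\ast^{\tau_{x,l}}\mu_{\xi_l(x)}\bigr)(B_{\tau_{x,l}})$, and that $\mu(B_n)=\mu(B)$ by measure preservation), and let uniform absolute continuity of $\mathfrak{K}$ clash with martingale convergence $\mu_{\xi_l(x)}(B)\to 1_B(x)$ on a positive-measure tail set. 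This avoids the density manipulations and the auxiliary non-concentration lemma altogether, uses only uniform integrability (so, like the paper's proof, it covers the weak-compactness generalization of Remark a) and the non-ergodic variant of Remark c)), and in addition you spell out the atomic/cyclic-permutation and exactness steps that the paper dismisses as routine; the paper's formulation, on the other hand, isolates the quantitative statement $\mu(v>\eta)>\eta$ for densities near $\mathfrak{U}$, which is reusable when one only controls conditioned measures such as $\mu_{\xi_l(x)\cap A}$ rather than $\mu_{\xi_l(x)}$ itself. The only points worth making explicit in your write-up are minor: $T^{-p}X_i=X_i$ (so $T^p\mid_{X_i}$ is indeed a measure-preserving map of $(X_i,\mu_{X_i})$), and that pure atomicity of the tail with atoms $X_1,\ldots,X_p$ is what yields $\bigcap_{n\geq0}T^{-n}\mathcal{A}=\sigma(X_1,\ldots,X_p)$ (mod $\mu$).
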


\begin{remark}
\textbf{a)} The proof only requires $\mathfrak{K}$ to be \emph{weakly} compact
in $(\mathfrak{P},d_{\mathfrak{P}})$ (with the latter identified with
$(\mathcal{D}(\mu),\left\Vert \centerdot\right\Vert _{L_{1}(\mu)})$).
\newline\textbf{b)} Condition (\ref{Eq_jshdcsjadhvbh}), and its generalization
pointed out in a), can be interpreted as weak bounded distortion conditions.
We exploit them through a Rohlin type argument, see \cite{Ro}.\newline%
\textbf{c)} If we drop the assumption of ergodicity, the argument below still
shows that the tail-$\sigma$-algebra is discrete.
\end{remark}

\begin{proof}
Abbreviate $\mathcal{A}_{\infty}:=\bigcap_{n\geq0}T^{-n}\mathcal{A}$. Our
assertion follows by easy routine arguments once we show that there is some
constant $\eta>0$ such that
\begin{equation}
\mu(A)>\eta\text{ \quad for all }A\in\mathcal{A}_{\infty}\text{ with }%
\mu(A)>0\text{.} \label{Eq_ConstEtaForTail}%
\end{equation}
\textbf{(i)} Let $\mathfrak{U}\subseteq L_{1}(\mu)$ be the set of
(probability) densities of all measures $\nu\in\mathfrak{K}$. Then
$\mathfrak{U}$ is strongly compact in $L_{1}(\mu)$, and hence also uniformly
integrable. Consequently, there is some $\eta>0$ such that
\begin{equation}
\mu(h>2\eta)>2\eta\text{ \quad for }u\in\mathcal{U}\text{.}
\label{Eq_scvcadsxdasxdaxdasxd}%
\end{equation}
(Otherwise there are $u_{m}\in\mathfrak{U}$ such that the sets $B_{m}%
:=\{u_{m}>2/m\}$ satisfy $\mu(B_{m})\leq2/m$ for $m\geq1$. But $\mu
(B_{m})\rightarrow0$ implies $\int_{B_{m}}u_{m}\,d\mu\rightarrow0$ by uniform
integrability, contradicting $\int u_{m}\,d\mu=1$.) For probability densities
$u,v$ with $u\in\mathfrak{U}$ we then have
\[
\mu(v>\eta)\geq\mu(u>2\eta)-\mu(\{u>2\eta\}\setminus\{v>\eta\})>2\eta
-\eta^{-1}\left\Vert u-v\right\Vert _{1}\text{,}%
\]
so that
\begin{equation}
\mu(v>\eta)>\eta\text{ \quad for probability densities }v\text{ with
}\mathrm{dist}_{L_{1}(\mu)}(v,\mathfrak{U})<\eta^{2}\text{. }
\label{Eq_hvbfdhvbjdfvbjshbvbv}%
\end{equation}
\textbf{(ii)} Now fix any $A\in\mathcal{A}_{\infty}$ (meaning that there are
$A_{n}\in\mathcal{A}$ such that $A=T^{-n}A_{n}$ for $n\geq0$) for which
$\mu(A)>0$. By the a.s. martingale convergence theorem (see Theorem 35.5 and
Exercise 35.17 of \cite{BilliPM}),\ we have
\begin{equation}
\mu_{\xi_{l}(x)}(\xi_{l}(x)\cap A)\longrightarrow1\text{\quad as }%
l\rightarrow\infty\text{ for a.e. }x\in A\text{. } \label{Eq_kjhgmkhgmjjhg}%
\end{equation}
Take a point $x\in A$ satisfying both (\ref{Eq_jshdcsjadhvbh}) and
(\ref{Eq_kjhgmkhgmjjhg}). Abbreviating $\tau_{l}:=\tau_{x,l}$, we consider the
probability densities given by $u_{l}:=\mu(\xi_{l}(x))^{-1}\widehat{T}%
^{\tau_{l}}1_{\xi_{l}(x)}$, $v_{l}:=\mu(\xi_{l}(x)\cap A)^{-1}\widehat
{T}^{\tau_{l}}1_{\xi_{l}(x)\cap A}$, and $w_{l}:=\mu(\xi_{l}(x)\setminus
A)^{-1}\widehat{T}^{\tau_{l}}1_{\xi_{l}(x)\setminus A}$, $l\geq1$. Since
\[
v_{l}=u_{l}+\frac{\mu(\xi_{l}(x)\setminus A)}{\mu(\xi_{l}(x)\cap A)}%
(u_{l}-w_{l})\text{\quad for }l\geq1\text{,}%
\]
where $u_{l}\in\mathfrak{U}$ and $\mu(\xi_{l}(x)\setminus A)/\mu(\xi
_{l}(x)\cap A)\rightarrow0$, there is some $l_{0}\geq1$ such that
\[
\mathrm{dist}_{L_{1}(\mu)}(v_{l},\mathfrak{U})<\eta^{2}\quad\text{for }l\geq
l_{0}\text{.}%
\]
Because $\xi_{l}(x)\cap A\subseteq A=T^{-\tau_{l}}A_{\tau_{l}}$, we have
$\{v_{l}>0\}\subseteq A_{\tau_{l}}$ (mod $\mu$). Therefore
(\ref{Eq_hvbfdhvbjdfvbjshbvbv}) entails
\[
\mu(A_{\tau_{l}})\geq\mu(v_{l}>0)\geq\mu(v_{l}>\eta)>\eta\text{ \quad for
}l\geq l_{0}\text{.}%
\]
But $\mu(A)=\mu(A_{\tau_{l}})$ for all $l$ since $T$ preserves $\mu$. This
proves (\ref{Eq_ConstEtaForTail}).
\end{proof}

%

\vspace{0.3cm}%

\textbf{Acknowledgements.} I am grateful to the refereree for comments and
questions which helped to clarify some points, and to Francoise P\`{e}ne,
Benoit Saussol and Max Auer for discussions on related matters.


\begin{thebibliography}{9999}                                                                                             %


\bibitem[Aa]{A0}J. Aaronson: \textit{An introduction to infinite ergodic
theory}. AMS 1997.

\bibitem[AD]{AD}J. Aaronson, M. Denker: \textit{Local limit theorems for
partial sums of stationary sequences generated by Gibbs-Markov maps}. Stoch.
Dyn. \textbf{1} (2001), 193-237.

\bibitem[Ab]{Ab}M. Abadi: \textit{Sharp error terms and necessary conditions
for exponential hitting times in mixing processes}. Ann. Probab. \textbf{32}
(2004), 243--264.

\bibitem[AS]{AS}M. Abadi, B. Saussol: \textit{Hitting and returning to rare
events for all alpha-mixing processes}. Stochastic Process. Appl. \textbf{121}
(2011), 314--323.

\bibitem[AE]{AE}D.J. Aldous, G.K. Eagleson: \textit{On mixing and stability of
limit theorems}. Ann. Probab. \textbf{6} (1978), 325-331.

\bibitem[B1]{BilliPM}P. Billingsley: \textit{Probability and Measure}. 2nd ed.
Wiley 1986.

\bibitem[B2]{Bi}P. Billingsley: \textit{Convergence of Probability Measures}.
2nd ed. Wiley 1999.

\bibitem[BSTV]{BruSauTrouVai}H. Bruin, B. Saussol, S. Troubetzkoy, S. Vaienti:
\textit{Return time statistics via inducing}. Ergodic Theory and Dynamical
Systems \textbf{23} (2003), 991-1013.

\bibitem[CCC]{CCC}J.-R. Chazottes, Z. Coelho, P. Collet: \textit{Poisson
processes for subsystems of finite type in symbolic dynamics}. Stoch. Dyn.
\textbf{9} (2009), no. 3, 393--422.

\bibitem[CC]{CC}J.-R. Chazottes, P. Collet: \textit{Poisson approximation for
the number of visits to balls in non-uniformly hyperbolic dynamical systems}.
Ergod. Th. \& Dynam. Sys. \textbf{33} (2013), 49-80.

\bibitem[CF]{CF}Z. Coelho, E. de Faria: \textit{Limit laws of entrance times
for homeomorphisms of the circle}. Isr. J. Math. \textbf{93} (1996), 93-112.

\bibitem[Co]{Co}Z. Coelho: \textit{The loss of tightness of time distribution
for homeomorphisms of the circle}. Trans. Amer. Math. Soc. \textbf{356}
(2004), 4427-4445.

\bibitem[D]{Doeblin}W. Doeblin: \textit{Remarques sur la th\'{e}orie
m\'{e}trique des fractiones continues}. Compositio Math. \textbf{7} (1940), 353-371.

\bibitem[DS]{DS}N. Dunford, J.T. Schwartz: \textit{Linear Operarors I}. Wiley
Classics 1988.

\bibitem[Ea]{Ea}G. K. Eagleson: \textit{Some simple conditions for limit
theorems to be mixing}. Teor. Verojatnost. i Primenen. \textbf{21} (1976), 653-660.

\bibitem[FFT1]{FFT1}A.C.M. Freitas, J.M. Freitas, M. Todd: \textit{The
extremal index, hitting time statistics and periodicity}. Advances in
Mathematics \textbf{231} (2012), 2626-2665.

\bibitem[FFT2]{FFT2}A.C.M. Freitas, J.M. Freitas, M. Todd: \textit{The
Compound Poisson Limit Ruling Periodic Extreme Behaviour of Non-Uniformly
Hyperbolic Dynamics}. Commun. Math. Phys. \textbf{321} (2013), 483-527.

\bibitem[FFTV]{FFTV}A.C.M. Freitas, J.M. Freitas, M. Todd, S. Vaienti:
\textit{Rare events for the Manneville-Pomeau map}. Stochastic Process. Appl.
\textbf{126} (2016), 3463-3479.

\bibitem[G]{G}C.F. Gauss: \textit{Letter to Laplace}. G\"{o}ttingen, January
30\textit{th} 1812.

\bibitem[HLV]{HaydnLacVai05}N. Haydn, Y. Lacroix, S. Vaienti: \textit{Hitting
and return times in ergodic dynamical systems}. Ann. Probab. \textbf{33}
(2005), 2043-2050.

\bibitem[HV]{HV09}N. Haydn, S. Vaienti: \textit{The compound Poisson
distribution and return times in dynamical systems}. Probab. Theory Relat.
Fields \textbf{144} (2009), 517-542.

\bibitem[HWZ]{HWZ}N. Haydn, N. Winterberg, R. Zweim\"{u}ller:
\textit{Return-time statistics, Hitting-time statistics and Inducing}. in
\textit{Ergodic Theory, Open Dynamics, and Coherent Structures}, 163-171,
Springer Proceedings in Mathematics \& Statistics \textbf{70} (2014).

\bibitem[HP]{HP}N. Haydn, Y. Psiloyenis: \textit{Return times distribution for
Markov towers with decay of correlations}. Nonlinearity \textbf{27} (2014), 1323--1349.

\bibitem[HY]{HY}N. Haydn, F. Yang: \textit{Entry times distribution for mixing
systems}. J. Stat. Phys. \textbf{163} (2016), 374--392.

\bibitem[HSV]{HSV}M. Hirata, B. Saussol, S. Vaienti: \textit{Statistics of
Return Times: A General Framework and New Applications}. Commun. Math. Phys.
\textbf{206} (1999), 33-55.

\bibitem[H]{Holland}M. Holland: \textit{Slowly mixing systems and
intermittency maps}. Ergod. Th. \& Dynam. Sys. \textbf{25} (2005), 133-159.

\bibitem[I]{I}M. Iosifescu: \textit{A Poisson law for }$\psi$\textit{-mixing
sequences establishing the truth of a Doeblin's statement}. Rev. Roumaine
Math. Pures Appl. \textbf{22} (1977), 1441-1447.

\bibitem[IK]{IK}M. Iosifescu, C. Kraaikamp: \textit{Metrical Theory of
Continued Fractions}. Kluwer 2002.

\bibitem[Kr]{Kr}U. Krengel: \textit{Ergodic Theorems}. de Gruyter 1985.

\bibitem[Ma]{Ma}J. Marklof: \textit{Entry and return times for semi-flows}.
Nonlinearity \textbf{30} (2017), 810-824.

\bibitem[PS1]{PS}F. P\`{e}ne, B. Saussol: \textit{Poisson law for some
non-uniformly hyperbolic dynamical systems with polynomial rate of mixing}.
Ergod. Th. \& Dynam. Sys. \textbf{36} (2016), 2602-2626.

\bibitem[PS2]{PS2}F. P\`{e}ne, B. Saussol: \textit{Spatio-temporal Poisson
processes for visits to small sets}. Isr. J. Math. \textbf{240} (2020), 625-665.

\bibitem[PSZ]{PSZ}F. P\`{e}ne, B. Saussol, R. Zweim\"{u}ller: \textit{Return-
and hitting-time limits for rare events of null-recurrent Markov maps}. Ergod.
Th. \& Dynam. Sys. \textbf{37} (2017), 244-276.

\bibitem[PT]{PT}F. P\`{e}ne, D. Thomine: \textit{Potential kernel, hitting
probabilities and distributional asymptotics}. Ergod. Th. \& Dynam. Sys.
\textbf{40} (2020), 1894-1967.

\bibitem[RZ]{RZ}S. Rechberger, R. Zweim\"{u}ller: \textit{Return- and
hitting-time distributions of small sets in infinite measure preserving
systems}. Ergod. Th. \& Dynam. Sys.\textbf{ 40} (2020), 2239-2273.

\bibitem[Ren]{Renyi}A. R\'{e}nyi: \textit{On mixing sequences of sets}. Acta
Math. Acad. Sci. Hung. \textbf{9} (1958), 3-4.

\bibitem[Res]{Resnick}S.I. Resnick: \textit{Extreme Values, Regular Variation
and Point Processes}. Springer 2008.

\bibitem[Ro]{Ro}V.A. Rohlin: \textit{Exact endomorphisms of a Lebesgue space}.
Amer. Math. Soc. Transl. Ser. 2, \textbf{39} (1964), 1-36.

\bibitem[T1]{T1}M. Thaler: \textit{Estimates of the invariant densities of
endomorphisms with indifferent fixed points.} Isr. J. Math. \textbf{37}
(1980), 303-314.

\bibitem[T2]{T3}M. Thaler: \textit{Asymptotic distributions and large
deviations for iterated maps with an indifferent fixed point}. Stoch. Dyn.
\textbf{5} (2005), 425-440.

\bibitem[Ty]{T}M. Tyran-Kaminska: \textit{Weak convergence to L\'{e}vy stable
processes in dynamical systems}. Stoch. Dyn. \textbf{10} (2010), 263-289.

\bibitem[Wh]{Whitt}W. Whitt: \textit{Stochastic-Process Limits}. Springer 2002.

\bibitem[Yo]{Yos}K. Yosida: \textit{Mean ergodic theorem in Banach spaces}.
Proc. Imp. Acad. Tokyo \textbf{14} (1938), 292-294.

\bibitem[Z1]{Z3}R. Zweim\"{u}ller: \textit{Stable limits for probability
preserving maps with indifferent fixed points.} Stoch. Dyn. \textbf{3} (2003), 83-99.

\bibitem[Z2]{Z7}R. Zweim\"{u}ller: \textit{Mixing limit theorems for ergodic
transformations}. J. Theor. Probab. \textbf{20} (2007), 1059-1071.

\bibitem[Z3]{ZcompactRegeneration}R. Zweim\"{u}ller: \textit{Infinite measure
preserving transformations with compact first regeneration}. J. Anal. Math.
\textbf{103} (2007), 93-131.

\bibitem[Z4]{Z11}R. Zweim\"{u}ller: \textit{The general asymptotic return-time
process.} Isr. J. Math. \textbf{212} (2016), 1-36.

\bibitem[Z5]{Zexceptional}R. Zweim\"{u}ller: \textit{Hitting-time limits for
some exceptional rare events of ergodic maps.} Stochastic Process. Appl.
\textbf{129} (2019), 1556-1567.
\end{thebibliography}
\end{document}